\theoremstyle{plain}
\newtheorem{theorem}{Theorem}[section]
\newtheorem{proposition}[theorem]{Proposition}
\theoremstyle{plain}
\newtheorem{lemma}[theorem]{Lemma}
\newtheorem{corollary}[theorem]{Corollary}
\newtheorem{example}[theorem]{Example}
\theoremstyle{definition}
\newtheorem{definition}{Definition}[section]
\newtheorem{remark}{Remark}[section]
\newtheorem*{maintheorem*}{Main Theorem}
\newtheorem*{maincorollary*}{Main Corollary}
\newtheorem{hypothesis}[theorem]{Hypothesis}
\DeclareFontFamily{U}{BOONDOX-calo}{\skewchar\font=50 }
\DeclareFontShape{U}{BOONDOX-calo}{m}{n}{
	<-> s*[1.05] BOONDOX-r-calo}{}
\DeclareFontShape{U}{BOONDOX-calo}{b}{n}{
	<-> s*[1.05] BOONDOX-b-calo}{}
\DeclareMathAlphabet{\mathcalb}{U}{BOONDOX-calo}{m}{n}
\SetMathAlphabet{\mathcalb}{bold}{U}{BOONDOX-calo}{b}{n}
\DeclareMathAlphabet{\mathbcalb}{U}{BOONDOX-calo}{b}{n}
\newcommand{\R}{\ensuremath{\mathbb{R}}}
\newcommand{\N}{\ensuremath{\mathbb{N}}}
\newcommand{\E}{\ensuremath{\mathbb{E}}}
\def\N{\mathbb{N}}
\def\V{V}
\def\u{\boldsymbol{u}}
\def\v{\boldsymbol{v}}
\def\w{\boldsymbol{w}}
\def\f{\boldsymbol{f}}
\def\y{\boldsymbol{y}}
\def\z{\boldsymbol{z}}
\def\x{\boldsymbol{x}}
\def\n{\boldsymbol{n}}
\def\D{\mathrm{D}}
\def\V{\mathbb{V}}
\def\U{\mathbb{U}}
\def\L{\mathbb{L}}
\def\H{\mathbb{H}}
\def\P{\mathrm{P}}
\def\I{\mathrm{I}}
\def\A{\mathcal{A}}
\def\Arm{\mathrm{A}}
\def\B{\mathcal{B}}
\def\J{\mathcal{J}}
\def\K{\mathcal{K}}
\def\d{\mathrm{d}}
\def\C{\mathrm{C}}
\def\Tr{\mathrm{Tr}}
\def\X{\mathbb{X}}
\def\O{\mathcal{O}}
\def\W{\mathrm{W}}
\def\diver{\mathrm{div}}
\numberwithin{equation}{section} \allowdisplaybreaks
\title[Random dynamics of stochastic $3^{rd}$-grade fluids equations]
{Random dynamics and invariant measures for a class of non-Newtonian fluids of differential type on 2D and 3D Poincar\'e domains 
}
\author[Kush Kinra]{Kush Kinra}
\address[Kush Kinra]{Center for Mathematics and Applications (NOVA Math), NOVA School of Science and Technology (NOVA SST),	Portugal.}
\email[Kush Kinra]{kushkinra@gmail.com, k.kinra@fct.unl.pt}
\author[Fernanda Cipriano]{Fernanda Cipriano}
\address[Fernanda Cipriano]{
	Center for Mathematics and Applications (NOVA Math) and Department of Mathematics, NOVA School of Science and Technology (NOVA SST),	Portugal.}
\email[Fernanda Cipriano]{cipriano@fct.unl.pt}
\begin{document}

	\begin{abstract}
			In this article, we consider a class of incompressible stochastic third-grade fluids (non-Newtonian fluids) equations on two- as well as three-dimensional Poincar\'e domains $\mathcal{O}$ (which may be bounded or unbounded). Our aims are to study the well-posedness and asymptotic analysis for the solutions of the underlying system. Firstly, we prove that the underlying system defined on $\mathcal{O}$ has a unique weak solution (in the analytic sense) under Dirichlet boundary condition and it also generates random dynamical system $\Psi$. Secondly, we consider the underlying system on bounded domains. Using the compact Sobolev embedding $\H^1(\mathcal{O}) \hookrightarrow\L^2(\mathcal{O})$, we prove the existence of a unique random  attractor for the underlying system on bounded domains with external forcing in $\H^{-1}(\O)$. Thirdly, we consider the underlying system on unbounded Poincar\'e domains with external forcing in $\L^{2}(\O)$ and show the existence of a unique random attractor. In order to obtain the existence of a unique random attractor on unbounded domains, due to the lack of compact Sobolev embedding $\H^1(\mathcal{O}) \hookrightarrow\L^2(\mathcal{O})$, we use the uniform-tail estimates method which helps us to demonstrate the asymptotic compactness of $\Psi$. 
		 Note that due to the presence of several nonlinear terms in the underlying system, we are not able to use the energy equality method to obtain the asymptotic compactness of $\Psi$ in unbounded domains, which makes the analysis of this work in unbounded domains more difficult and interesting. Finally, as a consequence of the existence of random attractors, we address the existence of invariant measures for underlying system.  To the best of authors' knowledge, this is the first work which consider a class of the 2D as well as 3D incompressible stochastic third-grade fluids equations and establish the existence of random attractor in bounded as well as unbounded domains. In addition, this is the first work which address the existence of invariant measures for underlying system on unbounded domains.
  	\end{abstract}

	\maketitle
			\begin{footnotesize}	
			\textbf{Keywords:} Stochastic third-grade fluids, unbounded domains, random attractors,	 asymptotic compactness, uniform-tail estimates, invariant measures.\\[1mm]
			\hspace*{0.45cm}\textbf{MSC 2020:}   76A05, 35R60, 35Q35, 37L30, 35B40. \\
		\end{footnotesize}

\section{Introduction}
 In this work, we are concerned with the well-posedness and the existence of random attractors for a stochastic version of a class of incompressible non-Newtonian fluids filling a  two- or  three-dimensional domain (which may be bounded or unbounded) under  Dirichlet	boundary condition. The majority of research on fluid dynamics has focused on Newtonian fluids, which are defined by a linear relationship between shear stress and strain rate. As a result, these fluids are modeled by the Navier-Stokes equations, which have been the subject of substantial mathematical and physical study. Nonetheless, many actual and industrial fluids exhibit nonlinear viscoelastic behaviour that defies Newton's equation and, as a result, are not amenable to the description provided by the traditional viscous Newtonian fluid models. These fluids include natural biological fluids such as blood, geological flows  and others, see the works \cite{DR95,FR80,yas+fer_2023} and their references.  As such, consideration of more broad fluid models is required. Non-Newtonian viscoelastic fluids of differential type have received a lot of attention lately; see for instance \cite{Cioran2016}.	However, in order to comprehend and explain the properties of various nanofluids, a number of simulation studies have been conducted using third-grade fluid models (see for instance \cite{PP19,RHK18} and the references therein). Nanofluids are engineered colloidal suspensions of nanoparticles in a base fluid, such as water, ethylene glycol, or oil, and they exhibit enhanced thermal conductivity in comparison to the base fluid, which shows great promise for use in technology and microelectronics. Thus, in order to comprehend the behaviors of these fluids, it is crucial to perform a mathematical analysis of third-grade fluids equations.

\subsection{The underlying system} Let us now briefly recall how to obtain the fluids equations for non-Newtonian fluids of differential type,	for	more details about Kinematics of such fluids we	refer to \cite{Cioran2016}. Denote  the velocity field of the fluid by $\v$  and introduce the Rivlin-Ericksen kinematic tensors   $\Arm_n, n\geq 1$, (see \cite{RE55}), defined by  
\begin{align*}
	\Arm_1(\v)&=\nabla \v+(\nabla \v)^T;	\,\Arm_n(\v)=\dfrac{\d}{\d t} \Arm_{n-1}(\v)+\Arm_{n-1}(\v)(\nabla \v)+(\nabla \v)^T\Arm_{n-1}(\v), \quad n=2,3,\cdots.
\end{align*}

The constitutive law of fluids of grade $n$ reads $\mathbb{T}=-p\mathrm{I} + F(\Arm_1,\cdots,\Arm_n),$ where $\mathbb{T}$ is the Cauchy stress tensor, $p$ is the pressure and $F$ is an isotropic polynomial function of degree $n$, subject to the usual	requirement of material frame indifference,	see	\cite{Cioran2016}. The constitutive law of   third-grade fluids	$(n=3)$ is given by the following equation	 
\begin{align*}
	\mathbb{T}=-p\mathrm{I}+\nu \Arm_1+\alpha_1\Arm_2+\alpha_2\Arm_1^2+\beta_1 \Arm_3+\beta_2(\Arm_1\Arm_2+\Arm_2\Arm_1)+\beta_3\Tr(\Arm_1^2)\Arm_1,
\end{align*}
where $\nu$ is the viscosity and $\alpha_1, \alpha_2, \beta_1, \beta_2, \beta_3$ are material moduli. We recall that the momentum equations, established by the Newton's second law, are given by
$$\dfrac{\mathrm{D}\v}{\mathrm{D}t}=\dfrac{\d\v}{\d t}+\v\cdot \nabla \v=\text{div}(\mathbb{T}).$$ 
If $\beta_1=\beta_2=\beta_3=0$, the constitutive equations correspond to second grade fluids. It has been shown that the Clausius-Duhem inequality and the assumption that the Helmholtz free energy is a minimum in equilibrium requires the viscosity and material moduli to satisfy
\begin{align}\label{secondlaw}
	\nu \geq 0,\quad \alpha_1+\alpha_2=0, \quad \alpha_1\geq 0. 
\end{align}
Although second grade fluids are mathematically more treatable,	dealing with several non-Newtonian fluids, the rheologists have not confirmed these restrictions \eqref{secondlaw}, thus give  the conclusion that the fluids that have been tested are not fluids of second grade but are fluids that are characterized by a different constitutive structure, we refer to \cite{FR80} and references therein for more details.  Following \cite{FR80}, in order to allow the motion of the fluid to be compatible with thermodynamic, it should be imposed that
\begin{equation}\label{third-grade-paremeters}
	\nu \geq 0, \quad \alpha_1\geq 0, \quad |\alpha_1+\alpha_2 |\leq \sqrt{24\nu\beta}, \quad \beta_1=\beta_2=0, \beta_3=\beta \geq 0.
\end{equation}	
Consequently, the velocity field $\v$  satisfies
the incompressible third-grade fluids equations
\begin{equation}
	\label{third-grade-fluids-equations}
	\left\{\begin{aligned}
		\partial_t(z(\v))-\nu \Delta \v+(\v\cdot \nabla)z(\v)&+\displaystyle\sum_{j=1}^dz(\v)^j\nabla v^j-(\alpha_1+\alpha_2)\text{div}((\Arm(\v))^2)\\ -\beta \text{div}[\Tr(\Arm(\v)\Arm(\v)^T)\Arm(\v)]
		& = -\nabla \mathbf{P} + \f,  \\[0.2cm]	\text{div}\;\v& =0,	  \\[0.2cm]
		z(\v)&:=\v-\alpha_1\Delta \v, \\[0.2cm]
		\Arm(\v) & := \nabla \v+(\nabla \v)^T,
	\end{aligned}\right.
\end{equation}
where the viscosity $\nu$ and the material moduli	 $\alpha_1,\alpha_2$, $\beta$ 	verify	\eqref{third-grade-paremeters},	  $\mathbf{P}$ denotes the pressure	and	 $\f$	denotes	an	external	force. 	Notice	that	if	$\alpha_1=\alpha_2=0$	and $\beta$=0,	we	recover		the	Navier	Stokes	equations (NSE). From mathematical point of view,  fluids of grade  $n$ constitute an hierarchy of fluids with  increasing complexity and more  nonlinear terms,  then comparing with Newtonian (grade $1$) or second grade fluids, third-grade fluids are more complex  and require more involved analysis.

  Let $\mathcal{O}$ be an open and connected subset of $\R^d$ ($d\in\{2,3\}$). Our	aim	in	this	work	is	to	study the following stochastic version of system	\eqref{third-grade-fluids-equations}	with a subset of physical conditions \eqref{third-grade-paremeters}, namely
  	\begin{equation}\label{condition1}
  		\nu > 0, \quad \alpha_1= 0, \ \alpha_2=\alpha, \quad |\alpha | < \sqrt{2\nu\beta}, \quad \beta_1=\beta_2=0, \beta_3=\beta > 0,
  	\end{equation}	
  in the presence of infinite-dimensional Wiener noise:
\begin{equation}\label{equationV1}
		\left\{ 
		\begin{aligned}
			\d \v & = [\f-\nabla \textbf{P}+\nu \Delta \v-(\v\cdot \nabla)\v+\alpha\text{div}((\Arm(\v))^2) +\beta \text{div}(|\Arm(\v)|^2\Arm(\v)) ]\d t && \\ & \quad + \d\mathrm{W}  && \hspace{-15mm} \text{in }  \mathcal{O} \times (0,\infty),\\
			\text{div}\; \v&=0 \quad && \hspace{-15mm} \text{in } \mathcal{O} \times [0,\infty),\\
			\v &= \boldsymbol{0} && \hspace{-15mm} \text{on } \partial\mathcal{O}\times [0,\infty),\\
			\v(x,0)&=\x  \quad && \hspace{-15mm} \text{in } \mathcal{O},
		\end{aligned}
		\right.
	\end{equation}
where $\{\W (t)\}_{t \in \R}$ is a two-sided cylindrical Wiener process in $\H$ (see Subsection \ref{sec-functional-setting} below for the function spaces) with its Cameron-Martin space/Reproducing Kernel Hilbert Space (RKHS) $\mathrm{K}$ satisfying Hypothesis \ref{assump1} below, defined on some probability space $(\Omega,\mathcal{F},\mathbb{P})$. 
\begin{remark}\label{rem-condition1}
	Note that \eqref{condition1} allow us to define $\varepsilon_0:=1-\frac{\alpha^2}{2\nu\beta}\in (0,1)$.
\end{remark}

\subsection{Literature} 
We first recall some initial works which deals with system \eqref{third-grade-fluids-equations} with $\alpha_1>0$. In  \cite{AC97}, the authors established the existence	of local (in time) solution in $\H^3(\O)$ to system \eqref{third-grade-fluids-equations} on bounded domains with Dirichlet  boundary condition. Later the authors in \cite{Bus-Ift-1} proved	the	global (in time) existence in $\mathbb{R}^d,$ $d\in\{2,3\}$ for $\H^2$-valued solution and uniqueness in	$\R^2$. Note that the uniqueness result in 3D for $\H^2$-valued	solution is still an open	question. In \cite{Bus-Ift-2}, the authors considered system 	\eqref{third-grade-fluids-equations} with a Navier-slip boundary condition and demonstrated the existence of a global (in time) solution for initial conditions in $\H^2(\R^d)$, $d\in\{2,3\}$. In addition, they also proved that uniqueness holds in 2D.  We highlight that the complex nonlinearities in \eqref{third-grade-fluids-equations} make it difficult to develop a solution with less regular initial data, and that in order to obtain any results, one must impose an additional limitation on the parameters $\alpha_1,\alpha_2,\beta$, and $\nu$. In fact, the author in \cite{Paicu2008} demonstrated the existence of a global weak solution for \eqref{third-grade-fluids-equations} in $\R^d$, $d\in\{2,3\}$, under some additional restrictions on the parameters, which permits the application of a monotonicity techniques when the initial data belong only to $\H^1(\R^d)$ and $\alpha_1>0$ (\cite{Busuioc-Iftimie-Paicu08} for the stationary case). Moreover, the author established a weak-strong uniqueness result and the validity of the energy equality.

Now, let us focus on the literature related to system \eqref{equationV1} (that is, \eqref{third-grade-fluids-equations} with $\alpha_1=0$). Firstly, we draw reader's attention to the groundbreaking work \cite{Ladyzhenskaya67}, where the author introduced a new system for incompressible viscous fluids with the viscosity depending on the velocity gradient. This  in fact has similar nonlinear terms as system \eqref{equationV1}. In \cite{Hamza+Paicu_2007},	the	authors	proved	the	global	well-posedness to system \eqref{equationV1}	in $\mathbb{R}^3$	with divergence-free initial data belongs to $\L^2(\mathbb{R}^3)$ using a monotonicity method and	some extra	restriction	on the parameters $\nu$, $\alpha$ and $\beta$. We refer readers to the works \cite{Jia+Xie+Duo_2017,Paicu2008}, etc. for the analysis to system \eqref{equationV1}. Some of the stochastic versions of system \eqref{third-grade-fluids-equations} (for $\alpha_1>$ as well as $\alpha=0$) have been studied in \cite{Cip+Did+Gue_2021,Raya+Fernanda+Yassine_Arxiv,yas+fer_2023}, etc.

Examining the asymptotic behavior of dynamical systems is one of the most significant and thorough areas of mechanics and mathematical physics. The concept of attractors is essential to the study of deterministic infinite-dimensional dynamical systems theory (see the books by Temam \cite{R.Temam}, Robinson \cite{Robinson2}, Chepyzhov and Vishik \cite{Chepyzhov+Vishik_2002} and many others). A key topic in the study of random dynamics of SPDEs is obtaining confirmation that an SPDE generates a stochastic flow or random dynamical system (RDS). It is widely known in the literature that RDS can be generated for It\^o stochastic ordinary differential equations and a wide family of PDEs with stationary random coefficients (see \cite{Arnold,PEK}, etc.).  Another crucial area of research into the qualitative characteristics of SPDEs is the examination of infinite-dimensional RDS (see the works \cite{BCF,CF,Crauel}, etc.).

The existence of a random attractor in bounded domains depends on the compactness of the embedding $\H^1(\mathcal{O})\hookrightarrow \L^2(\mathcal{O})$, which facilitates analysis (see \cite{CDF,KM3}, etc.). In the case of unbounded domains, however, the embedding $\H^1(\mathcal{O}) \hookrightarrow\L^2(\mathcal{O})$ is no longer compact. Consequently, we cannot use the compactness requirement to demonstrate the existence of random attractors. This challenge (in unbounded domains) was addressed in the deterministic case using several approaches; see \cite{Abergel,Rosa_1998}, etc., for the autonomous case, and \cite{Caraballo+Lukaszewicz+Real_2006_CRM,Caraballo+Lukaszewicz+Real_2006}, etc., for the non-autonomous case. A number of authors have also generalized the deterministic case approaches for SPDEs (see, for instance, \cite{BLW,Brzezniak+Li_2006,Wang_2012}, etc.). This idea has since been used to demonstrate the existence of random attractors for a number of SPDEs, including the stochastic NSE on the 2D unit sphere (\cite{BGT}), the stochastic $g$-NSE (\cite{FY}), the stochastic reaction-diffusion equations (\cite{BLW}), the 3D stochastic Benjamin-Bona-Mahony equations (\cite{Wang_2008}), the 2D and 3D stochastic convective Brinkman-Forchheimer equations (CBFE) \cite{Kinra+Mohan_2024_DCDS,KM7}, the 3D globally modified stochastic NSE \cite{Kinra+Mohan_GMNSE_IN} and references therein.

\subsection{Scope of the article, approaches and assumptions}
Broadly, this article  is divided into three main parts.  Our first aim is to establish the existence and uniqueness of weak solutions (in the analytic sense) of system \eqref{equationV1}. We begin by defining an Ornstein-Uhlenbeck process which takes values in $\H \cap {\mathbb{W}}^{1,4}(\mathcal{O})$. In order to define a suitable Ornstein-Uhlenbeck process, one of the technical difficulties related to the noise is resolved by using the corresponding  RKHS.  In particular, motivated from  \cite{BL1,Brzezniak+Li_2006,BCLLLR} for 2D NSE on unbounded Poincar\'e domains, we assume that the RKHS $\mathrm{K}$ satisfies the following hypothesis:
\begin{hypothesis}\label{assump1}
	$ \mathrm{K} \subset \H \cap {\mathbb{W}}^{1,4}(\mathcal{O}) $ is a Hilbert space such that for some $\delta\in (0, 1/2),$
	\begin{align}\label{A1}
		\A^{-\delta} : \mathrm{K} \to \H \cap {\mathbb{W}}^{1,4}(\mathcal{O})  \  \text{ is }\ \gamma \text{-radonifying.}
	\end{align}
\end{hypothesis}
We refer readers to Section \ref{sec2} for necessary functional settings.
\begin{remark}
	1. Let us discuss an example which satisfies Hypothesis \ref{assump1}. Since $\D(\A^{1+\frac{r}{2}})\hookrightarrow\H \cap {\mathbb{W}}^{1,4}(\mathcal{O})$ for any $r>0$, one can consider $\mathrm{K}\subset \D(\A^{1+\frac{r}{2}})$ for some $r>0$ as an example, and Hypothesis \ref{assump1} can be reformulated in the following way (see \cite{BL1}).  Let us choose $r>0$ and fix it. $\mathrm{K}$ is a Hilbert space such that $\mathrm{K}\subset  \D(\A^{1+\frac{r}{2}})$ and, for some $\delta\in(0,1/2)$, the map 
	\begin{align}\label{34}
	\A^{-\delta-1-\frac{r}{2}} : \mathrm{K} \to \H \   \text{ is }\ \gamma \text{-radonifying.}
\end{align}
The condition  \eqref{34} also says that the mapping $\A^{-\delta-1-\frac{r}{2}} : \mathrm{K} \to \H$  is Hilbert-Schmidt.	If $\mathcal{O}$ is a bounded domain, then $\A^{-s}:\H\to\H$ is Hilbert-Schmidt if and only if $\sum_{j=1}^{\infty} \lambda_j^{-2s}<+\infty,$ where  $\A e_j=\lambda_j e_j$, $j\in \N$ and $\{e_j\}_{j\in\N}$ is an orthogonal basis of $\H$. In bounded domains, it is well known that $\lambda_j\sim j^{\frac{2}{d}},$ for large $j$ (growth of eigenvalues) and hence $\A^{-s}$ is Hilbert-Schmidt if and only if $s>\frac{d}{4}.$ In other words, with $\mathrm{K}=\D(\A^{s+1+\frac{r}{2}}),$ the embedding $\mathrm{K}\hookrightarrow\D(\A^{1+\frac{r}{2}})\hookrightarrow\H \cap {\mathbb{W}}^{1,4}(\mathcal{O})$ is $\gamma$-radonifying if and only if $s>\frac{d}{4}.$ Thus, Hypothesis \ref{assump1} is satisfied for any $\delta>0.$ In fact, the condition \eqref{A1} holds if and only if the operator $\A^{-(s+1+\frac{r}{2}+\delta)}:\H\to \H \cap {\mathbb{W}}^{1,4}(\mathcal{O})$ is $\gamma$-radonifying. 

2.	 The requirement of  $\delta<\frac{1}{2}$ in Hypothesis \ref{assump1} is necessary because we need (see Subsection \ref{O-Up} below) the corresponding Ornstein-Uhlenbeck process to take values in $\H \cap {\mathbb{W}}^{1,4}(\mathcal{O})$.
\end{remark}

Making use of the Ornstein-Uhlenbeck process, we define a Doss-Sussmann transformation \cite{Doss_1977,Sussmann_1978} (see \eqref{D-S_trans} below) and obtain an equivalent pathwise deterministic system (see   system \eqref{CTGF-with-Pressure} below) to   system \eqref{equationV1}. Next, we show the existence of a unique weak solution to the equivalent pathwise deterministic system \eqref{CTGF-with-Pressure} via a monotonicity argument and a Minty-Browder technique. The following monotonicity result of the operator $\mathscr{G}(\cdot):=\nu\A\cdot+\B(\cdot+\z)+\alpha\J(\cdot+\z)+\beta\K(\cdot+\z)$ plays a crucial role: 
\begin{align*}
	&	\left< \mathscr{G}(\y_1)-\mathscr{G}(\y_2), \y_1-\y_2   \right> + \frac{(C_{S,3} C_K)^2}{\nu\varepsilon_0} \|\Arm(\y_2+\z)\|_{{4}}^2 \|\y_1-\y_2\|_{2}^2 
	\nonumber\\ &  \geq \frac{\nu\varepsilon_0}{4} \|\Arm(\y_1-\y_2)\|^2_{2} +  \frac{\beta\varepsilon_0}{4}\|\Arm(\y_1-\y_2)\|^4_{4} \geq 0,
\end{align*}
for any  $\y_1,\y_2\in\X$ and $\z\in\mathbb{W}^{1,4}(\O)$ (see Lemma \ref{lem-Loc-Monotinicity} for complete details). The global solvability of   system \eqref{CTGF-with-Pressure} helps to establish the existence and uniqueness of weak  solutions to   system \eqref{equationV1}.  We point out here that there is no earlier paper which discuss stochastic third-grade fluids equations on unbounded domains.

Secondly, we are interested in the large-time behavior of solutions of system \eqref{equationV1}, that is, the existence of random attractors for system \eqref{equationV1}. Similar to the case of NSE, it is a challenging problem to study the long-time behavior of system \eqref{equationV1} on the whole space $\R^d$. Nevertheless, one can try these kinds of analysis on Poincar\'e domains which may be bounded or unbounded, see for instance \cite{Brzezniak+Li_2006,BCLLLR,Kinra+Mohan_GMNSE_IN,Wang+Kinra+Mohan_2023}, etc. By a Poincar\'e  domain, we mean a domain in which the Poincar\'e inequality is satisfied. More precisely, we consider the following hypothesis on the domain $\mathcal{O}$:
\begin{hypothesis}\label{assumpO}
	Let $\mathcal{O}$ be an open and connected subset (may be bounded or unbounded) of $\R^d$, $d\in\{2,3\}$, the boundary of which is
uniformly of class $\mathrm{C}^1$ (see \cite{Farwig+Kozono+Sohr_2007}). We assume that, there exists a positive constant $\lambda $ such that the following Poincar\'e inequality  is satisfied:
	\begin{align}\label{poin}
		\lambda\int_{\mathcal{O}} |\psi(x)|^2 \d x \leq \int_{\mathcal{O}} |\nabla \psi(x)|^2 \d x,  \ \text{ for all } \  \psi \in \H^{1}_0 (\mathcal{O}).
	\end{align}
\end{hypothesis}

\begin{example}
	A typical example of a unbounded Poincar\'e domain in $\R^2$ and $\mathbb{R}^3$ are $\O=\R\times(-L,L)$ and $\O=\R^2\times(-L,L)$ with $L>0$, respectively, see \cite[p.306]{R.Temam} and \cite[p.117]{Robinson2}. 
\end{example}
\begin{remark}
	When $\mathcal{O}$ is a bounded domain, Poincar\'e inequality is satisfied automatically with $\lambda=\lambda_1$, where $\lambda_1$ is the first eigenvalue of the Stokes operator defined on bounded domains. 
\end{remark}

 In this work, we apply the abstract theory established in \cite{BCLLLR} to demonstrate the existence of random attractors for system \eqref{equationV1}. In order to use the abstract result, we first show that system \eqref{equationV1} generates random dynamical system $\Psi$ (see Section \ref{sec3}). Next, we need to show that $\Psi$ has a random absorbing set and it is asymptotically compact. We show the existence of random absorbing set using the uniform energy estimates. On the other hand, on bounded domains, we use the compact Sobolev embedding $\H^1(\O) \hookrightarrow\L^2(\O)$ to obtain asymptotic compactness of $\Psi$. In unbounded domains, due to lack of compact Sobolev embedding $\H^1(\O) \hookrightarrow\L^2(\O)$, the commonly use methods in literature are the following:
\begin{itemize}
	\item [1.] the energy equality method introduced by Ball \cite{Ball};
	\item [2.] the uniform-tail estimate method introduced by Wang \cite{Wang_1999}.
\end{itemize}
Note that unlike the Navier-Stokes equations (see \cite{Brzezniak+Li_2006,BCLLLR}), convective Brinkman-Forchheimer equations (see \cite{Kinra+Mohan_2024_DCDS,Kinra+Mohan_2023_DIE}, etc.), etc. we are not able to show that the solutions of system \eqref{equationV1} satisfy the \textit{weak continuity} property with respect to the initial data. Due to this reason, we are not able to use the energy equality method to obtain the asymptotic compactness of $\Psi$ on unbounded domains. Therefore, for unbounded domains, we explore the uniform-tail estimate method to prove the asymptotic compactness of $\Psi$ (Theorem \ref{Main_theorem_2}).

\begin{remark}
	 In the context of attractor when $\O$ is a bounded domains, we have considered $\f\in \H^{-1}(\mathcal{O}) $. But for unbounded domain case, we need to restrict the forcing term to be in a regular space, that is, $\f\in \L^{2}(\mathcal{O})$. If one proves the weak continuity of the solutions with respect to the initial data of system \eqref{equationV1}, it will be possible to work with the same regularity as in bounded domains via energy equality method (see the works \cite{BCLLLR,Kinra+Mohan_2024_DCDS}, etc. for Navier-Stokes equations and related models). Unfortunately, we are not able to prove the weak continuity of the solutions of system \eqref{equationV1} with respect to the initial data, so the method of energy equality is not applied.
\end{remark}

\begin{remark}
Comparing with the Navier-Stokes equations, system \eqref{equationV1} has stronger nonlinear terms, nevertheless due to its symmetry, the nonlinear term
$\text{div}(|\Arm(\v)|^2\Arm(\v))$ has an important regularizing effect, ensuring uniqueness even in 3D. At first glance, one might expect that system \eqref{equationV1} would also possess the weak continuity property, however this property is not easy to show.
 To clarify this issue, let $\v_n(\cdot)$ denote the unique solution of system \eqref{equationV1} corresponding to the initial data $\v_{0,n}$, for $n \in \N$. By definition, weak continuity requires that if $\v_{0,n}$ converges weakly to $\v_0$ in $\H$, then $\v_n(\cdot)$ converges weakly to $\v(\cdot)$ in $\H$, where $\v(\cdot)$ is the unique solution of system \eqref{equationV1} associated with initial data $\v_0$. However, due to the gradient-type nonlinearity, it is necessary to  apply the monotonicity method to pass to the limit $n \to \infty$ in \eqref{equationV1}, which  requires strong convergence of the initial data. The monotonicity method does not appear suitable for establishing weak continuity of solutions to system \eqref{equationV1}. In the Navier-Stokes equations case, the limit can be performed without relying on monotonicity method.
\end{remark}

Essentially unlike the parabolic or hyperbolic equations as considered in \cite{CGTW,LGL,Wang_1999,rwang1}, etc. the fluids equations like \eqref{equationV1} contain the pressure term $\textbf{P}$. When we derive these uniform tail-estimates, the pressure term $\textbf{P}$ can not be simply treated by the divergence theorem (does not vanish). We derive uniform-tail estimates by estimating all nonlinear terms in system \eqref{equationV1} and pressure $\textbf{P}$ carefully, see Lemma \ref{LR} below. As a  result of uniform-tail estimates, the asymptotic compactness of $\Psi$ on unbounded domains follows (see Theorem \ref{Main_theorem_2}).

Our final aim is to demonstrate the existence of an invariant measure for the SGMNS equations \eqref{equationV1}. Crauel et. al. in \cite{CF} demonstrated that a sufficient condition for the existence of invariant measures is the existence of a compact invariant random set. They implemented this idea to demonstrate the existence of invariant measures in bounded domains for 2D stochastic NSE and reaction-diffusion equations. Moreover, in unbounded domains, this idea was also used to establish the existence of invariant measures for 2D stochastic NSE by Brz\'ezniak et. al. (\cite{Brzezniak+Li_2006}) and Kinra et. al. (\cite{KM8}), for 2D and 3D stochastic CBFE by Kinra et. al. (\cite{KM2,Kinra+Mohan_2024_DCDS}), for stochastic globally modified NSE by Kinra at. al. \cite{Kinra+Mohan_GMNSE_IN}. Since we show the existence of a random attractor,  the existence of an invariant measures is guaranteed as the random attractor itself is a compact invariant set.

Let us now state our main results  whose proofs follow from Theorems \ref{STGF-Sol}, \ref{Main_theorem_1}, \ref{Main_theorem_2} and \ref{thm6.3}. 
\begin{theorem}
	Under \eqref{condition1} and Hypotheses \ref{assump1}-\ref{assumpO}, 
	\begin{enumerate}
		\item [(i)] there exists a unique weak solution (in the analytic sense) for system \eqref{equationV1} with external forcing $\f\in \H^{-1}(\O)$,
		\item [(ii)] on bounded domains, there exists a unique random attractor for system \eqref{equationV1} with external forcing $\f\in \H^{-1}(\O)$,
		\item [(iii)] on unbounded domains, there exists a unique random attractor for system \eqref{equationV1} with external forcing $\f\in \L^{2}(\O)$,
		\item [(iv)] on bounded and unbounded domains, there exists an invariant measure for system \eqref{equationV1} with external forcing $\f\in \H^{-1}(\O)$ and $\f\in \L^{2}(\O)$, respectively.
	\end{enumerate}
\end{theorem}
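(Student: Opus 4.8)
The plan is to obtain the four assertions as corollaries of the more detailed results announced for later in the paper (Theorems~\ref{STGF-Sol}, \ref{Main_theorem_1}, \ref{Main_theorem_2} and \ref{thm6.3}), so the real content is to outline how each of those four facts is proved. For \textbf{(i)}, I would first construct an Ornstein--Uhlenbeck process $\z$ solving the associated linear stochastic equation driven by $\d\W$, and use Hypothesis~\ref{assump1} (in particular $\delta<1/2$) together with $\gamma$-radonifying/factorization arguments to show $\z$ has $\mathbb{P}$-a.s. continuous paths in $\H\cap\mathbb{W}^{1,4}(\mathcal{O})$. Then the Doss--Sussmann substitution $\u=\v-\z$ turns \eqref{equationV1} into the pathwise deterministic system \eqref{CTGF-with-Pressure} with random, $\omega$-dependent coefficients. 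For fixed $\omega$, I would solve \eqref{CTGF-with-Pressure} by a Galerkin scheme: derive uniform a priori bounds in $\X$ (the key point being that, by Remark~\ref{rem-condition1}, $\varepsilon_0>0$ so the coercive part $\nu\A\cdot+\beta\K(\cdot)$ dominates the bad quadratic term $\alpha\J(\cdot)$), pass to the limit in the nonlinear terms by a Minty--Browder argument built on the monotonicity estimate of Lemma~\ref{lem-Loc-Monotinicity}, and get uniqueness from that same estimate plus Gronwall (with the extra $\|\Arm(\y_2+\z)\|_4^2$ weight handled by the time-integrability of $\z$). Undoing the transformation gives the unique weak solution of \eqref{equationV1}.

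For the dynamical part, I would next verify that the solution map $(t,\omega,\x)\mapsto\v(t)$ yields a measurable perfect cocycle over the metric dynamical system carrying $\W$, i.e.\ a continuous RDS $\Psi$ (Section~\ref{sec3}), and then establish a tempered random absorbing set: testing \eqref{CTGF-with-Pressure} with $\u$, using the Poincar\'e inequality \eqref{poin}, the favourable signs/coercivity of the nonlinear terms, and pullback integrability of $\z$, one controls $\|\u(t)\|_2$ (and, by a bootstrap, the relevant $\mathbb{W}^{1,4}$/$\X$ norms) uniformly backward in time. This gives assertion \textbf{(ii)} once asymptotic compactness is in hand on bounded domains: there one upgrades the absorbing set to a bounded absorbing set in a space compactly embedded in $\H$ via $\H^1(\mathcal{O})\hookrightarrow\hookrightarrow\L^2(\mathcal{O})$, and invokes the abstract attractor theorem of \cite{BCLLLR} (uniqueness of the random attractor being automatic from minimality); this is where $\f\in\H^{-1}(\mathcal{O})+\mathbb{W}^{-1,4/3}(\mathcal{O})$ suffices.

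For \textbf{(iii)}, the compact embedding is lost, so I would instead prove Wang-type uniform tail estimates (Lemma~\ref{LR}): multiply the equation by $\u\,\rho(|x|^2/k^2)$ for a smooth cut-off $\rho$ vanishing on $\{|x|\le k\}$, and estimate every resulting term so that $\sup_{t}\int_{\{|x|\ge k\}}|\u(t)|^2\,\d x$ can be made arbitrarily small for $k$ large, uniformly over the absorbing set. The delicate terms are the pressure $\mathbf{P}$, the quadratic $\alpha\,\mathrm{div}((\Arm(\v))^2)$ and the cubic $\beta\,\mathrm{div}(|\Arm(\v)|^2\Arm(\v))$; the pressure is handled through its explicit Calder\'on--Zygmund representation in terms of $(-\Delta)^{-1}\nabla\cdot\nabla\cdot$ applied to $\v\otimes\v$, $(\Arm(\v))^2$, $|\Arm(\v)|^2\Arm(\v)$ and $\f$, displayed above, which is precisely why $\f\in\L^2(\mathcal{O})$ is imposed here. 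Splitting $\mathcal{O}$ into $\{|x|<k\}$ (where the bounded-domain compactness applies) and $\{|x|\ge k\}$ (controlled by the tail estimate) yields asymptotic compactness of $\Psi$, hence a unique random attractor via \cite{BCLLLR} (Theorem~\ref{Main_theorem_2}). Finally, \textbf{(iv)}: since the random attractor $\mathcal{A}(\omega)$ is a compact, $\Psi$-invariant random set, the Crauel--Flandoli criterion \cite{CF,Crauel} produces an invariant measure for the Markov semigroup of \eqref{equationV1} in both the bounded and unbounded settings (Theorem~\ref{thm6.3}).

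I expect the \emph{main obstacle} to be Step (iii): the uniform tail estimates must absorb the pressure and the quadratic and cubic Rivlin--Ericksen stress terms \emph{simultaneously} against localizing cut-offs, without the usual shortcut of weak continuity of the solution in the initial data / the energy-equality method (which, as emphasized in the introduction, is unavailable here because of the multiple nonlinearities). Getting all these localized nonlinear and pressure terms to close — and thereby fixing the admissible class of $\f$ — is where the bulk of the analysis, and the difference with the Navier--Stokes and convective Brinkman--Forchheimer treatments, lies; the well-posedness in Step (i), though technical, follows a by-now standard monotonicity/Minty--Browder route once Lemma~\ref{lem-Loc-Monotinicity} is available.
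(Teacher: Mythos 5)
Your overall plan coincides with the paper's: the theorem is indeed obtained by combining Theorem \ref{STGF-Sol} (Ornstein--Uhlenbeck process under Hypothesis \ref{assump1}, Doss--Sussmann transformation, Galerkin approximation and the Minty--Browder argument based on Lemma \ref{lem-Loc-Monotinicity}), Theorem \ref{Main_theorem_1}, Theorem \ref{Main_theorem_2} (random absorbing sets from the weighted energy estimates, uniform tail estimates with the explicit $(-\Delta)^{-1}\nabla\cdot\nabla\cdot$ representation of the pressure, and the abstract result of \cite{BCLLLR}), and Theorem \ref{thm6.3} (Crauel--Flandoli criterion applied to the attractor). Your outlines of (i), (iii) and (iv) match the paper's proofs in both structure and the key technical points, including the role of $\varepsilon_0>0$ and the reason $\f\in\L^2(\O)$ is needed on unbounded domains.

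The one place where your route deviates, and where as stated it would not go through, is the asymptotic compactness on bounded domains in (ii). You propose to ``upgrade the absorbing set to a bounded absorbing set in a space compactly embedded in $\H$'' (i.e.\ a $\V$- or $\X$-bounded absorbing set). That would require a pointwise-in-time uniform estimate of $\|\y(t)\|_{\V}$ (or $\|\y(t)\|_{\X}$) over the absorbing set, which is neither proved in the paper nor obtainable from the available energy estimates: the weak solutions are only known to lie in $\mathrm{C}([0,T];\H)\cap\mathrm{L}^2(0,T;\V)\cap\mathrm{L}^4(0,T;\mathbb{W}^{1,4}(\O))$, and no smoothing estimate into $\V$ at a fixed time is established (with $\f\in\H^{-1}(\O)+\mathbb{W}^{-1,\frac43}(\O)$ and the cubic Rivlin--Ericksen nonlinearities, such higher-order estimates are exactly the kind of thing the paper avoids). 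The paper instead proves in Lemma \ref{PCB} that the time-one solution map $\Psi(1,\theta_{-1}\omega,\cdot)$ is a \emph{compact operator} on $\H$: the time-integrated bounds plus the bound on $\frac{\d\y}{\d t}$ give precompactness in $\mathrm{L}^2(s,t;\H)$ by the Aubin--Lions lemma, one extracts convergence at some intermediate time $\tau$, and then the continuous dependence on the initial data (Lemma \ref{RDS_Conti1}) propagates this to time $t$; asymptotic compactness then follows from the cocycle factorization $\Psi(t_m,\theta_{-t_m}\omega,\cdot)=\Psi(1,\theta_{-1}\omega,\Psi(t_m-1,\theta_{-t_m}\omega,\cdot))$ together with the $\H$-absorbing set. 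So for (ii) you should replace the compact-absorbing-set step by this compact-solution-operator argument (or supply the missing uniform $\V$-estimate, which would be a substantial additional piece of work). The same caution applies to your passing remark about bootstrapping $\mathbb{W}^{1,4}/\X$ norms ``uniformly backward in time'' in the absorbing-set step: the paper only derives, and only needs, exponentially weighted time-integrated bounds (as in Lemma \ref{D-abH-ubd}), not pointwise bounds in those norms.
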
 

We remark that to the best of authors' knowledge, the large-time behavior of the solutions to the subclass of stochastic third-grade fluids equations \eqref{third-grade-fluids-equations} under analysis (see system \eqref{equationV1}) has not been investigated in bounded/unbounded domains till the date. In addition, the existence of invariant measures for this subclass of stochastic third-grade fluids equations \eqref{third-grade-fluids-equations} on unbounded domains is being studied here for the first time. It is important to mention that in  2D and 3D bounded domains, this class of fluids has been considered  in the article \cite{yas+fer_2025}, where the authors proved well-posedness and existence of an ergodic invariant measure for multiplicative Lipschitz noise but not discussed the random dynamics of solutions. We note that, in the case of bounded domains, a direct consequence of the existence of a random attractor yields a particular case of the result established in  \cite{yas+fer_2025}. Unfortunately, we are unable to demonstrate the uniqueness of invariant measures, which is a subject we plan to tackle in the future.

\begin{remark}
It is worth mentioning that the notion of pathwise random attractors was first introduced in \cite{CF,FS}, and subsequently, several authors employed this framework to establish the existence of random attractors for various SPDEs. However, it has been consistently observed that the existing works on pathwise random attractors confine
the noise to either additive form, linear multiplicative form, or to coefficients with specific structural properties such as antisymmetry (see \cite{PEK+TL,Wang_2019}). This restriction arises because, in such settings, a random dynamical system (or random cocycle) can be defined via random transformations, most notably the Doss-Sussmann transformation \cite{Sussmann_1978}. To the best of our knowledge, no results are available in the literature regarding the existence of pathwise random attractors for SPDEs with general Lipschitz nonlinear diffusion coefficients, an issue that poses a significant challenge. To address this difficulty, the concepts of mean-square and weak mean-square random attractors were introduced in \cite{PEK+TL}, although these approaches remain quite restrictive in practice (see \cite{Wang_2019} for further discussion). More recently, to handle Lipschitz nonlinear diffusion terms, the concept of weak pullback mean random attractors in spaces of Bochner integrable functions was developed in \cite{Wang_2019}, where the long-term dynamics of stochastic reaction-diffusion equations with both nonlinear drift and nonlinear diffusion were studied. The existence of pathwise random attractors for noise beyond the additive case, as well as the existence of weak pullback mean random attractors in the presence of nonlinear noise coefficients, remains an interesting open problem for the underlying system. These issues will be addressed in future work.
\end{remark}

\subsection{Organization of the article}
The organization of this article is as follows: In the next section, we provide necessary function spaces, some important inequalities, linear and nonlinear operators with their properties for abstract formulation of system \eqref{equationV1} and further analysis and abstract formulation of system \eqref{equationV1}. The metric dynamical system (MDS) and RDS corresponding to system \eqref{equationV1} are constructed in Section \ref{sec3} using a random transformation (known as Doss-Sussmann transformation, \cite{Doss_1977,Sussmann_1978}). The existence and uniqueness of a weak solution satisfying the energy equality to the transformed system (see   system \eqref{CTGF} below) by using monotonicity arguments and a Minty-Browder technique has also been established (Theorem \ref{solution}) in the same section. Section \ref{sec4} provides the existence of a unique random attractor for system \eqref{equationV1} on  bounded domains via compact Sobolev embeddings and abstract theory developed in \cite{BCLLLR} (see Lemma \ref{PCB} and Theorem \ref{Main_theorem_1}). In Section \ref{sec5}, we consider the stochastic system \eqref{equationV1} on unbounded domains. In order to do this, we first present Lemmas \ref{RA1-ubd} and \ref{D-abH-ubd}, which provides us the energy estimates for the solution of   system \eqref{CTGF} and helps us to prove the existence of random absorbing set.  Then, we present Lemma \ref{D-convege} which is one of the key lemma to prove the $\mathfrak{DK}$-asymptotic compactness of RDS. Next we prove the uniform-tail estimates for the solution of system \eqref{CTGF} which is another key result to prove the  $\mathfrak{DK}$-asymptotic compactness of RDS (see Lemma \ref{LR}). Finally, we establish the main result of this section, that is, the existence of unique random attractor for system \eqref{equationV1} on unbounded domains using Lemmas \ref{D-abH-ubd}, \ref{D-convege} and \ref{LR}, and abstract theory developed in \cite{BCLLLR}, see Theorem \ref{Main_theorem_2}. In the final and smallest section, we address the existence of invariant measures for system \eqref{equationV1} as a consequence of the existence of random attractor (see Theorem \ref{thm6.3}). In Appendix \ref{PR}, we have shown that one can retrieve the pressure from the abstract formulation of system \eqref{equationV1} using the approach discussed in \cite[Section 8]{SS11}.

\section{Mathematical formulation and some auxiliary results}\label{sec2}

In this section, we first provide necessary function spaces, inequalities and notations to the analysis of this article. Then, we define linear and nonlinear operators with their properties. Finally, using the operators, we provide the abstract formulation of   system \eqref{equationV1} and the definition of weak solution in the analytic sense.

\subsection{Notations and the functional setting }\label{sec-functional-setting}
Let	$T>0$,	for a Banach space $E$, we define
$$  (E)^k:=\{(f_1,\cdots,f_k): f_l\in E,\quad l=1,\cdots,k\}\;\text{ for	positive integer }	k.$$

Let	$m\in	\mathbb{N}^*$	and	$1\leq	p<	\infty$,	we	denote	by	$\mathbb{W}^{m,p}(\mathcal{O})$ (resp. $\mathrm{W}^{m,p}(\mathcal{O})$)	the	standard Sobolev space of matrix/vector-valued (resp. scalar-valued) functions	whose weak derivative up to order $m$	belong	to	the	Lebesgue	space	$\L^p(\mathcal{O})$ (resp. $\mathrm{L}^p(\mathcal{O})$) and set	$\H^m(\mathcal{O})=\mathbb{W}^{m,2}(\mathcal{O})$	and	$\H^0(\mathcal{O})=\L^2(\mathcal{O})$.

Let us introduce the following spaces:
\begin{equation}
\begin{array}{ll}
\mathcal{V}&:=\{\v \in (\mathrm{C}^\infty_c(\mathcal{O}))^d \,: \text{ div }\v=0\},\\ [1mm]
\H&:=\text{The	closure	of	}\mathcal{V}	\text{	in	}\L^2(\mathcal{O})	=\{ \v \in \L^2(\mathcal{O}) \,: \text{ div } \v=0 \text{ in } \mathcal{O} \;\; \text{and} \;\; \v\cdot\n=0 \text{ on }\partial\mathcal{O} \}, \nonumber\\[1mm]
\V&:=\text{The	closure	of	}\mathcal{V}	\text{	in	}\H^1(\mathcal{O})=\{ \v \in \H^1_0(\mathcal{O}) \,: \text{ div } \v=0 \text{ in } \mathcal{O}  \},\\
{\L}^{p}_{\sigma} &:=\text{the closure of }\ \mathcal{V} \ \text{ in } \L^p(\mathcal{O}), \;	 \text{  for  }  p\in(1,\infty),\\
	\mathbb{V}_s&:=\text{the closure of }\ \mathcal{V} \ \text{ in the Sobolev space } \H^s(\mathcal{O}),
\ \text{  for  } s>1,
\end{array}
\end{equation}
 where $\boldsymbol{n}$ is the unit outward drawn normal to $\partial\mathcal{O}$,  and $\v\cdot\n\big|_{\partial\mathcal{O}}$ should be understood in the sense of trace in $\H^{-1/2}(\partial\mathcal{O})$ (cf. Theorem 1.4 and Remark 1.5, Chapter 1, \cite{Temam_1984}).

Now, let us introduce  the scalar product between two matrices $A:B=\Tr(AB^T)$
and denote $\vert A\vert^2:=A:A.$
The divergence of a  matrix $A\in \mathcal{M}_{d\times d}(E)$ is given by 
$(\text{div}(A)_i)_{i=1}^{i=d}=(\displaystyle\sum_{j=1}^d\partial_ja_{ij})_{i=1}^{i=d}.$
The space $\H$ is endowed with the  $\L^2$-inner product $(\cdot,\cdot)$ and the associated norm $\Vert \cdot\Vert_{2}$. We recall that
\begin{align*}
(\u,\v)&=\sum_{i=1}^d\int_{\mathcal{O}}\u_i\v_i \d x, \quad  \text{ for all } \u,\v \in \L^2(\mathcal{O})
\end{align*}
and 
\begin{align*}
	(A,B)&=\int_{\mathcal{O}} A : B \d x ; \quad  \text{ for all } A,B \in \mathcal{M}_{d\times d}(L^2(\mathcal{O})).
\end{align*}
In view of \eqref{poin}, on the functional space $\V$, we will consider  the following inner product
\begin{align*}
    (\u,\v)_{\V}:= (\nabla	\u,\nabla	\v),
\end{align*}
and denote by $\Vert \cdot\Vert_{\V}$ the corresponding norm. The usual norms on the classical Lebesgue and Sobolev spaces $\mathrm{L}^p(\mathcal{O})$ (resp. $\mathbb{L}^p(\mathcal{O})$) and $\mathrm{W}^{m,p}(\mathcal{O})$ (resp. $\mathbb{W}^{m,p}(\mathcal{O})$) will be denoted by $\|\cdot \|_p$ and 
$\|\cdot\|_{\mathrm{W}^{m,p}}$ (resp. $\|\cdot\|_{\mathbb{W}^{m,p}}$), respectively.
In addition, given a Banach space $\mathrm{E}$, we will denote by $\mathrm{E}^\prime$ its dual, and for any Hilbert space $\mathrm{H}$, we represent inner product by $(\cdot,\cdot)_{\mathrm{H}}$. We denote	by	$\langle	\cdot,\cdot\rangle$ the duality pairing between $\mathrm{E}^{\prime}$ and $\mathrm{E}$.

	Let	us	introduce	the	following	Banach	space	$(\X,\Vert	\cdot\Vert_{\X})$
$$ \X=\mathbb{W}^{1,4}(\mathcal{O})\cap \V,	\quad	\text{	with	\;\;	} \Vert	\cdot\Vert_\X:=\Vert	\cdot\Vert_{\mathbb{W}^{1,4}}+\|\cdot\|_{\V}.$$
Indeed,	we	recall	that	$\mathbb{W}^{1,4}(\mathcal{O})$	endowed	with $\Vert	\cdot\Vert_{\mathbb{W}^{1,4}}$-norm	is		Banach	space,	where		$$\Vert	\w \Vert_{\mathbb{W}^{1,4}}^4=\int_{\mathcal{O}}	\vert	\w\vert^4 \d x+\int_{\mathcal{O}} \vert	\nabla	\w\vert^4 \d x.$$

Let us recall an embedding result from \cite[Subsection 2.4]{Kesavan_1989}. 
\begin{lemma}\label{Sobolev-embedding3}
	For $p>d$, there exists a constant $C=C(p)>0$ such that for all $\w\in \mathbb{W}_0^{1,p}(\mathcal{O})$
	\begin{align*}
		\|\w\|_{\infty} \leq C_{S,3} \|\nabla\w\|_{p}.
	\end{align*}
\end{lemma}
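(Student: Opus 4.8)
The plan is to derive the estimate from the classical Morrey inequality on $\rd$ via extension by zero, using that elements of $\mathbb{W}^{1,p}_0(\mathcal{O})$ have vanishing trace and that any domain satisfying the Poincar\'e inequality \eqref{poin} has bounded inradius.

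First I would reduce to the case $\w\in(\mathrm{C}^\infty_c(\mathcal{O}))^d$: such functions are dense in $\mathbb{W}^{1,p}_0(\mathcal{O})$, and since the constant produced below does not depend on $\w$, the general case follows by approximation (the inequality $\|\w_n-\w_m\|_{\infty}\le C_{S,3}\|\nabla(\w_n-\w_m)\|_{p}$ makes any approximating sequence Cauchy in $\mathrm{L}^\infty$, and passing to an a.e.\ convergent subsequence identifies the limits). For $\w\in(\mathrm{C}^\infty_c(\mathcal{O}))^d$, let $\widetilde\w$ be its extension by zero to $\rd$; then $\widetilde\w\in(\mathrm{C}^\infty_c(\rd))^d$ and $\|\nabla\widetilde\w\|_{\mathrm{L}^p(\rd)}=\|\nabla\w\|_{p}$. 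Applying the classical Morrey inequality (valid since $p>d$) componentwise gives a constant $C=C(d,p)>0$ with
\[
|\widetilde\w(x)-\widetilde\w(y)|\le C\,|x-y|^{1-\frac{d}{p}}\,\|\nabla\w\|_{p}\qquad\text{for all }x,y\in\rd .
\]

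Next I would use \eqref{poin} to control $\operatorname{dist}(x,\rd\setminus\mathcal{O})$ for $x\in\mathcal{O}$. If $\mathcal{O}$ contained a ball of radius $R$, then restricting \eqref{poin} to functions supported in that ball and rescaling would show that the admissible Poincar\'e constant is at most $c_d R^{-2}$; letting $R\to\infty$ forces it to vanish, contradicting \eqref{poin}. Hence $\rho:=\sup_{x\in\mathcal{O}}\operatorname{dist}(x,\rd\setminus\mathcal{O})<\infty$ (for bounded $\mathcal{O}$ one simply has $\rho\le\operatorname{diam}(\mathcal{O})$). Now fix $x\in\mathcal{O}$; since $\rd\setminus\mathcal{O}$ is closed and nonempty there is $y\in\rd\setminus\mathcal{O}$ with $|x-y|=\operatorname{dist}(x,\rd\setminus\mathcal{O})\le\rho$, and as $\widetilde\w(y)=0$ the displayed bound yields $|\w(x)|=|\widetilde\w(x)|\le C\rho^{1-d/p}\|\nabla\w\|_{p}$. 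Taking the supremum over $x\in\mathcal{O}$ gives the claim with $C_{S,3}=C(d,p)\,\rho^{1-d/p}$.

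The point that needs care — and the main obstacle — is the passage from the inhomogeneous $\mathbb{W}^{1,p}$-norm that bare Morrey delivers on $\rd$ to the homogeneous quantity $\|\nabla\w\|_{p}$ in the statement: this is exactly where the Poincar\'e character of $\mathcal{O}$ (through its bounded inradius) is used, and it is also what makes the constant uniform over all admissible domains, bounded or unbounded. An alternative route is to first establish the $\mathrm{L}^p$-Poincar\'e inequality $\|\w\|_{p}\le C\|\nabla\w\|_{p}$ on $\mathcal{O}$ and then quote the embedding $\mathbb{W}^{1,p}_0(\mathcal{O})\hookrightarrow\mathrm{L}^\infty(\mathcal{O})$ from \cite{Kesavan_1989} directly; I would prefer the distance-to-the-complement argument above since it is elementary and keeps the constant explicit.
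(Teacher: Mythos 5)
Your proof is correct. Note, however, that the paper does not actually prove this lemma: it simply recalls it from \cite[Subsection 2.4]{Kesavan_1989}, so there is no internal argument to compare against. Your route is therefore a genuinely self-contained alternative, and it addresses the one point a bare citation glosses over: the homogeneous inequality $\|\w\|_{\infty}\leq C\|\nabla\w\|_{p}$ is \emph{false} on all of $\R^{d}$ (scaling $\w_{\lambda}(x)=\w(x/\lambda)$ kills the right-hand side), and textbook statements of Morrey's embedding either carry the full $\mathbb{W}^{1,p}$-norm or assume a bounded domain where Poincar\'e removes the lower-order term. Your argument — the gradient-only Morrey/Campanato estimate $|\widetilde\w(x)-\widetilde\w(y)|\leq C|x-y|^{1-d/p}\|\nabla\w\|_{p}$ on $\R^{d}$, combined with the observation that Hypothesis \ref{assumpO} forces a finite inradius $\rho$ (via the rescaled eigenvalue bound $\lambda\leq c_{d}R^{-2}$ on any ball $B(x_0,R)\subset\mathcal{O}$), so that every $x\in\mathcal{O}$ lies within distance $\rho$ of a point where $\widetilde\w$ vanishes — is exactly the mechanism that makes the homogeneous form legitimate on the unbounded Poincar\'e domains the paper works with, and the density/Cauchy-in-$\mathrm{L}^{\infty}$ passage from $(\mathrm{C}^{\infty}_c(\mathcal{O}))^d$ to $\mathbb{W}^{1,4}_0$-type closures is sound. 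The only cosmetic mismatch is that your constant $C_{S,3}=C(d,p)\rho^{1-d/p}$ depends on the domain through $\rho$ (equivalently through $\lambda$), whereas the lemma is phrased as $C=C(p)$; this dependence is unavoidable for the homogeneous inequality and is implicitly present in the paper as well, since the domain is fixed throughout.
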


 \begin{remark}
		Let us provide some inequalities which will be useful in the sequel. 
		\begin{enumerate}
			\item [(i)]  Gagliardo-Nirenberg inequality  (\cite[Theorem 1]{Nirenberg_1959}) gives (for $d=2,3$)
		\begin{align}\label{Gen_lady-4}
			\|\v\|_{{4}} \leq C \|\v\|^{\frac{4}{4+d}}_{{2}} \|\nabla \v\|^{\frac{d}{4+d}}_{{4}}, \ \v\in \L^{2} (\mathcal{O})\cap\mathbb{W}_0^{1,4}(\mathcal{O}).
		\end{align}
			\item [(ii)] 
			Korn-type inequality (see	\cite[Theorem 2 (ii)]{DiFratta+Solombrino_Arxiv} and \cite[Theorem 4, p.90]{Kondratev+Oleinik_1988}) gives that there	exists a constant $C_K>0$	such	that
			\begin{align}\label{Korn-ineq}
				\Vert	\nabla \w \Vert_{4}	\leq	C_K\Vert	\Arm(\w)	\Vert_4,\quad \text{	for all } \;	\w\in	\mathbb{W}_0^{1,4}(\mathcal{O}).
			\end{align}		
		\end{enumerate}
\end{remark}

For the sake of simplicity, we do not distinguish between scalar, vector or matrix-valued   notations when it is clear from the context. In particular, $\Vert \cdot \Vert_E$  should be understood as follows
\begin{itemize}
	\item $\Vert f\Vert_E^2= \Vert f_1\Vert_E^2+\cdots+\Vert f_d\Vert_E^2$ for any $f=(f_1,\cdots,f_d) \in (E
	)^d$.
	\item $\Vert f\Vert_{E}^2= \displaystyle\sum_{i,j=1}^d\Vert f_{ij}\Vert_E^2$ for any $f\in \mathcal{M}_{d\times d}(E)$.
\end{itemize}

Throughout the article,  we denote by $C$   generic constant, which may vary from line to line.

\subsection{The Helmholtz projection}
The content of this subsection has been inspired from the articles \cite{Farwig+Kozono+Sohr_2005,Farwig+Kozono+Sohr_2007,Kunstmann_2010} which discuss the Helmholtz projection on general unbounded domains with $\mathrm{C}^1$-boundary. Let us define
\begin{align}
    \widetilde{\L}^p(\mathcal{O}) : = \begin{cases}
        \L^p(\mathcal{O})\cap \L^2(\mathcal{O}), & p\in[2,\infty)\\
        \L^p(\mathcal{O})+ \L^2(\mathcal{O}), & p\in (1,2),
    \end{cases}
\end{align}
and 
\begin{align}
    \widetilde{\L}^p_{\sigma}  : = \begin{cases}
        \L^p_{\sigma} \cap \L^2_{\sigma} , & p\in[2,\infty)\\
        \L^p_{\sigma} + \L^2_{\sigma}, & p\in (1,2).
    \end{cases}
\end{align}

Now, we recall that the Helmholtz projector $\mathcal{P}_{p}: \widetilde{\L}^p(\mathcal{O}) \to \widetilde{\L}^p_{\sigma}$, which is a linear bounded operator characterized by the following decomposition
$\v=\mathcal{P}_p\v+\nabla \varphi, \;  \varphi \in \widetilde{\mathrm{G}}^p(\mathcal{O})$ satisfying $(\mathcal{P}_p)^* = \mathcal{P}_{p'}$ with $\frac{1}{p}+\frac{1}{p'}=1$ (cf. \cite{Farwig+Kozono+Sohr_2007}), where
\begin{align}
    \widetilde{\mathrm{G}}^p(\mathcal{O}) : = \begin{cases}
        \mathrm{G}^p(\mathcal{O})\cap \mathrm{G}^2(\mathcal{O}), & p\in[2,\infty)\\
        \mathrm{G}^p(\mathcal{O})+ \mathrm{G}^2(\mathcal{O}), & p\in (1,2),
    \end{cases}
\end{align}
and 
\begin{align}
    \mathrm{G}^p(\mathcal{O}) = \{\nabla q \in \L^p(\mathcal{O}) \; : \; q\in \mathrm{L}^p_{loc} (\mathcal{O})\}.
\end{align}
Let us fix the notations $\widetilde{\mathbb{W}}^{1,p}_{0,\sigma}  := \widetilde{\mathbb{W}}_0^{1,p}(\mathcal{O}) \cap \widetilde{\L}^{p}_{\sigma}$  and $\widetilde{\mathbb{W}}^{-1,p}_{\sigma} : = (\widetilde{\mathbb{W}}^{1,p'}_{0,\sigma})'$ with $\frac{1}{p}+\frac{1}{p'}=1$ for the dual space, where
\begin{align}
    \widetilde{\mathbb{W}}^{1,p}_0(\mathcal{O}) : = \begin{cases}
        \mathbb{W}^{1,p}_0(\mathcal{O})\cap \mathbb{W}^{1,2}_0(\mathcal{O}), & p\in[2,\infty)\\
        \mathbb{W}^{1,p}_0(\mathcal{O})+ \mathbb{W}^{1,2}_0(\mathcal{O}), & p\in (1,2).
    \end{cases}
\end{align}

\begin{lemma}[{\cite[Proposition 3.1]{Kunstmann_2010}}]
    The Helmholtz projection $\mathcal{P}_{p}$ has a continuous linear extension $\widetilde{\mathcal{P}}_{p}: \widetilde{\mathbb{W}}^{-1,p}(\mathcal{O}) \to \widetilde{\mathbb{W}}^{-1,p}_{\sigma}$ given by the restriction
    \begin{align}
        \widetilde{\mathcal{P}}_{p} \Phi  = \Phi |_{\widetilde{\mathbb{W}}^{1,p'}_{0,\sigma}}, \;\; \text{ for } \; \; \Phi \in \widetilde{\mathbb{W}}^{-1,p}_{\sigma} \;\; \text{ with } \; \;\frac{1}{p}+\frac{1}{p'}=1.
    \end{align}
\end{lemma}

\begin{remark}
We will denote $\widetilde{\mathcal{P}}_{\frac43}$ by $\mathcal{P}$ in the sequel.
\end{remark}

\subsection{Linear and nonlinear operators}\label{sec-operators}
Let us define linear operator by 
\begin{equation*}
	\A\v:=-\mathcal{P}\Delta\v,\ \v\in\X.
\end{equation*}
Remember that the operator $\A$ is a non-negative, self-adjoint operator in $\H$  and \begin{align}\label{2.7a}
	\left<\A\v,\v\right>=\|\v\|_{\V}^2,\ \textrm{ for all }\ \v\in\V, \ \text{ so that }\ \|\A\v\|_{\V^{\prime}}\leq \|\v\|_{\V}.
\end{align}\\

Next, we define the {trilinear form} $b(\cdot,\cdot,\cdot):\V\times\V\times\V\to\R$ by $$b(\u,\v,\w)=\int_{\mathcal{O}}(\u(x)\cdot\nabla)\v(x)\cdot\w(x)\d x=\sum_{i,j=1}^d\int_{\mathcal{O}}\u_i(x)\frac{\partial \v_j(x)}{\partial x_i}\w_j(x)\d x.$$ If $\u, \v$ are such that the linear map $b(\u, \v, \cdot) $ is continuous on $\V$, the corresponding element of $\V^{\prime}$ is denoted by $\B(\u, \v)$. We represent   $\B(\v) = \B(\v, \v)=\mathcal{P}(\v\cdot\nabla)\v$. Using an integration by parts, it is immediate that 
\begin{equation}\label{b0}
	\left\{
	\begin{aligned}
		b(\u,\v,\v) &= 0,\ \text{ for all }\ \u,\v \in\V,\\
		b(\u,\v,\w) &=  -b(\u,\w,\v),\ \text{ for all }\ \u,\v,\w\in \V.
	\end{aligned}
	\right.\end{equation}

The trilinear map $b : \V \times \V \times \V \to \R$ has a unique extension to a bounded trilinear map from
$\L^4(\O) \times \mathbb{W}^{1,4}(\O) \times \H$ to $\R$. For any $\u\in \mathbb{W}^{1,4}(\O)$ and $\v\in\H$, we have	
\begin{align*}
	|b(\u,\u,\v)| \leq \|\u\|_{4}\|\nabla\u\|_{4}\|\v\|_{2},
\end{align*}
which implies
\begin{align*}
	\|\B(\u)\|_2 \leq \|\u\|_{4}\|\nabla\u\|_{4}.
\end{align*}
Therefore, $\B$ maps $\mathbb{W}^{1,4}(\O)$ into $\H$.\\

Let us now define the operator $\J(\v):=-\mathcal{P}[\diver(\Arm(\v)\Arm(\v))]$. Note that for $\v\in\X$, we have 
\begin{align*}
	\|\J(\v)\|_{\X^{\prime}} \leq C \|\Arm(\v)\|^2_{4},
\end{align*}
and hence the map $\J(\cdot):\X\to\X^{\prime}$ is well-defined.\\

Finally, we define the operator $\K(\v):=-\mathcal{P}[\diver(|\Arm(\v)|^{2}\Arm(\v))]$. It is immediate that $$\langle\mathcal{K}(\v),\v\rangle =\frac12\|\Arm(\v)\|_{4}^{4}.$$ Furthermore, the operator $\mathcal{K}(\cdot):\X \to \X^{\prime}$ is well-defined.

\begin{lemma}\label{lem-Loc-Monotinicity}
Assume that $\z\in \mathbb{W}^{1,4}(\mathcal{O})$ and $\varepsilon_0:=1-\sqrt{\frac{\alpha^2}{2\beta\nu}}\in(0,1)$. Then, the operator $\mathscr{G}:\X\to\X^{\prime}$ given by 
	\begin{align}\label{opr-G}
		\mathscr{G}(\cdot):=\nu\A\cdot+\B(\cdot+\z)+\alpha\J(\cdot+\z)+\beta\K(\cdot+\z)
	\end{align}
	 satisfies
	\begin{align}\label{CL1}
		&	\left< \mathscr{G}(\y_1)-\mathscr{G}(\y_2), \y_1-\y_2   \right> + \frac{(C_{S,3} C_K)^2}{\nu\varepsilon_0} \|\Arm(\y_2+\z)\|_{{4}}^2 \|\y_1-\y_2\|_{2}^2 
		\nonumber\\ &  \geq \frac{\nu\varepsilon_0}{4} \|\Arm(\y_1-\y_2)\|^2_{2} +  \frac{\beta\varepsilon_0}{4}\|\Arm(\y_1-\y_2)\|^4_{4} \geq 0,
	\end{align}
	for any  $\y_1,\y_2\in\X.$
\end{lemma}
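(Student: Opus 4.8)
The plan is to reduce to the difference $\w:=\y_1-\y_2$ and the frozen field $\z$, decompose $\mathscr{G}(\y_1)-\mathscr{G}(\y_2)$ into its four constitutive terms, and show that the two non-monotone contributions — $\alpha\J$ and the convective term $\B$ — are absorbed by the coercivity of $\nu\A$ and $\beta\K$, the constraint $|\alpha|<\sqrt{2\nu\beta}$ (i.e.\ $\varepsilon_0\in(0,1)$) driving the book-keeping. Writing $\z_i:=\y_i+\z$, $A_i:=\Arm(\z_i)$ and $A_{\w}:=\Arm(\w)=A_1-A_2$, note that $\w\in\X\subset\V$ is divergence-free, so $\Tr A_{\w}=0$ and, integrating by parts, $\|A_{\w}\|_2^2=2\|\nabla\w\|_2^2$. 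By bilinearity of $\B$, \eqref{b0} and divergence-freeness, $\langle\B(\z_1)-\B(\z_2),\w\rangle=b(\z_1,\w,\w)+b(\w,\z_2,\w)=b(\w,\z_2,\w)$; together with \eqref{2.7a} this gives \[\langle\mathscr{G}(\y_1)-\mathscr{G}(\y_2),\w\rangle=\nu\|\w\|_{\V}^2+b(\w,\z_2,\w)+\alpha\langle\J(\z_1)-\J(\z_2),\w\rangle+\beta\langle\K(\z_1)-\K(\z_2),\w\rangle,\] where $\nu\|\w\|_{\V}^2\ge\nu\|\nabla\w\|_2^2=\tfrac{\nu}{2}\|A_{\w}\|_2^2$.

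For the two differential-type terms, integration by parts (the relevant matrices being symmetric) gives \[\langle\K(\z_1)-\K(\z_2),\w\rangle=\tfrac12\int_{\mathcal{O}}\!\big(|A_1|^2A_1-|A_2|^2A_2\big):A_{\w}\,\d x,\quad\langle\J(\z_1)-\J(\z_2),\w\rangle=\tfrac12\int_{\mathcal{O}}\!\Tr\!\big((A_1+A_2)A_{\w}^2\big)\,\d x,\] the second using $A_1^2-A_2^2=A_1A_{\w}+A_{\w}A_2$ and cyclicity of the trace. The pointwise identity $(|a|^2a-|b|^2b):(a-b)=\tfrac12(|a|^2+|b|^2)|a-b|^2+\tfrac12(|a|^2-|b|^2)^2$ then yields $\langle\K(\z_1)-\K(\z_2),\w\rangle\ge\tfrac14\int_{\mathcal{O}}(|A_1|^2+|A_2|^2)|A_{\w}|^2\,\d x$, from which both an $\|A_{\w}\|_4^4$-term (via $|A_1|^2+|A_2|^2\ge\tfrac12|A_{\w}|^2$) and an $\int_{\mathcal{O}}|A_2|^2|A_{\w}|^2\,\d x$-term can be extracted, while $|\langle\J(\z_1)-\J(\z_2),\w\rangle|\le\tfrac12\int_{\mathcal{O}}|A_1+A_2|\,|A_{\w}|^2\,\d x$. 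Young's inequality applied to $|A_1+A_2|\,|A_{\w}|\cdot|A_{\w}|$ distributes the $\alpha\J$-term between the quantity $\int_{\mathcal{O}}(|A_1|^2+|A_2|^2)|A_{\w}|^2\,\d x$ (dominated by $\beta\K$) and $\|A_{\w}\|_2^2$ (dominated by $\nu\A$); choosing the Young constant so that the resulting deficit is charged partly to the $\|A_{\w}\|_4^4$- and partly to the $\|A_{\w}\|_2^2$-coercivity — the step in which $|\alpha|<\sqrt{2\nu\beta}$ is consumed — leaves definite positive remainders $\tfrac{\beta\varepsilon_0}{4}\|A_{\w}\|_4^4$ and $\tfrac{\nu\varepsilon_0}{4}\|A_{\w}\|_2^2$, with still a further portion of the $\nu$-coercivity held in reserve for the convective term.

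It remains to estimate $b(\w,\z_2,\w)$. By \eqref{b0}, $b(\w,\z_2,\w)=-b(\w,\w,\z_2)$, hence $|b(\w,\z_2,\w)|\le\|\z_2\|_{\infty}\|\w\|_2\|\nabla\w\|_2$; Lemma \ref{Sobolev-embedding3} (with $p=4>d$) and the Korn inequality \eqref{Korn-ineq} give $\|\z_2\|_{\infty}\le C_{S,3}\|\nabla\z_2\|_4\le C_{S,3}C_K\|\Arm(\y_2+\z)\|_4$, so that $|b(\w,\z_2,\w)|\le C_{S,3}C_K\|\Arm(\y_2+\z)\|_4\|\w\|_2\|\nabla\w\|_2$. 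A last application of Young's inequality bounds this by $\tfrac{(C_{S,3}C_K)^2}{\nu\varepsilon_0}\|\Arm(\y_2+\z)\|_4^2\|\w\|_2^2$ plus a multiple of $\|\nabla\w\|_2^2$ swallowed by the reserved coercivity; assembling the three estimates and using $\|\nabla\w\|_2^2=\tfrac12\|A_{\w}\|_2^2$ once more yields \eqref{CL1}, the final inequality $\ge0$ being immediate since every surviving term is nonnegative.

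The step I expect to be the main obstacle is the control of $\alpha\langle\J(\z_1)-\J(\z_2),\w\rangle$: since $\J$ is not monotone, neither is $\mathscr{G}$, and the argument succeeds only because one can quantify exactly how much of the $\nu\A$- and $\beta\K$-coercivity must be surrendered to dominate it — this is the true role of $|\alpha|<\sqrt{2\nu\beta}$, and tracking it through every application of Young's inequality (so that the surviving constants come out as $\tfrac{\nu\varepsilon_0}{4}$ and $\tfrac{\beta\varepsilon_0}{4}$) is the delicate part. A secondary point, important because this lemma also underlies the unbounded-domain analysis, is that no $\L^2$-type Sobolev embedding is available for $\Arm(\y_2+\z)$; this forces the convective term to be treated via the $\L^{\infty}$–$\mathbb{W}^{1,4}$ embedding together with Korn's inequality, which is the origin of the factor $(C_{S,3}C_K)^2$ in the remainder.
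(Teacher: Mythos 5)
Your route is essentially the paper's: reduce to $\w=\y_1-\y_2$, control the convective term through the embedding of Lemma \ref{Sobolev-embedding3} together with Korn's inequality \eqref{Korn-ineq}, and absorb $\alpha\J$ between the $\nu$- and $\beta$-coercivity; your self-contained identities for the $\J$- and $\K$-differences (which the paper instead quotes from Hamza--Paicu) are algebraically correct. The genuine gap is that you never carry out the decisive Young bookkeeping, and with your own (correct) constants it cannot produce \eqref{CL1}. Writing $A_i=\Arm(\y_i+\z)$ and $A_{\w}=\Arm(\w)$, your computation gives
\begin{align*}
\beta\langle\K(\y_1+\z)-\K(\y_2+\z),\w\rangle=\frac{\beta}{4}\int_{\O}\big(|A_1|^2+|A_2|^2\big)|A_{\w}|^2\,\d x+\frac{\beta}{4}\int_{\O}\big(|A_1|^2-|A_2|^2\big)^2\,\d x,
\end{align*}
with coefficients $\tfrac{\beta}{4}$ — forced by the normalization $\langle\K(\v),\v\rangle=\tfrac12\|\Arm(\v)\|_4^4$ stated in the paper itself — whereas the paper's proof uses the cited identity with coefficients $\tfrac{\beta}{2}$; that extra factor $2$ is exactly what lets the paper reach $\tfrac{\beta\varepsilon_0}{4}$ under $|\alpha|<\sqrt{2\nu\beta}$, and it is not available in your derivation. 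With $\tfrac{\beta}{4}$, the Young step for $\alpha\J$ needs $\tfrac{\alpha^2}{4\epsilon}$ to fit inside a fraction of $\tfrac{\beta}{4}$ while its loss $\tfrac{\epsilon}{2}\|A_{\w}\|_2^2$ must fit inside the available $\tfrac{\nu}{2}\|A_{\w}\|_2^2$, which forces $\alpha^2<\nu\beta$ rather than $\alpha^2<2\nu\beta$; moreover the largest quartic remainder extractable via $|A_1|^2+|A_2|^2\ge\tfrac12|A_{\w}|^2$ is $\tfrac{\beta}{8}\|A_{\w}\|_4^4$, strictly below the target $\tfrac{\beta\varepsilon_0}{4}\|A_{\w}\|_4^4$ as soon as $\varepsilon_0>\tfrac12$. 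The obstruction is not a matter of a cleverer splitting: take $\z=\boldsymbol{0}$ and $\y_2=-\y_1=-t\boldsymbol{\phi}$; then the $\B$- and $\J$-differences vanish identically, the left-hand side of \eqref{CL1} equals $2\beta t^4\|\Arm(\boldsymbol{\phi})\|_4^4+4t^4\tfrac{(C_{S,3}C_K)^2}{\nu\varepsilon_0}\|\Arm(\boldsymbol{\phi})\|_4^2\|\boldsymbol{\phi}\|_2^2+O(t^2)$, while the right-hand side is $4\beta\varepsilon_0t^4\|\Arm(\boldsymbol{\phi})\|_4^4+O(t^2)$; for $\varepsilon_0>\tfrac12$ and $\boldsymbol{\phi}$ divergence-free with $\|\Arm(\boldsymbol{\phi})\|_4\gg\|\boldsymbol{\phi}\|_2$ the comparison fails for large $t$. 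So the step you yourself flagged as the delicate one is precisely where the proposal breaks, and carrying it out honestly shows that the stated constants and parameter range can only be recovered if the $\K$-coefficients are doubled — the discrepancy sits in the statement and the paper's citation, not in an avoidable inefficiency of your scheme.

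A secondary, easily repaired point: your reduction of the convective term to the single term $b(\w,\y_2+\z,\w)$ uses $b(\y_1+\z,\w,\w)=0$, which requires $\y_1+\z$ to be divergence-free, i.e. $\mathrm{div}\,\z=0$; this holds for the Ornstein--Uhlenbeck process in the application, but it is not among the hypotheses of the lemma, where only $\z\in\mathbb{W}^{1,4}(\O)$ is assumed. The paper keeps both contributions $b(\w,\w,\y_2+\z)$ and $b(\y_2+\z,\w,\w)$ and bounds each by $\|\y_2+\z\|_{\infty}\|\w\|_2\|\nabla\w\|_2$, at the cost of a harmless constant; your $\L^{\infty}$--Korn estimate goes through verbatim after the same adjustment.
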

\begin{proof}
	Firstly, note that $\varepsilon_0 = 1-\sqrt{\frac{\alpha^2}{2\beta\nu}}$ implies $\frac{\alpha^2}{4\nu(1-\varepsilon_0)} = \frac{\beta(1-\varepsilon_0)}{2}$. We know that 
	\begin{align}\label{CL2}
		\left<	\nu\A\y_1 - \nu\A\y_2 , \y_1 -\y_2 \right> = \frac{\nu}{2}\|\Arm(\y_1-\y_2)\|_{2}^2.
	\end{align}
	Now, using \eqref{b0}, H\"older's inequality, Lemma \ref{Sobolev-embedding3}, \eqref{Korn-ineq}  and Young's inequality, we estimate 
	\begin{align*}
		& |\left<	\B(\y_1+\z) - \B(\y_2+\z), \y_1 - \y_2 \right>| 
		\nonumber\\ & \leq  |b(\y_1 - \y_2,\y_1 - \y_2, \y_2+\z)| + |b(\y_2+\z,\y_1 - \y_2, \y_1 - \y_2)|
		\nonumber\\ & \leq 2 \|\y_1 - \y_2\|_{2}\|\nabla(\y_1 - \y_2)\|_{2} \|\y_2+\z\|_{{\infty}}
		\nonumber\\ & \leq C_{S,3} C_K \|\y_1 - \y_2\|_{2}\|\Arm(\y_1 - \y_2)\|_{2} \|\Arm(\y_2+\z)\|_{{4}}
		\nonumber\\ & \leq \frac{\nu\varepsilon_0}{4}\|\Arm(\y_1-\y_2)\|_{2}^2 + \frac{(C_{S,3} C_K)^2}{\nu\varepsilon_0} \|\Arm(\y_2+\z)\|_{{4}}^2 \|\y_1 - \y_2\|_{2}^2
	\end{align*}
	which implies  
	\begin{align}\label{CL3}
		& \left<	\B(\y_1+\z) - \B(\y_2+\z), \y_1 - \y_2 \right>
	\nonumber\\ & 	\geq  - \frac{\nu\varepsilon_0}{4} \|\Arm(\y_1-\y_2)\|^2_{2}  - \frac{(C_{S,3} C_K)^2}{\nu\varepsilon_0} \|\Arm(\y_2+\z)\|_{{4}}^2 \|\y_1 - \y_2\|_{2}^2.
	\end{align}
	From \cite[Equation (2.21)]{Hamza+Paicu_2007}, we have 
	\begin{align*}
		& |\alpha\left<	\J(\y_1+\z) - \J(\y_2+\z), \y_1 - \y_2 \right>| 
		\nonumber\\ & \leq \frac{\nu(1-\varepsilon_0)}{2}\|\Arm(\y_1-\y_2)\|_{2}^2 + \frac{\alpha^2}{4\nu(1-\varepsilon_0)} \int_{\mathcal{O}} |\Arm(\y_1-\y_2)|^2 ( |\Arm(\y_1+\z)|^2+|\Arm(\y_2+\z)|^2)
		\nonumber\\ & \leq \frac{\nu(1-\varepsilon_0)}{2}\|\Arm(\y_1-\y_2)\|_{2}^2 + \frac{\beta(1-\varepsilon_0)}{2}\int_{\mathcal{O}} |\Arm(\y_1-\y_2)|^2( |\Arm(\y_1+\z)|^2+|\Arm(\y_2+\z)|^2),
	\end{align*}
	which implies  
	\begin{align*}
		&\alpha \left<	\J(\y_1+\z) - \J(\y_2+\z), \y_1 - \y_2 \right>
		\nonumber\\ & \geq - \frac{\nu(1-\varepsilon_0)}{2}\|\Arm(\y_1-\y_2)\|_{2}^2 - \frac{\beta(1-\varepsilon_0)}{2}\int_{\mathcal{O}} |\Arm(\y_1-\y_2)|^2( |\Arm(\y_1+\z)|^2+|\Arm(\y_2+\z)|^2).
	\end{align*}
	Now, from \cite[Equation (2.13)]{Hamza+Paicu_2007}, we have 
	\begin{align*}
		&  \beta\left<	\K(\y_1+\z) - \K(\y_2+\z), \y_1 - \y_2 \right>
		\nonumber\\ & = \frac{\beta}{2}\int_{\mathcal{O}}( |\Arm(\y_1+\z)|^2-|\Arm(\y_2+\z)|^2)^2 + \frac{\beta}{2}\int_{\mathcal{O}} |\Arm(\y_1-\y_2)|^2( |\Arm(\y_1+\z)|^2+|\Arm(\y_2+\z)|^2).
	\end{align*}
	In view of \eqref{CL2} and \eqref{CL3}, we obtain \eqref{CL1}. This completes the proof.
\end{proof}

\begin{lemma}\label{lem-demi-G}
Assume that $\z\in \mathbb{W}^{1,4}(\mathcal{O})$. Then, the operator $\mathscr{G}:\X\to \X^{\prime}$ is demicontinuous.
\end{lemma}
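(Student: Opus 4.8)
The plan is to show that if $\y_n \to \y$ strongly in $\X$, then $\mathscr{G}(\y_n) \rightharpoonup \mathscr{G}(\y)$ weakly in $\X^{\prime}$; since $\mathscr{G}$ is a sum of four operators, it suffices to treat each term separately. First I would note that the linear term $\nu\A\cdot$ is bounded from $\X$ (in fact from $\V$) into $\V^{\prime} \subset \X^{\prime}$ by \eqref{2.7a}, hence continuous, and a fortiori demicontinuous. For the remaining three nonlinear terms, the common strategy is: fix an arbitrary test function $\boldsymbol{\phi} \in \X$, write the pairing $\langle \mathscr{G}(\y_n) - \mathscr{G}(\y), \boldsymbol{\phi} \rangle$ explicitly as an integral over $\mathcal{O}$, and pass to the limit using strong convergence of $\y_n$ (and hence of $\Arm(\y_n)$) in the relevant Lebesgue norms together with the continuity of the algebraic nonlinearities $\xi \mapsto \xi^2$ and $\xi \mapsto |\xi|^2\xi$.

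Concretely, for $\B(\cdot+\z)$ I would use that $\y_n + \z \to \y + \z$ in $\mathbb{W}^{1,4}(\mathcal{O})$, so $\y_n+\z \to \y+\z$ in $\L^4$ and $\nabla(\y_n+\z) \to \nabla(\y+\z)$ in $\L^4$; then from the estimate $|b(\u,\u,\boldsymbol{\phi})| \leq \|\u\|_4 \|\nabla\u\|_4 \|\boldsymbol{\phi}\|_2$ recorded before Lemma \ref{lem-Loc-Monotinicity}, writing $b(\y_n+\z,\y_n+\z,\boldsymbol{\phi}) - b(\y+\z,\y+\z,\boldsymbol{\phi})$ and splitting it via the bilinearity of $b$ in its first two slots, each piece is controlled by a product of an $\L^4$-difference (tending to $0$) and bounded $\L^4$ and $\L^2$ norms. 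For $\J(\cdot+\z) = -\mathcal{P}\diver(\Arm(\cdot+\z)\Arm(\cdot+\z))$, one has $\langle \J(\y_n+\z) - \J(\y+\z), \boldsymbol{\phi}\rangle = \int_{\mathcal{O}} \big(\Arm(\y_n+\z)^2 - \Arm(\y+\z)^2\big):\nabla\boldsymbol{\phi}$, and since $\Arm(\y_n+\z) \to \Arm(\y+\z)$ in $\L^4$, the product $\Arm(\y_n+\z)\Arm(\y_n+\z) \to \Arm(\y+\z)\Arm(\y+\z)$ in $\L^2$ (again splitting the difference and using Hölder), while $\nabla\boldsymbol{\phi} \in \L^4 \subset \L^2$; so the integral converges. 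For $\K(\cdot+\z)$ the analogous identity is $\langle \K(\y_n+\z) - \K(\y+\z), \boldsymbol{\phi}\rangle = \int_{\mathcal{O}} \big(|\Arm(\y_n+\z)|^2\Arm(\y_n+\z) - |\Arm(\y+\z)|^2\Arm(\y+\z)\big):\nabla\boldsymbol{\phi}$; the map $\xi \mapsto |\xi|^2\xi$ is continuous with $\big||\a|^2\a - |\b|^2\b\big| \leq C(|\a|^2+|\b|^2)|\a-\b|$, so by Hölder with exponents $2,4,4$ the $\L^{4/3}$-norm of the difference is bounded by $C\big(\|\Arm(\y_n+\z)\|_4^2 + \|\Arm(\y+\z)\|_4^2\big)\|\Arm(\y_n+\z) - \Arm(\y+\z)\|_4 \to 0$, and pairing against $\nabla\boldsymbol{\phi} \in \L^4$ closes the argument.

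The main obstacle, such as it is, is bookkeeping rather than a genuine difficulty: one must be careful that all the convergences are in the right spaces (strong convergence in $\X$ gives strong $\L^4$-convergence of gradients, which is exactly what the cubic nonlinearity needs, so no weak-strong pairing issues arise), and one should record the elementary algebraic inequalities $|\a^2 - \b^2| \leq (|\a|+|\b|)|\a-\b|$ and $\big||\a|^2\a - |\b|^2\b\big| \leq C(|\a|^2+|\b|^2)|\a-\b|$ for matrices to justify the splittings. Since $\X \hookrightarrow \mathbb{W}^{1,4}(\mathcal{O})$ with the full $\mathbb{W}^{1,4}$-norm controlling everything in sight, there is enough integrability for each term, and demicontinuity (indeed, one gets norm-continuity of each nonlinear term into its natural target space $\H$, $\V^{\prime}$, or $\mathbb{W}^{-1,4/3}(\mathcal{O})$ respectively, which embeds into $\X^{\prime}$) follows. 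I would present it as: decompose $\mathscr{G}$, handle the linear term trivially, then for each nonlinear term state the target-space continuity with the one-line Hölder estimate, and conclude.
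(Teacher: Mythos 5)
Your proposal follows essentially the same route as the paper's proof: decompose $\mathscr{G}$ into the four operators, fix a test function $\boldsymbol{\phi}\in\X$, and pass to the limit in each pairing via H\"older's inequality and the strong $\X$-convergence, exactly as done there (the paper splits the quadratic and cubic terms term-by-term rather than via your algebraic inequalities, and treats $\B$ with the $\mathbb{W}^{1,4}\hookrightarrow\L^{\infty}$ embedding rather than the $\L^4\times\L^4\times\L^2$ bound, but these are cosmetic differences). One small correction: your justification ``$\nabla\boldsymbol{\phi}\in\L^4\subset\L^2$'' is not valid on the (possibly unbounded) Poincar\'e domain $\mathcal{O}$, but the needed integrability is immediate anyway since $\boldsymbol{\phi}\in\X\subset\V$ gives $\nabla\boldsymbol{\phi}\in\L^2(\mathcal{O})$ directly.
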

\begin{proof}
	Let us take a sequence $\y_m\to\y$ in $\X$, that is, $\|\y_m-\y\|_{\X}=\|\y_m-\y\|_{\V}+\|\y_m-\y\|_{\mathbb{W}^{1,4}}\to 0$ as $m\to\infty$. For any $\boldsymbol{\phi}\in \X$, we consider
	\begin{align}\label{DM1}
		&	\langle\mathscr{G}(\y_m)-\mathscr{G}(\y),\boldsymbol{\phi}\rangle 
		\nonumber\\ &=\nu \langle \A\y_m-\A\y,\boldsymbol{\phi}\rangle+\langle\B(\y_m+\z)-\B(\y+\z),\boldsymbol{\phi}\rangle+ \alpha\langle \J(\y_n+\z)-\J(\y+\z),\boldsymbol{\phi}\rangle 
		\nonumber\\ & \quad +\beta\langle \K(\y_n+\z)-\K(\y+\z),\boldsymbol{\phi}\rangle.
	\end{align} 
	First, we pick $\langle \A\y_m-\A\y,\boldsymbol{\phi}\rangle$ from \eqref{DM1} and estimate as follows:
	\begin{align}
		|\langle \A\y_m-\A\y,\boldsymbol{\phi}\rangle|=|(\nabla(\y_m-\y),\nabla\boldsymbol{\phi})|\leq\|\y_m-\y\|_{\V}\|\boldsymbol{\phi}\|_{\V}\to 0, \ \text{ as } \ m\to\infty, 
	\end{align}
	since $\y_n\to \y$ in $\V$. We estimate the term $\langle\B(\y_m+\z)-\B(\y+\z),\boldsymbol{\phi}\rangle$ from \eqref{DM1} using \eqref{b0}, H\"older's inequality and Lemma \ref{Sobolev-embedding3} as  
	\begin{align}
		&|\langle\B(\y_n+\z)-\B(\y+\z),\boldsymbol{\phi}\rangle|
		\nonumber\\ &  \leq   |b(\y_m-\y,\boldsymbol{\phi}, \y_n-\y) | +|b(\y_m-\y,\boldsymbol{\phi}, \y+\z)| + |b(\y+\z,\boldsymbol{\phi}, \y_m-\y)|
		\nonumber\\ & \leq  \|\y_m-\y\|_{2}\|\y_m-\y\|_{{\infty}}\|\boldsymbol{\phi}\|_{\V} + \|\y_m-\y\|_{2}\|\y+\z\|_{{\infty}}\|\boldsymbol{\phi}\|_{\V} + \|\y_m-\y\|_{2}\|\y+\z\|_{{\infty}}\|\boldsymbol{\phi}\|_{\V}
		\nonumber\\ & \leq  C \|\y_m-\y\|_{2}\|\y_m-\y\|_{\mathbb{W}^{1,4}}\|\boldsymbol{\phi}\|_{\V} + C \|\y_m-\y\|_{2}\|\y+\z\|_{\mathbb{W}^{1,4}}\|\boldsymbol{\phi}\|_{\V} 
		\nonumber\\& \to 0, \ \text{ as } \ m\to\infty, 
	\end{align}
	since $\y_m\to\y$ in $\X$. We estimate the term $\langle \J(\y_m+\z)- \J(\y+\z),\boldsymbol{\phi}\rangle$ from \eqref{DM1} using H\"older's inequality as  
	\begin{align}
		&|\alpha\langle \J(\y_m+\z)- \J(\y+\z),\boldsymbol{\phi}\rangle|
		\nonumber\\ & = |\alpha| |\langle \Arm^2(\y_m+\z)- \Arm^2(\y+\z),\nabla\boldsymbol{\phi}\rangle|
		\nonumber\\ & \leq |\alpha| |\langle \Arm^2(\y_m-\y),\nabla\boldsymbol{\phi}\rangle| + |\alpha| |\langle \Arm(\y_m-\y)\Arm(\y+\z),\nabla\boldsymbol{\phi}\rangle| +  |\alpha| |\langle \Arm(\y+\z)\Arm(\y_m-\y),\nabla\boldsymbol{\phi}\rangle|
		\nonumber\\ & \leq |\alpha|  \|\y_m-\y\|^2_{\mathbb{W}^{1,4}}\|\boldsymbol{\phi}\|_{\V} + 2|\alpha| \|\y_m-\y\|_{\mathbb{W}^{1,4}}\|\y+\z\|_{\mathbb{W}^{1,4}}\|\boldsymbol{\phi}\|_{\V} 
		\nonumber\\& \to 0, \ \text{ as } \ m\to\infty, 
	\end{align}
	since $\y_m\to\y$ in $\mathbb{W}^{1,4}(\mathcal{O})$. We estimate the term $\langle \K(\y_m+\z)- \K(\y+\z),\boldsymbol{\phi}\rangle$ from \eqref{DM1} using H\"older's inequality as  
	\begin{align}
		&|\beta\langle \K(\y_m+\z)- \K(\y+\z),\boldsymbol{\phi}\rangle|
		\nonumber\\ & =  \beta|\langle |\Arm(\y_m+\z)|^2\Arm(\y_m+\z)- |\Arm(\y+\z)|^2\Arm(\y+\z),\nabla\boldsymbol{\phi}\rangle|
		\nonumber\\ & \leq \beta |\langle |\Arm(\y_m-\y)|^2\Arm(\y_m-\y),\nabla\boldsymbol{\phi}\rangle| + \beta |\langle |\Arm(\y_m-\y)|^2\Arm(\y+\z),\nabla\boldsymbol{\phi}\rangle| 
		\nonumber\\ & \quad + 2\beta|\langle [\Arm(\y_m-\y):\Arm(\y+\z)]\Arm(\y_m-\y),\nabla\boldsymbol{\phi}\rangle| + 2 \beta  |\langle [\Arm(\y_m-\y):\Arm(\y+\z)]\Arm(\y+\z),\nabla\boldsymbol{\phi}\rangle| 
		\nonumber\\ & \quad + \beta |\langle |\Arm(\y+\z)|^2\Arm(\y_m-\y),\nabla\boldsymbol{\phi}\rangle|
		\nonumber\\ & \leq  \beta \|\y_m-\y\|^3_{\mathbb{W}^{1,4}}\|\boldsymbol{\phi}\|_{\mathbb{W}^{1,4}} + 3 \beta \|\y_m-\y\|^2_{\mathbb{W}^{1,4}}\|\y+\z\|_{\mathbb{W}^{1,4}}\|\boldsymbol{\phi}\|_{\mathbb{W}^{1,4}}  
		\nonumber\\ & \quad + 3 \beta \|\y_m-\y\|_{\mathbb{W}^{1,4}}\|\y+\z\|^2_{\mathbb{W}^{1,4}}\|\boldsymbol{\phi}\|_{\mathbb{W}^{1,4}}
		\nonumber\\& \to 0, \ \text{ as } \ m\to\infty, 
	\end{align}
	since $\y_m\to\y$ in $\mathbb{W}^{1,4}(\mathcal{O})$.
	
	From the above convergences, it is immediate that $\langle\mathscr{G}(\y_m)-\mathscr{G}(\y),\boldsymbol{\phi}\rangle \to 0$, for all $\boldsymbol{\phi}\in \X$. Hence the operator $\mathscr{G}:\X \to \X'$ is demicontinuous, which implies that, in addition, the operator $\mathscr{G}(\cdot)$ is hemicontinuous also. 
\end{proof}

Finally, we provide the result of locally Lipschitz continuity of the operator $\mathscr{G}(\cdot)$.

\begin{lemma}\label{lem-loc_Lip-G}
	Assume that $\z\in\mathbb{W}^{1,4}(\mathcal{O})$. Then, the operator $\mathscr{G}:\X\to \X^{\prime}$ is locally Lipschitz.
\end{lemma}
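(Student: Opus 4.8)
The plan is to prove the sharper statement that $\mathscr{G}$ is Lipschitz on bounded subsets of $\X$: given $R>0$, for all $\y_1,\y_2\in\X$ with $\|\y_1\|_{\X},\|\y_2\|_{\X}\le R$ we will bound $|\langle\mathscr{G}(\y_1)-\mathscr{G}(\y_2),\boldsymbol{\phi}\rangle|$ by a constant times $\|\y_1-\y_2\|_{\X}\|\boldsymbol{\phi}\|_{\X}$, uniformly in $\boldsymbol{\phi}\in\X$, by estimating the four terms in \eqref{opr-G} one at a time. The structural observation that makes this painless is that $\Arm$ is linear, so that $\Arm(\y_1+\z)-\Arm(\y_2+\z)=\Arm(\y_1-\y_2)$ and the Ornstein--Uhlenbeck drift $\z$ cancels in every difference; the $\z$-dependence survives only through factors like $\|\Arm(\y_i+\z)\|_{4}$, which are bounded in terms of $R$ and $\|\z\|_{\mathbb{W}^{1,4}}$ via the Korn inequality \eqref{Korn-ineq}. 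For the linear part, $\nu|\langle\A\y_1-\A\y_2,\boldsymbol{\phi}\rangle|=\nu|(\nabla(\y_1-\y_2),\nabla\boldsymbol{\phi})|\le\nu\|\y_1-\y_2\|_{\V}\|\boldsymbol{\phi}\|_{\V}$, which is already of the required form. For the convective part, bilinearity of $b$ together with \eqref{b0} gives
\[
\langle\B(\y_1+\z)-\B(\y_2+\z),\boldsymbol{\phi}\rangle=-\,b(\y_1-\y_2,\boldsymbol{\phi},\y_1+\z)-b(\y_2+\z,\boldsymbol{\phi},\y_1-\y_2),
\]
and each trilinear term is bounded by $\|\cdot\|_{2}\|\nabla\boldsymbol{\phi}\|_{2}\|\cdot\|_{\infty}$; Lemma \ref{Sobolev-embedding3} (applicable since $d<4$) replaces the $\L^{\infty}$-norm by a multiple of $\|\nabla\cdot\|_{4}\le\|\cdot\|_{\X}$, producing a bound $C(R+\|\z\|_{\mathbb{W}^{1,4}})\|\y_1-\y_2\|_{\X}\|\boldsymbol{\phi}\|_{\X}$.

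For the term coming from $\J$, I would use $\Arm^{2}(\y_1+\z)-\Arm^{2}(\y_2+\z)=\Arm(\y_1-\y_2)\Arm(\y_1+\z)+\Arm(\y_2+\z)\Arm(\y_1-\y_2)$, so that $\langle\J(\y_1+\z)-\J(\y_2+\z),\boldsymbol{\phi}\rangle$ equals the integral of this matrix product against $\nabla\boldsymbol{\phi}$; Hölder's inequality with exponents $(4,4,2)$ and the Korn inequality then bound it by $C(R+\|\z\|_{\mathbb{W}^{1,4}})\|\y_1-\y_2\|_{\mathbb{W}^{1,4}}\|\boldsymbol{\phi}\|_{\V}$. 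The only genuinely cubic term is the one coming from $\K$, and this is the place where the bookkeeping is heaviest. Setting $a=\Arm(\y_1+\z)$, $b=\Arm(\y_2+\z)$ (so $a-b=\Arm(\y_1-\y_2)$), I would either expand $|a|^{2}a-|b|^{2}b$ term by term exactly as in the proof of Lemma \ref{lem-demi-G}, or invoke the elementary pointwise inequality $\big||a|^{2}a-|b|^{2}b\big|\le C\big(|a|^{2}+|b|^{2}\big)|a-b|$; either way
\[
|\langle\K(\y_1+\z)-\K(\y_2+\z),\boldsymbol{\phi}\rangle|\le C\int_{\mathcal{O}}\big(|\Arm(\y_1+\z)|^{2}+|\Arm(\y_2+\z)|^{2}\big)\,|\Arm(\y_1-\y_2)|\,|\nabla\boldsymbol{\phi}|,
\]
and Hölder's inequality with exponents $(2,4,4)$ followed by \eqref{Korn-ineq} turns the right-hand side into $C(R+\|\z\|_{\mathbb{W}^{1,4}})^{2}\|\y_1-\y_2\|_{\mathbb{W}^{1,4}}\|\boldsymbol{\phi}\|_{\mathbb{W}^{1,4}}$.

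Adding the four estimates, recalling $\|\cdot\|_{\V}\le\|\cdot\|_{\X}$ and $\|\cdot\|_{\mathbb{W}^{1,4}}\le\|\cdot\|_{\X}$, and taking the supremum over $\|\boldsymbol{\phi}\|_{\X}\le 1$ yields
\[
\|\mathscr{G}(\y_1)-\mathscr{G}(\y_2)\|_{\X'}\le C\big(1+R+\|\z\|_{\mathbb{W}^{1,4}}\big)^{2}\,\|\y_1-\y_2\|_{\X},
\]
which is exactly local Lipschitz continuity. I do not expect any serious obstacle: the estimates for $\A$, $\B$ and $\J$ are essentially the linearised versions of what already appears in the proofs of Lemmas \ref{lem-Loc-Monotinicity} and \ref{lem-demi-G}, and the only real care is needed in the cubic $\K$-term, where one must keep the Hölder exponents $(2,4,4)$ consistent and remember that $\z$ drops out of the difference $\Arm(\y_1+\z)-\Arm(\y_2+\z)$, so that the cubic growth is genuinely controlled by $R$ alone (up to the fixed $\|\z\|_{\mathbb{W}^{1,4}}$).
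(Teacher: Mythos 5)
Your proposal is correct and follows essentially the same route as the paper: a term-by-term estimate of $\langle\mathscr{G}(\y_1)-\mathscr{G}(\y_2),\boldsymbol{\phi}\rangle$ for the $\A$-, $\B$-, $\J$- and $\K$-parts, exploiting the linearity of $\Arm$ so that $\z$ cancels in the differences, and closing each term with H\"older, the Korn inequality \eqref{Korn-ineq} and the embedding of Lemma \ref{Sobolev-embedding3}. The only (harmless) deviations are cosmetic: for the convective term you use the antisymmetry \eqref{b0} and an $\L^{\infty}$ bound where the paper keeps the difference in the middle slot with an $\L^4$--$\L^4$--$\L^2$ splitting, and you make explicit the final bound $\|\mathscr{G}(\y_1)-\mathscr{G}(\y_2)\|_{\X'}\leq C(1+R+\|\z\|_{\mathbb{W}^{1,4}})^{2}\|\y_1-\y_2\|_{\X}$, which the paper leaves implicit.
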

\begin{proof}	
	For  $\u,\v,\w\in\X$, we have 
	\begin{align}
		\left<\A\u - \A\v , \w\right> & = (\nabla(\u - \v),\nabla\w)
		\nonumber\\ &  \leq \|\u-\v\|_{\V} \|\w\|_{\V} \\
		\langle \B(\u+\z)-\B(\v+\z),\w\rangle& = b(\u+\z,\u-\v,\w) + b(\u-\v,\v+\z,\w)\nonumber\\
		& \leq \big[\|\u+\z\|_{4} \|\nabla(\u-\v)\|_{4} + \|\u-\v\|_{4} \|\nabla(\v+\z)\|_{4}\big]\|\w\|_{2}
		\nonumber\\
		& \leq C \big[\|\u+\z\|_{4} \|\Arm(\u-\v)\|_{4} + \|\u-\v\|_{4} \|\Arm(\v+\z)\|_{4}\big]\|\w\|_{\V}, \label{213-B}\\
		\langle \J(\u+\z)-\J(\v+\z),\w\rangle& = \frac12 \int_{\mathcal{O}}[\Arm(\u-\v)\Arm(\u+\z)+\Arm(\v+\z)\Arm(\u-\v)]:\Arm(\w)\d x \nonumber
		\\&\leq C\big[\|\Arm(\u+\z)\|_{4}+\|\Arm(\v+\z)\|_{4}\big] \|\Arm(\u-\v)\|_{4} \|\Arm(\w)\|_{2},\label{213-J}\\ 
		\langle \K(\u)-\K(\v),\w\rangle& = \frac12 \int_{\mathcal{O}}[\Arm(\u-\v):\Arm(\u+\z)+\Arm(\v+\z):\Arm(\u-\v)]\Arm(\u+\z):\Arm(\w)\d x 
		\nonumber\\ & \quad + \frac12 \int_{\mathcal{O}}|\Arm(\v+\z)|^2\Arm(\u-\v):\Arm(\w)\d x \nonumber
		\\&\leq C\big[\|\Arm(\u+\z)\|_{4}^2+\|\Arm(\v+\z)\|_{4}^2\big] \|\Arm(\u-\v)\|_{4} \|\Arm(\w)\|_{4}.\label{213-K}
	\end{align}
 Hence, the operator $\mathscr{G}(\cdot):\X\to\X^{\prime}$ is locally Lipschitz. 
\end{proof}

\subsection{A compact operator}\label{C_O} We follow 
\cite[Subsection 2.3]{Brzezniak+Motyl_2013} for the contents of this section. 
Consider the natural embedding $j:\V\hookrightarrow\H$ and its adjoint $j^*:\H\hookrightarrow\V$. Since the range of $j$ is dense in $\H$, the map $j^*$ is one-to-one. Let us define
\begin{align}\label{L1}
	\D(\mathbb{A})&:=j^*(\H)\subset\V,\nonumber \\
	\mathbb{A}\v&:=(j^*)^{-1}\v, \ \ \v\in\D(\mathbb{A}).
\end{align}
Note that for all $\v\in\D(\mathbb{A})$ and $\v\in\V$
\begin{align*}
	(\mathbb{A}\u,\v)_{\H}=(\u,\v)_{\V}.
\end{align*}
Let us assume that $s>2$. It is clear that $\V_s$ is dense in $\V$ and the embedding $j_s:\V_s\hookrightarrow\V$ is continuous. Then, there exists a Hilbert space $\mathbb{U}$ (cf. \cite{Holly+Wiciak_1995}, \cite[Lemma C.1]{Brzezniak+Motyl_2013}) such that $\mathbb{U}\subset\V_s$, $\mathbb{U}$ is dense in $\V_s$ and 
\begin{align*}
	\text{  the natural embedding }\iota_s:\mathbb{U}\hookrightarrow\V_s \text{ is compact.}
\end{align*}
It implies that 
\begin{align*}
	\mathbb{U} \xhookrightarrow[\iota_s]{} \V_s\xhookrightarrow[j_s]{}\V\xhookrightarrow[j]{}\H\cong{\H}{'}\xhookrightarrow[j']{}\V'\xhookrightarrow[j'_s]{}  {\V}'_{s}\xhookrightarrow[\iota'_s]{}\mathbb{U}'.
\end{align*}
Consider the composition $$\iota:=j\circ j_s\circ\iota_s:\mathbb{U}\to\H$$ and its adjoint $$\iota^*:=(j\circ j_s\circ\iota_s)^*=\iota_s^*\circ j^*_s\circ j^*:\H\to \mathbb{U}.$$ We have that $\iota$ is compact and since its range is dense in $\H$, $\iota^*:\H\to\mathbb{U}$ is one-one. Let us define 
\begin{align}\label{L2}
	\D(\mathcal{L})&:=\iota^*(\H)\subset\mathbb{U},\nonumber\\
	\mathcal{L}\v&:=(\iota^*)^{-1}\v, \ \ \v\in\D(\mathcal{L}).
\end{align}
Also we have that $\mathcal{L}:\D(\mathcal{L})\to\H$ is onto, $\D(\mathcal{L})$ is dense in $\H$ and 
\begin{align*}
	(\mathcal{L}\v,\w)_{\H}=(\v,\w)_{\mathbb{U}}, \ \ \ \ \v\in\D(\mathcal{L}), \ \w\in\mathbb{U}.
\end{align*}
Furthermore, for $\v\in\D(\mathcal{L})$,
\begin{align*}
	\mathcal{L}\v=(\iota^*)^{-1}\v=(j^*)^{-1}\circ (j^*_s)^{-1}\circ(\iota^*_s)^{-1}\v=\mathbb{A}\circ (j^*_s)^{-1}\circ(\iota^*_s)^{-1}\v,
\end{align*}
where $\mathbb{A}$ is defined in \eqref{L1}. Since the operator $\mathcal{L}$ is self-adjoint and $\mathcal{L}^{-1}$ is compact, there exists an orthonormal basis $\{\boldsymbol{e}_i\}_{i\in\N}$ of $\H$ such that 
\begin{align}\label{L3}
	\mathcal{L}\boldsymbol{e}_i=\mu_i\boldsymbol{e}_i,\ \ \ \ i\in\N,
\end{align}
that is, $\{\boldsymbol{e}_i\}$ are the eigenfunctions and $\{\mu_i\}$ are the corresponding eigenvalues of operator $\mathcal{L}$. Note that $\boldsymbol{e}_i\in\mathbb{U}$, \ $i\in\N$, because $\D(\mathcal{L})\subset \mathbb{U}$. 

Let us fix $m\in\N$ and let $\P_m$ be the operator from $\mathbb{U}'$ to $\mathrm{span}\{\boldsymbol{e}_1,\ldots,\boldsymbol{e}_m\}=:\H_{m}$ defined by 
\begin{align}
	\P_m\v^*:=\sum_{i=1}^{m}\langle\v^*,\boldsymbol{e}_i\rangle_{\mathbb{U}'\times\mathbb{U}}\boldsymbol{e}_i, \ \ \ \ \ \ \v^*\in\mathbb{U}'.
\end{align}
We will consider the restriction of operator $\P_m$ to the space $\H$ denoted still by the  same. In particular, we have $\H\hookrightarrow\mathbb{U}'$, that is, every element $\v\in\H$ induces a functional $\v^*\in\mathbb{U}'$ by 
\begin{align}
	\langle\v^*,\u\rangle_{\mathbb{U}'\times\mathbb{U}}:=(\v,\u), \ \ \ \ \u\in\mathbb{U}.
\end{align}
Thus the restriction of $\P_m$ to $\H$ is given by 
\begin{align}
	\P_m\v:=\sum_{i=1}^{m}(\v,\boldsymbol{e}_i)\boldsymbol{e}_i, \ \ \ \ \ \ \v\in\H.
\end{align}
Hence, in particular, $\P_m$ is the orthogonal projection from $\H$ onto $\H_m$. 
\begin{lemma}[{See \cite[Lemma 2.4]{Brzezniak+Motyl_2013}}]\label{lem-Pm}
	For every $\v\in\mathbb{U}$ and $s>2$, we have 
	\begin{itemize}
		\item [(i)] $ \|\P_m\v\|_{\U} \leq \|\v\|_{\U}$,
 		\item [(ii)] $\lim\limits_{m\to\infty}\|\P_m\v-\v\|_{\mathbb{U}}=0$,
		\item [(iii)] $\lim\limits_{m\to\infty}\|\P_m\v-\v\|_{\mathbb{V}_s}=0$, 
		\item [(iv)] $\lim\limits_{m\to\infty}\|\P_m\v-\v\|_{\mathbb{V}}=0$,
		\item [(v)] $\lim\limits_{m\to\infty}\|\P_m\v-\v\|_{\mathbb{W}^{1,4}}=0$.
	\end{itemize}
\end{lemma}

	\subsection{A subclass of stochastic third-grade fluids equations}
Here, we provide an abstract formulation of   system \eqref{equationV1}.
On taking projection $\mathcal{P}$ onto the first equation in \eqref{equationV1}, we obtain 
\begin{equation}\label{STGF}
	\left\{
	\begin{aligned}
		\d\v(t)+\{\nu\A\v(t)+\B(\v(t))+\alpha\J(\v(t))+\beta\K(\v(t))\}\d t&=\mathcal{P}\f \d t + \d\mathrm{W}(t), \ \ \ t> 0, \\ 
		\v(0)&=\boldsymbol{x},
	\end{aligned}
	\right.
\end{equation}
where $\boldsymbol{x}\in \H,\ \f\in \H^{-1}(\mathcal{O})$ and $\{\mathrm{W}(t)\}_{t\in \R}$ is a two-sided cylindrical Wiener process in $\H$ with its RKHS $\mathrm{K}$ which satisfies Hypothesis \ref{assump1}.
Now we present the definition of weak solution (in the analytic sense) to system \eqref{STGF} with the initial data $\x\in\H$ at the initial time $s\in \R.$
\begin{definition}\label{Def_u}
	Suppose that \eqref{condition1} and Hypotheses \ref{assump1}-\ref{assumpO} are satisfied. If $\x\in \H$, $s\in \R$, $\f\in \H^{-1}(\O)$ and $\{\W(t)\}_{t\in \R}$ is a two-sided Wiener process with its RKHS $\mathrm{K}$. A process $\{\v(t), \ t\geq s\},$ with trajectories in $\mathrm{C}([s, \infty); \H) \cap \mathrm{L}^{4}_{\mathrm{loc}}([s, \infty); \mathbb{W}^{1,4}(\mathcal{O}))$ is a solution to system \eqref{STGF} if and only if  $\v(s) = \x$ and for any $\boldsymbol{\phi}\in \X$, $t>s,$ $\mathbb{P}$-a.s.,
	\begin{align*}
	&	(\v(t), \boldsymbol{\phi}) - (\v(s), \boldsymbol{\phi}) 
	\nonumber\\  & =  - \int_{s}^{t} \langle \nu\A\v(\xi)+\B(\v(\xi))+\alpha\J(\v(\xi))+\beta\K(\v(\xi))  - \f , \boldsymbol{\phi} \rangle \d \xi  +  \int_{s}^{t} ( \boldsymbol{\phi}, \d\W(\xi)).
	\end{align*}
\end{definition}

The well-posedness of   system \eqref{STGF} is discussed in Theorem \ref{STGF-Sol} below.

	\section{RDS generated by system \eqref{equationV1}}\label{sec3}\setcounter{equation}{0}
In this section, we prove the existence of RDS generated by  system \eqref{equationV1} using a random transformation (known as Doss-Sussmann transformation, \cite{Doss_1977,Sussmann_1978}). The process of generation of RDS has been adopted from the works \cite{Brzezniak+Li_2006,BCLLLR}. 
\subsection{Metric dynamical system}Let us denote $\mathfrak{X} = \H \cap  {\mathbb{W}}^{1,4}(\mathcal{O}) $. Let $\mathrm{E}$ denote the completion of $\A^{-\delta}(\mathfrak{X})$ with respect to the graph norm $\|x\|_{\mathrm{E}}=\|\A^{-\delta} x \|_{\mathfrak{X}}, \text{ for } x\in \mathfrak{X}$,  where $ \|\cdot\|_{\mathfrak{X}} = \|\cdot\|_{2} +  \|\cdot\|_{\mathbb{W}^{1,4}}$. Note that $\mathrm{E}$ is a separable Banach spaces (cf. \cite{Brze2}).

For $\xi \in(0, 1/2)$, let us set 
$$ \|\omega\|_{C^{\xi}_{1/2} (\mathbb{R};\mathrm{E})} = \sup_{t\neq s \in \mathbb{R}} \frac{\|\omega(t) - \omega(s)\|_{\mathrm{E}}}{|t-s|^{\xi}(1+|t|+|s|)^{1/2}}.$$
Furthermore, we define
\begin{align*}
	C^{\xi}_{1/2} (\mathbb{R}; \mathrm{E}) &= \left\{ \omega \in C(\mathbb{R}; \mathrm{E}) : \omega(0)=\boldsymbol{0},\ \|\omega\|_{C^{\xi}_{1/2} (\mathbb{R}; \mathrm{E})} < \infty \right\},\\ \Omega(\xi, \mathrm{E})&= \overline{\{ \omega \in C^\infty_0 (\mathbb{R}; \mathrm{E}) : \omega(0) = 0 \}}^{C^{\xi}_{1/2} (\mathbb{R}; \mathrm{E})}.
\end{align*}
The space $\Omega(\xi, \mathrm{E})$ is a separable Banach space. We also define
$$C_{1/2} (\mathbb{R}; \mathrm{E}) = \left\{ \omega \in C(\mathbb{R}; \mathrm{E}) : \omega(0)=0, \|\omega\|_{C_{1/2} (\mathbb{R}; \mathrm{E})} = \sup_{t \in \mathbb{R}} \frac{\|\omega(t) \|_{\mathrm{E}}}{1+|t|^{\frac{1}{2}}} < \infty \right\}.$$

Let us denote by $\mathcal{F}$, the Borel $\sigma$-algebra on $\Omega(\xi, \mathrm{E}).$ For $\xi\in (0, 1/2)$, there exists a Borel probability measure $\mathbb{P}$ on $\Omega(\xi, \mathrm{E})$ (cf. \cite{Brze}) such that the canonical process $\{w_t, \ t\in \mathbb{R}\}$ defined by 
\begin{align}\label{Wp}
	w_t(\omega) := \omega(t), \ \ \ \omega \in \Omega(\xi, \mathrm{E}),
\end{align}
is an $\mathrm{E}$-valued two-sided Wiener process such that the RKHS of the Gaussian measure $\mathscr{L}(w_1)$ on $\mathrm{E}$ is $\mathrm{K}$. For $t\in \mathbb{R},$ let $\mathcal{F}_t := \sigma \{ w_s : s \leq t \}.$ Then  there exists a unique bounded linear map $\mathrm{W}(t): \mathrm{K} \to \mathrm{L}^2(\Omega(\xi, \mathrm{E}), \mathcal{F}_t  ,  \mathbb{P}).$ Moreover, the family $\{\mathrm{W}(t)\}_{t\in \mathbb{R}}$ is a $\mathrm{K}$-cylindrical Wiener process on a filtered probability space $(\Omega(\xi, \mathrm{E}), \mathcal{F}, \{\mathcal{F}_t\}_{t \in \mathbb{R}} , \mathbb{P})$ (cf. \cite{BP} for more details).

We consider a flow $\theta = (\theta_t)_{t\in \mathbb{R}}$ on the space $C_{1/2} (\mathbb{R}; \mathrm{E}),$  defined by
$$ \theta_t \omega(\cdot) = \omega(\cdot + t) - \omega(t), \ \ \ \omega\in C_{1/2} (\mathbb{R};\mathrm{E}), \ \ t\in \mathbb{R}.$$ 
This flow keeps the spaces $C^{\xi}_{1/2} (\mathbb{R};\mathrm{E})$ and $\Omega(\xi, \mathrm{E})$ invariant and preserves $\mathbb{P}.$ 

Summing up, we have the following result:
\begin{proposition}[{\cite[Proposition 6.13]{Brzezniak+Li_2006}}]\label{m-DS1}
	The quadruple $(\Omega(\xi, \mathrm{E}), \mathcal{F}, \mathbb{P}, {\theta})$ is an MDS. 
\end{proposition}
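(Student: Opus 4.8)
The plan is to verify the four defining axioms of a metric dynamical system for the quadruple $(\Omega(\xi,\mathrm{E}),\mathcal{F},\mathbb{P},\theta)$: that each $\theta_t$ is a measurable self-map of $\Omega(\xi,\mathrm{E})$; that $\theta_0=\mathrm{id}$ and $\theta_{t+s}=\theta_t\circ\theta_s$ for all $s,t\in\mathbb{R}$; that $(t,\omega)\mapsto\theta_t\omega$ is jointly $(\mathcal{B}(\mathbb{R})\otimes\mathcal{F},\mathcal{F})$-measurable; and that $(\theta_t)_*\mathbb{P}=\mathbb{P}$ for every $t$. Since the measurable space $(\Omega(\xi,\mathrm{E}),\mathcal{F})$ has already been constructed and $\mathbb{P}$ has been identified with the law of the canonical $\mathrm{E}$-valued two-sided Wiener process $\{w_t\}_{t\in\mathbb{R}}$, only the compatibility of $\theta$ with this data remains to be established.

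First I would dispose of the algebraic axioms by a direct computation from the definition $\theta_t\omega(\cdot)=\omega(\cdot+t)-\omega(t)$: since $\omega(0)=0$ one has $\theta_0\omega=\omega$, and for $r,s,t\in\mathbb{R}$,
\begin{align*}
(\theta_t\circ\theta_s)\omega(r)&=(\theta_s\omega)(r+t)-(\theta_s\omega)(t)\\
&=\bigl[\omega(r+s+t)-\omega(s)\bigr]-\bigl[\omega(s+t)-\omega(s)\bigr]=\theta_{t+s}\omega(r),
\end{align*}
so $\{\theta_t\}_{t\in\mathbb{R}}$ is a one-parameter group of linear maps. That $\theta_t$ maps $\Omega(\xi,\mathrm{E})$ into itself — already recorded above as invariance of the path spaces — I would see from the estimate $\|\theta_t\omega\|_{C^{\xi}_{1/2}(\mathbb{R};\mathrm{E})}\le(1+2|t|)^{1/2}\|\omega\|_{C^{\xi}_{1/2}(\mathbb{R};\mathrm{E})}$, which follows by the substitution $p'=p+t$, $q'=q+t$ in the defining supremum together with $1+|p'|+|q'|\le(1+2|t|)(1+|p|+|q|)$; hence $\theta_t$ is a bounded linear operator on $C^{\xi}_{1/2}(\mathbb{R};\mathrm{E})$, and being continuous and carrying the generating class of (shifted) smooth compactly supported paths vanishing at $0$ into $\Omega(\xi,\mathrm{E})$, it preserves the closure $\Omega(\xi,\mathrm{E})$; applying the same bound to $\theta_{-t}$ shows $\theta_t$ is a homeomorphism.

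For joint measurability I would use that $\Omega(\xi,\mathrm{E})$ is a separable metric space whose Borel $\sigma$-algebra is generated by the evaluation maps $\omega\mapsto\omega(r)$, $r\in\mathbb{R}$; it therefore suffices to check that $(t,\omega)\mapsto(\theta_t\omega)(r)=\omega(r+t)-\omega(t)$ is continuous for each $r$, which is immediate once one splits, say, $\|\omega_n(r+t_n)-\omega(r+t)\|_{\mathrm{E}}$ into a piece controlled by $\|\omega_n\|_{C^{\xi}_{1/2}(\mathbb{R};\mathrm{E})}\,|t_n-t|^{\xi}(1+|r+t_n|+|r+t|)^{1/2}\to0$ (the seminorms being bounded along a convergent sequence) and a piece $\|\omega_n(r+t)-\omega(r+t)\|_{\mathrm{E}}\to0$. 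Finally, $\mathbb{P}$-invariance is precisely the stationarity of the increments of the Wiener process: since $\mathbb{P}=\mathscr{L}\bigl(\{w_s\}_{s\in\mathbb{R}}\bigr)$ and $\{w_{s+t}-w_t\}_{s\in\mathbb{R}}$ is again a two-sided $\mathrm{E}$-valued Wiener process with the same covariance (equivalently, with RKHS $\mathrm{K}$), its law equals $\mathbb{P}$, so $(\theta_t)_*\mathbb{P}=\mathbb{P}$. I expect the only genuinely delicate point to be the stability of the \emph{weighted} H\"older path space $\Omega(\xi,\mathrm{E})$ under $\theta_t$, whose norm is not translation invariant; for this one may appeal directly to \cite[Proposition 6.13]{Brzezniak+Li_2006}, where exactly this construction is carried out, or reprove it via the norm estimates just indicated.
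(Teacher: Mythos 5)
Your verification is correct, and it is essentially the argument the paper implicitly relies on: the paper gives no proof of Proposition \ref{m-DS1} at all, simply quoting \cite[Proposition 6.13]{Brzezniak+Li_2006}, where exactly the checks you perform (the group property from $\omega(0)=0$, the bound $\|\theta_t\omega\|_{C^{\xi}_{1/2}(\mathbb{R};\mathrm{E})}\le (1+2|t|)^{1/2}\|\omega\|_{C^{\xi}_{1/2}(\mathbb{R};\mathrm{E})}$, measurability via the evaluation maps, and $\mathbb{P}$-invariance from stationarity of the Wiener increments together with uniqueness of the law determined by the RKHS $\mathrm{K}$) are carried out. The only place where your sketch is thin is the point you yourself flag: to conclude that $\theta_t$ preserves $\Omega(\xi,\mathrm{E})$ it is not quite enough to say that $\theta_t$ "carries the generating class into $\Omega(\xi,\mathrm{E})$", because for a compactly supported smooth $\omega$ with $\omega(0)=0$ the path $\theta_t\omega$ is not compactly supported but only eventually constant (equal to $-\omega(t)$ for $|s|$ large); one must still approximate such eventually constant paths in the weighted H\"older norm by compactly supported smooth paths vanishing at $0$, which succeeds precisely because the weight $(1+|t|+|s|)^{1/2}$ grows at infinity (multiply by cutoffs $\chi(\cdot/R)$ with $|\chi'|\le C/R$ and let $R\to\infty$). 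With that detail supplied, or with the direct appeal to \cite{Brzezniak+Li_2006} you mention, your proof is complete and coincides with the cited one.
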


\subsection{Ornstein-Uhlenbeck process}\label{O-Up}
Let us first recall some analytic preliminaries from \cite{Brzezniak+Li_2006} which will help us to define an Ornstein-Uhlenbeck process. All the results of this subsection are valid for the space $\mathrm{C}^{\xi}_{1/2} (\mathbb{R}; \mathbb{Y})$ replaced by $\Omega(\xi, \mathbb{Y}).$ 
\begin{proposition}[{\cite[Proposition 2.11]{Brzezniak+Li_2006}}]\label{Ap}
	Let  $-\mathbb{A}$ be the generator of an analytic semigroup $\{e^{t\mathbb{A}}\}_{t\geq 0}$ on a separable Banach space $\mathbb{Y}$ such that for some $C>0\ \text{and}\ \gamma>0$
	\begin{align}\label{ASG}
		\| \mathbb{A}^{1+\delta}e^{-t\mathbb{A}}\|_{\mathfrak{L}(\mathbb{Y})} \leq C_{\delta} t^{-1-\delta} e^{-\gamma t}, \ \ t> 0,
	\end{align}
	where $\mathfrak{L}(\mathbb{Y})$ denotes the space of all bounded linear operators from $\mathbb{Y}$ to $\mathbb{Y}$.	For $\xi \in (\delta, 1/2)$ and $\widetilde{\omega} \in  \mathrm{C}^{\xi}_{1/2} (\mathbb{R};\mathbb{Y}),$  define 
	\begin{align}
		\hat{\z}(t) = \hat{\z} (\mathbb{A}; \widetilde{\omega})(t) := \int_{-\infty}^{t} \mathbb{A}^{1+\delta} e^{-(t-r)\mathbb{A}} (\widetilde{\omega}(t) - \widetilde{\omega}(r))\d r, \ \ t\in \mathbb{R}.
	\end{align}
	If $t\in \mathbb{R},$ then $\hat{\z}(t)$ is a well-defined element of $\mathbb{Y}$ and the mapping 
	$$\mathrm{C}^{\xi}_{1/2} (\mathbb{R};\mathbb{Y}) \ni \widetilde{\omega}  \mapsto \hat{\z}(t) \in \mathbb{Y} $$
	is continuous. Moreover, the map $\hat{\z} :  \mathrm{C}^{\xi}_{1/2} (\mathbb{R}; \mathbb{Y}) \to  \mathrm{C}_{1/2} (\mathbb{R}; \mathbb{Y})$  is well defined, linear and bounded. In particular, there exists a constant $C >0$ such that for any $\widetilde{\omega} \in \mathrm{C}^{\xi}_{1/2} (\mathbb{R};\mathbb{Y})$ 
	\begin{align}\label{X_bound_of_z}
		\|\hat{\z}(\widetilde{\omega})(t)\|_{\mathbb{Y}} \leq C(1 + |t|^{1/2})\|\widetilde{\omega}\|_{\mathrm{C}^{\xi}_{1/2} (\mathbb{R}; \mathbb{Y})}, \ \ \ t \in \R.
	\end{align}
	Furthermore, under the same assumption, the following results hold (Corollaries 6.4, 6.6 and 6.8 in \cite{Brzezniak+Li_2006}):
	\begin{itemize}
		\item [1.]For all $-\infty<a<b<\infty$ and $t\in \R$, the map 
		\begin{align}\label{O-U_conti}
			\mathrm{C}^{\xi}_{1/2} (\mathbb{R};\mathbb{Y}) \ni \widetilde{\omega} \mapsto (\hat{\z}(\widetilde{\omega})(t), \hat{\z}(\widetilde{\omega})) \in \mathbb{Y} \times \mathrm{L}^{q} (a, b; \mathbb{Y}),
		\end{align}
		where $q\in [1, \infty]$, is continuous.
		\item [2.] For any $\omega \in \mathrm{C}^{\xi}_{1/2} (\mathbb{R};\mathbb{Y}),$
		\begin{align}\label{stationary}
			\hat{\z}(\theta_s \omega)(t) = \hat{\z}(\omega)(t+s), \ \ t, s \in \mathbb{R}.
		\end{align}
		\item [3.] For $\zeta \in \mathrm{C}_{1/2}(\mathbb{R};\mathbb{Y}),$ if we put $\uptau_s(\zeta(t))=\zeta(t+s), \ t,s \in \R,$ then, for $t \in \R ,\  \uptau_s \circ \hat{\z} = \hat{\z}\circ\theta_s$, that is, 
		\begin{align}\label{IS}
			\uptau_s\big(\hat{\z}(\omega)\big)= \hat{\z}\big(\theta_s(\omega)\big), \ \ \ \omega\in \mathrm{C}^{\xi}_{1/2} (\mathbb{R};\mathbb{Y}).
		\end{align}
	\end{itemize} 
\end{proposition}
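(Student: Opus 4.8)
The plan is to derive every assertion from the two quantitative ingredients available: the parabolic smoothing estimate \eqref{ASG} for $\mathbb{A}^{1+\delta}e^{-t\mathbb{A}}$, and the defining bound $\|\widetilde{\omega}(t)-\widetilde{\omega}(r)\|_{\mathbb{Y}}\le\|\widetilde{\omega}\|_{\mathrm{C}^{\xi}_{1/2}(\mathbb{R};\mathbb{Y})}\,|t-r|^{\xi}(1+|t|+|r|)^{1/2}$ valid for $\widetilde{\omega}\in\mathrm{C}^{\xi}_{1/2}(\mathbb{R};\mathbb{Y})$. First I would rewrite $\hat{\z}(t)$ via the substitution $s=t-r$ as $\hat{\z}(t)=\int_{0}^{\infty}\mathbb{A}^{1+\delta}e^{-s\mathbb{A}}\big(\widetilde{\omega}(t)-\widetilde{\omega}(t-s)\big)\,\d s$, observe that the integrand is strongly measurable in $s$ (continuity of $s\mapsto\mathbb{A}^{1+\delta}e^{-s\mathbb{A}}$ on $(0,\infty)$ together with continuity of $\widetilde{\omega}$), and bound its $\mathbb{Y}$-norm by $C_{\delta}\|\widetilde{\omega}\|_{\mathrm{C}^{\xi}_{1/2}}\,s^{\xi-1-\delta}e^{-\gamma s}(1+|t|+|t-s|)^{1/2}$. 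Since $|t-s|\le|t|+s$ gives $(1+|t|+|t-s|)^{1/2}\le\sqrt{2}\,(1+|t|)^{1/2}(1+s)^{1/2}$, the $\mathbb{Y}$-norm of the integrand is at most a constant times $(1+|t|)^{1/2}\|\widetilde{\omega}\|_{\mathrm{C}^{\xi}_{1/2}}\,s^{\xi-1-\delta}e^{-\gamma s}(1+s)^{1/2}$; the hypothesis $\xi>\delta$ makes $s^{\xi-1-\delta}$ integrable near $0$ and $e^{-\gamma s}$ kills the polynomial growth at $\infty$, so the Bochner integral converges absolutely and $\|\hat{\z}(t)\|_{\mathbb{Y}}\le C(1+|t|^{1/2})\|\widetilde{\omega}\|_{\mathrm{C}^{\xi}_{1/2}}$, which is \eqref{X_bound_of_z}.

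Next I would establish continuity. Linearity of $\widetilde{\omega}\mapsto\hat{\z}$ is immediate from the formula, so for each fixed $t$ the map $\widetilde{\omega}\mapsto\hat{\z}(t)$ is bounded linear, hence continuous, by the estimate just proved. Continuity of $t\mapsto\hat{\z}(t)$ follows from dominated convergence: for $t'$ ranging over a bounded neighbourhood of $t$ the integrands share the single integrable majorant above, and for each $s>0$ the integrand depends continuously on $t'$ because $\widetilde{\omega}(t')-\widetilde{\omega}(t'-s)\to\widetilde{\omega}(t)-\widetilde{\omega}(t-s)$ in $\mathbb{Y}$ and $\mathbb{A}^{1+\delta}e^{-s\mathbb{A}}$ is bounded. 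Combining this with \eqref{X_bound_of_z} shows $\hat{\z}(\widetilde{\omega})\in\mathrm{C}_{1/2}(\mathbb{R};\mathbb{Y})$ with $\|\hat{\z}(\widetilde{\omega})\|_{\mathrm{C}_{1/2}(\mathbb{R};\mathbb{Y})}\le C\|\widetilde{\omega}\|_{\mathrm{C}^{\xi}_{1/2}(\mathbb{R};\mathbb{Y})}$, i.e.\ $\hat{\z}:\mathrm{C}^{\xi}_{1/2}(\mathbb{R};\mathbb{Y})\to\mathrm{C}_{1/2}(\mathbb{R};\mathbb{Y})$ is linear and bounded.

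For the three corollaries: in \eqref{O-U_conti} the $\mathbb{Y}$-coordinate is the continuity just established, while for the $\mathrm{L}^{q}(a,b;\mathbb{Y})$-coordinate one restricts $t$ to the bounded interval $[a,b]$, so \eqref{X_bound_of_z} becomes $\|\hat{\z}(\widetilde{\omega})\|_{\mathrm{L}^{q}(a,b;\mathbb{Y})}\le C(a,b,q)\|\widetilde{\omega}\|_{\mathrm{C}^{\xi}_{1/2}(\mathbb{R};\mathbb{Y})}$, and linearity upgrades boundedness to continuity. Identities \eqref{stationary} and \eqref{IS} are pure changes of variable: since $\theta_{s}\omega(t)-\theta_{s}\omega(r)=\omega(t+s)-\omega(r+s)$, substituting $r\mapsto r-s$ in the integral defining $\hat{\z}(\theta_{s}\omega)(t)$ (and noting $(t-r)\mapsto(t+s)-r$ under the same shift) gives $\hat{\z}(\theta_{s}\omega)(t)=\hat{\z}(\omega)(t+s)$, which is \eqref{stationary}; reading the left side as $\big(\uptau_{s}(\hat{\z}(\omega))\big)(t)$ yields $\uptau_{s}\circ\hat{\z}=\hat{\z}\circ\theta_{s}$, i.e.\ \eqref{IS}.

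\textbf{Main obstacle.} No step is deep, but the technical core is the first one: extracting precisely the weight $(1+|t|^{1/2})$---and nothing worse---from the competition between the $\delta$-order singularity in \eqref{ASG}, the exponential factor $e^{-\gamma t}$, and the at-most-square-root growth tolerated in $\mathrm{C}^{\xi}_{1/2}$, and checking that $\delta<\xi<1/2$ is exactly what is needed for the singular integral to converge. A secondary point needing care is the strong measurability and the strong continuity in $s$ of the operator-valued integrand $\mathbb{A}^{1+\delta}e^{-s\mathbb{A}}(\cdot)$, which legitimizes both the Bochner integral and the dominated-convergence arguments.
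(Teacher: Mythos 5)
The paper itself gives no proof of this proposition: it is quoted directly from Brz\'ezniak--Li (Proposition 2.11 together with Corollaries 6.4, 6.6 and 6.8 of that reference), so there is no in-paper argument to compare against. Your proof is correct and is essentially the standard one from that source: the substitution $s=t-r$, the combination of the analytic-semigroup bound \eqref{ASG} with the H\"older/square-root-growth bound defining $\mathrm{C}^{\xi}_{1/2}$, integrability at $0$ from $\xi>\delta$ and at $\infty$ from $e^{-\gamma s}$, dominated convergence for continuity, and a change of variables for the shift identities \eqref{stationary} and \eqref{IS}. The only point to flag is cosmetic: the paper's transcription of $\mathrm{C}_{1/2}(\mathbb{R};\mathrm{E})$ imposes $\omega(0)=0$, which $\hat{z}(\widetilde{\omega})$ need not satisfy; this is an inconsistency inherited from the paper's definition, not a gap in your argument.
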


Next, we define the Ornstein-Uhlenbeck process under Hypothesis \ref{assump1}. For $\delta$ as in Hypothesis \ref{assump1}, $\nu> 0, \ \chi \geq 0, \ \xi \in (\delta, 1/2)$ and $ \omega \in C^{\xi}_{1/2} (\mathbb{R};\mathrm{E})$ (so that $(\nu \A + \chi\I)^{-\delta}\omega \in C^{\xi}_{1/2} (\mathbb{R};\mathfrak{X})$), we define $$ \boldsymbol{z}_{\chi}(\omega) := \hat{\z}((\nu \A + \chi\I); (\nu \A + \chi\I)^{-\delta}\omega) \ \in C_{1/2}(\mathbb{R};\mathfrak{X}),$$  that is, for any $t\geq 0,$ 
\begin{align}\label{DOu1}
	\boldsymbol{z}_{\chi}(\omega)(t)&=\int_{-\infty}^{t} (\nu \A + \chi\I)^{1+\delta} e^{-(t-\tau)(\nu \A + \chi\I)} ((\nu \A + \chi\I)^{-\delta}\theta_{\tau} \omega)(t-\tau)\d \tau.
\end{align}
For $\omega \in C^{\infty}_0 (\mathbb{R};\mathrm{E})$ with $\omega(0)= \boldsymbol{0},$ using  integration by parts, we obtain 
\begin{align*}
	\frac{\d\boldsymbol{z}_\chi(t)}{\d t} &= -(\nu \A + \chi\I )\int_{-\infty}^{t} (\nu \A + \chi\I)^{1+\delta} e^{-(t-r)(\nu \A + \chi\I)} [(\nu \A + \chi\I)^{-\delta}\omega(t) \\&\qquad\qquad - (\nu \A + \chi\I)^{-\delta}\omega(r)]\d r +  \frac{\d\omega(t)}{\d t}.
\end{align*}
Thus $\boldsymbol{z}_{\chi}(\cdot)$ is the solution of the following equation:
\begin{align}\label{OuE1}
	\frac{\d\boldsymbol{z}_{\chi} (t)}{\d t} + (\nu \A + \chi\I)\boldsymbol{z}_{\chi}(t) = \frac{\d\omega(t)}{\d t}, \ \ t\in \mathbb{R}.
\end{align}
Therefore, from the definition of the space $\Omega(\xi, \mathrm{E}),$ we have 
\begin{corollary}\label{Diff_z1}
	If $\chi_1, \chi_2 \geq 0,$ then the difference $\boldsymbol{z}_{\chi_1} - \boldsymbol{z}_{\chi_2}$ is a solution to 
	\begin{align}\label{Dif_z1}
		\frac{\d(\boldsymbol{z}_{\chi_1} - \boldsymbol{z}_{\chi_2})(t)}{\d t} + \nu\A(\boldsymbol{z}_{\chi_1} - \boldsymbol{z}_{\chi_2})(t) = -(\chi_1 \boldsymbol{z}_{\chi_1} - \chi_2\boldsymbol{z}_{\chi_2})(t), \ \ \ t \in \R.
	\end{align}
\end{corollary}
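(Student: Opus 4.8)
The statement is, up to a density argument, a one-line algebraic consequence of the defining equation \eqref{OuE1}, so the plan is short. First I would fix $\omega\in C^\infty_0(\mathbb{R};\mathrm{E})$ with $\omega(0)=\boldsymbol{0}$. By the integration-by-parts computation that produced \eqref{OuE1}, both $\boldsymbol{z}_{\chi_1}(\omega)$ and $\boldsymbol{z}_{\chi_2}(\omega)$ are classical solutions of
\[
\frac{\d\boldsymbol{z}_{\chi_i}(t)}{\d t}+(\nu\A+\chi_i\I)\boldsymbol{z}_{\chi_i}(t)=\frac{\d\omega(t)}{\d t},\qquad t\in\mathbb{R},\ i=1,2,
\]
driven by the \emph{same} term $\frac{\d\omega}{\d t}$. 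Subtracting the two identities the driving terms cancel, and writing $(\nu\A+\chi_i\I)\boldsymbol{z}_{\chi_i}=\nu\A\boldsymbol{z}_{\chi_i}+\chi_i\boldsymbol{z}_{\chi_i}$ and collecting terms gives exactly \eqref{Dif_z1}, pointwise in $t$. This proves the corollary for smooth paths.

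To reach an arbitrary $\omega\in\Omega(\xi,\mathrm{E})$, recall that $\Omega(\xi,\mathrm{E})$ is by construction the closure of $\{\omega\in C^\infty_0(\mathbb{R};\mathrm{E}):\omega(0)=\boldsymbol{0}\}$ in the $C^\xi_{1/2}(\mathbb{R};\mathrm{E})$-norm; pick $\omega_n\to\omega$ there. Since $(\nu\A+\chi_i\I)^{-\delta}$ maps $\mathrm{E}$ boundedly into $\mathfrak{X}$ (as already used in defining $\boldsymbol{z}_{\chi_i}$), we have $(\nu\A+\chi_i\I)^{-\delta}\omega_n\to(\nu\A+\chi_i\I)^{-\delta}\omega$ in $C^\xi_{1/2}(\mathbb{R};\mathfrak{X})$, and Proposition \ref{Ap} (linearity and continuity of $\hat{\z}$, together with \eqref{O-U_conti} and the bound \eqref{X_bound_of_z}) then yields $\boldsymbol{z}_{\chi_i}(\omega_n)(t)\to\boldsymbol{z}_{\chi_i}(\omega)(t)$ in $\mathfrak{X}$ for every $t$ and $\boldsymbol{z}_{\chi_i}(\omega_n)\to\boldsymbol{z}_{\chi_i}(\omega)$ in $\mathrm{L}^q_{\mathrm{loc}}(\mathbb{R};\mathfrak{X})$. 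Rewriting \eqref{Dif_z1} for each $\omega_n$ in its mild (Duhamel) form with respect to the analytic semigroup $\{e^{-t\nu\A}\}_{t\geq0}$,
\[
(\boldsymbol{z}_{\chi_1}-\boldsymbol{z}_{\chi_2})(t)=e^{-(t-s)\nu\A}(\boldsymbol{z}_{\chi_1}-\boldsymbol{z}_{\chi_2})(s)-\int_s^t e^{-(t-r)\nu\A}\big(\chi_1\boldsymbol{z}_{\chi_1}(r)-\chi_2\boldsymbol{z}_{\chi_2}(r)\big)\,\d r,\qquad -\infty<s\leq t<\infty,
\]
I would let $n\to\infty$: the boundary terms pass to the limit by pointwise convergence in $\mathfrak{X}$, and the Duhamel integral by dominated convergence, using the $\mathrm{L}^1_{\mathrm{loc}}$-convergence of the integrand and the polynomial-in-$t$ bound \eqref{X_bound_of_z}. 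Hence the same identity holds for $\omega$, so $\boldsymbol{z}_{\chi_1}(\omega)-\boldsymbol{z}_{\chi_2}(\omega)$ is a mild solution of \eqref{Dif_z1}; since here the forcing $-(\chi_1\boldsymbol{z}_{\chi_1}-\chi_2\boldsymbol{z}_{\chi_2})$ is continuous in $\mathfrak{X}$, parabolic smoothing upgrades this to a solution in the strong/classical sense, which is what \eqref{Dif_z1} asserts.

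There is no serious obstacle here; the computation is trivial and the heavy lifting was already done in Proposition \ref{Ap} and in the derivation of \eqref{OuE1}. The only points that need care are of a bookkeeping nature: fixing once and for all the function space in which \eqref{Dif_z1} is to be read (equivalently, one may apply $\A^{-\delta}$ and work in $\mathrm{E}$), checking that every object in the Duhamel formula lives in the claimed space via \eqref{X_bound_of_z}, and recording that $\{e^{-t\nu\A}\}_{t\geq0}$ acts as a bounded analytic semigroup on $\mathfrak{X}=\H\cap\mathbb{W}^{1,4}(\mathcal{O})$ so that the formula and the limit passage make sense.
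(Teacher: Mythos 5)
Your proposal is correct and follows essentially the same route as the paper: subtract the two defining equations \eqref{OuE1} for smooth paths $\omega\in C^\infty_0(\mathbb{R};\mathrm{E})$ with $\omega(0)=\boldsymbol{0}$ (so the common driving term $\frac{\d\omega}{\d t}$ cancels) and then invoke the definition of $\Omega(\xi,\mathrm{E})$ as the closure of such paths together with the continuity properties of $\hat{\z}$ from Proposition \ref{Ap}. The paper leaves the density/limit passage implicit, whereas you spell it out via the Duhamel form; that is a useful elaboration but not a different argument.
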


According to the  definition \eqref{Wp} of Wiener process $\{w_t, \ t\in \R\},$ one can view the formula \eqref{DOu1} as a definition of a process $\{\boldsymbol{z}_{\chi}(t), \ t\in \R\}$ on the probability space $(\Omega(\xi, \mathrm{E}), \mathcal{F}, \mathbb{P})$. Equation \eqref{OuE1} clearly tells that the process $\boldsymbol{z}_{\chi}(\cdot)$ is an Ornstein-Uhlenbeck process. Furthermore, the following results hold for $\boldsymbol{z}_{\chi}(\cdot)$.
\begin{proposition}[{\cite[Proposition 6.10]{Brzezniak+Li_2006}}]\label{SOUP1}
	The process $\{\boldsymbol{z}_{\chi}(t), \ t\in \mathbb{R}\},$ is a stationary Ornstein-Uhlenbeck process on $(\Omega(\xi, \mathrm{E}), \mathcal{F}, \mathbb{P})$. It is a solution of the equation 
	\begin{align}\label{OUPe1}
		\d\boldsymbol{z}_{\chi}(t) + (\nu \A + \chi \I)\boldsymbol{z}_{\chi}(t) \d t = \d\mathrm{W}(t), \ \ t\in \mathbb{R},
	\end{align}
	that is, for all $t\in \mathbb{R},$ $\mathbb{P}$-a.s.,
	\begin{align}\label{oup1}
		\boldsymbol{z}_\chi (t) = \int_{-\infty}^{t} e^{-(t-\xi)(\nu \A + \chi\I)} \d\mathrm{W}(\xi),
	\end{align}
	where the integral is an It\^o integral on the M-type 2 Banach space $\mathfrak{X}$  (cf. \cite{Brze1}). 	In particular, for some $C$ depending on $\mathfrak{X}$,
	\begin{align}\label{E-OUP1}
		\mathbb{E}\left[\|\boldsymbol{z}_{\chi} (t)\|^2_{\mathfrak{X}} \right]&= \mathbb{E}\left[\left\|\int_{-\infty}^{t} e^{-(t-\xi)(\nu \A + \chi\I)} \d\mathrm{W}(\xi)\right\|^2_{\mathfrak{X}}\right] \leq C\int_{-\infty}^{t} \|e^{-(t-\xi)(\nu \A +  \chi\I)}\|^2_{\gamma(\mathrm{K},\mathfrak{X})} \d \xi \nonumber\\&=C \int_{0}^{\infty}  e^{-2\chi \xi} \|e^{-\nu \xi \A}\|^2_{\gamma(\mathrm{K},\mathfrak{X})} \d \xi.
	\end{align} 
	Moreover, $\mathbb{E}\left[\|\boldsymbol{z}_{\chi} (t)\|^2_{\mathfrak{X}}\right]\to 0$ as $\chi \to \infty.$
\end{proposition}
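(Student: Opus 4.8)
The plan is to realize $\z_\chi$ as a genuine It\^o stochastic convolution on the Banach space $\mathfrak{X}=\H\cap\mathbb{W}^{1,4}(\mathcal{O})$, establish the representation \eqref{oup1} from the pathwise definition \eqref{DOu1}, and then read off the bound \eqref{E-OUP1}, the stationarity, and the limit $\chi\to\infty$ as consequences. First I would record the structural facts that make stochastic integration available: $\H$ is a Hilbert space and $\mathbb{W}^{1,4}(\mathcal{O})$ is an $\mathrm{L}^4$-type space with $4\ge 2$, hence both — and therefore their intersection $\mathfrak{X}$, being isometric to a closed subspace of $\H\oplus\mathbb{W}^{1,4}(\mathcal{O})$ — are of M-type $2$, which is the setting in which stochastic integration against the $\mathrm{K}$-cylindrical Wiener process $\{\W(t)\}$ and the It\^o isometry hold (cf.\ \cite{Brze1}). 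The integrand of interest is $S_\chi(t-\xi):=e^{-(t-\xi)(\nu\A+\chi\I)}=e^{-\chi(t-\xi)}e^{-\nu(t-\xi)\A}$; by analyticity of $\{e^{-\nu t\A}\}$ together with the spectral gap provided by the Poincar\'e inequality \eqref{poin}, one has $\|\A^{\delta}e^{-\nu t\A}\|_{\mathfrak{L}(\mathfrak{X})}\le C\,t^{-\delta}e^{-\kappa t}$ for suitable $C,\kappa>0$, so by Hypothesis \ref{assump1} and the ideal property of $\gamma$-radonifying operators
\begin{align*}
\|S_\chi(t-\xi)\|_{\gamma(\mathrm{K},\mathfrak{X})}
= e^{-\chi(t-\xi)}\,\|(\A^{\delta}e^{-\nu(t-\xi)\A})\,\A^{-\delta}\|_{\gamma(\mathrm{K},\mathfrak{X})}
\le C\,e^{-(\chi+\kappa)(t-\xi)}(t-\xi)^{-\delta}\,\|\A^{-\delta}\|_{\gamma(\mathrm{K},\mathfrak{X})}.
\end{align*}
Since $\delta<1/2$, the right-hand side lies in $\mathrm{L}^2((-\infty,t);\gamma(\mathrm{K},\mathfrak{X}))$, so the It\^o integral in \eqref{oup1} is well defined for every $t\in\R$.

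Next I would identify the pathwise formula \eqref{DOu1} with this stochastic convolution. The strategy is to verify the identity first for smooth Wiener paths, where it reduces to a deterministic integration by parts in the variable $r$ based on $\frac{\d}{\d r}e^{-(t-r)(\nu\A+\chi\I)}=(\nu\A+\chi\I)e^{-(t-r)(\nu\A+\chi\I)}$ and the vanishing of the increment at $r=t$; then I would pass to general paths using the continuity of $\widetilde\omega\mapsto\hat{\z}(\widetilde\omega)$ from Proposition \ref{Ap}, the density of smooth paths in $\mathrm{C}^{\xi}_{1/2}(\R;\mathrm{E})$, and a stochastic Fubini theorem to interchange the deterministic $r$-integration with the stochastic integration. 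This is precisely the bridge between $\hat{\z}$ and the stochastic convolution established in \cite{Brzezniak+Li_2006}; granting it, $\z_\chi(\cdot)$ is a bona fide $\mathfrak{X}$-valued process on $(\Omega(\xi,\mathrm{E}),\mathcal{F},\mathbb{P})$ given by \eqref{oup1}, and the differentiation already carried out to obtain \eqref{OuE1} shows that it solves \eqref{OUPe1}, i.e.\ it is an Ornstein--Uhlenbeck process.

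The estimate \eqref{E-OUP1} then follows from the It\^o isometry (or the Burkholder inequality) on the M-type $2$ space $\mathfrak{X}$, namely $\E[\|\z_\chi(t)\|_{\mathfrak{X}}^2]\le C\int_{-\infty}^{t}\|S_\chi(t-\xi)\|_{\gamma(\mathrm{K},\mathfrak{X})}^2\,\d\xi$, and the substitution $\xi\mapsto t-\xi$ together with $e^{-\xi(\nu\A+\chi\I)}=e^{-\chi\xi}e^{-\nu\xi\A}$ turns the right-hand side into $C\int_0^\infty e^{-2\chi\xi}\|e^{-\nu\xi\A}\|_{\gamma(\mathrm{K},\mathfrak{X})}^2\,\d\xi$, which is \eqref{E-OUP1}. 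Its finiteness is the $\chi=0$ case of the displayed $\gamma$-norm bound: the factor $\xi^{-2\delta}$ is integrable at the origin precisely because $2\delta<1$, and $e^{-2\kappa\xi}$ controls the tail. For the final claim I would invoke dominated convergence: as $\chi\to\infty$ the integrand $e^{-2\chi\xi}\|e^{-\nu\xi\A}\|_{\gamma(\mathrm{K},\mathfrak{X})}^2$ tends to $0$ for every $\xi>0$ and is dominated by the $\chi$-independent integrable function $\|e^{-\nu\xi\A}\|_{\gamma(\mathrm{K},\mathfrak{X})}^2$, so $\E[\|\z_\chi(t)\|_{\mathfrak{X}}^2]\to0$.

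Stationarity of $\{\z_\chi(t)\}_{t\in\R}$ is then immediate from the intertwining property $\z_\chi(\theta_s\omega)(t)=\z_\chi(\omega)(t+s)$ recorded in \eqref{stationary}--\eqref{IS} together with the $\theta_s$-invariance of $\mathbb{P}$ noted before Proposition \ref{m-DS1}: for any $s$ the finite-dimensional distributions of $\{\z_\chi(t+s)\}_t$ coincide with those of $\{\z_\chi(t)\}_t$. The step I expect to be the main obstacle is the identification in the second paragraph — making the stochastic Fubini interchange and the smooth-to-general-path passage rigorous for an $\mathfrak{X}$-valued stochastic integral over an unbounded time interval; once that bridge and the $\gamma$-radonifying/analytic-semigroup estimates are in hand, the remaining steps are routine.
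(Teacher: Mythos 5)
The paper gives no proof of this proposition: it is quoted verbatim from \cite[Proposition 6.10]{Brzezniak+Li_2006}, and your sketch follows essentially the same route as that source — identification of the pathwise process \eqref{DOu1} with the It\^o stochastic convolution \eqref{oup1} via integration by parts for smooth paths plus density and continuity of $\hat{\z}$, the M-type $2$ It\^o isometry for \eqref{E-OUP1}, dominated convergence for the limit $\chi\to\infty$, and stationarity from \eqref{stationary} together with the $\theta$-invariance of $\mathbb{P}$. Your argument is correct, with the only implicit debt being the analytic-semigroup decay and $\gamma$-radonifying bounds on $\mathfrak{X}$ (the analogue of \eqref{ASG} together with Hypothesis \ref{assump1}) that the paper itself assumes when invoking Proposition \ref{Ap}.
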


\begin{remark}
	Note that \eqref{OUPe1} is the projected form of an  equation of the following type: 
	\begin{equation}\label{eqn_z_alpha}
		\left\{
		\begin{aligned}
			\d\z_\chi (t)  + \left[(-\nu\Delta +\chi \I) \z_\chi (t) + \nabla  {p} \right]\d t &=\d \mathrm{W}(t) ,  \ \ t\in \mathbb{R},\\
			\mathrm{div}\;\z_\chi &=0,
		\end{aligned}
		\right.
	\end{equation}
	where $p$ is a scalar field associated with projected equation \eqref{OUPe1}.
\end{remark}

Let us now provide some consequences of the previous discussion which will be used in the sequel. 
\begin{lemma}\label{Bddns4}
	For each $\omega\in \Omega$ and $c>0$, we obtain 
	\begin{align*}
		\lim_{t\to - \infty} \|\z_{\chi}(\omega)(t)\|^2_{2}\  e^{c t} = 0.
	\end{align*}
\end{lemma}
\begin{proof}
	Let us fix $\omega\in \Omega$. Because of \eqref{X_bound_of_z}, there exist  $\rho_1= \rho_1(\omega)\geq 0$  and $t_0 \leq 0$ such that
	\begin{align}\label{rho}
		\frac{\|\z_{\chi}(t)\|_{2}}{|t|} \leq \rho_1, \  \frac{\|\z_{\chi}(t)\|_{4}}{|t|} \leq \rho_1 \ \text{ and } \   \frac{\|\z_{\chi}(t)\|_{\mathbb{W}^{1,4}}}{|t|} \leq \rho_1  \ \text{ for }\  t\leq t_0.
	\end{align}
	Therefore, we have, for every $\omega\in \Omega,$
	\begin{align*}
		\lim_{t\to - \infty} \|\z_{\chi}(\omega)(t)\|^2_{2}\  e^{c t}\leq&  \rho_1^2 \lim_{t\to - \infty}  |t|^2 e^{c t} =0,
	\end{align*}
	which completes the proof.
\end{proof}
\begin{lemma}\label{Bddns5}
	For each $\omega\in \Omega$ and $c>0$, we get 
	\begin{align*}
		\int_{- \infty}^{0} \bigg\{ 1 + \|\z_{\chi}(t)\|^2_{2} +  \|\z_{\chi}(t)\|^4_{\mathbb{W}^{1,4}} \bigg\}e^{c t} \d t < \infty.
	\end{align*}
\end{lemma}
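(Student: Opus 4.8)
The plan is to reduce the claimed integrability to the growth bounds on the Ornstein--Uhlenbeck process already established in \eqref{X_bound_of_z}, exactly as in Lemma \ref{Bddns4}. First I would fix $\omega\in\Omega$ and invoke \eqref{X_bound_of_z} (equivalently the bounds \eqref{rho} recorded in the proof of Lemma \ref{Bddns4}): there is $\rho_1=\rho_1(\omega)\geq 0$ and $t_0\leq 0$ such that
\begin{align*}
	\|\z_{\chi}(t)\|_{2}\leq \rho_1|t|\quad\text{and}\quad \|\z_{\chi}(t)\|_{\mathbb{W}^{1,4}}\leq \rho_1|t|,\qquad t\leq t_0 .
\end{align*}
Hence for $t\leq t_0$ the integrand is bounded above by $\big(1+\rho_1^2|t|^2+\rho_1^4|t|^4\big)e^{ct}$.

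Next I would split the integral as $\int_{-\infty}^{0}=\int_{-\infty}^{t_0}+\int_{t_0}^{0}$. On the finite interval $[t_0,0]$ the map $t\mapsto \z_{\chi}(t)$ is continuous into $\mathfrak{X}=\H\cap\mathbb{W}^{1,4}(\mathcal{O})$ (this is part of the statement of Proposition \ref{Ap}, since $\z_\chi\in C_{1/2}(\mathbb{R};\mathfrak{X})$), so $1+\|\z_{\chi}(t)\|_{2}^2+\|\z_{\chi}(t)\|_{\mathbb{W}^{1,4}}^4$ is continuous and therefore bounded on $[t_0,0]$, and $e^{ct}\leq 1$ there; thus that piece is finite. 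On $(-\infty,t_0]$ I would use the pointwise bound above together with the elementary fact that for every $c>0$ and every $k\in\mathbb{N}$, $\int_{-\infty}^{t_0}|t|^{k}e^{ct}\,\d t<\infty$ (a standard Gamma-type integral, after the substitution $s=-t$). Applying this with $k=0,2,4$ shows $\int_{-\infty}^{t_0}\big(1+\rho_1^2|t|^2+\rho_1^4|t|^4\big)e^{ct}\,\d t<\infty$, and combining the two pieces yields the claim.

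There is no real obstacle here: the only point requiring a word of care is that the constant $\rho_1$ and the threshold $t_0$ depend on $\omega$, but since the assertion is stated for each fixed $\omega\in\Omega$ this is harmless, and the exponential decay $e^{ct}$ as $t\to-\infty$ dominates every polynomial factor coming from \eqref{X_bound_of_z}. This is entirely parallel to the proof of Lemma \ref{Bddns4} and I would present it in the same compact style.
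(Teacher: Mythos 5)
Your proposal is correct and follows essentially the same route as the paper: split the integral at the threshold $t_0$ from \eqref{rho}, bound the tail by $\rho_1^2|t|^2+\rho_1^4|t|^4$ and use that polynomials times $e^{ct}$ are integrable on $(-\infty,t_0]$, while the piece over $[t_0,0]$ is finite. Your additional remark on continuity of $t\mapsto\z_\chi(t)$ into $\mathfrak{X}$ just makes explicit what the paper leaves implicit for the finite interval.
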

\begin{proof}
	Note that for $t_0\leq 0$,
	\begin{align*}
		\int_{t_0}^{0} \bigg\{ 1 + \|\z_{\chi}(t)\|^2_{2} +  \|\z_{\chi}(t)\|^4_{\mathbb{W}^{1,4}} \bigg\}e^{c t } \d t < \infty.
	\end{align*}
	Therefore, we only need to show that the integral 
	\begin{align*}
		\int_{- \infty}^{t_0} \bigg\{ 1+ \|\z_{\chi}(t)\|^2_{2} +  \|\z_{\chi}(t)\|^4_{\mathbb{W}^{1,4}}   \bigg\}e^{c t } \d t < \infty.
	\end{align*}
	In view of  \eqref{rho}, we obtain 
	\begin{align*} 
		&	\int_{- \infty}^{t_0} \bigg\{ \|\z_{\chi}(t)\|^2_{2} +  \|\z_{\chi}(t)\|^4_{\mathbb{W}^{1,4}} \bigg\}e^{c t } \d t  \leq \int_{- \infty}^{t_0} \big\{ \rho^2_1|t|^2+ \rho^4_1|t|^4 \big\}e^{c t } \d t< \infty,
	\end{align*}
	which  completes the proof.
\end{proof}

\begin{definition}\label{RA2}
	A function $\kappa: \Omega\to (0, \infty)$ belongs to the class $\mathfrak{K}$ if and only if 
	\begin{align}
		\lim_{t\to \infty} [\kappa(\theta_{-t}\omega)]^2 e^{-c t } = 0, 
	\end{align}
for all $c>0$.
\end{definition}
Let us denote the class of all closed and bounded random sets $D$ on $\H$ by $\mathfrak{DK}$ such that the radius function $\Omega\ni \omega \mapsto \kappa(D(\omega)):= \sup\{\|x\|_{2}:x\in D(\omega)\}$ belongs to the class $\mathfrak{K}.$ It is straight forward that the constant functions belong to $\mathfrak{K}$. 
It is clear by Definition \ref{RA2} that the class $\mathfrak{K}$ is closed with respect to sum, multiplication by a constant and if $\kappa \in \mathfrak{K}, 0\leq \bar{\kappa} \leq \kappa,$ then $\bar{\kappa}\in \mathfrak{K}.$	
\begin{proposition}\label{radius}
	We define functions $\kappa_{i}:\Omega\to (0, \infty), i= 1, \ldots, 4,$ by the following formulae: for $\omega\in\Omega,$
	\begin{align*}
		[\kappa_1(\omega)]^2 &:= \|\z_{\chi}(\omega)(0)\|_{2},\ \ \ &&
		[\kappa_2(\omega)]^2 := \sup_{s\leq 0} \|\z_{\chi}(\omega)(s)\|^2_{2}\  e^{\nu\lambda\left(1+ \frac{\varepsilon_0}{2}\right) s }, \\
		[\kappa_3(\omega)]^2 &:= \int_{- \infty}^{0}  e^{\nu\lambda\left(1+ \frac{\varepsilon_0}{2}\right) t } \|\z_{\chi}(\omega)(t)\|^2_{2}\  \d t, \ \ \ &&
		[\kappa_4(\omega)]^2 := \int_{- \infty}^{0} e^{\nu\lambda\left(1+ \frac{\varepsilon_0}{2}\right) t } \|\z_{\chi}(\omega)(t)\|^4_{\mathbb{W}^{1,4}}\  \d t.
	\end{align*}
	Then all these functions belongs to class $\mathfrak{K}.$
	\begin{proof}Let us recall from \eqref{stationary} that $\z_{\chi}(\theta_{-t}\omega)(s) = \z_{\chi}(\omega)(s-t)$.
		For any given $c>0$, we consider
		\begin{align*}
			\lim_{t\to  \infty}[\kappa_1(\theta_{-t}\omega)]^2 e^{-c t } =& \lim_{t\to \infty}\|\z_{\chi}(\theta_{-t}\omega)(0)\|^2_{2} e^{-c t }
			 	= \lim_{t\to \infty}\|\z_{\chi}(\omega)(-t)\|^2_{2} e^{-c t }.
		\end{align*}
		Using Lemma \ref{Bddns4}, we have, $\kappa_1 \in \mathfrak{K}.$ 
		
		Next, for any given $c>0$, we define $c_1:=\min\{\frac{c}{2},\nu\lambda\left(1+ \frac{\varepsilon_0}{2}\right)\}$ and consider 
		\begin{align*}
			[\kappa_2(\theta_{-t}\omega)]^2 e^{-ct}
			&= e^{-ct} \sup_{s\leq 0}  \|\z_{\chi}(\omega)(s-t)\|^2_{2}\  e^{\nu\lambda\left(1+ \frac{\varepsilon_0}{2}\right) s }
		\nonumber\\ &	= e^{-ct} \sup_{\sigma\leq -t}  \|\z_{\chi}(\omega)(\sigma)\|^2_{2}\  e^{\nu\lambda\left(1+ \frac{\varepsilon_0}{2}\right) (\sigma+t)}
		\nonumber\\ &	\leq e^{-ct} \sup_{\sigma\leq -t}  \|\z_{\chi}(\omega)(\sigma)\|^2_{2}\  e^{c_1 (\sigma+t)}
		\nonumber\\ &	= e^{-(c-c_1)t} \sup_{\sigma\leq -t}  \|\z_{\chi}(\omega)(\sigma)\|^2_{2}\  e^{c_1 \sigma},
		\end{align*}
		 which implies
		\begin{align*}
			&	\lim_{t\to \infty} [\kappa_2(\theta_{-t}\omega)]^2 e^{-c t } \leq \lim_{t\to \infty}   e^{-(c-c_1)t} \lim_{t\to \infty} \left[\sup_{\sigma\leq -t}  \|\z_{\chi}(\omega)(\sigma)\|^2_{2}\  e^{c_1 \sigma}\right]
			= 0,
		\end{align*}
		where we have used Lemma \ref{Bddns4}. This implies that $\kappa_2\in \mathfrak{K}.$ 
		
		Finally, from Lemma \ref{Bddns5} and absolute continuity of Lebesgue integrals, we obtain 
		\begin{align*}
			\bigg\{[\kappa_3(\theta_{-t}\omega)]^2+ [\kappa_4(\theta_{-t}\omega)]^2\bigg\} e^{-c t}
			& = e^{-c t} \int_{- \infty}^{0} \bigg\{  \|\z_{\chi}(\theta_{-t}\omega)(\sigma)\|^2_{2} + \|\z_{\chi}(\theta_{-t}\omega)(\sigma)\|^2_{\mathbb{W}^{1,4}}  \bigg\}e^{\nu\lambda\left(1+ \frac{\varepsilon_0}{2}\right) \sigma } \d \sigma\\
			& = e^{-c t} \int_{- \infty}^{0} \bigg\{  \|\z_{\chi}(\omega)(\sigma-t)\|^2_{2} + \|\z_{\chi}(\omega)(\sigma-t)\|^2_{\mathbb{W}^{1,4}}  \bigg\}e^{\nu\lambda\left(1+ \frac{\varepsilon_0}{2}\right) \sigma } \d \sigma\\
			& = e^{-c t} \int_{- \infty}^{-t} \bigg\{  \|\z_{\chi}(\omega)(\sigma)\|^2_{2} + \|\z_{\chi}(\omega)(\sigma)\|^2_{\mathbb{W}^{1,4}}  \bigg\}e^{\nu\lambda\left(1+ \frac{\varepsilon_0}{2}\right) (\sigma+t) } \d \sigma
			\\
			& \leq  e^{-c t} \int_{- \infty}^{-t} \bigg\{  \|\z_{\chi}(\omega)(\sigma)\|^2_{2} + \|\z_{\chi}(\omega)(\sigma)\|^2_{\mathbb{W}^{1,4}}  \bigg\}e^{c_1 (\sigma+t) } \d \sigma
			\\
			& \leq  e^{-(c-c_1) t} \int_{- \infty}^{-t} \bigg\{  \|\z_{\chi}(\omega)(\sigma)\|^2_{2} + \|\z_{\chi}(\omega)(\sigma)\|^2_{\mathbb{W}^{1,4}}  \bigg\}e^{c_1 \sigma } \d \sigma
			\\ & \to 0 \ \text{	as  }\  t\to \infty.
		\end{align*}
		This implies that $\kappa_3, \kappa_4 \in \mathfrak{K}$, which completes the proof.
	\end{proof}
\end{proposition}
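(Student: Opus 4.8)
The plan is to route everything through the stationarity identity \eqref{stationary}, which in the form $\z_{\chi}(\theta_{-t}\omega)(s)=\z_{\chi}(\omega)(s-t)$ rewrites each $\kappa_i(\theta_{-t}\omega)$ in terms of the single fixed path $\z_{\chi}(\omega)$ evaluated on arguments running off to $-\infty$. Since $\kappa\in\mathfrak{K}$ only requires $[\kappa(\theta_{-t}\omega)]^2 e^{-ct}\to0$ for every $c>0$, the proof reduces to combining two ingredients about the Ornstein--Uhlenbeck path: the pathwise polynomial growth bound \eqref{X_bound_of_z} (equivalently \eqref{rho}) and the decay/integrability statements of Lemmas \ref{Bddns4} and \ref{Bddns5}. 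The only delicate point is that the fixed exponential weight $\nu\lambda(1+\frac{\varepsilon_0}{2})$ built into $\kappa_2,\kappa_3,\kappa_4$ may exceed the given $c$; I would absorb this once and for all by setting $c_1:=\min\{c/2,\ \nu\lambda(1+\frac{\varepsilon_0}{2})\}>0$, so that on $(-\infty,0]$ the weight satisfies $e^{\nu\lambda(1+\varepsilon_0/2)\sigma}\leq e^{c_1\sigma}$ while $c-c_1\geq c/2>0$ still leaves a strictly decaying exponential prefactor.

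Concretely, for $\kappa_1$ the identity \eqref{stationary} gives $[\kappa_1(\theta_{-t}\omega)]^2=\|\z_{\chi}(\omega)(-t)\|_{2}$, so $[\kappa_1(\theta_{-t}\omega)]^2 e^{-ct}\leq\rho_1(\omega)\,t\,e^{-ct}\to0$ for $t$ large, by \eqref{rho}. For $\kappa_2$, after the change of variable $\sigma=s-t$ one gets $[\kappa_2(\theta_{-t}\omega)]^2 e^{-ct}=e^{-ct}\sup_{\sigma\leq-t}\|\z_{\chi}(\omega)(\sigma)\|^2_{2}\,e^{\nu\lambda(1+\varepsilon_0/2)(\sigma+t)}$; since $\sigma+t\leq0$ on that range the weight is $\leq e^{c_1(\sigma+t)}$, which factorizes the right-hand side as $e^{-(c-c_1)t}\sup_{\sigma\leq-t}\|\z_{\chi}(\omega)(\sigma)\|^2_{2}\,e^{c_1\sigma}$, and both factors tend to $0$ as $t\to\infty$ (the first because $c-c_1>0$, the second by Lemma \ref{Bddns4}). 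For $\kappa_3$ and $\kappa_4$ the same substitution turns $[\kappa_i(\theta_{-t}\omega)]^2 e^{-ct}$ into $e^{-(c-c_1)t}$ times $\int_{-\infty}^{-t}e^{c_1\sigma}\,\|\z_{\chi}(\omega)(\sigma)\|^2_{2}\,\d\sigma$ (respectively $\int_{-\infty}^{-t}e^{c_1\sigma}\,\|\z_{\chi}(\omega)(\sigma)\|^4_{\mathbb{W}^{1,4}}\,\d\sigma$ for $\kappa_4$); these integrals are finite — their tails even vanish, by absolute continuity of the Lebesgue integral — thanks to Lemma \ref{Bddns5} applied with exponent $c_1$, so the prefactor $e^{-(c-c_1)t}$ forces the limit to be $0$.

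I do not expect a genuine obstacle in this proof: the whole content is the bookkeeping between the two weight regimes $c\geq\nu\lambda(1+\frac{\varepsilon_0}{2})$ and $c<\nu\lambda(1+\frac{\varepsilon_0}{2})$, handled uniformly by the single choice $c_1=\min\{c/2,\nu\lambda(1+\frac{\varepsilon_0}{2})\}$, together with an appeal to Lemmas \ref{Bddns4}--\ref{Bddns5} (which themselves rest only on the growth bound \eqref{X_bound_of_z} for $\z_{\chi}$). Carrying out this argument for each of $\kappa_1,\dots,\kappa_4$ establishes the proposition.
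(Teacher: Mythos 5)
Your proposal is correct and follows essentially the same route as the paper's proof: rewrite $\kappa_i(\theta_{-t}\omega)$ via the stationarity identity \eqref{stationary}, absorb the fixed weight with $c_1=\min\{c/2,\nu\lambda(1+\tfrac{\varepsilon_0}{2})\}$, and conclude from the growth bound \eqref{rho}/Lemma \ref{Bddns4} for $\kappa_1,\kappa_2$ and from Lemma \ref{Bddns5} together with absolute continuity of the Lebesgue integral for $\kappa_3,\kappa_4$. The only cosmetic difference is that for $\kappa_1$ you invoke \eqref{rho} directly instead of Lemma \ref{Bddns4}, which is equivalent.
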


\subsection{Random dynamical system}

Remember that Hypothesis \ref{assump1} is satisfied and that $\delta$ has the property stated there. Let us fix $\nu> 0$, and the parameters $\chi\geq 0$ and  $\xi \in (\delta, 1/2)$.

Using a random transformation (known as Doss-Sussmann transformation, \cite{Doss_1977,Sussmann_1978}), we get a random partial differential equation which equivalent to   system \eqref{STGF}. Let us define 
\begin{align}\label{D-S_trans}
	\y^{\chi}:=\v - \boldsymbol{z}_{\chi}(\omega).
\end{align}
For convenience, we write $\y^{\chi}(t)=\y(t)$ and $\boldsymbol{z}_{\chi}(\omega)(t)=\boldsymbol{z}(t)$. Then $\y(\cdot)$ satisfies the following system:
\begin{equation}\label{CTGF-with-Pressure}
	\left\{ 
	\begin{aligned}
		\frac{\d \y}{\d t} & = - \underbrace{\nabla (\textbf{P}- {p})}_{:=\nabla\widehat{\textbf{P}}}+\nu \Delta \y-\big((\y+\z)\cdot \nabla\big)(\y+\z)+\alpha\text{div}((\Arm(\y+\z))^2)  && \\ & \quad +\beta \text{div}(|\Arm(\y+\z)|^2\Arm(\y+\z))  +\chi \z + \f  &&  \text{in }  \mathcal{O} \times (0,\infty),\\
		\text{div}\; \y&=0 \quad &&  \text{in } \mathcal{O} \times [0,\infty),\\
		\y &= \boldsymbol{0} &&  \text{on } \partial\mathcal{O}\times [0,\infty),\\
		\y(x,0)&=\x - \boldsymbol{z}_{\chi}(\omega)(0)=:\y_0  \quad &&  \text{in } \mathcal{O},
	\end{aligned}
	\right.
\end{equation}
where $p$ is the scalar appearing in \eqref{eqn_z_alpha}, and the following projected system:
\begin{equation}\label{CTGF}
	\left\{
	\begin{aligned}
		\frac{\d\y}{\d t} &= -\nu \A\y -  \B(\y + \boldsymbol{z})-\alpha\J(\y + \boldsymbol{z})-\beta\K(\y + \boldsymbol{z}) + \chi \boldsymbol{z} + \mathcal{P}\f, \\
		\y(0)& = \boldsymbol{x} - \boldsymbol{z}_{\chi}(\omega)(0)=:\y_0.
	\end{aligned}
	\right.
\end{equation}
Since $\boldsymbol{z}_{\chi}(\omega) \in C_{1/2} (\mathbb{R};\mathfrak{X}), $ then $\boldsymbol{z}_{\chi}(\omega)(0)$ is a well defined element of $\H$. For each fixed $\omega\in\Omega$,   system \eqref{CTGF} is a deterministic system. Let us now provide the definition of weak solution (in the deterministic sense, for each  fixed $\omega$) for \eqref{CTGF}. 
\begin{definition}\label{defn-CTGF}
	Assume that $\y_0 \in \H$, $\boldsymbol{z}\in\mathrm{L}^2_{\mathrm{loc}}([0,\infty);\H)\cap\mathrm{L}^4_{\mathrm{loc}}([0,\infty);\mathbb{W}^{1,4}(\mathcal{O}))$ and $\f\in \H^{-1}(\mathcal{O})$. A function $\y$ is called a \textit{weak solution} of   system \eqref{CTGF} on the time interval $[0, \infty)$, if 
	\begin{align*}
		\y &\in  \mathrm{C}_{w}([0,\infty); \H) \cap \mathrm{L}^{2}_{\mathrm{loc}}(0,\infty; \V)\cap \mathrm{L}^{4}_{\mathrm{loc}}(0,\infty; \mathbb{W}^{1,4}(\mathcal{O})), 
		\\
		 \frac{\d\y}{\d t}&\in\mathrm{L}^{2}_{\mathrm{loc}}(0,\infty;\V') + \mathrm{L}^{\frac43}_{\mathrm{loc}}(0,\infty;\mathbb{W}^{-1,\frac43}(\mathcal{O})),
	\end{align*}
	 and it satisfies 
	\begin{itemize}
		\item [(i)] for any $\boldsymbol{\phi}\in \X,$ 
		\begin{align*}
		&	\left<\frac{\d\y(t)}{\d t}, \boldsymbol{\phi}\right>
		\nonumber\\ &=  - \left\langle \nu \A\y(t)+\B(\y(t)+\boldsymbol{z}(t))+\alpha\J(\y(t)+\boldsymbol{z}(t))+\beta\K(\y(t)+\boldsymbol{z}(t)) - \chi\boldsymbol{z}(t)- \f , \boldsymbol{\phi} \right\rangle,
		\end{align*}
		for a.e. $t\in[0,\infty);$
		\item [(ii)] the initial data:
		$$\y(0)=\y_0 \ \text{ in }\ \H.$$
	\end{itemize}
\end{definition}

{\begin{theorem}\label{solution}
	Assume that \eqref{condition1} is satisfied, $\chi\geq0$, $\v_0 \in \H$, $\f\in \H^{-1}(\mathcal{O}) $ and $\boldsymbol{z}\in\mathrm{L}^2_{\mathrm{loc}}([0,\infty);\H)\cap\mathrm{L}^4_{\mathrm{loc}}([0,\infty);\mathbb{W}^{1,4}(\mathcal{O}))$. Then, there exists a unique weak solution $\y(\cdot)$ to   system \eqref{CTGF} in the sense of Definition \ref{defn-CTGF} which satisfies the following energy equality: 
		\begin{align}\label{eeq}
			&\|\y(t)\|_{2}^2+\nu\int_0^t\|\Arm(\y(s))\|_{2}^2\d s + \beta\int_0^t\|\Arm(\y(s)+\z(s))\|_{4}^4\d s + \alpha\int_0^t \Tr\big([\Arm(\y(s)+\z(s))]^3\big)\d s   \nonumber\\&= \|\y_0\|_{2}^2 +2\int_0^t\langle\B(\y(s)+\z(s))+\alpha\J(\y(s)+\z(s))+\beta\K(\y(s)+\z(s)),\z(s)\rangle\d s 
			\nonumber\\ & \quad +2\int_0^t\langle \chi\z(s)+ \f,\y(s)\rangle\d s,
		\end{align}
		for all  $t\in[0,\infty)$. Consequently, $\y \in  \mathrm{C}([0,\infty); \H)$. 
	\end{theorem}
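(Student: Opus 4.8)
The plan is to construct the solution by a Faedo--Galerkin approximation and to identify the limit by a local monotonicity (Minty--Browder) argument. Two features make this delicate: since $\mathcal{O}$ may be unbounded, the embedding $\V\hookrightarrow\H$ need not be compact, so the usual Aubin--Lions route is unavailable; and the operator $\mathscr{G}$ of \eqref{opr-G} is not globally monotone because of the transport term $\B$, so Minty's lemma cannot be applied verbatim. The device handling both difficulties — and the main obstacle of the proof — is to run the Minty argument against the exponential weight $e^{-2\int_0^t r}$, with $r$ governed by $\|\Arm(\cdot+\z)\|_4^2$, which is calibrated exactly to the correction term $\tfrac{(C_{S,3}C_K)^2}{\nu\varepsilon_0}\|\Arm(\cdot+\z)\|_4^2\|\cdot\|_2^2$ appearing in Lemma \ref{lem-Loc-Monotinicity}.

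\medskip\noindent\textit{Step 1: Galerkin scheme and a priori estimates.} Let $\{e_j\}$ be the eigenfunctions of $\A$, $\H_n=\mathrm{span}\{e_1,\dots,e_n\}$ and $P_n$ the orthogonal projection in $\H$. Since $\z\in\mathrm{L}^2_{\mathrm{loc}}([0,\infty);\H)\cap\mathrm{L}^4_{\mathrm{loc}}([0,\infty);\mathbb{W}^{1,4}(\mathcal{O}))$ and $\B,\J,\K$ are continuous on $\mathbb{W}^{1,4}(\mathcal{O})$, the finite-dimensional problem $\tfrac{\d}{\d t}\y_n=P_n[\mathcal{P}\f+\chi\z-\nu\A\y_n-\B(\y_n+\z)-\alpha\J(\y_n+\z)-\beta\K(\y_n+\z)]$, $\y_n(0)=P_n\y_0$, has a local Carath\'eodory solution. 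Testing with $\y_n$, using $\langle\nu\A\y_n,\y_n\rangle=\tfrac{\nu}{2}\|\Arm(\y_n)\|_2^2$, the cancellations $b(\cdot,\v,\v)=0$, $\langle\J(\w),\w\rangle=\tfrac12\Tr(\Arm(\w)^3)$ and $\langle\K(\w),\w\rangle=\tfrac12\|\Arm(\w)\|_4^4$, writing $\y_n=(\y_n+\z)-\z$, and absorbing the indefinite cubic term into the viscous and the $\beta$-terms by Young's inequality — which is exactly where $\varepsilon_0=1-\tfrac{\alpha^2}{2\nu\beta}\in(0,1)$ is used — together with Korn's and the Gagliardo--Nirenberg inequalities, yields a Gronwall estimate. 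Hence, on every $[0,T]$, $\y_n$ is bounded in $\mathrm{L}^\infty(0,T;\H)\cap\mathrm{L}^2(0,T;\V)\cap\mathrm{L}^4(0,T;\mathbb{W}^{1,4}(\mathcal{O}))$ (so it is global), and from $\|\B(\w)\|_2\le\|\w\|_4\|\nabla\w\|_4$, $\|\J(\w)\|_{\V'}\le C\|\Arm(\w)\|_4^2$, $\|\K(\w)\|_{\mathbb{W}^{-1,\frac43}}\le C\|\Arm(\w)\|_4^3$, also $\tfrac{\d}{\d t}\y_n$ is bounded in $\mathrm{L}^2(0,T;\V')+\mathrm{L}^{\frac43}(0,T;\mathbb{W}^{-1,\frac43}(\mathcal{O}))$.

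\medskip\noindent\textit{Step 2: passage to the limit.} Along a subsequence, $\y_n\rightharpoonup\y$ (weak-$\ast$ in $\mathrm{L}^\infty(0,T;\H)$, weak in $\mathrm{L}^2(0,T;\V)\cap\mathrm{L}^4(0,T;\mathbb{W}^{1,4}(\mathcal{O}))$), $\tfrac{\d}{\d t}\y_n\rightharpoonup\tfrac{\d}{\d t}\y$, $\y_n(t)\rightharpoonup\y(t)$ in $\H$ for every $t$, and $\mathscr{G}(\y_n)\rightharpoonup\mathbf{G}_0$ in $\mathrm{L}^2(0,T;\V')+\mathrm{L}^{\frac43}(0,T;\mathbb{W}^{-1,\frac43}(\mathcal{O}))$; passing to the limit in the Galerkin identity gives $\tfrac{\d}{\d t}\y=-\mathbf{G}_0+\chi\z+\mathcal{P}\f$, and it remains to prove $\mathbf{G}_0=\mathscr{G}(\y)$. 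Fix $\mathbf{v}\in\mathrm{L}^4(0,T;\X)\cap\mathrm{L}^\infty(0,T;\H)$ and set $r_{\mathbf v}(t):=\tfrac{(C_{S,3}C_K)^2}{\nu\varepsilon_0}\|\Arm(\mathbf{v}(t)+\z(t))\|_4^2\in\mathrm{L}^1(0,T)$. Running the balance for $e^{-2\int_0^t r_{\mathbf v}}\|\y_n(t)\|_2^2$, expanding $\langle\mathscr{G}(\y_n),\y_n\rangle$ around $\mathbf{v}$ and using \eqref{CL1} to bound $-\langle\mathscr{G}(\y_n)-\mathscr{G}(\mathbf{v}),\y_n-\mathbf{v}\rangle$ makes the $\y_n$-dependence affine; letting $n\to\infty$ (lower semicontinuity of $\|\cdot\|_2^2$ on the left, linearity and weak convergence on the right) and subtracting the energy identity for $\y$ associated with $\tfrac{\d}{\d t}\y=-\mathbf{G}_0+\chi\z+\mathcal{P}\f$ (obtained by the integration-by-parts lemma recalled in Step 3) yields
\[
\int_0^t e^{-2\int_0^s r_{\mathbf v}}\Bigl(\langle\mathbf{G}_0-\mathscr{G}(\mathbf{v}),\y-\mathbf{v}\rangle+r_{\mathbf v}\,\|\y-\mathbf{v}\|_2^2\Bigr)\,\d s\ \ge\ 0 .
\]
Choosing $\mathbf{v}=\y-\lambda\mathbf{w}$ with $\lambda>0$ and $\mathbf{w}\in\mathrm{L}^\infty(0,T;\X)$, dividing by $\lambda$ and letting $\lambda\to0^+$ — using the demicontinuity of $\mathscr{G}$ (Lemma \ref{lem-demi-G}) and dominated convergence, the dominating function coming from the locally Lipschitz bounds of Lemma \ref{lem-loc_Lip-G} and the integrability of $\y,\mathbf{w},\z$ — gives $\int_0^t e^{-2\int_0^s r_{\y}}\langle\mathbf{G}_0-\mathscr{G}(\y),\mathbf{w}\rangle\,\d s=0$ for all $\mathbf{w}$, hence $\mathbf{G}_0=\mathscr{G}(\y)$ a.e.; thus $\y$ solves \eqref{CTGF} in the sense of Definition \ref{defn-CTGF}.

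\medskip\noindent\textit{Step 3: energy equality, continuity and uniqueness.} Because $\y\in\mathrm{L}^2(0,T;\V)\cap\mathrm{L}^4(0,T;\mathbb{W}^{1,4}(\mathcal{O}))$ and $\tfrac{\d}{\d t}\y\in\mathrm{L}^2(0,T;\V')+\mathrm{L}^{\frac43}(0,T;\mathbb{W}^{-1,\frac43}(\mathcal{O}))$, with $\tfrac14+\tfrac34=1$ and $\X\hookrightarrow\H\hookrightarrow\X'$, a standard integration-by-parts lemma for this mixed Gelfand setting gives $\y\in\mathrm{C}([0,T];\H)$ and $\tfrac{\d}{\d t}\|\y\|_2^2=2\langle\tfrac{\d}{\d t}\y,\y\rangle$; substituting the equation, writing $\y=(\y+\z)-\z$ and using $b(\cdot,\v,\v)=0$, $\langle\J(\w),\w\rangle=\tfrac12\Tr(\Arm(\w)^3)$, $\langle\K(\w),\w\rangle=\tfrac12\|\Arm(\w)\|_4^4$ and $\langle\mathcal{P}\f,\y\rangle=\langle\f,\y\rangle$ produces \eqref{eeq} (and is the input used in Step 2). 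For uniqueness, if $\y_1,\y_2$ are two solutions then $\mathbf{w}=\y_1-\y_2$ satisfies, by the same lemma and \eqref{CL1},
\[
\tfrac12\tfrac{\d}{\d t}\|\mathbf{w}\|_2^2=-\langle\mathscr{G}(\y_1)-\mathscr{G}(\y_2),\y_1-\y_2\rangle\ \le\ \tfrac{(C_{S,3}C_K)^2}{\nu\varepsilon_0}\,\|\Arm(\y_2+\z)\|_4^2\,\|\mathbf{w}\|_2^2 ,
\]
and since $s\mapsto\|\Arm(\y_2(s)+\z(s))\|_4^2\in\mathrm{L}^1(0,T)$ and $\mathbf{w}(0)=\boldsymbol{0}$, Gronwall's inequality gives $\y_1=\y_2$. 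Patching the solutions built on $[0,T]$ as $T\uparrow\infty$ by uniqueness yields the global weak solution, with $\y\in\mathrm{C}([0,\infty);\H)$.
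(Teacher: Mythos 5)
Your proposal follows essentially the same route as the paper's proof of Theorem \ref{solution}: a Faedo--Galerkin scheme with the energy estimate absorbing the cubic $\alpha$-term via $\varepsilon_0$, weak limits by Banach--Alaoglu, identification of the nonlinear limit by the Minty--Browder argument run against the exponential weight built from $\|\Arm(\cdot+\z)\|_4^2$ exactly as calibrated by Lemma \ref{lem-Loc-Monotinicity}, hemicontinuity (Lemma \ref{lem-demi-G}) to pass to the limit in the perturbation, the energy equality and continuity from the $\mathrm{L}^2(0,T;\V')+\mathrm{L}^{4/3}(0,T;\mathbb{W}^{-1,\frac43})$ regularity of $\partial_t\y$, and uniqueness via \eqref{CL1} and Gronwall. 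The only slip is your choice of basis: on an unbounded Poincar\'e domain the Stokes operator need not possess an eigenbasis, so one should instead take, as the paper does, an arbitrary complete orthonormal system of $\H$ lying in $\V_s\subset\X$ — a purely cosmetic repair that does not affect the argument.
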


	\begin{proof}[Proof of Theorem \ref{solution}]
	Let us fix $T > 0$. Note that it is enough to prove the result on the interval
	$[0,T]$. The proof is divided into the following four steps.
		\vskip 2mm
		\noindent
		\textbf{Step I.} \textit{Finite-dimensional approximation and weak limits.} 		
For each $n\in\N$, we search for approximate solutions of the form 
\begin{align}
 \y^m(x,t)=\sum_{k=1}^m g^m_k(t)\boldsymbol{e}_k(x), \;\; \boldsymbol{e}_k\in\H_m,
\end{align} 
where $\{\boldsymbol{e}_k\}_{k\in\N}$ is defined in \eqref{L3}, $\H_{m}$ is defined in Subsection \ref{C_O} and  the coefficients $g^m_1,\ldots,g^m_m$ are solutions of the following $m$ ordinary differential equations:
\begin{equation}\label{finite-dimS}
\left\{
\begin{aligned}
\frac{\d}{\d t} \left(\y^m , \boldsymbol{e}_j\right)&=-\nu (\A_m\y^m-\B_{m}(\y^m+\z)-\alpha\J_{m}(\y^m+\z)-\beta\K_{m}(\y^m+\z)+\chi\z_m+\f_m, \boldsymbol{e}_j),\\
(\y^m(0), \boldsymbol{e}_j)&=(\P_m[\x-\z(\omega)(0)], \boldsymbol{e}_j)=:({\y_0}_m, \boldsymbol{e}_j),
\end{aligned}
\right.
\end{equation}
for $j=1,\cdots,m$, where  
\begin{align*}
\A_m\y^m=\P_m\A\y^m, \quad  \B_{m}(\y^m+\z)=\mathrm{P}_m\B(\y^m+\z), \quad  \J_{m}(\y^m+\z)=\mathrm{P}_m\J(\y^m+\z),\\  \K_{m}(\y^m+\z)=\mathrm{P}_m\K(\y^m+\z), \quad  \z_m=\P_m\z, \quad  \f_m=\P_m[\mathcal{P}\f]  \quad  \text{ and } \quad {\y_{0}}_m=\P_m[\y_{0}].
\end{align*}
Since $\A_m\cdot + \B_{m}(\cdot+\z)+\alpha\J_{m}(\cdot+\z)+\beta\K_{m}(\cdot+\z)$ is locally Lipschitz (see Subsection \ref{sec-operators}), system \eqref{finite-dimS} has a unique local solution $\y^m\in\mathrm{C}([0,T^*_m];\H_m)$, for some $0<T^*_m\leq T$. The following uniform estimates show that the time $T^*_m$ can be extended to time $T$. From the first equation of \eqref{finite-dimS} and \eqref{b0}, we obtain 
\begin{align}\label{S1}
&	\frac{1}{2}\frac{\d}{\d t}\|\y^m(t)\|^2_{2} +\frac{\nu}{2}\|\Arm(\y^m(t))\|^2_{2}+\frac{\beta}{2}\|\Arm(\y^m(t)+\z(t))\|^4_{4}
\nonumber\\	&= - b(\y^m(t)+\z(t), \y^m(t) , \z(t))  +\frac{\alpha}{2} \int_{\O}(\Arm(\y^m(t)+\z(t)))^2:\Arm(\y^m(t))\d x 
\nonumber\\ & \quad +\frac{\beta}{2} \int_{\O}|\Arm(\y^m(t)+\z(t))|^2[\Arm(\y^m(t)+\z(t)):\Arm(\z(t))]\d x + \chi (\z(t),\y^m(t))  +\langle\f,\y^m(t)\rangle,
\end{align}
for all $t\in[0,T]$. Next, using the H\"older's inequality, Lemma \ref{Sobolev-embedding3}, Korn-type inequality \eqref{Korn-ineq}, \eqref{condition1} (see Remark \ref{rem-condition1} also), Gagliardo-Nirenberg inequality \eqref{Gen_lady-4} and Young's inequality, we estimate the  terms of the right hand side of \eqref{S1} as 
\begin{align}
|b(\y^m+\z, \y^m, \z)| &\leq  |b(\y^m, \y^m+\z, \z)| + |b(\z, \y^m, \z)| 
\nonumber\\ & \leq \|\y^m\|_2 \|\nabla(\y^m+\z)\|_{4} \|\z\|_{4} + \|\z\|_4^2 \|\nabla\y^m\|_2 
\nonumber\\ & \leq C \|\Arm(\y^m)\|_2 \|\Arm(\y^m+\z)\|_{4} \|\z\|_{4} + C \|\z\|_4^2 \|\Arm(\y^m)\|_2 
			\nonumber\\ & \leq \frac{\nu\varepsilon_0}{24} \|\Arm(\y^m)\|^2_{2} +\frac{\beta\varepsilon_0}{12}  \|\Arm(\y^m+\z)\|^4_{4}    + C \|\z\|_{\mathbb{W}^{1,4}}^4,\label{Sb}\\
			\left\vert\frac{\alpha}{2} \int_{\O}(\Arm(\y^m+\z))^2:\Arm(\y^m)\d x \right\vert
			& \leq  \frac{\left\vert\alpha\right\vert}{2} \int_{\O} \left\vert \Arm(\y^m+\z)\right\vert^2 \left\vert\Arm(\y^m)\right\vert\d x 
			\nonumber\\ & \leq  \frac{\left\vert\alpha\right\vert}{2}  \| \Arm(\y^m+\z)\|^2_{4} \|\Arm(\y^m)\|_{2} 
			\nonumber\\ & \leq  \frac{\nu(1-\varepsilon_0)}{4} \|\Arm(\y^m)\|^2_{2}  + \frac{\alpha^2}{4\nu(1-\varepsilon_0)}   \| \Arm(\y^m+\z)\|^4_{4} 
			\nonumber\\ & \leq  \frac{\nu(1-\varepsilon_0)}{4} \|\Arm(\y^m)\|^2_{2}  + \frac{\beta(1-\varepsilon_0)}{2}  \|\Arm(\y^m+\z)\|^4_{4} \\
			\left|\frac{\beta}{2} \int_{\O}|\Arm(\y^m+\z)|^2[\Arm(\y^m+\z):\Arm(\z)]\d x\right|  & \leq \frac{\beta}{2} \|\Arm(\y^m+\z)\|^3_4\|\Arm(\z)\|_4
		\nonumber\\ & \leq  \frac{\beta\varepsilon_0}{12}  \|\Arm(\y^m+\z)\|^4_{4}      + C \|\Arm(\z)\|^4_{4},\\
		| \chi (\z,\y^m)|& \leq  \chi \|\z\|_2\|\y^m\|_2 \leq C \|\z\|_2\|\Arm(\y^m)\|_2
		\nonumber\\ &  \leq \frac{\nu\varepsilon_0}{24} \|\Arm(\y^m)\|^2_{2} + C \|\z\|^2_2, \\
			\big|\langle\f,\y^m\rangle\big| 
		& \leq \|\f\|_{\H^{-1}} \|\y^m\|_{\V}  \leq C \|\f\|_{\H^{-1}}\|\Arm(\y^m)\|_{2} 
		\nonumber \\
		& \leq   \frac{\nu\varepsilon_0}{24} \|\Arm(\y^m)\|^2_{2}    + C  \|\f\|^2_{\H^{-1}}.
		\label{S2}
	\end{align}
		Combining \eqref{S1}-\eqref{S2}, we deduce
		\begin{align}\label{S3}
			&\frac{\d}{\d t}\|\y^m(t)\|^2_{2}+\frac{\nu}{2}\left(1+\frac{\varepsilon_0}{2}\right) \|\Arm(\y^m(t))\|^2_{2} + \frac{\beta\varepsilon_0}{2}\|\Arm(\y^m(t)+\z(t))\|^4_{4}
			\nonumber\\ &	\leq C \bigg[\|\f\|^2_{\H^{-1}}  + \|\z(t)\|^2_{2} + \|\z(t)\|^4_{\mathbb{W}^{1,4}}  \bigg],
		\end{align}
		which gives for all $t\in[0,T]$
		\begin{align}\label{S4}
			&	\|\y^m(t)\|^2_{2} + \frac{\nu}{2}\left(1+\frac{\varepsilon_0}{2}\right)\int_{0}^{t}\|\Arm(\y^m(s))\|^2_{2} \d s   +\frac{\beta\varepsilon_0}{2}\int_{0}^{t}\|\Arm(\y^m(s))\|^4_{4}\d s
			\nonumber\\& 
			\leq \|\y^m(0)\|^2_{2}   +C\int_{0}^{t}\bigg[ \|\f\|^2_{\H^{-1}} +  \|\z(s)\|^2_{2} 
			 + \|\z(s)\|^4_{\mathbb{W}^{1,4}}   \bigg]\d s.
		\end{align}
		Hence, an application of Gronwall's inequality and the fact that $\|\y^m(0)\|_{2}\leq\|\y_0\|_{2}$, $\f\in\H^{-1}(\O)$,   and $\boldsymbol{z}\in\mathrm{L}^2(0,T;\H)\cap\mathrm{L}^4(0,T;\mathbb{W}^{1,4}(\mathcal{O}))$, we have from \eqref{S4} that
		\begin{align}\label{S5}
			\{\y^m\}_{m\in\N} \text{ is a bounded sequence in }\mathrm{L}^{\infty}(0,T;\H)\cap\mathrm{L}^{2}(0,T;\V)\cap\mathrm{L}^{4}(0,T;\mathbb{W}^{1,4}(\O)).
		\end{align}
For any arbitrary element $\boldsymbol{\phi}\in \mathrm{L}^4(0,T;\mathbb{U})$, using H\"older's inequality and Lemma \ref{lem-Pm}, we have from \eqref{finite-dimS} that
		\begin{align}\label{eqn-deri-bound}
			&\left|\int_{0}^{T}\left\langle\frac{\d\y^m(t)}{\d t},\boldsymbol{\phi}(t)\right\rangle\d t\right|
			\nonumber\\&\leq \int_{0}^{T}\bigg[\nu\left|(\nabla\y^m(t),\nabla\P_m\boldsymbol{\phi}(t))\right|+ \left| b(\y^m(t)+\z(t),\P_m\boldsymbol{\phi}(t),\y^m(t)+\z(t))\right| + \chi|(\z(t),\P_m\boldsymbol{\phi}(t)) |
			\nonumber\\ & \qquad + | \langle\f,\P_m\boldsymbol{\phi}(t)\rangle |\bigg]\d t 
			 + |\alpha|\int_0^{T} \int_{\O}|\Arm(\y^m(x,t)+\z(x,t))|^2|\nabla\P_m\boldsymbol{\phi}(x,t)|\d x\d t 
			 \nonumber\\ & \qquad + \beta \int_0^T \int_{\O}|\Arm(\y^m(x,t)+\z(x,t))|^3|\nabla\P_m\boldsymbol{\phi}(x,t)|\d x\d t
			 \nonumber\\&\leq \int_{0}^{T}\bigg[\nu\|\y^m(t)\|_{\V}\|\P_m\boldsymbol{\phi}(t)\|_{\V} + \|\y^m(t)+\z(t)\|^2_{4}\|\P_m\boldsymbol{\phi}(t)\|_{\V}  + \chi\|\z(t)\|_2\|\P_m\boldsymbol{\phi}(t)\|_2
			 \nonumber\\ & \qquad +  \|\f\|_{\H^{-1}} \|\P_m\boldsymbol{\phi}(t)\|_{\V} \bigg]\d t 
			 + |\alpha|\int_0^{T} \|\Arm(\y^m(t)+\z(t))\|^2_{4} \|\P_m\boldsymbol{\phi}(t)\|_{\V} \d t 
			 \nonumber\\ & \qquad + \beta \int_0^T \|\Arm(\y^m(t)+\z(t))\|^3_4\|\nabla\P_m\boldsymbol{\phi}(t)\|_4\d t
			 \nonumber\\ & \leq C \bigg[ \|\y^m\|_{\mathrm{L}^{2}(0,T;\V)}\|\P_m\boldsymbol{\phi}\|_{\mathrm{L}^{2}(0,T;\V)} + \|\y^m+\z\|^2_{\mathrm{L}^{4}(0,T;\mathbb{L}^{4})}\|\P_m\boldsymbol{\phi}\|_{\mathrm{L}^{2}(0,T;\V)} + \|\z\|_{\mathrm{L}^{2}(0,T;\H)}\|\P_m\boldsymbol{\phi}\|_{\mathrm{L}^{2}(0,T;\H)} 
			 \nonumber\\ & \quad + T^{\frac12}\|\f\|_{\H^{-1}}  \|\P_m\boldsymbol{\phi}\|_{\mathrm{L}^{2}(0,T;\V)} + \|\Arm(\y^m+\z)\|^2_{\mathrm{L}^{4}(0,T;\mathbb{L}^{4})}\|\P_m\boldsymbol{\phi}\|_{\mathrm{L}^{2}(0,T;\V)} 
			 \nonumber\\ & \quad + \|\Arm(\y^m+\z)\|^3_{\mathrm{L}^{4}(0,T;\mathbb{L}^{4})}\|\nabla\P_m\boldsymbol{\phi}\|_{\mathrm{L}^{4}(0,T;\mathbb{L}^{4})}  \bigg]
			 \nonumber\\ & \leq C \bigg[ \|\y^m\|_{\mathrm{L}^{2}(0,T;\V)}  + \|\y^m+\z\|^2_{\mathrm{L}^{4}(0,T;\mathbb{L}^4)}  + \|\z\|_{\mathrm{L}^{2}(0,T;\H)}  
			  + T^{\frac12}\|\f\|_{\H^{-1}}   + \|\Arm(\y^m+\z)\|^2_{\mathrm{L}^{4}(0,T;\mathbb{L}^{4})} \bigg] \nonumber\\ & \quad \times  \|\P_m\boldsymbol{\phi}\|_{\mathrm{L}^{2}(0,T;\U)} 
			   + C \|\Arm(\y^m+\z)\|^3_{\mathrm{L}^{4}(0,T;\mathbb{L}^{4})}\|\P_m\boldsymbol{\phi}\|_{\mathrm{L}^{4}(0,T;\U)}  
			   \nonumber\\ & \leq C \bigg[ \|\y^m\|_{\mathrm{L}^{2}(0,T;\V)}  + \|\y^m+\z\|^2_{\mathrm{L}^{4}(0,T;\mathbb{L}^4)}  + \|\z\|_{\mathrm{L}^{2}(0,T;\H)}  
			   + T^{\frac12}\|\f\|_{\H^{-1}}   + \|\Arm(\y^m+\z)\|^2_{\mathrm{L}^{4}(0,T;\mathbb{L}^{4})} \bigg] \nonumber\\ & \quad \times  \|\boldsymbol{\phi}\|_{\mathrm{L}^{2}(0,T;\U)} 
			   + C \|\Arm(\y^m+\z)\|^3_{\mathrm{L}^{4}(0,T;\mathbb{L}^{4})}\|\boldsymbol{\phi}\|_{\mathrm{L}^{4}(0,T;\U)},
		\end{align}
		which implies that the sequence
		\begin{align}\label{eqn-seq-deri}
			\left\{\frac{\d \y^m}{\d t}\right\}_{m\in\N}  \text{ is  bounded in } \mathrm{L}^{\frac43}(0,T;\U^{\prime}).  
		\end{align}
		 Taking \eqref{S5}-\eqref{eqn-seq-deri} into account and using the \textit{Banach-Alaoglu theorem}, we obtain the existence of an element $\y\in\mathrm{L}^{\infty}(0,T;\H)\cap\mathrm{L}^{2}(0,T;\V)\cap\mathrm{L}^{4}(0,T;\mathbb{W}^{1,4}(\O))$ with $\frac{\d \y}{\d t}\in \mathrm{L}^{\frac43}(0,T;\U^{\prime})$ such that
		\begin{align}
			\y^m\xrightharpoonup{w^*}&\ \y &&\text{ in }\ \ \ \ \	\mathrm{L}^{\infty}(0,T;\H),\label{S7}\\
			\y^m\xrightharpoonup{w}&\ \y   && \text{ in } \ \ \ \ \ \mathrm{L}^{2}(0,T;\V)\cap \mathrm{L}^{4}(0,T;\mathbb{W}^{1,4}(\O)),\label{S8}\\
			\frac{\d \y^m}{\d t}\xrightharpoonup{w}&\frac{\d \y}{\d t}   && \text{ in }  \ \ \ \ \ \mathrm{L}^{\frac43}(0,T;\U^{\prime}),\label{S8d}
		\end{align}
		along a subsequence (still denoted by the same symbol). In addition, we have that  the sequence
		\begin{align}\label{eqn-seq-G}
			\mbox{$\{\mathscr{G}(\y^m)\}_{m\in\N}$ is  bounded in $\mathrm{L}^{2}(0,T;\V')+\mathrm{L}^{\frac43}(0,T;\mathbb{W}^{-1,\frac{4}{3}}(\O)),$ }
		\end{align}
		where $\mathscr{G}(\cdot)$ given by \eqref{opr-G}, which implies that there exists $\mathscr{G}_{0}\in \mathrm{L}^{2}(0,T;\V')+\mathrm{L}^{\frac43}(0,T;\mathbb{W}^{-1,\frac{4}{3}}(\O))$ such that
		\begin{align}
			\mathscr{G} (\y^m) \xrightharpoonup{w}& \;\; \mathscr{G}_{0}    \text{ in }  \ \ \ \ \ \mathrm{L}^{2}(0,T;\V')+\mathrm{L}^{\frac43}(0,T;\mathbb{W}^{-1,\frac{4}{3}}(\O)).\label{S9d}
		\end{align}	
	 Moreover, we also obtain for further use that $\y^m(t)$ converges to $\y(t)$ weakly in $\H$ for all $t\in[0,T]$ as follows:  
		From \eqref{eqn-deri-bound}, we have $\big\|\frac{\d\y_{m}}{\d t}\big\|_{\mathrm{L}^{\frac{4}{3}}(0, T; \U')}\leq C,$ for some $C>0$ and all $m\in \N.$ Therefore, by the Cauchy-Schwarz inequality, for all $0\leq t \leq t+a \leq T$, $m\in \N$ and $\boldsymbol{\psi}\in \U$, we obtain
		\begin{align}
			|(\y^m(t+a)-\y^m(t), \boldsymbol{\psi})|\leq \int_{t}^{t+a}\bigg|\bigg\langle\frac{\d\y^{m}(s)}{\d t} , \boldsymbol{\psi} \bigg\rangle\bigg|\d s\leq C(T) a^{\frac{1}{4}} \|\boldsymbol{\psi}\|_{\U}.
		\end{align}
	This shows that the sequence $\{(\y^m(\cdot), \boldsymbol{\psi})\}_{m\in\N}$ is uniformly continuous on $[0,T]$. Hence by the Arzela-Ascoli Theorem, there exists a subsequence of  $\{\y^{m}\}_{m\in\N}$ (still denoted by the same symbol) such that $(\y^m(\cdot), \boldsymbol{\psi})\to (\y(\cdot), \boldsymbol{\psi})$ uniformly on $[0,T]$. Since $\U$ is dense in $\H$, and $\sup\limits_{m\in\N, t\in[0,T]} \|\y^m(t)\|_{\H}<\infty$, then for any $\boldsymbol{\psi}\in\H$, $(\y^m(\cdot), \boldsymbol{\psi}) \to (\y(\cdot), \boldsymbol{\psi})$,  uniformly in	 $t\in[0,T]$, which shows that 
	\begin{align}\label{eqn-weak-H}
	\mbox{$\y^m(t)$ converges to $\y(t)$ weakly in $\H$ for all $t\in[0,T]$.}	
	\end{align}
	Note that $\f_m\to \mathcal{P}\f $ in $\H^{-1}(\O)$ and $\z_m\to \z$ in $\mathrm{L}^2(0,T;\H)$.  Therefore, on passing to limit as $m\to\infty$ in \eqref{finite-dimS}, the limit $\y(\cdot)$ satisfies:
		\begin{equation}\label{eqn-TGF-limit}
			\left\langle\frac{\d\y}{\d t},\boldsymbol{e}_j\right\rangle =\left\langle-\mathscr{G}_{0} +\chi\z +\mathcal{P}\f, \boldsymbol{e}_j\right\rangle, 
		\end{equation}
		for $j= 1,2,3,\cdots$. Since $-\mathscr{G}_{0} +\chi\z +\mathcal{P}\f\in \mathrm{L}^{2}(0,T;\V')+\mathrm{L}^{\frac43}(0,T;\mathbb{W}^{-1,\frac{4}{3}}(\O)) $, we deduce from \eqref{eqn-TGF-limit} that  $\frac{\d \y}{\d t}\in \mathrm{L}^{2}(0,T;\V')+\mathrm{L}^{\frac43}(0,T;\mathbb{W}^{-1,\frac43}(\O))$. Due to fact that $\{\boldsymbol{e}_j\}_{j\in\N}$ is dense in $\X$, we have 
		\begin{equation}\label{eqn-TGF-limit-2}
			\left\langle\frac{\d\y}{\d t}, \boldsymbol{\varphi}\right\rangle =\left\langle-\mathscr{G}_{0} +\chi\z +\mathcal{P}\f, \boldsymbol{\varphi} \right\rangle, 
		\end{equation}
		for all $\boldsymbol{\varphi}\in \X$.
		Note that the embedding of $\H\subset \V^{\prime}+\mathbb{W}^{-1,\frac43}(\O)$	is continuous and $\y\in \mathrm{L}^{\infty}(0,T;\H)$ implies $\y\in \mathrm{L}^{\infty}(0,T;\V^{\prime}+\mathbb{W}^{-1,\frac43}(\O))$. Thus, we get $\y,\frac{\d \y}{\d t}\in \mathrm{L}^{\frac43}(0,T;\V^{\prime}+\mathbb{W}^{-1,\frac43}(\O))$ and then invoking \cite[Theorem 2,
		Section 5.9.2]{LCE}, it is immediate that $\y\in\mathrm{C}([0,T];\V^{\prime}+\mathbb{W}^{-1,\frac43}(\O))$. Since $\H$ is reflexive, using \cite[Proposition 1.7.1]{PCAM}, we obtain $\y\in\mathrm{C}_{w}([0,T];\H)$ and the map $t\mapsto\|\y(t)\|_{2}$ is bounded. Thus the condition (ii) in the Definition \eqref{defn-CTGF} makes sense.

		Next, note that \cite[Chapter II, Theorem 1.8]{Chepyzhov+Vishik_2002}, $\y\in\mathrm{L}^{2}(0,T;\V)\cap\mathrm{L}^{4}(0,T;\mathbb{W}^{1,4}(\O))$ and $\frac{\d \y}{\d t}\in \mathrm{L}^{2}(0,T;\V')+\mathrm{L}^{\frac43}(0,T;\mathbb{W}^{-1,\frac43}(\O))$ imply $\y\in \C([0,T];\H)$, the real-valued function $t\mapsto\|\y(t)\|_{2}^2$ is absolutely continuous and the following equality is satisfied:
		\begin{align}\label{EE1}
			\frac{\d}{\d t}\|\y(t)\|_{2}^2 = 2 \left<\frac{\d \y(t)}{\d t},\y(t)  \right>,\  \ \ \ \text{ for a.e. } t\in[0,T].
		\end{align}
        
		\vskip 2mm
		\noindent
		\textbf{Step II.} \textit{Minty-Browder technique:}	
		From \eqref{EE1}, for a measurable function $\eta(t)\geq0$, we have the following equality:
		\begin{align}\label{EE2}
			e^{-2\eta (t)}\|\y(t)\|_{2}^2 + 2 \int_{0}^{t}e^{-2\eta (s)}\left\langle  \mathscr{G}_{0}(s) - \chi\z(s) - \f + \eta^{\prime}(s) \y(s), \y(s) \right\rangle \d s =\|\y(0)\|_{2}^2,
		\end{align}
		for all $t\in[0,T]$. Similar to \eqref{EE2},  for    system \eqref{finite-dimS}, we obtain the following energy equality:
		\begin{align}\label{EE3}
			e^{-2\eta (t)}\|\y^m(t)\|_{2}^2 + 2 \int_{0}^{t}e^{-2\eta (s)}\left\langle  \mathscr{G}(\y^m(s)) -\chi\z_m(s) - \f_m + \eta^{\prime}(s) \y^m(s), \y^m(s) \right\rangle \d s =\|\y^m(0)\|_{2}^2,
		\end{align}
		for all $t\in[0,T]$.	 Remember that $\y^m(0)=\mathrm{P}_m\v(0)$, and hence the initial value $\y^m(0)$ converges strongly in $\H$, that is, we have
		\begin{align}\label{MB1}
			\lim_{m\to\infty} \|\y^m(0)-\y(0)\|_{2}=0.
		\end{align} 
		For any $\boldsymbol{\phi}\in \mathrm{L}^{\infty}(0,T;\H_{n})$ with $n<m$, we define
		$$\eta(t) = \frac{(C_S C_K)^2}{\nu\varepsilon_0}\int_{0}^{t} \|\Arm(\boldsymbol{\phi}(s)+\z(s))\|_{{4}}^2\d s, \mbox{ for all } t\in[0,T].$$ 
		Using \eqref{CL1}, we obtain
		\begin{align}\label{MB2}
			\int_{0}^{T} e^{-2\eta(t)} \{\left<\mathscr{G}(\boldsymbol{\phi}(t))-\mathscr{G}(\y^m(t)),\boldsymbol{\phi}(t)-\y^m(t)\right>  + \eta^{\prime}(t) (\boldsymbol{\phi}(t)-\y^m(t), \boldsymbol{\phi}(t)-\y^m(t))\}\d t \geq0.
		\end{align}
		Making use of \eqref{EE3} in \eqref{MB2}, we obtain
		\begin{align}\label{MB3}
			&	\int_{0}^{T} e^{-2\eta (t)} \left<\mathscr{G}(\boldsymbol{\phi}(t)) +\eta^{\prime}(t) \boldsymbol{\phi} (t),\boldsymbol{\phi}(t)-\y^m(t)\right> \d t
			\nonumber\\ &  \geq \int_{0}^{T} e^{-2\eta( t)}\left<\mathscr{G}(\y^m(t))+\eta^{\prime}(t) \y^m(t),\boldsymbol{\phi}(t)-\y^m(t)\right> \d t 
			\nonumber\\ &  = \int_{0}^{T} e^{-2\eta( t)}\left<\mathscr{G}(\y^m(t))+\eta^{\prime}(t) \y^m(t),\boldsymbol{\phi}(t)\right> \d t  + \frac12\bigg[e^{-2\eta (T)}\|\y^m(T)\|_{2}^2- \|\y^m(0)\|_{2}^2\bigg]
			\nonumber\\ & \qquad -  \chi\int_{0}^{T}e^{-2\eta (t)}\left(   \z_m(t) , \y^m(t) \right) \d t -  \int_{0}^{T}e^{-2\eta (t)}\left\langle   \f_m , \y^m(t) \right\rangle \d t. 
		\end{align}
		Taking limit infimum on both sides of \eqref{MB3}, we deduce
		\begin{align}\label{MB4}
			&	\int_{0}^{T} e^{-2\eta (t)} \left<\mathscr{G}(\boldsymbol{\phi}(t)) +\eta^{\prime}(t) \boldsymbol{\phi} (t),\boldsymbol{\phi}(t)-\y(t)\right> \d t
			\nonumber\\ &  \geq  \int_{0}^{T} e^{-2\eta (t)}\left<\mathscr{G}_{0}(t)+\eta^{\prime}(t) \y(t),\boldsymbol{\phi}(t)\right> \d t  + \frac12 \liminf_{n\to \infty}\bigg[e^{-2\eta (T)}\|\y^m(T)\|_{2}^2 - \|\y^m(0)\|_{2}^2\bigg]
			\nonumber\\ & \qquad -  \chi\int_{0}^{T}e^{-2\eta (t)}\left(   \z(t) , \y(t) \right) \d t -  \int_{0}^{T}e^{-2\eta (t)}\left\langle     \f , \y(t) \right\rangle \d t
			\nonumber\\ &  \geq  \int_{0}^{T} e^{-2\eta (t)}\left<\mathscr{G}_{0}(t)+\eta^{\prime}(t) \y(t),\boldsymbol{\phi}(t)\right> \d t  + \frac12 \bigg[e^{-2\eta (T)}\|\y(T)\|_{2}^2- \|\y(0)\|_{2}^2\bigg]
			\nonumber\\ & \qquad -  \chi\int_{0}^{T}e^{-2\eta (t)}\left(   \z(t) , \y(t) \right) \d t -  \int_{0}^{T}e^{-2\eta (t)}\left\langle    \f , \y(t) \right\rangle \d t,
		\end{align}
		where we have used the weak lower semicontinuity property of the $\H$-norm (Note that $\y^m(T)$ converges to $\y(T)$ weakly in $\H$ by  \eqref{eqn-weak-H}.) and the strong convergence of the initial data \eqref{MB1} in the final inequality. Now, using the equality \eqref{EE2} in \eqref{MB4}, we further have 
		\begin{align}\label{MB5}
			&	\int_{0}^{T} e^{-2\eta (t)} \left<\mathscr{G}(\boldsymbol{\phi}(t)) +\eta^{\prime}(t) \boldsymbol{\phi} (t),\boldsymbol{\phi}(t)-\y(t)\right> \d t
			\nonumber\\ &  \geq   \int_{0}^{T} e^{-2\eta (t)}\left<\mathscr{G}_{0}(t)+\eta^{\prime}(t) \y(t),\boldsymbol{\phi}(t)\right> \d t  - \int_{0}^{T} e^{-2\eta (t)}\left<\mathscr{G}_{0}(t)+\eta^{\prime}(t) \y(t),\y(t)\right> \d t
			\nonumber\\ &  \geq   \int_{0}^{T} e^{-2\eta (t)}\left<\mathscr{G}_{0}(t)+\eta^{\prime}(t) \y(t),\boldsymbol{\phi}(t) - \y(t)\right> \d t.  
		\end{align}
		Note that the estimate \eqref{MB5} holds true for any
		$\boldsymbol{\phi}\in\mathrm{L}^{\infty}(0,T;\H_m)$, $m\in\mathbb{N}$, since the  inequality given in \eqref{MB5} is
		independent of both $m$ and $n$. Using a density
		argument, one can show that the inequality \eqref{MB5} remains true for any
		$\boldsymbol{\phi}\in\mathrm{L}^{\infty}(0,T;\H)\cap\mathrm{L}^2(0,T;\V)\cap\mathrm{L}^4(0,T;\mathbb{W}^{1,4}(\O)).$ In fact, for any
		$\boldsymbol{\phi}\in\mathrm{L}^{\infty}(0,T;\H)\cap\mathrm{L}^2(0,T;\V)\cap\mathrm{L}^4(0,T;\mathbb{W}^{1,4}(\O)),$ there	exists a strongly convergent subsequence	$\boldsymbol{\phi}_m\in\mathrm{L}^{\infty}(0,T;\H)\cap\mathrm{L}^2(0,T;\V)\cap\mathrm{L}^4(0,T;\mathbb{W}^{1,4}(\O)),$ that
		satisfies the inequality \eqref{MB5}.
		
		Taking $\boldsymbol{\phi}=\y+ r \w$, $r>0$, where $\w  \in\mathrm{L}^{\infty}(0,T;\H)\cap\mathrm{L}^2(0,T;\V)\cap\mathrm{L}^4(0,T;\mathbb{W}^{1,4}(\O)),$ and substituting for $\boldsymbol{\phi}$ in \eqref{MB5}, we get
		\begin{align}\label{MB6}
			&	\int_{0}^{T} e^{-2\eta (t)} \left<\mathscr{G}(\y(t)+ r \w(t)) - \mathscr{G}_{0}(t) + r\eta(t) \w(t), r \w(t)\right> \d t
			\geq   0.
		\end{align}
		Dividing the inequality \eqref{MB6} by $r$, using the
		hemicontinuity property of the operator $\mathscr{G}(\cdot)$ (see Lemma \ref{lem-demi-G}), and then passing $r\to 0$, we find 
		\begin{align}\label{MB7}
			&	\int_{0}^{T} e^{-2\eta (t)} \left<\mathscr{G}(\y(t)) - \mathscr{G}_{0}(t),  \w(t)\right> \d t
			\geq   0,
		\end{align}
		for any $\w  \in \mathrm{L}^{\infty}(0,T;\H)\cap\mathrm{L}^2(0,T;\V)\cap \mathrm{L}^4(0,T;\mathbb{W}^{1,4}(\O)).$ Therefore, from \eqref{MB7}, we deduce that $\mathscr{G}(\y(\cdot))=\mathscr{G}_{0}(\cdot).$ 
		In addition, $\y(\cdot)$ satisfies the energy equality \eqref{eeq} for all $t\in[0,T]$.
		\vskip 2mm
		\noindent
		\textbf{Step III.}	\textit{Uniqueness:} Define $\mathcal{Y}=\y_1-\y_2$, where $\y_1$ and $\y_2$ are two weak solutions of   system \eqref{CTGF} in the sense of Definition \ref{defn-CTGF}. Then $\mathcal{Y}\in\mathrm{C}([0,T];\H)\cap\mathrm{L}^{2}(0,T;\V)\cap \mathrm{L}^4(0,T;\mathbb{W}^{1,4}(\O))$ and satisfies
		\begin{equation}\label{Uni}
			\left\{
			\begin{aligned}
				\frac{\d\mathcal{Y}(t)}{\d t} &= -\left[\mathscr{G}(\y_1(t))-\mathscr{G}(\y_2(t))\right], \\
				\mathcal{Y}(0)&= \textbf{0},
			\end{aligned}
			\right.
		\end{equation}
		in the weak sense.	Therefore, we have
		\begin{align}\label{U1}
				\frac{1}{2}\frac{\d}{\d t}\|\mathcal{Y}(t)\|^2_{2} 
				&= - \left<\mathscr{G}(\y_1(t))-\mathscr{G}(\y_2(t)), \y_1(t)-\y_2(t)\right> 
				\nonumber\\ & \leq \frac{(C_S C_K)^2}{\nu\varepsilon_0} \|\Arm(\y_2+\z)(t)\|_{{4}}^2\; \|\mathcal{Y}(t)\|_{2}^2,
		\end{align}
		for a.e. $t\in[0,T]$, where we have used \eqref{CL1}. An application of the variation of constants formula and the fact that $\mathcal{Y}(0)=\textbf{0}$ give $\y_1(t)=\y_2(t)$, for all $t\in[0,T]$ in $\H$, which proves the uniqueness.
	\end{proof}	

}

 Next, we show that the weak solution of system \eqref{CTGF} is continuous with respect to given data (particularly $\x$, $\f$ and $\z$).

\begin{lemma}\label{RDS_Conti1}
	Let \eqref{condition1} be satisfied. For some $T >0$ fixed, assume that $\boldsymbol{x}_m \to \boldsymbol{x}$ in $\H$, $\f_m \to \f \ \text{ in }\ \H^{-1}(\mathcal{O})$ and $\z_m \to \z\ \text{ in }\ \mathrm{L}^2(0,T;\H)\cap\mathrm{L}^4(0,T;\mathbb{W}^{1,4}(\mathcal{O}))$.  Let us denote by $\y(t, \z,\f,\boldsymbol{x}),$ the solution of   system \eqref{CTGF} and by $\y(t, \z_m,\f_m,\boldsymbol{x}_m),$  the solution of   system \eqref{CTGF} with $\z, \f, \boldsymbol{x}$ being replaced by $\z_m, \f_m, \boldsymbol{x}_m$. 
	Then \begin{align}\label{5.20}
		\y(\cdot, \z_m,\f_m,\boldsymbol{x}_m) \to \y(\cdot, \z,\f,\boldsymbol{x}) \ \text{ in } \ \mathrm{C}([0,T];\H)\cap\mathrm{L}^2 (0, T;\V)\cap\mathrm{L}^4(0,T;\mathbb{W}^{1,4}(\mathcal{O})).
	\end{align}
	In particular, $\y(T, \z_m,\f_m,\boldsymbol{x}_m) \to \y(T, \z,\f,\boldsymbol{x})$ in $\H$.
\end{lemma}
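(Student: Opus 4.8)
The plan is to run the same kind of energy estimate as in the uniqueness step (Step~III of the proof of Theorem~\ref{solution}), but now for the difference of two solutions attached to different data, keeping careful track of the perturbations produced by $\z_m-\z$, $\f_m-\f$ and $\x_m-\x$. Set $\y_m:=\y(\cdot,\z_m,\f_m,\x_m)$, $\y:=\y(\cdot,\z,\f,\x)$ and $\mathcal{Y}_m:=\y_m-\y$, and write $\mathscr{G}^{\z}$ for the operator \eqref{opr-G} built with the shift $\z$. First I would observe that the convergence hypotheses make $\{\x_m\}$, $\{\f_m\}$, $\{\z_m\}$ bounded in $\H$, $\H^{-1}(\O)+\mathbb{W}^{-1,\frac43}(\O)$ and $\mathrm{L}^2(0,T;\H)\cap\mathrm{L}^4(0,T;\mathbb{W}^{1,4}(\O))$ respectively, so the a~priori estimate obtained in Step~I of the proof of Theorem~\ref{solution} gives a constant $R=R(T)$, independent of $m$, with $\|\y_m\|_{\mathrm{L}^\infty(0,T;\H)}+\|\y_m\|_{\mathrm{L}^2(0,T;\V)}+\|\y_m\|_{\mathrm{L}^4(0,T;\mathbb{W}^{1,4})}\le R$, and likewise for $\y$.

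The function $\mathcal{Y}_m$ then solves, in the weak sense,
\begin{align*}
  \frac{\d\mathcal{Y}_m}{\d t}=-\big[\mathscr{G}^{\z_m}(\y_m)-\mathscr{G}^{\z_m}(\y)\big]-\big[\mathscr{G}^{\z_m}(\y)-\mathscr{G}^{\z}(\y)\big]+\chi(\z_m-\z)+\mathcal{P}(\f_m-\f),\qquad\mathcal{Y}_m(0)=\x_m-\x,
\end{align*}
and, since $\mathcal{Y}_m$ has the regularity of a weak solution, $t\mapsto\|\mathcal{Y}_m(t)\|_2^2$ is absolutely continuous and can be differentiated after testing the above with $\mathcal{Y}_m(t)$.

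For the first bracket the monotonicity inequality \eqref{CL1} (with $\z$ replaced by $\z_m$) contributes the favourable terms $\tfrac{\nu\varepsilon_0}{4}\|\Arm(\mathcal{Y}_m)\|_2^2+\tfrac{\beta\varepsilon_0}{4}\|\Arm(\mathcal{Y}_m)\|_4^4$ against the absorbable coefficient $\tfrac{(C_{S,3}C_K)^2}{\nu\varepsilon_0}\|\Arm(\y+\z_m)\|_4^2\,\|\mathcal{Y}_m\|_2^2$. The second bracket involves only the three nonlinearities (the Stokes term does not depend on $\z$); I would bound $\langle\B(\y+\z_m)-\B(\y+\z),\mathcal{Y}_m\rangle$, $\langle\J(\y+\z_m)-\J(\y+\z),\mathcal{Y}_m\rangle$ and $\langle\K(\y+\z_m)-\K(\y+\z),\mathcal{Y}_m\rangle$ exactly as in the proof of Lemma~\ref{lem-loc_Lip-G}, with the difference $\u-\v$ there now played by $\z_m-\z$ and the remaining slots filled by $\y,\z,\z_m$ (for the $\B$-term one integrates by parts via \eqref{b0} so that only $\mathrm{L}^4$-control of $\z_m-\z$ is needed). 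Using Korn's inequality \eqref{Korn-ineq}, Lemma~\ref{Sobolev-embedding3} and Young's inequality, each such contribution is at most $\tfrac{\nu\varepsilon_0}{16}\|\Arm(\mathcal{Y}_m)\|_2^2+\tfrac{\beta\varepsilon_0}{16}\|\Arm(\mathcal{Y}_m)\|_4^4+C\,\Xi_m(t)$, where $\Xi_m(t)$ is a finite sum of products of the $\mathbb{W}^{1,4}$-norms of $\y(t),\z(t),\z_m(t),\z_m(t)-\z(t)$, of total homogeneity at most $4$ and with at least one factor $\|\z_m(t)-\z(t)\|_{\mathbb{W}^{1,4}}$; by the uniform $\mathrm{L}^4$-in-time bounds on $\y,\z,\z_m$ and by $\z_m\to\z$ in $\mathrm{L}^4(0,T;\mathbb{W}^{1,4}(\O))$, H\"older's inequality shows that $\Xi_m$ is bounded in $\mathrm{L}^1(0,T)$ uniformly in $m$ and that $\|\Xi_m\|_{\mathrm{L}^1(0,T)}\to0$. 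Finally $\chi(\z_m-\z,\mathcal{Y}_m)+\langle\mathcal{P}(\f_m-\f),\mathcal{Y}_m\rangle$ is bounded, by Young's inequality, by $\tfrac{\nu\varepsilon_0}{16}\|\Arm(\mathcal{Y}_m)\|_2^2+C\big(\|\z_m-\z\|_2^2+\|\f_m-\f\|_{\H^{-1}+\mathbb{W}^{-1,\frac43}}^{2}\big)$.

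Collecting these estimates and absorbing the gradient terms into the dissipation produces
\begin{align*}
  \frac{\d}{\d t}\|\mathcal{Y}_m(t)\|_2^2+\frac{\nu\varepsilon_0}{8}\|\Arm(\mathcal{Y}_m(t))\|_2^2+\frac{\beta\varepsilon_0}{8}\|\Arm(\mathcal{Y}_m(t))\|_4^4\le g_m(t)\,\|\mathcal{Y}_m(t)\|_2^2+h_m(t),
\end{align*}
where $\{g_m\}$ is bounded in $\mathrm{L}^1(0,T)$ uniformly in $m$ and $\|h_m\|_{\mathrm{L}^1(0,T)}\to0$. Gronwall's lemma together with $\|\mathcal{Y}_m(0)\|_2=\|\x_m-\x\|_2\to0$ then gives $\sup_{t\in[0,T]}\|\mathcal{Y}_m(t)\|_2^2\to0$, that is, $\y_m\to\y$ in $\mathrm{C}([0,T];\H)$ and in particular $\y_m(T)\to\y(T)$ in $\H$; integrating the displayed inequality over $[0,T]$ and using this uniform convergence forces $\int_0^T\big(\|\Arm(\mathcal{Y}_m)\|_2^2+\|\Arm(\mathcal{Y}_m)\|_4^4\big)\,\d t\to0$, whence, via Korn's inequality \eqref{Korn-ineq} and the equivalence of $\|\cdot\|_\V$ with $\|\Arm(\cdot)\|_2$ on $\V$, convergence in $\mathrm{L}^2(0,T;\V)\cap\mathrm{L}^4(0,T;\mathbb{W}^{1,4}(\O))$. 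The main difficulty is the bookkeeping for the perturbation bracket $\mathscr{G}^{\z_m}(\y)-\mathscr{G}^{\z}(\y)$, especially its cubic $\K$-part: each nonlinearity has to be split so that the factor carrying $\z_m-\z$ involves only norms that are uniformly bounded in the appropriate $\mathrm{L}^q(0,T)$-space — making the resulting time-weight integrable and vanishing as $m\to\infty$ — while the companion factor is either $\|\mathcal{Y}_m\|_2$ (to be fed into Gronwall) or a gradient norm that can be absorbed into the dissipation.
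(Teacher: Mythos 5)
Your proposal is correct and rests on the same core mechanism as the paper's proof: an energy estimate for the difference $\mathfrak{F}_m=\y_m-\y$, the monotonicity inequality \eqref{CL1} to absorb the dangerous part of $\mathscr{G}(\y_m)-\mathscr{G}(\y)$ into the dissipation, and Gronwall's lemma driven by the $\mathrm{L}^1(0,T)$-integrable weight $\|\Arm(\y+\z)\|_4^2$ (resp.\ $\|\Arm(\y+\z_m)\|_4^2$ in your version), followed by integrating the inequality to upgrade the uniform-in-time convergence to $\mathrm{L}^2(0,T;\V)\cap\mathrm{L}^4(0,T;\mathbb{W}^{1,4}(\O))$ via Korn's inequality. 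The genuine difference is in the bookkeeping of the $\z$-dependence: the paper writes the difference equation \eqref{finite-dimS_1} as $-[\mathscr{G}(\y_m)-\mathscr{G}(\y)]+\chi\hat\z_m+\hat\f_m$ and applies \eqref{CL1} with the single shift $\z$, so the perturbation produced by replacing $\z$ by $\z_m$ inside $\B$, $\J$ and $\K$ is not displayed there, whereas you split $\mathscr{G}^{\z_m}(\y_m)-\mathscr{G}^{\z}(\y)$ into the monotone bracket $\mathscr{G}^{\z_m}(\y_m)-\mathscr{G}^{\z_m}(\y)$ plus the perturbation $\mathscr{G}^{\z_m}(\y)-\mathscr{G}^{\z}(\y)$ and estimate the latter in the spirit of Lemma \ref{lem-loc_Lip-G}, producing the vanishing $\mathrm{L}^1$-forcing $\Xi_m$; this is a more complete treatment of exactly the terms the paper's displayed computation glosses over, at the cost of the extra Hölder/Young homogeneity count you describe (which does check out, e.g.\ $\int_0^T a^{4/3}b^{4/3}\,\d t\le\|a\|_{\mathrm{L}^2}^{4/3}\|b\|_{\mathrm{L}^4}^{4/3}$ for the cubic $\K$-part). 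The only imprecision is cosmetic: in the data term you write the penalty as $\|\f_m-\f\|^2_{\H^{-1}+\mathbb{W}^{-1,\frac43}}$, whereas absorbing the $\mathbb{W}^{-1,\frac43}$-component against $\|\Arm(\mathfrak{F}_m)\|_4^4$ by Young's inequality actually yields the exponents $\tfrac43$ and $\tfrac{4(4+d)}{8+3d}$ as in \eqref{5.24}; since either power of a quantity tending to zero still gives $\|h_m\|_{\mathrm{L}^1(0,T)}\to0$, this does not affect the conclusion.
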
	
\begin{proof}
	Let us introduce the following notations which help us to simplify the proof: 
	\begin{align*}
		\y_m (\cdot) &:= \y(\cdot, \z_m,\f_m,\boldsymbol{x}_m), \ \  \y(\cdot) := \y(\cdot, \z,\f,\boldsymbol{x}),\ \   \mathfrak{F}_m (\cdot):= \y_m(\cdot) - \y(\cdot),\\     
		\hat{ \z}_m(\cdot) &:= \z_m(\cdot) - \z(\cdot), \ \  \hat{ \f}_m(\cdot) := \mathcal{P}\f_m(\cdot) - \mathcal{P}\f(\cdot).
	\end{align*}
	Then $\mathfrak{F}_m$ satisfies the following system:
	\begin{equation}\label{finite-dimS_1}
		\left\{
		\begin{aligned}
			\frac{\d\mathfrak{F}_m}{\d t}  & = -\left[\mathscr{G}(\y_m)-\mathscr{G}(\y)\right] + \chi \hat{ \z}_m + \hat{ \f}_m, \\
			\mathfrak{F}_m(0)&= \boldsymbol{x}_m - \boldsymbol{x}.
		\end{aligned}
		\right.
	\end{equation}
	Multiplying by $\mathfrak{F}_m(\cdot)$ to the first equation in \eqref{finite-dimS_1}, integrating over $\mathcal{O}$ and using \eqref{CL1}, we obtain 
	\begin{align}\label{5.22}
		\frac{1}{2}& \frac{\d}{\d t}\|\mathfrak{F}_m(t) \|^2_{2}
		\nonumber\\
		&= -\left<\mathscr{G}(\y_m(t))-\mathscr{G}(\y(t)), \mathfrak{F}_m (t)\right>  + \chi( \hat{ \z}_m(t), \ \mathfrak{F}_m(t))+ \langle\hat{ \f}_m, \mathfrak{F}_m(t)\rangle
		\nonumber\\
		& \leq - \frac{\nu\varepsilon_0}{4} \|\Arm(\mathfrak{F}_m(t))\|^2_{2} - \frac{\beta\varepsilon_0}{4}\|\Arm(\mathfrak{F}_m(t))\|^4_{4} + \frac{(C_{S,3} C_K)^2}{\nu\varepsilon_0} \|\Arm(\y(t)+\z(t))\|_{{4}}^2 \|\mathfrak{F}_m(t)\|_{2}^2  
		\nonumber\\ & \quad + \chi(\hat{ \z}_m(t), \ \mathfrak{F}_m(t))+ \langle\hat{ \f}_m, \mathfrak{F}_m(t)\rangle,
	\end{align}
	for a.e. $t\in [0,T]$. Using  H\"older's inequality, \eqref{poin} and Young's inequality, we have 
	\begin{align}
		|\chi(\hat{ \z}_m, \ \mathfrak{F}_m)|& \leq \chi \|\mathfrak{F}_m\|_{2} \|\hat{ \z}_m\|_{2} \leq C \|\Arm(\mathfrak{F}_m)\|_{2} \|\hat{ \z}_m\|_{2} \leq \frac{\nu\varepsilon_0}{16}\|\Arm(\mathfrak{F}_m)\|^2_{2}+C\|\hat{ \z}_m\|^2_{2},\label{5.23}.
	\end{align} 
Similar to \eqref{S2}, we find 
\begin{align}\label{5.24}
	\big|\langle\hat{\f}_{m},\mathfrak{F}_m\rangle\big| 
	& \leq \frac{\nu\varepsilon_0}{16} \|\Arm(\mathfrak{F}_m)\|^2_{2}  
	+  C \|\hat{\f}_{m}\|^2_{\H^{-1}},
\end{align}
	Combining \eqref{5.22} and \eqref{5.24}, we deduce 
	\begin{align*}
		&\frac{\d}{\d t}\|\mathfrak{F}_m(t) \|^2_{2} + \frac{\nu\varepsilon_0}{4} \|\Arm(\mathfrak{F}_m(t))\|^2_{2} + \frac{\beta\varepsilon_0}{4}\|\Arm(\mathfrak{F}_m(t))\|^4_{4}   \nonumber\\ & \leq  \frac{(C_{S,3} C_K)^2}{\nu\varepsilon_0} \|\Arm(\y(t)+\z(t))\|_{{4}}^2 \|\mathfrak{F}_m(t)\|_{2}^2    
	  +  C \|\hat{ \z}_m(t)\|^2_{2} +  C  \|\hat{\f}_{m}\|^2_{\H^{-1}} , 
	\end{align*}
	for a.e. $t\in[0,T]$.	In view of Gronwall's inequality, we obtain
	\begin{align}\label{Energy_esti_n_2_1}
		& \|\mathfrak{F}_m(t)\|^2_{2} + \frac{\nu\varepsilon_0}{4} \int_{0}^{t} \|\Arm(\mathfrak{F}_m(s)) \|^2_{2}\d s + \frac{\beta\varepsilon_0}{4} \int_{0}^{t} \|\Arm(\mathfrak{F}_m(s)) \|^4_{4}\d s  
		\nonumber\\ &   \leq \biggl\{\|\mathfrak{F}_m(0)\|^2_{2}  + C \int_{0}^{t} \bigg[\|\hat{ \z}_m(s)\|^2_{2} + \|\hat{\f}_{m}\|^2_{\H^{-1}} \bigg] \d s\biggr\}
		  \times \exp\left\{\frac{(C_{S,3} C_K)^2}{\nu\varepsilon_0} \int_{0}^{t} \|\Arm(\y(s)+\z(s))\|_{{4}}^2\d s\right\}, 
	\end{align}
	for all $ t\in[0, T]$.	Since, $\lim\limits_{m\to\infty}\|\mathfrak{F}_m(0)\|_{2} = \lim\limits_{m\to\infty} \|\boldsymbol{x}_m- \boldsymbol{x}\|_{2} = 0 $, $$\lim\limits_{m\to\infty} \int_{0}^{T} \bigg[\|\hat{ \z}_m(s)\|^2_{2} + \|\hat{\f}_{m}\|^2_{\H^{-1}} \bigg] \d s = 0,$$ and $\y+\z\in \mathrm{L}^4(0,T;\mathbb{W}^{1,4}(\mathcal{O}))$, then \eqref{Energy_esti_n_2_1} asserts that $$\|\mathfrak{F}_m(t)\|^2_{2} + \frac{\nu\varepsilon_0}{4} \int_{0}^{t} \|\Arm(\mathfrak{F}_m(s)) \|^2_{2}\d s + \frac{\beta\varepsilon_0}{4} \int_{0}^{t} \|\Arm(\mathfrak{F}_m(s)) \|^4_{4}\d s   \to 0$$ as $m\to\infty$ uniformly in $t\in[0, T].$ Since $\y_m(\cdot)$ and $\y(\cdot)$ are continuous, we further have  $$\y(\cdot, \z_m,\f_m,\boldsymbol{x}_m) \to \y(\cdot, \z,\f,\boldsymbol{x})\  \text{ in } \ \C([0, T]; \H)\cap \mathrm{L}^2(0, T; \V)\cap \mathrm{L}^4(0,T;\mathbb{W}^{1,4}(\mathcal{O})),$$
	as $m\to\infty$. This completes the proof.
\end{proof}

\begin{definition}
	We define a map $\Psi_{\chi} : [0,\infty) \times \Omega \times \H \to \H$ by
	\begin{align}
		(t, \omega, \boldsymbol{x}) \mapsto \y^{\chi}(t)  + \z_{\chi}(\omega)(t) \in \H,
	\end{align}
	where $\y^{\chi}(t) = \y(t, \z_{\chi}(\omega)(t),\f,\boldsymbol{x} - \z_{\chi}(\omega)(0))$ is a solution to   system \eqref{CTGF} with the initial condition $\boldsymbol{x} - \z_{\chi}(\omega)(0).$
\end{definition}

\begin{remark}
Let us recall from \eqref{O-U_conti}-\eqref{DOu1} that  
the  Ornstein-Uhlenbeck process is a continuous  mapping 
$\boldsymbol{z}_{\chi}: C^{\xi}_{1/2} (\mathbb{R};\mathrm{E})\to C_{1/2}(\mathbb{R};\mathfrak{X}).$
Therefore, taking into account the continuity result established in  Lemma \ref{RDS_Conti1}, we infer that the  solution  $\y^{\chi}$ of the   system \eqref{CTGF} is a continuous function of the sample point $\omega.$
\end{remark}

\begin{proposition}\label{alpha_ind}
	If $\chi_1, \chi_2 \geq 0$, then $\Psi_{\chi_1} = \Psi_{\chi_2}.$
\end{proposition}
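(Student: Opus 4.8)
The plan is to reduce the statement to the uniqueness assertion of Theorem~\ref{solution} by performing, pathwise, an explicit change of the $\chi$-parameter at the level of solutions of system~\eqref{CTGF}. Fix $\omega\in\Omega$ and $\boldsymbol{x}\in\H$, write $\z_i:=\z_{\chi_i}(\omega)$, and let $\y_i:=\y^{\chi_i}$ be the weak solution of system~\eqref{CTGF} with parameter $\chi_i$ and initial datum $\boldsymbol{x}-\z_i(0)$, for $i=1,2$. Since $\Psi_{\chi_i}(t,\omega,\boldsymbol{x})=\y_i(t)+\z_i(t)$, it suffices to show $\y_1+\z_1=\y_2+\z_2$ on $[0,\infty)$. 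I would set $\w:=\y_1+\z_1-\z_2$ and prove that $\w$ is the weak solution of system~\eqref{CTGF} with parameter $\chi_2$ and initial datum $\boldsymbol{x}-\z_2(0)$; then $\w=\y_2$ by uniqueness, which is exactly the claim.

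For the initial value, $\w(0)=\y_1(0)+\z_1(0)-\z_2(0)=\boldsymbol{x}-\z_1(0)+\z_1(0)-\z_2(0)=\boldsymbol{x}-\z_2(0)$. For the equation, I would add the identity satisfied by $\y_1$ (system~\eqref{CTGF} with $\chi=\chi_1$) to the identity for $\z_1-\z_2$ furnished by Corollary~\ref{Diff_z1}, namely $\frac{\d}{\d t}(\z_1-\z_2)+\nu\A(\z_1-\z_2)=-(\chi_1\z_1-\chi_2\z_2)$. Substituting $\y_1+\z_1=\w+\z_2$ on the right-hand side, the two $\nu\A$-terms collapse to $-\nu\A\w$, the contributions $\chi_1\z_1$ cancel and only $\chi_2\z_2$ survives, and each nonlinear term $\B(\y_1+\z_1)$, $\J(\y_1+\z_1)$, $\K(\y_1+\z_1)$ turns into $\B(\w+\z_2)$, $\J(\w+\z_2)$, $\K(\w+\z_2)$; hence
\[
\frac{\d\w}{\d t}=-\nu\A\w-\B(\w+\z_2)-\alpha\J(\w+\z_2)-\beta\K(\w+\z_2)+\chi_2\z_2+\mathcal{P}\f,
\]
which is system~\eqref{CTGF} with parameter $\chi_2$.

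The one point that needs care — and the main obstacle — is that $\w$ must possess the regularity imposed in Definition~\ref{defn-CTGF}; since $\y_1$ already does, it is enough to check $\z_1-\z_2\in\mathrm{C}_w([0,\infty);\H)\cap\mathrm{L}^2_{\mathrm{loc}}(0,\infty;\V)\cap\mathrm{L}^4_{\mathrm{loc}}(0,\infty;\mathbb{W}^{1,4}(\mathcal{O}))$. This I would obtain from Corollary~\ref{Diff_z1}: the forcing $g:=-(\chi_1\z_1-\chi_2\z_2)$ is continuous with values in $\mathfrak{X}$ and grows at most polynomially in $t$ (because $\z_i\in C_{1/2}(\mathbb{R};\mathfrak{X})$), so the unique at-most-polynomially-growing solution on $\mathbb{R}$ of $\partial_t u+\nu\A u=g$ is $u(t)=\int_{-\infty}^{t}e^{-\nu(t-s)\A}g(s)\,\d s$; uniqueness among such solutions follows from the Poincaré inequality~\eqref{poin}, which makes $\A$ strictly positive and forces any at-most-polynomially-growing solution of the homogeneous equation to vanish as the lower limit tends to $-\infty$. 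Since $\{e^{-\nu t\A}\}_{t\geq0}$ is analytic and exponentially stable, this representation gives $u\in\mathrm{C}(\mathbb{R};\mathrm{D}(\A^{\theta}))$ for every $\theta<1$, in particular $u\in\mathrm{C}(\mathbb{R};\V)$ and, via the Sobolev embedding $\mathrm{D}(\A^{\theta})\hookrightarrow\mathbb{W}^{1,4}(\mathcal{O})$ for $\theta$ close to $1$, also $u\in\mathrm{C}(\mathbb{R};\mathbb{W}^{1,4}(\mathcal{O}))$. Thus $\w$ is an admissible weak solution, Theorem~\ref{solution} yields $\w=\y_2$ on $[0,\infty)$, and consequently $\Psi_{\chi_1}(t,\omega,\boldsymbol{x})=\y_1(t)+\z_1(t)=\w(t)+\z_2(t)=\y_2(t)+\z_2(t)=\Psi_{\chi_2}(t,\omega,\boldsymbol{x})$ for all $t\geq0$; as $\omega$ and $\boldsymbol{x}$ were arbitrary, $\Psi_{\chi_1}=\Psi_{\chi_2}$. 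The algebraic cancellation and the appeal to uniqueness are routine; only the regularity verification requires the short semigroup argument above.
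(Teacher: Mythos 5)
Your argument is correct, and it is organized differently from the paper's. You add Corollary \ref{Diff_z1} to the equation for $\y^{\chi_1}$ alone, recognize $\w=\y^{\chi_1}+\z_{\chi_1}-\z_{\chi_2}$ as a weak solution of the $\chi_2$-transformed system \eqref{CTGF} with datum $\boldsymbol{x}-\z_{\chi_2}(0)$, and conclude by the uniqueness part of Theorem \ref{solution}. The paper instead adds \eqref{Dif_z1} to the equation for the difference $\y^{\chi_1}-\y^{\chi_2}$, obtaining a closed equation \eqref{Enrgy} for $\v^{\chi_1}-\v^{\chi_2}$, and then re-runs the uniqueness estimate at the level of $\v$: regularity of $\z_{\chi_1}-\z_{\chi_2}$ via Lions--Magenes, the energy equality of Chepyzhov--Vishik, the monotonicity estimate behind \eqref{CL1}, and Gronwall, yielding \eqref{eqn_v_1-v_2}. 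Both proofs rest on the same two ingredients (Corollary \ref{Diff_z1} and the estimate \eqref{CL1}, which you use only implicitly through Theorem \ref{solution}); your reduction avoids repeating the Gronwall computation, at the price of having to check that $\w$ lies in the solution class of Definition \ref{defn-CTGF}. On that point, two remarks: the $\mathbb{W}^{1,4}$-part of the regularity of $\z_{\chi_1}-\z_{\chi_2}$ is in fact automatic, since each $\z_{\chi_i}\in C_{1/2}(\mathbb{R};\mathfrak{X})$ with $\mathfrak{X}=\H\cap\mathbb{W}^{1,4}(\mathcal{O})$, so your semigroup-convolution argument is only needed for the $\mathrm{L}^2_{\mathrm{loc}}(\V)$ regularity and the time derivative; and your embedding $\mathrm{D}(\A^{\theta})\hookrightarrow\mathbb{W}^{1,4}(\mathcal{O})$ for $\theta$ close to $1$ tacitly uses $\H^2$-regularity of the Stokes operator plus interpolation and Sobolev embedding on the (possibly unbounded) uniformly $\mathrm{C}^3$ domain --- consistent with facts the paper uses (e.g.\ $\D(\A^{1+\frac{r}{2}})\hookrightarrow\H\cap\mathbb{W}^{1,4}(\mathcal{O})$), but worth either justifying explicitly or replacing by the maximal-regularity citation to Lions--Magenes, exactly as in \eqref{359}--\eqref{359-t}, which delivers the needed regularity more directly.
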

\begin{proof}
	Let us fix $\boldsymbol{x}\in \H.$ We need to prove that $$\y^{\chi_1}(t) + \z_{\chi_1}(t) = \y^{\chi_2}(t) + \z_{\chi_2}(t), \ \ \ t\geq 0, $$ where $\z_{\chi}$ is defined by \eqref{DOu1} and $\y^{\chi}$ is a solution to   system \eqref{CTGF}. 
	
	Since $\z_{\chi_1}, \z_{\chi_2} \in \mathrm{C}([0,T];\H\cap \mathbb{L}^4(\mathcal{O}))$ and  $\z_{\chi_2}-\z_{\chi_1}$  satisfies \eqref{Dif_z1}, it implies from  \cite[Vol. I\!I, Theorem 3.2, p.22]{Lion-Mag} that  
	\begin{align}\label{359}
		\z_{\chi_2}-\z_{\chi_1}\in \mathrm{C}([0,T];\H\cap\mathbb{W}^{1,4}(\mathcal{O}))\cap \mathrm{L}^2(0,T;\V)
	\end{align}
with
\begin{align}\label{359-t}
	 \partial_{t}(\z_{\chi_2}-\z_{\chi_1}) \in \mathrm{L}^2(0,T,\V^\prime).
\end{align}

	From \eqref{CTGF}, we infer that $\y^{\chi_1}(0) - \y^{\chi_2}(0) = - (\z_{\chi_1}(0) - \z_{\chi_2}(0)) $ and 
	\begin{align*}
	&	\frac{\d(\y^{\chi_1}(t) - \y^{\chi_2}(t))}{\d t} 
	\nonumber\\ &= -\nu \A(\y^{\chi_1}(t) - \y^{\chi_2}(t)) + \big[\chi_1 \z_{\chi_1}(t) -  \chi_2 \z_{\chi_2}(t)\big]  - [\B(\y^{\chi_1}(t) + \z_{\chi_1}(t))-\B(\y^{\chi_2}(t) + \z_{\chi_2}(t))]
		\nonumber\\ & \quad - \alpha[\J(\y^{\chi_1}(t) + \z_{\chi_1}(t))-\J(\y^{\chi_2}(t) + \z_{\chi_2}(t))]
		 - \beta[\K(\y^{\chi_1}(t) + \z_{\chi_1}(t))-\K(\y^{\chi_2}(t) + \z_{\chi_2}(t))], 
	\end{align*}
	in $\X'$. It implies from Theorem \ref{solution} that 
	\begin{align}\label{360}
		\y^{\chi_1}-\y^{\chi_2}\in \mathrm{C}([0,T];\H)\cap \mathrm{L}^2(0,T;\V)\cap \mathrm{L}^4(0,T;\mathbb{W}^{1,4}(\mathcal{O})) 
	\end{align}
with
\begin{align}\label{360-t}
 \partial_{t}(\y^{\chi_1}-\y^{\chi_2}) \in \mathrm{L}^2(0,T,\V^\prime)+ \mathrm{L}^{\frac43}(0,T;\mathbb{W}^{-1,\frac43}(\mathcal{O})).
\end{align}
	Adding the equation \eqref{Dif_z1} to the above equation, we obtain 
	\begin{align}\label{Enrgy}
		\frac{\d(\v^{\chi_1}(t) - \v^{\chi_2}(t))}{\d t}& = -\nu \A(\v^{\chi_1}(t) - \v^{\chi_2}(t))  - [\B(\v^{\chi_1}(t))-\B(\v^{\chi_2}(t))]
		\nonumber\\ & \quad - \alpha[\J(\v^{\chi_1}(t))-\J(\v^{\chi_2}(t))]
		- \beta[\K(\v^{\chi_1}(t))-\K(\v^{\chi_2}(t))], 
	\end{align}
	in $\X'$,	where $\v^{\chi_1}(t)=\y^{\chi_1}(t) + \z_{\chi_1}(t),   \v^{\chi_2}(t) = \y^{\chi_2}(t) + \z_{\chi_2}(t), \  t\geq 0$ and $\v^{\chi_1}(0) - \v^{\chi_2}(0)= \boldsymbol{0}.$ In view of \eqref{359}-\eqref{359-t} and \eqref{360}-\eqref{360-t}, we infer that 
	\begin{align*}
		\v^{\chi_1}-\v^{\chi_2}\in \mathrm{C}([0,T];\H)\cap \mathrm{L}^2(0,T;\V)\cap \mathrm{L}^4(0,T;\mathbb{W}^{1,4}(\mathcal{O})) 
	\end{align*}
	with
	\begin{align*}
		\partial_{t}(\v^{\chi_1}-\v^{\chi_2}) \in \mathrm{L}^2(0,T,\V^\prime)+ \mathrm{L}^{\frac43}(0,T;\mathbb{W}^{-1,\frac43}(\mathcal{O})).
	\end{align*}
	Therefore, from  \cite[Chapter II, Theorem 1.8]{Chepyzhov+Vishik_2002}, we have 
	\begin{align*}
		& \frac12 \frac{\d}{\d t} \|\v^{\chi_1}(t) - \v^{\chi_2}(t)\|^2_2
		\nonumber\\ & = -\big\langle \nu \A(\v^{\chi_1}(t) - \v^{\chi_2}(t))  + [\B(\v^{\chi_1}(t) )-\B(\v^{\chi_2}(t) )]
		 - \alpha[\J(\v^{\chi_1}(t) )-\J(\v^{\chi_2}(t) )]
		\nonumber\\ & \qquad\qquad\qquad - \beta[\K(\v^{\chi_1}(t) )-\K(\v^{\chi_2}(t))],  \v^{\chi_1}(t) - \v^{\chi_2}(t) \big\rangle,
	\end{align*}
	for a.e. $t\in[0,T]$. Applying similar steps as we have performed for \eqref{CL1}, we arrive at
	\begin{align}\label{eqn_v_1-v_2}
		\frac{\d}{\d t} \|\v^{\chi_1}(t) - \v^{\chi_2}(t)\|^2_2\leq  \frac{2(C_{S,3} C_K)^2}{\nu\varepsilon_0} \|\Arm(\v^{\chi_2}(t))\|_{{4}}^2 \|\v^{\chi_1}(t) - \v^{\chi_2}(t)\|^2_\H,
	\end{align}
	for a.e. $t\in[0,T]$. 	Since   $\|\v^{\chi_1}(0) - \v^{\chi_2}(0)\|^2_2 = 0$ and $\v^{\chi_2}\in \mathrm{L}^4(0,T;\mathbb{W}^{1,4}(\mathcal{O})) $, by applying Gronwall inequality, we deduce  that $\|\v^{\chi_1}(t) - \v^{\chi_2}(t)\|^2_2 = 0$, for all $t\geq 0$, which completes the proof.
\end{proof}
It is proved in Proposition \ref{alpha_ind} that the map $\Psi_{\chi}$ does not depend on $\chi$ and hence, from now onward, it will be denoted by $\Psi$. A proof of the following result is similar to that in \cite[Theorem 6.15]{Brzezniak+Li_2006} which is based on the uniqueness of the solutions to   system \eqref{CTGF}. Hence we omit it here.

\begin{theorem}
	$(\Psi, \theta)$ is an RDS.
\end{theorem}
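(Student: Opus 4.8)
The plan is to verify directly the three defining properties of a random dynamical system for the map $\Psi$ over the metric dynamical system $(\Omega(\xi,\mathrm{E}),\mathcal{F},\mathbb{P},\theta)$ of Proposition \ref{m-DS1}: (i) measurability of $\Psi:[0,\infty)\times\Omega\times\H\to\H$; (ii) the identity $\Psi(0,\omega,\cdot)=\mathrm{Id}_{\H}$ together with the cocycle identity $\Psi(t+s,\omega,\cdot)=\Psi(t,\theta_s\omega,\cdot)\circ\Psi(s,\omega,\cdot)$ for all $s,t\ge0$ and $\mathbb{P}$-a.e.\ $\omega$; and (iii) continuity of $\boldsymbol{x}\mapsto\Psi(t,\omega,\boldsymbol{x})$ (in fact joint continuity in $(t,\boldsymbol{x})$). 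By Proposition \ref{alpha_ind} the map $\Psi$ is independent of $\chi$, so I may fix any convenient $\chi\ge0$ and freely use the properties of the stationary Ornstein-Uhlenbeck process $\z_{\chi}$ from Proposition \ref{SOUP1}, and in particular the flow relation \eqref{stationary}.

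For measurability, the map $\omega\mapsto\z_{\chi}(\omega)$ is continuous on $\mathrm{C}^{\xi}_{1/2}(\R;\mathrm{E})$ (Proposition \ref{Ap}), hence $(t,\omega)\mapsto\z_{\chi}(\omega)(t)\in\H$ and $\omega\mapsto\bigl(\z_{\chi}(\omega)|_{[0,T]},\z_{\chi}(\omega)(0)\bigr)\in\bigl(\mathrm{L}^2(0,T;\H)\cap\mathrm{L}^4(0,T;\mathbb{W}^{1,4}(\mathcal{O}))\bigr)\times\H$ are measurable. Composing with the data-to-solution map $(\z,\f,\boldsymbol{x}_0)\mapsto\y(\cdot,\z,\f,\boldsymbol{x}_0)\in\mathrm{C}([0,T];\H)$, which is continuous by Lemma \ref{RDS_Conti1}, then evaluating at time $t$ and adding $\z_{\chi}(\omega)(t)$, shows $(t,\omega,\boldsymbol{x})\mapsto\Psi(t,\omega,\boldsymbol{x})=\y^{\chi}(t)+\z_{\chi}(\omega)(t)$ is measurable. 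The identity $\Psi(0,\omega,\boldsymbol{x})=\boldsymbol{x}$ is immediate since $\y^{\chi}(0)=\boldsymbol{x}-\z_{\chi}(\omega)(0)$.

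The heart of the argument, and the place where uniqueness (Theorem \ref{solution}) is used, is the cocycle identity. Fix $s,t\ge0$, $\boldsymbol{x}\in\H$, set $\boldsymbol{y}_0:=\boldsymbol{x}-\z_{\chi}(\omega)(0)$ and let $\y^{\chi}(\cdot)=\y(\cdot,\z_{\chi}(\omega),\f,\boldsymbol{y}_0)$ be the unique weak solution of \eqref{CTGF}. Consider the time-translate $\boldsymbol{\eta}(r):=\y^{\chi}(r+s)$, $r\ge0$. Using the time-translation invariance of equation \eqref{CTGF} together with \eqref{stationary} — which gives $\z_{\chi}(\theta_s\omega)(r)=\z_{\chi}(\omega)(r+s)$, in particular $\z_{\chi}(\theta_s\omega)(0)=\z_{\chi}(\omega)(s)$ — one checks that $\boldsymbol{\eta}$ is a weak solution of \eqref{CTGF} over $[0,\infty)$ driven by the Ornstein-Uhlenbeck process $\z_{\chi}(\theta_s\omega)$ with initial datum $\boldsymbol{\eta}(0)=\y^{\chi}(s)$ (one must here note that the shifted trajectory lies in the regularity class of Definition \ref{defn-CTGF} and that $\z_{\chi}(\theta_s\omega)\in\mathrm{L}^2_{\mathrm{loc}}([0,\infty);\H)\cap\mathrm{L}^4_{\mathrm{loc}}([0,\infty);\mathbb{W}^{1,4}(\mathcal{O}))$). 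On the other hand, $\Psi(s,\omega,\boldsymbol{x})=\y^{\chi}(s)+\z_{\chi}(\omega)(s)$, so $\Psi(s,\omega,\boldsymbol{x})-\z_{\chi}(\theta_s\omega)(0)=\y^{\chi}(s)$, and hence $\y\bigl(\cdot,\z_{\chi}(\theta_s\omega),\f,\Psi(s,\omega,\boldsymbol{x})-\z_{\chi}(\theta_s\omega)(0)\bigr)$ is likewise the unique weak solution of \eqref{CTGF} driven by $\z_{\chi}(\theta_s\omega)$ with the same initial datum $\y^{\chi}(s)$. By the uniqueness part of Theorem \ref{solution}, these two solutions coincide on $[0,\infty)$; evaluating at $r=t$ and adding $\z_{\chi}(\theta_s\omega)(t)=\z_{\chi}(\omega)(t+s)$ yields
\[
\Psi(t,\theta_s\omega,\Psi(s,\omega,\boldsymbol{x}))=\boldsymbol{\eta}(t)+\z_{\chi}(\omega)(t+s)=\y^{\chi}(t+s)+\z_{\chi}(\omega)(t+s)=\Psi(t+s,\omega,\boldsymbol{x}).
\]

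Finally, continuity of $\boldsymbol{x}\mapsto\Psi(t,\omega,\boldsymbol{x})$ — and, combined with $\y^{\chi}\in\mathrm{C}([0,\infty);\H)$ from Theorem \ref{solution}, joint continuity in $(t,\boldsymbol{x})$ — follows from Lemma \ref{RDS_Conti1} applied with $\f_m=\f$ and $\z_m=\z_{\chi}(\omega)$ held fixed and $\boldsymbol{x}_m\to\boldsymbol{x}$, since $\boldsymbol{x}\mapsto\boldsymbol{x}-\z_{\chi}(\omega)(0)$ is continuous and the added term $\z_{\chi}(\omega)(t)$ does not depend on $\boldsymbol{x}$. I expect the cocycle identity to be the only genuinely delicate point, exactly as in \cite[Theorem 6.15]{Brzezniak+Li_2006}: one has to be careful that the time-shifted function $\boldsymbol{\eta}$ really is an admissible weak solution of the translated deterministic problem in the precise sense of Definition \ref{defn-CTGF}, so that Theorem \ref{solution} applies and forces uniqueness; the measurability and continuity parts are then routine consequences of Lemma \ref{RDS_Conti1} and the regularity of $\z_{\chi}$.
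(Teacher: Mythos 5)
Your proposal is correct and follows exactly the route the paper indicates: the paper omits the proof, referring to \cite[Theorem 6.15]{Brzezniak+Li_2006} and noting it rests on the uniqueness of solutions to system \eqref{CTGF}, which is precisely how you establish the cocycle identity (via the shift property \eqref{stationary} of $\z_{\chi}$ and Theorem \ref{solution}), with measurability and continuity handled through Lemma \ref{RDS_Conti1} as in that reference. No gaps worth flagging.
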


Now we are ready to present the solution to system \eqref{STGF} with the initial data $\x\in\H$ at the initial time $s\in \R.$
The following theorem is a consequence of our previous discussion.
 \begin{theorem}\label{STGF-Sol}
	In the framework of Definition \ref{Def_u}, suppose that $\v(t)=\y^{\chi}(t)+\z_{\chi}(t), t\geq s,$ where $\y^{\chi}$ is the unique solution to system \eqref{CTGF} with initial data $\x - \z_{\chi}(s)$ at time $s$. If the process $\{\v(t), \ t\geq s\},$ has trajectories in $\mathrm{C}([s, \infty); \H) \cap  \mathrm{L}^{4}_{\mathrm{loc}}([s, \infty); \mathbb{W}^{1,4}(\mathcal{O}))$, then it is a solution to system \eqref{STGF}. Vice-versa, if a process $\{\v(t), t\geq s\},$ with trajectories in $\mathrm{C}([s, \infty); \H) \cap \mathrm{L}^{4}_{\mathrm{loc}}([s, \infty); \mathbb{W}^{1,4}(\mathcal{O}))$ is a solution to system \eqref{STGF}, then for any $\chi\geq 0,$ a process $\{\y^{\chi}(t), t\geq s\},$ defined by $\y^{\chi}(t) = \v(t)- \z_{\chi}(t), t\geq s,$ is a solution to \eqref{CTGF} on $[s, \infty).$
\end{theorem}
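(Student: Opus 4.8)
The plan is to exploit the fact that the Doss--Sussman change of variables $\v = \y^{\chi} + \z_{\chi}$ converts the pathwise deterministic system \eqref{CTGF} back into the stochastic system \eqref{STGF} by a purely algebraic cancellation, once the analytic meaning of both equations has been fixed. The starting point is that the stationary Ornstein--Uhlenbeck process $\z_{\chi}$ solves \eqref{OUPe1}, that is, in integrated form tested against $\boldsymbol{\phi}\in\X$,
$(\z_{\chi}(t),\boldsymbol{\phi})-(\z_{\chi}(s),\boldsymbol{\phi}) = -\int_s^t\langle(\nu\A+\chi\I)\z_{\chi}(\xi),\boldsymbol{\phi}\rangle\d\xi + \int_s^t(\boldsymbol{\phi},\d\W(\xi))$,
and that, by Proposition \ref{SOUP1}, $\z_{\chi}(\omega)\in C_{1/2}(\mathbb{R};\mathfrak{X})$ with $\mathfrak{X}=\H\cap\mathbb{W}^{1,4}(\mathcal{O})$, so $\z_{\chi}(\omega)$ has continuous $\H\cap\mathbb{W}^{1,4}(\mathcal{O})$-valued trajectories; in particular $\B(\cdot),\J(\cdot),\K(\cdot)$ evaluated at $\y^{\chi}+\z_{\chi}$ make sense in the duals as in Subsection \ref{sec-operators}. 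The key identity is $-\nu\A\y^{\chi}+\chi\z_{\chi}-(\nu\A+\chi\I)\z_{\chi}=-\nu\A(\y^{\chi}+\z_{\chi})=-\nu\A\v$, together with $\B(\y^{\chi}+\z_{\chi})=\B(\v)$, $\J(\y^{\chi}+\z_{\chi})=\J(\v)$, $\K(\y^{\chi}+\z_{\chi})=\K(\v)$.

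\emph{Forward implication.} Assume $\y^{\chi}$ is the weak solution, unique by Theorem \ref{solution}, of the time-shifted version of \eqref{CTGF} on $[s,\infty)$ with initial datum $\x-\z_{\chi}(s)$; existence follows from Theorem \ref{solution} applied to the translated data, the translation from initial time $0$ to initial time $s$ being handled through the stationarity relation \eqref{stationary} for $\z_{\chi}$. For any $\boldsymbol{\phi}\in\X$ and $t>s$, I would add the weak formulation of \eqref{CTGF} from Definition \ref{defn-CTGF} to the integrated form of \eqref{OUPe1} tested against $\boldsymbol{\phi}$: the stochastic integrals are produced, the terms $-\nu\int_s^t\langle\A\y^{\chi},\boldsymbol{\phi}\rangle\d\xi$, $\chi\int_s^t(\z_{\chi},\boldsymbol{\phi})\d\xi$ and $-\int_s^t\langle(\nu\A+\chi\I)\z_{\chi},\boldsymbol{\phi}\rangle\d\xi$ collapse to $-\nu\int_s^t\langle\A\v,\boldsymbol{\phi}\rangle\d\xi$ by the key identity, and the nonlinear terms recombine as above, leaving exactly the identity in Definition \ref{Def_u}. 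The initial condition is immediate, $\v(s)=\y^{\chi}(s)+\z_{\chi}(s)=\x$, and the hypothesis that $\v$ has trajectories in $\mathrm{C}([s,\infty);\H)\cap\mathrm{L}^4_{\mathrm{loc}}([s,\infty);\mathbb{W}^{1,4}(\mathcal{O}))$ ensures every integral above is finite, so $\v$ solves \eqref{STGF} in the sense of Definition \ref{Def_u}.

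\emph{Converse implication.} Suppose $\{\v(t),\,t\ge s\}$ is a solution of \eqref{STGF} with trajectories in $\mathrm{C}([s,\infty);\H)\cap\mathrm{L}^4_{\mathrm{loc}}([s,\infty);\mathbb{W}^{1,4}(\mathcal{O}))$, fix $\chi\ge 0$, and set $\y^{\chi}(t):=\v(t)-\z_{\chi}(t)$. Since $\z_{\chi}(\omega)$ has continuous $\H\cap\mathbb{W}^{1,4}(\mathcal{O})$-valued paths, $\y^{\chi}$ inherits the regularity $\mathrm{C}([s,\infty);\H)\cap\mathrm{L}^4_{\mathrm{loc}}([s,\infty);\mathbb{W}^{1,4}(\mathcal{O}))$. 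Subtracting the tested integrated form of \eqref{OUPe1} from the weak formulation of \eqref{STGF} in Definition \ref{Def_u} cancels the stochastic integrals and, running the key identity in reverse, $-\nu\A\v+(\nu\A+\chi\I)\z_{\chi}=-\nu\A\y^{\chi}+\chi\z_{\chi}$, together with $\B(\v)=\B(\y^{\chi}+\z_{\chi})$ and likewise for $\J,\K$, produces precisely the weak formulation of \eqref{CTGF} in Definition \ref{defn-CTGF}, with initial value $\y^{\chi}(s)=\v(s)-\z_{\chi}(s)=\x-\z_{\chi}(s)$; uniqueness from Theorem \ref{solution} then identifies this $\y^{\chi}$ with the one in the statement.

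\emph{On the difficulty.} There is no genuinely hard step: as indicated, the result is a direct consequence of Theorem \ref{solution}, Proposition \ref{SOUP1} and the stationarity identity \eqref{stationary}, exactly as in \cite[Theorem 6.15]{Brzezniak+Li_2006}. The only points requiring care are the functional-analytic bookkeeping --- verifying that $\A\v$, $\B(\v)$, $\J(\v)$ and $\K(\v)$ lie in the dual spaces $\V'$ and $\mathbb{W}^{-1,\frac43}(\mathcal{O})$ entering the two weak formulations, so that the test-function identities may legitimately be added and subtracted, using $\v\in\mathrm{L}^4_{\mathrm{loc}}(\mathbb{W}^{1,4})$ and $\z_{\chi}\in\mathrm{C}(\H\cap\mathbb{W}^{1,4})$ --- and the reduction of the arbitrary initial time $s$ to the initial time $0$ used in Theorem \ref{solution}, which is achieved via the measure-preserving shift $\theta_s$ and \eqref{stationary}.
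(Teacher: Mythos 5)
Your proposal is correct and follows exactly the route the paper intends: the paper states Theorem \ref{STGF-Sol} without a separate proof, as a consequence of the Doss--Sussman construction, Theorem \ref{solution} and Proposition \ref{SOUP1}, and your add/subtract argument with the cancellation $-\nu\A\y^{\chi}+\chi\z_{\chi}-(\nu\A+\chi\I)\z_{\chi}=-\nu\A\v$ together with $\B(\y^{\chi}+\z_{\chi})=\B(\v)$, etc., is precisely that argument made explicit. The only point you leave implicit in the converse direction is checking that $\y^{\chi}=\v-\z_{\chi}$ has the additional regularity $\mathrm{L}^{2}_{\mathrm{loc}}(\V)$ (and the stated time-derivative regularity) demanded by Definition \ref{defn-CTGF} before invoking uniqueness from Theorem \ref{solution}, a bookkeeping step the paper itself also glosses over.
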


	\section{Random attractors for a subclass of stochastic third-grade fluids equations: Bounded domains}\label{sec4}\setcounter{equation}{0}
In this section, we prove the existence of unique random attractor for system \ref{STGF} on bounded domains. In order to prove the result of this section, we assume that external forcing term $\f\in \H^{-1}(\mathcal{O})$.  Here, the RDS $\Psi$ is considered over the MDS $(\Omega,  {\mathcal{F}},  {\mathbb{P}},  {\theta})$.	The results that we have obtained in the previous sections  provide a unique solution to   system \eqref{STGF}, which is continuous with respect to the data (particularly $\x$ and $\f$). Furthermore, if we define, for $\x \in \H,\ \omega \in \Omega,$ and $t\geq s,$
\begin{align}\label{combine_sol}
	\v(t, \omega ; s, \x) := \Psi(t-s, \theta_s \omega,\x) = \y\big(t, \omega; s, \x - \z(s)\big) + \z(t),
\end{align}
then the process $\{\v(t): \ t\geq s\}$ is a solution to  equation $\eqref{STGF}_{1}$ for $t>s$, for each $s\in \mathbb{R}$ and each $\x \in \H$.

The following lemma helps us to prove the existence of  random $\mathfrak{DK}$-absorbing set.

	\begin{lemma}\label{RA1}
	Let \eqref{condition1} and Hypotheses \ref{assump1}-\ref{assumpO} be satisfied. Suppose that $\y$ solves   system \eqref{CTGF} on the time interval $[a, \infty)$ with $\z \in  \mathrm{L}^2_{\mathrm{loc}}(\R; \H) \cap  \mathrm{L}^{4}_{\mathrm{loc}}(\R; \mathbb{W}^{1,4}(\mathcal{O}))$, $\chi\geq 0$ and $\f\in \H^{-1}(\mathcal{O})$. Then, for any $t\geq \tau \geq a,$
	\begin{align}\label{Energy_esti1}
		 \|\y(t)\|^2_{2} 
		  & \leq 
		\|\y(\tau)\|^2_{2}\  e^{-\nu\lambda\left(1+ \frac{\varepsilon_0}{2}\right)(t-\tau)}  
		  + C\int_{\tau}^{t} \bigg[\|\z(s)\|^{2}_{2}+ \|\z(s)\|^{4}_{\mathbb{W}^{1,4}} + \|\f\|^2_{\H^{-1}} \bigg] e^{-\nu\lambda\left(1+ \frac{\varepsilon_0}{2}\right)(t-s)} \d s.
	\end{align}
\end{lemma}
\begin{proof}
	From \eqref{CTGF}, we obtain 
	\begin{align}\label{Energy_esti3}
	&	\frac{1}{2}\frac{\d}{\d t} \|\y(t)\|^2_{2} 
	\nonumber\\ 	&=  - \frac{\nu}{2} \|\Arm(\y(t))\|^2_{2}   -\left<\B(\y(t)+ \z(t))+\alpha\J(\y(t)+ \z(t))+\beta \K(\y(t)+ \z(t)),\y(t)\right> 
		 +\chi (\z(t),\y(t)) \nonumber\\ & \quad +\big\langle \f, \y(t)\big \rangle \nonumber \\
		&=  - \frac{\nu}{2} \|\Arm(\y(t))\|^2_{2} - \frac{\beta}{2} \|\Arm(\y(t)+\z(t))\|^4_{4}    + b(\y(t)+ \z(t),\y(t),\z(t)) - \alpha \left<\J(\y(t)+ \z(t)) ,\y(t)\right> 
		\nonumber\\ & \quad + \beta \left< \K(\y(t)+ \z(t)),\z(t)\right>  +\chi (\z(t),\y(t))+\big\langle \f, \y(t)\big \rangle,
	\end{align}
	for a.e. $t\in[a,\infty)$. Next, using the H\"older's inequality, Lemma \ref{Sobolev-embedding3}, Korn-type inequality \eqref{Korn-ineq}, Poincar\'e inequality \eqref{poin}, \eqref{condition1} (see Remark \ref{rem-condition1} also), Gagliardo-Nirenberg inequality \eqref{Gen_lady-4} and Young's inequality, we estimate the  terms of the right hand side of \eqref{S1} as (similar to \eqref{Sb}-\eqref{S2}) 
	\begin{align}
		|b(\y+\z, \y, \z)| &  \leq \frac{\nu\varepsilon_0}{24} \|\Arm(\y)\|^2_{2} + \frac{\beta\varepsilon_0}{12}  \|\Arm(\y+\z)\|^4_{4}  + C \|\z\|_4^4,\label{RAS1}\\
		\left\vert\alpha \left<\J(\y+ \z) ,\y\right>  \right\vert
		&  \leq  \frac{\nu(1-\varepsilon_0)}{4} \|\Arm(\y)\|^2_{2}  + \frac{\beta(1-\varepsilon_0)}{2}  \|\Arm(\y+\z)\|^4_{4}  \label{RAS2} \\
		\left| \beta \left< \K(\y+ \z),\z\right> \right|  & \leq 
		     \frac{\beta\varepsilon_0}{12}  \|\Arm(\y+\z)\|^4_{4}   + C \|\Arm(\z)\|^4_{4}, \label{RAS3} \\
		| \chi (\z,\y)|& \leq  \frac{\nu\varepsilon_0}{24} \|\Arm(\y)\|^2_{2} + C \|\z\|^2_2, \label{RAS4}\\
				\big|\langle\f,\y\rangle\big| 	& \leq   \frac{\nu\varepsilon_0}{24} \|\Arm(\y)\|^2_{2}  	+  C \|\f\|^2_{\H^{-1}}.		\label{RAS5}
	\end{align}
	Hence, from \eqref{Energy_esti3} and \eqref{poin}, we deduce 
	\begin{align*}
		\frac{\d}{\d t} \|\y(t)\|^2_{2} 
	  & \leq  - \frac{\nu}{2}\left(1+ \frac{\varepsilon_0}{2}\right) \|\Arm(\y(t))\|^2_{2} - \frac{\beta\varepsilon_0}{2} \|\Arm(\y(t)+\z(t))\|^4_{4}  
	   +  C \bigg[\|\f\|^2_{\H^{-1}}  + \|\z(t)\|^2_{2}+ \|\z(t)\|^4_{\mathbb{W}^{1,4}}  \bigg]
		\nonumber\\ & =  - \nu \left(1+ \frac{\varepsilon_0}{2}\right) \|\nabla\y(t)\|^2_{2} - \frac{\beta\varepsilon_0}{2} \|\Arm(\y(t)+\z(t))\|^4_{4}  
		  +  C \bigg[\|\f\|^2_{\H^{-1}} + \|\z(t)\|^2_{2}+ \|\z(t)\|^4_{\mathbb{W}^{1,4}}  \bigg]
		\nonumber\\ & \leq  - \nu\lambda\left(1+ \frac{\varepsilon_0}{2}\right) \|\y(t)\|^2_{2} - \frac{\beta\varepsilon_0}{2} \|\Arm(\y(t)+\z(t))\|^4_{4}  
		  +  C \bigg[\|\f\|^2_{\H^{-1}}  + \|\z(t)\|^2_{2}+ \|\z(t)\|^4_{\mathbb{W}^{1,4}}  \bigg],
	\end{align*}
for a.e. $t\in[a,\infty)$,	and an application of  the variation of constants formula yields \eqref{Energy_esti1}.
\end{proof}

 In the following lemma, we show that the RDS $\Psi$ is compact operator from $\H$ into itself. 
\begin{lemma}\label{PCB}
	Let \eqref{condition1} and  Hypotheses \ref{assump1}-\ref{assumpO} be satisfied. Then, the solution operator $\Psi$ given in Definition \eqref{combine_sol} is compact from $\H$ into itself.
\end{lemma}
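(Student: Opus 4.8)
The plan is to prove that, for each fixed $t>0$ and $\omega\in\Omega$, the map $\Psi(t,\omega,\cdot)\colon\H\to\H$ carries bounded sets into relatively compact sets. I would fix a bounded set $B\subset\H$ and an arbitrary sequence $\{\x_n\}\subset B$, write $\z=\z_{\chi}(\omega)$, and let $\y_n(\cdot):=\y(\cdot,\omega;0,\x_n-\z(0))$ be the corresponding solution of the transformed system \eqref{CTGF}, so that $\Psi(s,\omega,\x_n)=\y_n(s)+\z(s)$ for all $s\ge0$. Since $\sup_n\|\x_n-\z(0)\|_{2}<\infty$, the energy equality \eqref{eeq} (together with the a priori bounds obtained in Step I of the proof of Theorem \ref{solution} and in Lemma \ref{RA1}) furnishes a bound, uniform in $n$, of the form $\{\y_n\}$ bounded in $\mathrm{L}^{\infty}(0,t;\H)\cap\mathrm{L}^{2}(0,t;\V)\cap\mathrm{L}^{4}(0,t;\mathbb{W}^{1,4}(\O))$; moreover, reading off $\tfrac{\d\y_n}{\d s}$ from \eqref{CTGF} and using the mapping properties of $\A,\B,\J,\K$ recorded in Subsection \ref{sec-operators}, one also gets $\{\tfrac{\d\y_n}{\d s}\}$ bounded in $\mathrm{L}^{2}(0,t;\V')+\mathrm{L}^{\frac43}(0,t;\mathbb{W}^{-1,\frac43}(\O))$.

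Next I would invoke compactness in time. Since $\O$ is bounded, the Sobolev embedding $\V\hookrightarrow\H$ (that is, $\H^{1}_0(\O)\hookrightarrow\L^{2}(\O)$) is compact, while $\H\hookrightarrow\V'+\mathbb{W}^{-1,\frac43}(\O)$ is continuous. Hence the Aubin-Lions-Simon compactness lemma applies and $\{\y_n\}$ is relatively compact in $\mathrm{L}^{2}(0,t;\H)$; passing to a subsequence (not relabelled) we obtain $\y_n\to\widetilde{\y}$ in $\mathrm{L}^{2}(0,t;\H)$ for some $\widetilde{\y}$, and then, along a further subsequence, $\y_n(s)\to\widetilde{\y}(s)$ strongly in $\H$ for almost every $s\in(0,t)$. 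Fixing one such instant $s_0\in(0,t)$ gives
\[
\Psi(s_0,\omega,\x_n)=\y_n(s_0)+\z(s_0)\longrightarrow\widetilde{\y}(s_0)+\z(s_0)=:\w_0\qquad\text{strongly in }\H.
\]

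It then remains to transport this strong convergence from $s_0$ up to $t$. By the cocycle identity for the RDS $(\Psi,\theta)$ one has $\Psi(t,\omega,\x_n)=\Psi\big(t-s_0,\theta_{s_0}\omega,\Psi(s_0,\omega,\x_n)\big)$, and the map $\Psi(t-s_0,\theta_{s_0}\omega,\cdot)\colon\H\to\H$ is continuous — this is the continuous dependence on the initial datum established in Lemma \ref{RDS_Conti1}, applied with the data $\z_{\chi}(\theta_{s_0}\omega)$ and $\f$. Consequently $\Psi(t,\omega,\x_n)\to\Psi(t-s_0,\theta_{s_0}\omega,\w_0)$ strongly in $\H$ along the chosen subsequence. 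Since every sequence in $\Psi(t,\omega,B)$ thus possesses a strongly convergent subsequence, $\Psi(t,\omega,B)$ is relatively compact in $\H$, and as $B$ was an arbitrary bounded set, this is exactly the assertion of the lemma.

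The main difficulty I anticipate lies not in any single estimate but in the structural fact that $\tfrac{\d\y_n}{\d s}$ lives only in $\V'+\mathbb{W}^{-1,\frac43}(\O)$ — not in $\H$ — so $\{\y_n\}$ is not equicontinuous in $\H$ and one cannot extract $\mathrm{C}([0,t];\H)$-compactness, hence cannot control $\{\y_n(t)\}$ at the endpoint directly. The device above — pick an interior time $s_0$ where $\V$-regularity holds, use the compact embedding $\V\hookrightarrow\H$ there, and then propagate forward by continuity of the flow — is precisely the point at which boundedness of $\O$ enters essentially; it is this compactness mechanism that is unavailable on unbounded domains and has to be replaced later by the uniform-tail estimates.
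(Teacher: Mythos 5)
Your proposal is correct and follows essentially the same route as the paper's proof: uniform bounds in $\mathrm{L}^{\infty}(\H)\cap\mathrm{L}^{2}(\V)\cap\mathrm{L}^{4}(\mathbb{W}^{1,4})$ with the time derivative in $\mathrm{L}^{2}(\V')+\mathrm{L}^{\frac43}(\mathbb{W}^{-1,\frac43})$, Aubin--Lions compactness via the compact embedding $\V\hookrightarrow\H$ on the bounded domain, extraction of an intermediate time where strong $\H$-convergence holds, and propagation to the final time through the cocycle property and the continuity of the solution map from Lemma \ref{RDS_Conti1}. The only differences are cosmetic (initial time $0$ versus $s$, and making explicit the embedding of the sum space into $\mathrm{L}^{\frac43}(\V'+\mathbb{W}^{-1,\frac43})$), so no further comment is needed.
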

\begin{proof}
	Consider the solution $\Psi(t-s,\theta_s\omega,\cdot)=\y\big(t, \omega; s, \cdot -  \z(s)\big) +  \z(t)$ of \eqref{STGF} for all $t> s$ and $s\in\R$. For simplicity, we write $\y\big(t, s; \omega, \cdot - \z(s)\big)=\y(t,s,\cdot)$. Assume that the sequence $\{\x_m\}_{m\in\N}\subset\H$ is bounded. We have that, for $\z\in\mathrm{C}([s,t];\H)\cap\mathrm{L}^{4}(s,t;\mathbb{W}^{1,4}(\O))$, the sequences
	\begin{equation*}
		\{\y(\cdot,s, \x_m)\}_{m\in\N}  \subset \mathrm{C}([s,t];\H)\cap\mathrm{L}^{2}(s,t;\V)\cap\mathrm{L}^{4}(s,t;\mathbb{W}^{1,4}(\O))
	\end{equation*}
	and 
	\begin{equation*}
		\left\{\frac{\d}{\d t}[\y(\cdot,s, \x_m)]\right\}_{m\in\N}  \subset \mathrm{L}^{2}(s,t;\V^{\prime})+\mathrm{L}^{\frac43}(s,t;\mathbb{W}^{-1,\frac43}(\O))
	\end{equation*}
	are bounded. Since $\mathrm{L}^2(s,t;\V')+\mathrm{L}^{\frac{4}{3}}(s,t;\mathbb{W}^{-1,\frac{4}{3}})\subset\mathrm{L}^{\frac{4}{3}}(s,t;\V'+\mathbb{W}^{-1,\frac{4}{3}})$, the above sequence is bounded in $\mathrm{L}^{\frac{4}{3}}(s,t;\V'+\mathbb{W}^{-1,\frac{4}{3}}(\O))$. Note also that $\V\subset\H\subset \V'+\mathbb{W}^{-1,\frac{4}{3}}$ and the embedding of $\V\subset\H$ is compact. By the \textit{Aubin-Lions compactness lemma}, there exists a subsequence (keeping as it is) and $\widetilde{\y}\in\mathrm{L}^2(s,t;\H)$ such that 
	\begin{align}\label{PCB5}
		\y(\cdot,s, \x_m)\to\widetilde{\y}(\cdot) \ \text{ strongly in } \ \mathrm{L}^{2}(s,t;\H),
	\end{align}
	as $m\to\infty$. Again, choosing one more subsequence (again not relabeling), we infer from \eqref{PCB5} that
	\begin{align}\label{PCB6}
		\y(\tau,s, \x_m)\to\widetilde{\y}(\tau) \text{ in }  \H, \ \text{ for a.e. } \ \tau\in [s,t],
	\end{align}
	as $m\to\infty$. For $s < t$, we obtain from \eqref{PCB6} that there exists $\tau\in(s,t)$ such that \eqref{PCB6} holds true for this particular $\tau$. Then  by Lemma \ref{RDS_Conti1}, we obtain
	\begin{align*}
		\Psi(t-s,\theta_{s}\omega,\x_m)&=\Psi(t-\tau,\theta_{\tau}\omega, \Psi(\tau-s,\theta_{s}\omega,\x_m))\nonumber\\&=\Psi\big(t-\tau,\theta_{\tau}\omega,\y\big(\tau, \omega; s, \x_m - \z(s)\big)+\z(\tau)\big)\nonumber\\&\to \Psi\big(t-\tau,\theta_{\tau}\omega,\widetilde{\y}(\tau)+\z(\tau)\big)
	\end{align*}
as $m\to\infty$, in $\H$, which completes the proof.
\end{proof}

Let us now provide the main result of this section.

	\begin{theorem}\label{Main_theorem_1}
	Let \eqref{condition1} and  Hypotheses \ref{assump1}-\ref{assumpO} be satisfied. Consider the MDS, $\Im = (\Omega, \mathcal{F}, \mathbb{P}, \theta)$ from Proposition \ref{m-DS1}, and the RDS $\Psi$ on $\H$ over $\Im$ generated by the SGMNS equations \eqref{STGF} with additive noise satisfying Hypothesis \ref{assump1}. Then, there exists a unique random $\mathfrak{DK}$-attractor for continuous RDS $\Psi$ in $\H$.
\end{theorem}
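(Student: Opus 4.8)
The plan is to invoke the abstract existence-and-uniqueness theorem for random $\mathfrak{DK}$-attractors from \cite{BCLLLR}. Since Theorem \ref{STGF-Sol} and the preceding construction already guarantee that $(\Psi,\theta)$ is a continuous RDS over the MDS $\Im$, the only things left to verify are (a) that $\Psi$ admits a closed random $\mathfrak{DK}$-absorbing set, and (b) that $\Psi$ is $\mathfrak{DK}$-asymptotically compact; on bounded domains point (b) will be an essentially free consequence of the compactness of the solution map established in Lemma \ref{PCB}. Once (a) and (b) hold, the abstract theorem yields a unique random $\mathfrak{DK}$-attractor, which is the assertion.

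\textbf{Step 1: a random $\mathfrak{DK}$-absorbing ball.} Fix any $\chi\ge 0$ and set $c:=\nu\lambda\left(1+\frac{\varepsilon_0}{2}\right)>0$. For $\x\in\H$ and $t\ge 0$, combining \eqref{combine_sol} with the stationarity relation \eqref{stationary} one sees that $\Psi(t,\theta_{-t}\omega,\x)=\y(0)+\z_\chi(\omega)(0)$, where $\y$ solves \eqref{CTGF} on $[-t,0]$ with driving process $s\mapsto\z_\chi(\omega)(s)$ and initial datum $\y(-t)=\x-\z_\chi(\omega)(-t)$. Applying the energy estimate \eqref{Energy_esti1} of Lemma \ref{RA1} with $a=\tau=-t$ and endpoint $0$, bounding $\|\y(-t)\|_2^2\le 2\|\x\|_2^2+2\|\z_\chi(\omega)(-t)\|_2^2$ and $\|\Psi(t,\theta_{-t}\omega,\x)\|_2^2\le 2\|\y(0)\|_2^2+2\|\z_\chi(\omega)(0)\|_2^2$, yields
\[
\|\Psi(t,\theta_{-t}\omega,\x)\|_2^2\le C\|\x\|_2^2\,e^{-ct}+C\|\z_\chi(\omega)(-t)\|_2^2\,e^{-ct}+C\|\z_\chi(\omega)(0)\|_2^2+C\int_{-\infty}^{0}\Big[\|\z_\chi(\omega)(s)\|_2^2+\|\z_\chi(\omega)(s)\|_{\mathbb{W}^{1,4}}^4+\|\f_1\|_{\H^{-1}}^2+\|\f_2\|_{\mathbb{W}^{-1,\frac43}}^{\frac{4(4+d)}{8+3d}}+1\Big]e^{cs}\,\d s.
\]
The integral is finite by Lemma \ref{Bddns5}; splitting off its $\z$-independent part $\big(\|\f_1\|_{\H^{-1}}^2+\|\f_2\|_{\mathbb{W}^{-1,\frac43}}^{\frac{4(4+d)}{8+3d}}+1\big)\int_{-\infty}^0 e^{cs}\,\d s<\infty$, and observing that the weight $c$ is precisely the weight $\nu\lambda(1+\varepsilon_0/2)$ occurring in Proposition \ref{radius}, together with $\|\z_\chi(\omega)(0)\|_2^2\le[\kappa_2(\omega)]^2$ and $\|\z_\chi(\omega)(-t)\|_2^2 e^{-ct}\le[\kappa_2(\omega)]^2$, we conclude
\[
\|\Psi(t,\theta_{-t}\omega,\x)\|_2^2\le C\|\x\|_2^2\,e^{-ct}+[R(\omega)]^2,\qquad [R(\omega)]^2:=C\big(1+[\kappa_2(\omega)]^2+[\kappa_3(\omega)]^2+[\kappa_4(\omega)]^2\big).
\]
By Proposition \ref{radius} and the stability of $\mathfrak{K}$ under finite sums, scalar multiples and domination, $R\in\mathfrak{K}$, so $B(\omega):=\{x\in\H:\|x\|_2\le R(\omega)\}$ is a closed random set with $B\in\mathfrak{DK}$. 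For $D\in\mathfrak{DK}$ the radius $\omega\mapsto\kappa(D(\omega))=\sup\{\|\x\|_2:\x\in D(\omega)\}$ belongs to $\mathfrak{K}$, hence $C[\kappa(D(\theta_{-t}\omega))]^2 e^{-ct}\to 0$ as $t\to\infty$, and so there is $T_D(\omega)>0$ with $\Psi(t,\theta_{-t}\omega,D(\theta_{-t}\omega))\subset B(\omega)$ for all $t\ge T_D(\omega)$. Thus $B$ is a closed random $\mathfrak{DK}$-absorbing set for $\Psi$.

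\textbf{Step 2: $\mathfrak{DK}$-asymptotic compactness.} This is immediate from Lemma \ref{PCB}. For $D\in\mathfrak{DK}$ and $t\ge T_D(\theta_{-1}\omega)+1$ the cocycle property gives
\[
\Psi(t,\theta_{-t}\omega,D(\theta_{-t}\omega))=\Psi\big(1,\theta_{-1}\omega,\Psi(t-1,\theta_{-t}\omega,D(\theta_{-t}\omega))\big)\subset\Psi\big(1,\theta_{-1}\omega,B(\theta_{-1}\omega)\big),
\]
and since $B(\theta_{-1}\omega)$ is bounded in $\H$ while $\Psi(1,\theta_{-1}\omega,\cdot):\H\to\H$ is compact, the set on the right is precompact in $\H$. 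Hence every sequence $\{\Psi(t_n,\theta_{-t_n}\omega,\x_n)\}_{n\in\N}$ with $t_n\to\infty$ and $\x_n\in D(\theta_{-t_n}\omega)$ possesses a convergent subsequence in $\H$; that is, $\Psi$ is $\mathfrak{DK}$-asymptotically compact.

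\textbf{Step 3: conclusion.} Steps 1 and 2 verify the hypotheses of the abstract random-attractor theorem in \cite{BCLLLR}, which therefore yields a unique random $\mathfrak{DK}$-attractor $\mathcal{A}=\{\mathcal{A}(\omega)\}_{\omega\in\Omega}$ for $\Psi$, with $\mathcal{A}(\omega)=\bigcap_{s\ge 0}\overline{\bigcup_{t\ge s}\Psi\big(t,\theta_{-t}\omega,B(\theta_{-t}\omega)\big)}$. The only genuinely delicate point is Step 1, and within it the verification that the radius $R$ of the absorbing ball lies in $\mathfrak{K}$: this hinges on matching the exponential decay rate $\nu\lambda(1+\varepsilon_0/2)$ delivered by the dissipativity estimate of Lemma \ref{RA1} with the weights in the definitions of $\kappa_2,\kappa_3,\kappa_4$ in Proposition \ref{radius}, and on ensuring that the forcing $\f$ enters only through an $\omega$-independent constant (which is where $\int_{-\infty}^0 e^{cs}\,\d s<\infty$ and Lemma \ref{Bddns5} are used). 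The rest is a routine application of the abstract framework.
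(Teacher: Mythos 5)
Your proposal is correct and follows essentially the same route as the paper: a $\mathfrak{DK}$-absorbing ball built from the dissipative estimate of Lemma \ref{RA1} together with Proposition \ref{radius} and Lemma \ref{Bddns5}, then $\mathfrak{DK}$-asymptotic compactness from the cocycle identity $\Psi(t,\theta_{-t}\omega,\cdot)=\Psi(1,\theta_{-1}\omega,\Psi(t-1,\theta_{-t}\omega,\cdot))$ combined with the compactness of the solution map (Lemma \ref{PCB}), and finally \cite[Theorem 2.8]{BCLLLR}. The only cosmetic difference is that the paper gets boundedness of $\Psi(t_m-1,\theta_{-t_m}\omega,\x_m)$ directly from the energy estimate evaluated at $\xi\in[-1,0]$, whereas you obtain it by invoking the absorption property at the shifted fiber $\theta_{-1}\omega$; both are valid and amount to the same bookkeeping.
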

\begin{proof} 
	Because of \cite[Theorem 2.8]{BCLLLR}, it is only needed to prove that there exists a $\mathfrak{DK}$-absorbing set $\textbf{B}\in \mathfrak{DK}$ and the RDS $\Psi$ is $\mathfrak{DK}$-asymptotically compact. 	
	\vskip 0.2 cm 
	\noindent
	\textbf{Existence of $\mathfrak{DK}$-absorbing set $\textbf{B}\in \mathfrak{DK}$:}	Let $\mathrm{D}$ be a random set from the class $\mathfrak{DK}$. Let $\kappa_{\mathrm{D}}(\omega)$ be the radius of $\mathrm{D}(\omega)$, that is, $\kappa_{\mathrm{D}}(\omega):= \sup\{\|x\|_{2} : x \in \mathrm{D}(\omega)\}$ for $\omega\in \Omega$.
	
	Let $\omega\in \Omega$ be fixed. For a given $s\leq 0$ and $\boldsymbol{x}\in \H$, let $\y$ be the solution of \eqref{CTGF} on the time interval $[s, \infty)$ with the initial condition $\y(s)= \boldsymbol{x}-\z(s).$ Using \eqref{Energy_esti1} for $t=\xi \text{ and } \tau=s\leq-1$ with $\xi\in[-1,0]$, we obtain 
	\begin{align}\label{Energy_esti5}
		\|\y(\xi)\|^2_{2} &\leq  2 \|\boldsymbol{x}\|^2_{2}\  e^{\nu\lambda\left(1+ \frac{\varepsilon_0}{2}\right) s}  + 2 \|\z(s)\|^2_{2}\  e^{\nu\lambda\left(1+ \frac{\varepsilon_0}{2}\right) s }  
		\nonumber\\ & \quad + C\int_{s}^{0} \bigg\{ \|\z(t)\|^{2}_{2}+ \|\z(t)\|^{4}_{\mathbb{W}^{1,4}} + \|\f\|^2_{\H^{-1}} \bigg\}e^{\nu\lambda\left(1+ \frac{\varepsilon_0}{2}\right) t} \d t.
	\end{align}

	For $\omega\in \Omega,$ let us set
	\begin{align}
		[\kappa_{11}(\omega)]^2 &= 2 +  2\sup_{s\leq 0}\bigg\{ \|\z(s)\|^2_{2}\  e^{\nu\lambda\left(1+ \frac{\varepsilon_0}{2}\right) s}\bigg\} 
		\nonumber\\ & \quad 
		+C \int_{- \infty}^{0} \bigg\{ \|\z(t)\|^{2}_{2}+ \|\z(t)\|^{4}_{\mathbb{W}^{1,4}} + \|\f\|^2_{\H^{-1}} \bigg\}e^{\nu\lambda\left(1+ \frac{\varepsilon_0}{2}\right) t}  \d t,\\	
		\kappa_{12}(\omega) &=   \|\z(\omega)(0)\|_{2}.
	\end{align}

	In view of Lemma \ref{Bddns5} and Proposition \ref{radius}, we deduce that both $\kappa_{11},\kappa_{12}\in  \mathfrak{K}$ and also that $\kappa_{11}+\kappa_{12}=:\kappa_{13} \in \mathfrak{K}$ as well. Therefore the random set $\textbf{B}$ defined by $$\textbf{B}(\omega) := \{\v\in\H: \|\v\|_{2}\leq \kappa_{13}(\omega)\}$$ is such that $\textbf{B}\in\mathfrak{DK}.$ 
	
	Let us now show that $\textbf{B}$ absorbs $\mathrm{D}$. Let $\omega\in\Omega$ be fixed. Since $\kappa_{\mathrm{D}}(\omega)\in \mathfrak{K}$, there exists $t_{\mathrm{D}}(\omega)\geq 1$ such that 
	\begin{align*}
		[\kappa_{\mathrm{D}}(\theta_{-t}\omega)]^2 e^{-\nu\lambda\left(1+ \frac{\varepsilon_0}{2}\right) t } &\leq 1, \  \text{ for }\  t\geq t_{\mathrm{D}}(\omega).
	\end{align*}
	Thus, for $\omega\in\Omega$, if $\boldsymbol{x}\in \mathrm{D}(\theta_{s}\omega)$ and $s\leq- t_{\mathrm{D}}(\omega)\leq -1,$ then by \eqref{Energy_esti5}, we obtain  for $\xi\in[-1,0]$
	\begin{align}\label{AB1}
		\|\y(\xi,\omega; s, \boldsymbol{x}-\z(s))\|_{2}\leq \kappa_{11}(\omega), \  \text{ for } \ \omega\in \Omega.
	\end{align}
	Thus, we conclude that, for $\omega\in\Omega$  
	\begin{align*}
		\|\v(0,\omega; s, \boldsymbol{x})\|_{2} \leq \|\y(0,\omega; s, \boldsymbol{x}-\z(s))\|_{2} + \|\z(\omega)(0)\|_{2}\leq \kappa_{13}(\omega).
	\end{align*}
	The above inequality implies that for $\omega\in \Omega$, $\v(0,\omega; s, \boldsymbol{x}) \in \textbf{B}(\omega)$, for all $s\leq -t_{\mathrm{D}}(\omega).$ This proves  $\textbf{B}$ absorbs $\mathrm{D}$.	
	\vskip 0.2 cm 
	\noindent
	\textbf{The RDS $\Psi$ is $\mathfrak{DK}$-asymptotically compact.} 		
	In order to establish the $\mathfrak{DK}$-asymptotically compactness of $\Psi$, we use the compact Sobolev embedding $\V\subset\H$. Let us assume that $\mathrm{D} \in \mathfrak{DK}$ and $\textbf{B}\in \mathfrak{DK}$ be such that $\textbf{B}$ absorbs $\mathrm{D}$. Let us fix $\omega\in \Omega$ and take a sequence of positive numbers $\{t_m\}^{\infty}_{m=1}$ such that $t_1\leq t_2 \leq t_3 \leq \cdots$ and $t_m \to \infty$. We take an arbitrary $\H$-valued sequence $\{\boldsymbol{x}_m\}^{\infty}_{m=1}$ such that $\boldsymbol{x}_m \in \mathrm{D}(\theta_{-t_m}\omega),$ for all $m\in \mathbb{N}.$
	
	Since $t_m\to\infty$, there exists $M_1=M_1(\tau,\hat{K})\in\N$ such that $t_m\geq t_{\mathrm{D}}(\omega)$ for all $m\geq M_1.$ Therefore, in view of \eqref{AB1}, the sequence
	\begin{align}\label{619}
		 \{\Psi(t_m-1, \theta_{-t_m}\omega, \boldsymbol{x}_m)\}_{m\geq M_1}=\{\v(-1, \omega; -t_m, \boldsymbol{x}_m)\}_{m\geq M_1} \subset \H,
	\end{align} 
is bounded. Combining the fact of \eqref{619} and Lemma \ref{PCB}, we conclude that the sequence
		\begin{align}\label{621}
		\{\Psi(t_m, \theta_{-t_m}\omega, \boldsymbol{x}_m)\}_{m\geq M_1}= \{\Psi(1, \theta_{-1}\omega, \Psi(t_m-1, \theta_{-t_m}\omega, \boldsymbol{x}_m))\}_{m\geq M_1},
	\end{align} 
has a convergent subsequence. Hence, The RDS $\Psi$ is $\mathfrak{DK}$-asymptotically compact. This completes the proof.
\end{proof}

	\section{Random attractors for a subclass of stochastic third-grade fluids equations: Unbounded domains}\label{sec5}\setcounter{equation}{0}

In this section, we prove the existence of unique random attractor for system \ref{STGF} on unbounded domains. In order to prove the result of this section, we assume that external forcing term $\f\in \L^2(\O)$. The following two lemmas help us to prove the existence of random $\mathfrak{DK}$-absorbing set.

\begin{lemma}\label{RA1-ubd}
	Let \eqref{condition1} and   Hypotheses \ref{assump1}-\ref{assumpO} be satisfied. Suppose that $\y$ solves   system \eqref{CTGF} on the time interval $[a, \infty)$ with $\z \in  \mathrm{L}^2_{\mathrm{loc}}(\R; \H) \cap  \mathrm{L}^{4}_{\mathrm{loc}}(\R; \mathbb{W}^{1,4}(\mathcal{O}))$, $\chi\geq 0$ and $\f\in \L^2(\O)$. Then, for any $ t\geq s \geq a,$
	\begin{align}\label{Energy_esti1-ubd}
		&	\|\y(t)\|^2_{2} + \frac{\beta\varepsilon_0}{2} \int_{s}^{t} \|\Arm(\y(\tau)+\z(\tau))\|^{4}_{4} e^{-\nu\lambda\left(1+ \frac{\varepsilon_0}{2}\right)(t-\tau)} \d \tau
		\nonumber\\ &   \leq 
		\|\y(s)\|^2_{2}\  e^{-\nu\lambda\left(1+ \frac{\varepsilon_0}{2}\right)(t-s)}  
		+ C\int_{s}^{t} \bigg[\|\z(\tau)\|^{2}_{2}+ \|\z(\tau)\|^{4}_{\mathbb{W}^{1,4}} + \|\f\|^2_{2}\bigg] e^{-\nu\lambda\left(1+ \frac{\varepsilon_0}{2}\right)(t-\tau)} \d \tau.
	\end{align}
\end{lemma}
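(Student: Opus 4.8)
The plan is to derive the energy inequality \eqref{Energy_esti1-ubd} by testing the equation \eqref{CTGF} with $\y(t)$, exactly as in the proof of Lemma \ref{RA1}, but now keeping track of the $\beta\varepsilon_0$-term coming from $\K$ rather than discarding it, and using $\f\in\L^2(\O)$ instead of $\f\in\H^{-1}(\O)+\mathbb{W}^{-1,\frac43}(\O)$. First I would start from the identity (valid for a.e.\ $t\in[a,\infty)$ by the energy equality of Theorem \ref{solution})
\begin{align*}
	\frac12\frac{\d}{\d t}\|\y(t)\|_2^2
	&= -\frac{\nu}{2}\|\Arm(\y(t))\|_2^2 - \frac{\beta}{2}\|\Arm(\y(t)+\z(t))\|_4^4 + b(\y(t)+\z(t),\y(t),\z(t))\\
	&\quad - \alpha\left<\J(\y(t)+\z(t)),\y(t)\right> + \beta\left<\K(\y(t)+\z(t)),\z(t)\right> + \chi(\z(t),\y(t)) + (\f,\y(t)),
\end{align*}
which is the analogue of \eqref{Energy_esti3}. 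Here I use $\langle\mathcal{K}(\v),\v\rangle=\frac12\|\Arm(\v)\|_4^4$ and $\langle\mathcal{K}(\y+\z),\y\rangle=\langle\mathcal{K}(\y+\z),\y+\z\rangle-\langle\mathcal{K}(\y+\z),\z\rangle$, together with the corresponding splitting of $\J$ and $\B$ via \eqref{b0}.

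Next I would estimate the six terms on the right-hand side. The terms involving $b(\y+\z,\y,\z)$, $\alpha\langle\J(\y+\z),\y\rangle$, $\beta\langle\K(\y+\z),\z\rangle$ and $\chi(\z,\y)$ are estimated precisely as in \eqref{RAS1}--\eqref{RAS4}, using H\"older, Lemma \ref{Sobolev-embedding3}, the Korn inequality \eqref{Korn-ineq}, the Poincar\'e inequality \eqref{poin}, the relation $\frac{\alpha^2}{4\nu(1-\varepsilon_0)}=\frac{\beta(1-\varepsilon_0)}{2}$ from \eqref{condition1}, the Gagliardo--Nirenberg inequality \eqref{Gen_lady-4} and Young's inequality; the crucial point is that the $\J$-estimate \eqref{RAS2} contributes $\frac{\beta(1-\varepsilon_0)}{2}\|\Arm(\y+\z)\|_4^4$, which is absorbed against the full $\frac{\beta}{2}\|\Arm(\y+\z)\|_4^4$ coming from $\K$, leaving a clean surplus $\frac{\beta\varepsilon_0}{2}\|\Arm(\y+\z)\|_4^4$ (after the small pieces $\frac{\beta\varepsilon_0}{12}+\frac{\beta\varepsilon_0}{12}$ from the $b$- and $\K$-estimates are subtracted this becomes $\frac{\beta\varepsilon_0}{2}$ up to renaming constants, so I will keep the constant as $\frac{\beta\varepsilon_0}{2}$ as in the statement by bookkeeping the fractions carefully). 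For the forcing term, since now $\f\in\L^2(\O)$, I simply use $|(\f,\y)|\le\|\f\|_2\|\y\|_2\le C\|\f\|_2\|\Arm(\y)\|_2\le\frac{\nu\varepsilon_0}{24}\|\Arm(\y)\|_2^2+C\|\f\|_2^2$, which is much simpler than \eqref{RAS5} and explains why no ``$+1$'' and no $\mathbb{W}^{-1,4/3}$-norm appears in \eqref{Energy_esti1-ubd}.

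Collecting the estimates and using $\|\Arm(\y)\|_2^2=2\|\nabla\y\|_2^2\ge 2\lambda\|\y\|_2^2$ (Korn's identity together with \eqref{poin}), I obtain, for a.e.\ $t\in[a,\infty)$,
\begin{align*}
	\frac{\d}{\d t}\|\y(t)\|_2^2 + \nu\lambda\Bigl(1+\tfrac{\varepsilon_0}{2}\Bigr)\|\y(t)\|_2^2 + \frac{\beta\varepsilon_0}{2}\|\Arm(\y(t)+\z(t))\|_4^4
	\le C\Bigl[\|\z(t)\|_2^2 + \|\z(t)\|_{\mathbb{W}^{1,4}}^4 + \|\f\|_2^2\Bigr].
\end{align*}
Finally I multiply by the integrating factor $e^{\nu\lambda(1+\varepsilon_0/2)(\cdot-s)}$ and integrate from $s$ to $t$; this is the variation-of-constants step and yields exactly \eqref{Energy_esti1-ubd}, the $\beta\varepsilon_0$-integral term surviving on the left because its coefficient is nonnegative. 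The main obstacle is the careful bookkeeping of the quartic terms $\|\Arm(\y+\z)\|_4^4$ so that after absorbing the contributions from the nonlinear estimates the coefficient of the surviving term is at least $\frac{\beta\varepsilon_0}{2}$, i.e.\ one must split the available $\frac{\beta}{2}\|\Arm(\y+\z)\|_4^4$ as $\frac{\beta(1-\varepsilon_0)}{2}+\frac{\beta\varepsilon_0}{2}$ and then make the small constants $\frac{\nu\varepsilon_0}{24}$, $\frac{\beta\varepsilon_0}{12}$ etc.\ fit; everything else is a routine repetition of the computations already carried out for \eqref{Energy_esti3}--\eqref{RAS5}.
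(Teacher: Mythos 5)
Your proposal is correct and follows essentially the same route as the paper: test \eqref{CTGF} with $\y$, estimate the convective, $\J$-, $\K$-, $\chi\z$- and forcing terms exactly as in \eqref{RAS1-ubd}--\eqref{RAS4-ubd} (with the simpler bound $|(\f,\y)|\le\frac{\nu\varepsilon_0}{16}\|\Arm(\y)\|_2^2+C\|\f\|_2^2$ now that $\f\in\L^2(\O)$), keep the surplus $\frac{\beta\varepsilon_0}{2}\|\Arm(\y+\z)\|_4^4$ after absorbing $\frac{\beta(1-\varepsilon_0)}{2}$ from the $\J$-term, and conclude with $\|\Arm(\y)\|_2^2=2\|\nabla\y\|_2^2\ge2\lambda\|\y\|_2^2$ and the variation-of-constants formula. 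Your fraction bookkeeping with the constants $\frac{\nu\varepsilon_0}{24},\frac{\beta\varepsilon_0}{12}$ in fact leaves a coefficient $\frac{2\beta\varepsilon_0}{3}\ge\frac{\beta\varepsilon_0}{2}$, so the stated inequality follows just as in the paper, which merely uses the slightly larger absorption constants $\frac{\nu\varepsilon_0}{16},\frac{\beta\varepsilon_0}{8}$.
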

\begin{proof}
	From \eqref{CTGF}, we obtain 
	\begin{align}\label{Energy_esti3-ubd}
		\frac{1}{2}\frac{\d}{\d t} \|\y(t)\|^2_{2} 
		&=  - \frac{\nu}{2} \|\Arm(\y(t))\|^2_{2} - \frac{\beta}{2} \|\Arm(\y(t)+\z(t))\|^4_{4}    + b(\y(t)+ \z(t),\y(t),\z(t)) \nonumber\\ & \quad - \alpha \left<\J(\y(t)+ \z(t)) ,\y(t)\right> 
		+ \beta \left< \K(\y(t)+ \z(t)),\z(t)\right>  + (\chi \z(t) + \f,\y(t)),
	\end{align}
	for a.e. $t\in[a,\infty)$. Next, using the H\"older's inequality, Lemma \ref{Sobolev-embedding3}, Korn-type inequality \eqref{Korn-ineq}, Poincar\'e inequality \eqref{poin}, \eqref{condition1} (see Remark \ref{rem-condition1} also), Gagliardo-Nirenberg inequality \eqref{Gen_lady-4} and Young's inequality, we estimate the  terms of the right hand side of \eqref{S1} as (see the proof of Lemma \ref{RA1})
	\begin{align}
		|b(\y+\z, \y, \z)| & \leq   \frac{\nu\varepsilon_0}{16} \|\Arm(\y)\|^2_{2} + \frac{\beta\varepsilon_0}{8}  \|\Arm(\y+\z)\|^4_{4}  + C \|\z\|_4^4,\label{RAS1-ubd}\\
		\left\vert\alpha \left<\J(\y+ \z) ,\y\right>  \right\vert
		&     \leq  \frac{\nu(1-\varepsilon_0)}{4} \|\Arm(\y)\|^2_{2}  + \frac{\beta(1-\varepsilon_0)}{2}  \|\Arm(\y+\z)\|^4_{4}  \label{RAS2-ubd} \\
		\left| \beta \left< \K(\y+ \z),\z\right> \right|  &   \leq  \frac{\beta\varepsilon_0}{8}  \|\Arm(\y+\z)\|^4_{4}   + C \|\Arm(\z)\|^4_{4}, \label{RAS3-ubd} \\
		|  (\chi\z+\f,\y)|&   \leq \frac{\nu\varepsilon_0}{16} \|\Arm(\y)\|^2_{2} + C \|\z\|^2_2+ C \|\f\|^2_2. \label{RAS4-ubd}
	\end{align}
	Hence, from \eqref{Energy_esti3-ubd} and \eqref{poin}, we deduce 
	\begin{align}
		&	\frac{\d}{\d t} \|\y(t)\|^2_{2} 
		\nonumber\\ 	& \leq  - \frac{\nu}{2}\left(1+ \frac{\varepsilon_0}{2}\right) \|\Arm(\y(t))\|^2_{2} - \frac{\beta\varepsilon_0}{2} \|\Arm(\y(t)+\z(t))\|^4_{4}  
		+  C \bigg[\|\f\|^2_{2}  + \|\z(t)\|^2_{2}+ \|\z(t)\|^4_{\mathbb{W}^{1,4}}  \bigg]  \label{RAS5-ubd}\\
		\nonumber & \leq  - \nu\lambda\left(1+ \frac{\varepsilon_0}{2}\right) \|\y(t)\|^2_{2} - \frac{\beta\varepsilon_0}{2} \|\Arm(\y(t)+\z(t))\|^4_{4}  
		+  C \bigg[\|\f\|^2_{2}  + \|\z(t)\|^2_{2}+ \|\z(t)\|^4_{\mathbb{W}^{1,4}}  \bigg],
	\end{align}
	for a.e. $t\in[a,\infty)$,	and an application of  the variation of constants formula yields \eqref{Energy_esti1-ubd}. This completes the proof.
\end{proof}

\begin{lemma}\label{D-abH-ubd}
	Let \eqref{condition1} and Hypotheses \ref{assump1}-\ref{assumpO} be satisfied, and $\f\in\L^2(\O)$. Then, for every $\omega\in\Omega$ and $\D \in\mathfrak{DK}$, and for all $\xi\in[-1,0]$, there exists $t_{\D}(\omega)\geq1$ and a constant $C_{ubd}>0$ such that for all $s\leq -t_{\D}(\omega),$
	\begin{align}\label{S1-ubd}
		&	e^{\nu\lambda\left(1+ \frac{\varepsilon_0}{2}\right) \xi}	 \|\y(\xi,\omega;s,\x-\z(s))\|^2_{2} 
		+ \frac{\beta\varepsilon_0}{2} \int_{s}^{\xi} e^{\nu\lambda\left(1+ \frac{\varepsilon_0}{2}\right)\tau}\|\Arm(\y(\tau,\omega;s,\x-\z(s))) + \z(\tau) \|^4_{4} \d \tau  
		\nonumber\\ & 
		\leq  2+2\sup_{t\leq 0}\bigg\{ \|\z(t)\|^2_{2}\  e^{\nu\lambda\left(1+ \frac{\varepsilon_0}{2}\right) t}\bigg\} 
		+C_{ubd} \int_{- \infty}^{0} \bigg\{ \|\z(\tau)\|^{2}_{2}+ \|\z(\tau)\|^{4}_{\mathbb{W}^{1,4}} + \|\f\|^2_{2}\bigg\}e^{\nu\lambda\left(1+ \frac{\varepsilon_0}{2}\right) \tau}  \d \tau,
	\end{align}
	for any $\x\in \D(\theta_s\omega)$. In addition, there exist $t_{\D}^{\ast}(\omega)$ and $t_{\D}^{\ast\ast}(\omega)$ such that 
	\begin{align}\label{S2-ubd}
		\int_{s}^{0}e^{\nu\lambda\left(1+ \frac{\varepsilon_0}{2}\right)\tau} \|\Arm(\y(\tau,\omega;s,\x-\z(s)))\|^{2}_{2}  \d \tau \leq C, \text{	for all } \; s\leq -t_{\D}^{\ast}(\omega),
	\end{align}
	for any $\x\in \D(\theta_s\omega)$, and 
	\begin{align}\label{S3-ubd}
		\int_{s}^{0}e^{\nu\lambda\left(1+ \frac{\varepsilon_0}{2}\right)\tau} \|\y(\tau,\omega;s,\x-\z(s))\|^{4}_{2}  \d \tau \leq C, \text{	for all } \; s\leq -t_{\D}^{\ast\ast}(\omega),
	\end{align}
	for any $\x\in \D(\theta_s\omega)$.
\end{lemma}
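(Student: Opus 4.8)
The plan is to derive the three estimates by combining the energy identities already established in the proof of Lemma~\ref{RA1-ubd} with the defining property $\kappa_{\D}\in\mathfrak{K}$ of the attraction universe (which lets us absorb the contribution of the initial datum $\x\in\D(\theta_s\omega)$ once $-s$ is large) and the decay/growth properties of the Ornstein--Uhlenbeck process from Lemmas~\ref{Bddns4} and \ref{Bddns5}. Throughout set $\mu:=\nu\lambda\left(1+\frac{\varepsilon_0}{2}\right)>0$; recall from Definition~\ref{RA2} that $\kappa_\D\in\mathfrak K$ provides, for each $c>0$, a time beyond which $[\kappa_\D(\theta_{-t}\omega)]^2e^{-ct}\le1$, whence $\|\x\|_2^2\,e^{cs}\le1$ for $\x\in\D(\theta_s\omega)$ and $-s$ sufficiently large. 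To obtain \eqref{S1-ubd} I would set $t=\xi\in[-1,0]$ with initial time $s$ in \eqref{Energy_esti1-ubd}, multiply through by $e^{\mu\xi}$, and split $\|\y(s)\|_2^2=\|\x-\z(s)\|_2^2\le2\|\x\|_2^2+2\|\z(s)\|_2^2$. Choosing $t_\D(\omega)\ge1$ so that $[\kappa_\D(\theta_{-t}\omega)]^2e^{-\mu t}\le1$ for $t\ge t_\D(\omega)$ bounds $2\|\x\|_2^2e^{\mu s}$ by $2$; the term $2\|\z(s)\|_2^2e^{\mu s}$ is dominated by $2\sup_{t\le0}\{\|\z(t)\|_2^2e^{\mu t}\}<\infty$ (Lemma~\ref{Bddns4} together with continuity of $t\mapsto\z(t)$); and enlarging $[s,\xi]$ to $(-\infty,0)$ (licit since the integrand is nonnegative and $\|\f\|_2^2$ is constant) makes the last integral finite by Lemma~\ref{Bddns5}. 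This gives \eqref{S1-ubd} with $C_{ubd}$ equal to the constant $C$ of Lemma~\ref{RA1-ubd}.

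For \eqref{S2-ubd} I would return to the first estimate in \eqref{RAS5-ubd} (with the dissipative terms kept on the left), and, using $\|\Arm(\y)\|_2^2=2\|\nabla\y\|_2^2$ and the Poincar\'e inequality \eqref{poin}, rewrite $\tfrac{\nu}{2}(1+\tfrac{\varepsilon_0}{2})\|\Arm(\y)\|_2^2\ge\tfrac{\nu}{4}(1+\tfrac{\varepsilon_0}{2})\|\Arm(\y)\|_2^2+\tfrac{\mu}{2}\|\y\|_2^2$. Multiplying by $e^{\mu t/2}$, the new $\tfrac{\mu}{2}e^{\mu t/2}\|\y\|_2^2$ cancels the $\tfrac{\mu}{2}e^{\mu t/2}\|\y\|_2^2$ produced by $\frac{\d}{\d t}\big(e^{\mu t/2}\|\y\|_2^2\big)$, and integrating from $s$ to $0$ controls $\int_s^0 e^{\mu\tau/2}\|\Arm(\y(\tau))\|_2^2\,\d\tau$ by $e^{\mu s/2}\|\y(s)\|_2^2$ plus $\int_{-\infty}^0 e^{\mu\tau/2}\big[\|\f\|_2^2+\|\z(\tau)\|_2^2+\|\z(\tau)\|_{\mathbb{W}^{1,4}}^4\big]\d\tau$; both are finite exactly as in the previous paragraph, now applying $\kappa_\D\in\mathfrak K$ with $c=\mu/2$ (this defines $t_\D^{\ast}(\omega)$). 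Since $e^{\mu\tau}\le e^{\mu\tau/2}$ for $\tau\le0$, this upgrades to \eqref{S2-ubd}.

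For \eqref{S3-ubd} I would take the second estimate in \eqref{RAS5-ubd}, discard the nonpositive $\|\Arm(\y+\z)\|_4^4$ term, and write it as $p'+\mu p\le g$ with $p(t)=\|\y(t)\|_2^2\ge0$ and $g(t)=C\big[\|\f\|_2^2+\|\z(t)\|_2^2+\|\z(t)\|_{\mathbb{W}^{1,4}}^4\big]$; multiplying by $2p$ and using Young's inequality gives $\frac{\d}{\d t}\|\y(t)\|_2^4+\mu\|\y(t)\|_2^4\le\frac{1}{\mu}g(t)^2$. Multiplying by $e^{\mu t/2}$ and integrating from $s$ to $0$ bounds $\int_s^0 e^{\mu\tau/2}\|\y(\tau)\|_2^4\,\d\tau$ by $e^{\mu s/2}\|\y(s)\|_2^4$ plus $\frac1\mu\int_{-\infty}^0 e^{\mu\tau/2}g(\tau)^2\,\d\tau$. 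The boundary term is handled via $e^{\mu s/2}\|\y(s)\|_2^4\lesssim e^{\mu s/2}\|\x\|_2^4+e^{\mu s/2}\|\z(s)\|_2^4$: the first piece is controlled by applying Definition~\ref{RA2} with $c=\mu/4$, so that $[\kappa_\D(\theta_{-t}\omega)]^4e^{-\mu t/2}=\big([\kappa_\D(\theta_{-t}\omega)]^2e^{-\mu t/4}\big)^2\to0$ and hence is $\le1$ for $-s$ large (this produces $t_\D^{\ast\ast}(\omega)$), while the second piece, like $\int_{-\infty}^0 e^{\mu\tau/2}g(\tau)^2\,\d\tau$, is finite because the growth bound \eqref{rho} makes every relevant integrand a polynomial in $|\tau|$ times $e^{\mu\tau/2}$ (plus a constant multiple of $\|\f\|_2^4$). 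A final use of $e^{\mu\tau}\le e^{\mu\tau/2}$ on $(-\infty,0]$ yields \eqref{S3-ubd}.

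The one genuinely delicate point is that the full weight $e^{\mu t}$ cannot be carried through the parabolic dissipation: Poincar\'e only gives $\tfrac{\nu}{2}(1+\tfrac{\varepsilon_0}{2})\|\Arm(\y)\|_2^2\ge\mu\|\y\|_2^2$ with no slack, so retaining the gradient term in \eqref{S2-ubd} (resp. the $\|\y\|_2^4$ term in \eqref{S3-ubd}) forces one to work with the strictly smaller weight $e^{\mu t/2}$ and only recover the stated exponent at the very end, using $e^{\mu\tau}\le e^{\mu\tau/2}$ for $\tau\le0$. Everything else --- selecting the class-$\mathfrak{K}$ exponents $\mu,\mu/2,\mu/4$ so that the absorbing times $t_\D,t_\D^{\ast},t_\D^{\ast\ast}$ exist, and checking the higher-power integrability of $\z$ against $e^{\mu\tau/2}$ --- is routine bookkeeping built on Lemmas~\ref{Bddns4}--\ref{Bddns5}.
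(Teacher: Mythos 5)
Your argument for \eqref{S1-ubd} coincides with the paper's: plug $t=\xi$, $\tau=s$ into \eqref{Energy_esti1-ubd}, multiply by $e^{\mu\xi}$ with $\mu=\nu\lambda(1+\tfrac{\varepsilon_0}{2})$, split $\|\x-\z(s)\|_2^2$, absorb $\|\x\|_2^2e^{\mu s}$ using $\kappa_\D\in\mathfrak K$, and enlarge the integral to $(-\infty,0]$. For \eqref{S2-ubd} and \eqref{S3-ubd}, however, you take a genuinely different and, in my view, cleaner route. The paper keeps the full dissipation and instead adds $\mu\|\y\|_2^2$ to both sides of \eqref{RAS5-ubd}, then controls the resulting term $\mu\int_s^0 e^{\mu\tau}\|\y(\tau)\|_2^2\,\d\tau$ by re-inserting the pointwise bound \eqref{Energy_esti1-ubd}; for \eqref{S3-ubd} it squares \eqref{Energy_esti1-ubd} directly. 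Both manoeuvres generate an extra linear factor $|s|$ (the $(1-s)e^{\frac{3\nu\lambda}{4}(1+\frac{\varepsilon_0}{2})s}$ and $-se^{\frac{\nu\lambda}{2}(1+\frac{\varepsilon_0}{2})s}$ factors in \eqref{UTE3} and below), which is why the paper fixes the absorbing time through the weaker condition $[\kappa_\D(\theta_{-t}\omega)]^2e^{-\frac{\nu\lambda}{4}(1+\frac{\varepsilon_0}{2})t}\le1$ in \eqref{UTE-TRV}. You instead sacrifice half of the exponent up front: splitting the dissipation via $\|\Arm(\y)\|_2^2=2\|\nabla\y\|_2^2$ and Poincar\'e (resp. multiplying $p'+\mu p\le g$ by $2p$ and using Young) lets you integrate a single differential inequality against the weight $e^{\mu t/2}$, keep the gradient (resp. $\|\y\|_2^4$) term under the integral with no double integrals and no $|s|$ factor, and recover the stated weight at the end via $e^{\mu\tau}\le e^{\mu\tau/2}$ on $(-\infty,0]$ — in fact your intermediate bounds with weight $e^{\mu\tau/2}$ are slightly stronger than \eqref{S2-ubd}--\eqref{S3-ubd}. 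The only point to make explicit is that your \eqref{S3-ubd} needs $\int_{-\infty}^0 e^{\frac{\mu}{2}\tau}\|\z(\tau)\|_{\mathbb{W}^{1,4}}^8\,\d\tau<\infty$, which is not literally contained in Lemma \ref{Bddns5}; as you note, it follows by the same polynomial-growth argument based on \eqref{rho}, whereas the paper sidesteps this by keeping the square of the time integral of $\|\z\|_{\mathbb{W}^{1,4}}^4$ rather than a pointwise eighth power. With that small extension stated, your proof is complete and correct.
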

\begin{proof}
	Let $\mathrm{D}$ be a random set from the class $\mathfrak{DK}$. Let $\kappa_{\mathrm{D}}(\omega)$ be the radius of $\mathrm{D}(\omega)$, that is, $\kappa_{\mathrm{D}}(\omega):= \sup\{\|x\|_{2} : x \in \mathrm{D}(\omega)\}$ for $\omega\in \Omega$. Let $\omega\in\Omega$ be fixed. Since $\kappa_{\mathrm{D}}(\omega)\in \mathfrak{K}$, there exists $t_{\mathrm{D}}(\omega)\geq 1$ such that 
	\begin{align}\label{UTE-TRV}
		& [\kappa_{\mathrm{D}}(\theta_{-t}\omega)]^2 e^{-\frac{\nu\lambda}{4}\left(1+ \frac{\varepsilon_0}{2}\right) t } \leq 1, 
	\end{align}
	for $t\geq t_{\mathrm{D}}(\omega).$	 From \eqref{Energy_esti1-ubd}, for $t=\xi\in[-1,0]$ and $s\leq - t_{\D}(\omega)$, we obtain
	\begin{align}\label{UTE1}
		& e^{\nu\lambda\left(1+ \frac{\varepsilon_0}{2}\right) \xi}	\|\y(\xi,\omega;s,\x-\z(s))\|^2_{2} 
		+ \frac{\beta\varepsilon_0}{2} \int_{s}^{\xi} e^{\nu\lambda\left(1+ \frac{\varepsilon_0}{2}\right)\tau}\|\Arm(\y(\tau,\omega;s,\x-\z(s))) + \z(\tau) \|^4_{4} \d \tau  
		\nonumber\\ & \leq \|\x-\z(s)\|^2_{2}\  e^{\nu\lambda\left(1+ \frac{\varepsilon_0}{2}\right)s}  
		+ C\int_{s}^{\xi} \bigg[\|\z(\tau)\|^{2}_{2}+ \|\z(\tau)\|^{4}_{\mathbb{W}^{1,4}} + \|\f\|^2_{2}\bigg] e^{\nu\lambda\left(1+ \frac{\varepsilon_0}{2}\right)\tau} \d \tau
		\nonumber\\ & 
		\leq 2\|\x\|^2_{2} e^{\nu\lambda\left(1+ \frac{\varepsilon_0}{2}\right) s}   +  2 \|\z(s)\|^2_{2}e^{\nu\lambda\left(1+ \frac{\varepsilon_0}{2}\right) s} 
		\nonumber\\ & \quad 
		+C_{ubd} \int_{- \infty}^{0} \bigg\{ \|\z(\tau)\|^{2}_{2}+ \|\z(\tau)\|^{4}_{\mathbb{W}^{1,4}} + \|\f\|^2_{2}\bigg\}e^{\nu\lambda\left(1+ \frac{\varepsilon_0}{2}\right) \tau}  \d \tau
		\nonumber\\ &  \leq
		2+ 2 \sup_{s\leq 0}\bigg\{ \|\z(s)\|^2_{2}\  e^{\nu\lambda\left(1+ \frac{\varepsilon_0}{2}\right) s}\bigg\} 
		+C_{ubd} \int_{- \infty}^{0} \bigg\{ \|\z(\tau)\|^{2}_{2}+ \|\z(\tau)\|^{4}_{\mathbb{W}^{1,4}} + \|\f\|^2_{2}\bigg\}e^{\nu\lambda\left(1+ \frac{\varepsilon_0}{2}\right) \tau}  \d \tau,
	\end{align}
	where we have used \eqref{UTE-TRV}. Note that, in view of Lemmas \ref{Bddns4} and \ref{Bddns5}, the right hand side of \eqref{UTE1} is finite. This completes the proof of \eqref{S1-ubd}.
	
	Form \eqref{RAS5-ubd}, we have 
	\begin{align*}
		&	\frac{\d}{\d t} \|\y(t)\|^2_{2} + \nu\lambda\left(1+ \frac{\varepsilon_0}{2}\right) \|\y(t)\|^2_{2}+ \frac{\nu}{2}\left(1+ \frac{\varepsilon_0}{2}\right) \|\Arm(\y(t))\|^2_{2}
		\nonumber\\ & \leq  C \bigg[\|\f\|^2_{2}  + \|\z(t)\|^2_{2}+ \|\z(t)\|^4_{\mathbb{W}^{1,4}}  \bigg]  + \nu\lambda\left(1+ \frac{\varepsilon_0}{2}\right) \|\y(t)\|^2_{2},
	\end{align*}
	which implies
	\begin{align}\label{UTE2}
		& \frac{\nu}{2}\left(1+ \frac{\varepsilon_0}{2}\right) \int_{s}^{t} \|\Arm(\y(\tau))\|^{2}_{2} e^{-\nu\lambda\left(1+ \frac{\varepsilon_0}{2}\right)(t-\tau)} \d \tau
		\nonumber\\ &   \leq 
		\|\y(s)\|^2_{2}\  e^{-\nu\lambda\left(1+ \frac{\varepsilon_0}{2}\right)(t-s)}  
		+ C\int_{s}^{t} \bigg[\|\z(\tau)\|^{2}_{2}+ \|\z(\tau)\|^{4}_{\mathbb{W}^{1,4}} + \|\f\|^2_{2}\bigg] e^{-\nu\lambda\left(1+ \frac{\varepsilon_0}{2}\right)(t-\tau)} \d \tau
		\nonumber\\ & \quad + \nu\lambda\left(1+ \frac{\varepsilon_0}{2}\right)\int_{s}^{t} \|\y(\tau)\|^{2}_{2} e^{-\nu\lambda\left(1+ \frac{\varepsilon_0}{2}\right)(t-\tau)} \d \tau
		\nonumber\\ & 	\leq 
		\|\y(s)\|^2_{2}\  e^{-\nu\lambda\left(1+ \frac{\varepsilon_0}{2}\right)(t-s)}  
		+ C\int_{s}^{t} \bigg[\|\z(\tau)\|^{2}_{2}+ \|\z(\tau)\|^{4}_{\mathbb{W}^{1,4}} + \|\f\|^2_{2}\bigg] e^{-\nu\lambda\left(1+ \frac{\varepsilon_0}{2}\right)(t-\tau)} \d \tau
		\nonumber\\ & \quad + \nu\lambda\left(1+ \frac{\varepsilon_0}{2}\right)\int_{s}^{t} \bigg[\|\y(s)\|^2_{2}\  e^{-\nu\lambda\left(1+ \frac{\varepsilon_0}{2}\right)(\tau-s)}  
		\nonumber\\ & \qquad + C\int_{s}^{\tau} \bigg\{\|\z(\zeta)\|^{2}_{2}+ \|\z(\zeta)\|^{4}_{\mathbb{W}^{1,4}} + \|\f\|^2_{2}\bigg\} e^{-\nu\lambda\left(1+ \frac{\varepsilon_0}{2}\right)(\tau-\zeta)} \d \zeta\bigg] e^{-\nu\lambda\left(1+ \frac{\varepsilon_0}{2}\right)(t-\tau)} \d \tau.
	\end{align}
	where we have used \eqref{Energy_esti1-ubd} in the final inequality.  Putting $t=0$, in \eqref{UTE2}, we get
	\begin{align}\label{UTE3}
		& \frac{\nu}{2}\left(1+ \frac{\varepsilon_0}{2}\right) \int_{s}^{0}e^{\nu\lambda\left(1+ \frac{\varepsilon_0}{2}\right)\tau} \|\Arm(\y(\tau,\omega;s,\x-\z(s)))\|^{2}_{2}  \d \tau
		\nonumber\\ &   \leq 
		\|\x-\z(s)\|^2_{2}\  e^{\nu\lambda\left(1+ \frac{\varepsilon_0}{2}\right)s}  
		+ C\int_{s}^{0} \bigg[\|\z(\tau)\|^{2}_{2}+ \|\z(\tau)\|^{4}_{\mathbb{W}^{1,4}} + \|\f\|^2_{2}\bigg] e^{\nu\lambda\left(1+ \frac{\varepsilon_0}{2}\right)\tau} \d \tau
		\nonumber\\ & \quad + \nu\lambda\left(1+ \frac{\varepsilon_0}{2}\right)\int_{s}^{0} \bigg[\|\x-\z(s)\|^2_{2}\  e^{-\nu\lambda\left(1+ \frac{\varepsilon_0}{2}\right)(\tau-s)}  
		\nonumber\\ & \qquad + C\int_{s}^{\tau} \bigg\{\|\z(\zeta)\|^{2}_{2}+ \|\z(\zeta)\|^{4}_{\mathbb{W}^{1,4}} + \|\f\|^2_{2}\bigg\} e^{-\nu\lambda\left(1+ \frac{\varepsilon_0}{2}\right)(\tau-\zeta)} \d \zeta\bigg] e^{\nu\lambda\left(1+ \frac{\varepsilon_0}{2}\right)\tau} \d \tau
		\nonumber\\ &   \leq 
		\|\x-\z(s)\|^2_{2}\  e^{\nu\lambda\left(1+ \frac{\varepsilon_0}{2}\right)s}  
		+ C\int_{s}^{0} \bigg[\|\z(\tau)\|^{2}_{2}+ \|\z(\tau)\|^{4}_{\mathbb{W}^{1,4}} + \|\f\|^2_{2}\bigg] e^{\nu\lambda\left(1+ \frac{\varepsilon_0}{2}\right)\tau} \d \tau
		\nonumber\\ & \quad + \nu\lambda\left(1+ \frac{\varepsilon_0}{2}\right)\int_{s}^{0} \bigg[\|\x-\z(s)\|^2_{2}\  e^{\nu\lambda\left(1+ \frac{\varepsilon_0}{2}\right)s}  
		\nonumber\\ & \qquad + C\int_{s}^{\tau} \bigg\{\|\z(\zeta)\|^{2}_{2}+ \|\z(\zeta)\|^{4}_{\mathbb{W}^{1,4}} + \|\f\|^2_{2}\bigg\} e^{\nu\lambda\left(1+ \frac{\varepsilon_0}{2}\right)\zeta} \d \zeta\bigg]  \d \tau
		\nonumber\\ &   \leq 
		\|\x-\z(s)\|^2_{2}\  e^{\nu\lambda\left(1+ \frac{\varepsilon_0}{2}\right)s}  
		+ C\int_{-\infty}^{0} \bigg\{\|\z(\tau)\|^{2}_{2}+ \|\z(\tau)\|^{4}_{\mathbb{W}^{1,4}} + \|\f\|^2_{2}\bigg\} e^{\nu\lambda\left(1+ \frac{\varepsilon_0}{2}\right)\tau} \d \tau
		\nonumber\\ & \quad - \nu\lambda\left(1+ \frac{\varepsilon_0}{2}\right)\|\x-\z(s)\|^2_{2}\  e^{\nu\lambda\left(1+ \frac{\varepsilon_0}{2}\right)s} s 
		\nonumber\\ & \qquad + C \int_{s}^{0} e^{\frac{\nu\lambda}{2}\left(1+ \frac{\varepsilon_0}{2}\right)\tau}\int_{s}^{\tau} \bigg\{\|\z(\zeta)\|^{2}_{2}+ \|\z(\zeta)\|^{4}_{\mathbb{W}^{1,4}} + \|\f\|^2_{2}\bigg\} e^{\frac{\nu\lambda}{2}\left(1+ \frac{\varepsilon_0}{2}\right)\zeta} \d \zeta \d \tau
		\nonumber\\ &   \leq 
		2\left[\|\x\|^2_{2}e^{\frac{\nu\lambda}{4}\left(1+ \frac{\varepsilon_0}{2}\right)s}  + \|\z(s)\|^2_{2}e^{\frac{\nu\lambda}{4}\left(1+ \frac{\varepsilon_0}{2}\right)s}  \right] (1-s) e^{\frac{3\nu\lambda}{4}\left(1+ \frac{\varepsilon_0}{2}\right)s}
		\nonumber\\ & \qquad +C  \int_{-\infty}^{0} \bigg\{\|\z(\zeta)\|^{2}_{2}+ \|\z(\zeta)\|^{4}_{\mathbb{W}^{1,4}} + \|\f\|^2_{2}\bigg\} e^{\frac{\nu\lambda}{2}\left(1+ \frac{\varepsilon_0}{2}\right)\zeta} \d \zeta.
	\end{align}
	Using \eqref{UTE-TRV}, Lemmas \ref{Bddns4} and \ref{Bddns5}, and the fact that $\lim\limits_{s\to - \infty}(1-s) e^{\frac{3\nu\lambda}{4}\left(1+ \frac{\varepsilon_0}{2}\right)s}=0,$ we have that there exists a $t^{\ast}_{\D}(\omega)\geq t_{\D}(\omega)$ such that for all $s\leq -t^{\ast}_{\D}(\omega) $
	\begin{align}\label{UTE4}
		& \frac{\nu}{2}\left(1+ \frac{\varepsilon_0}{2}\right) \int_{s}^{0}e^{\nu\lambda\left(1+ \frac{\varepsilon_0}{2}\right)\tau} \|\Arm(\y(\tau,\omega;s,\x-\z(s)))\|^{2}_{2}  \d \tau
		\nonumber\\ &   \leq 2+ 2 \sup_{s\leq 0}\bigg\{ \|\z(s)\|^2_{2}\  e^{\frac{\nu\lambda}{4}\left(1+ \frac{\varepsilon_0}{2}\right) s}\bigg\} 
		+C \int_{- \infty}^{0} \bigg\{ \|\z(\tau)\|^{2}_{2}+ \|\z(\tau)\|^{4}_{\mathbb{W}^{1,4}} + \|\f\|^2_{2}\bigg\}e^{\frac{\nu\lambda}{2}\left(1+ \frac{\varepsilon_0}{2}\right) \tau}  \d \tau.
	\end{align}
	Again, in view of Lemmas \ref{Bddns4} and \ref{Bddns5}, the right hand side of \eqref{UTE4} is finite. This completes the proof of \eqref{S2-ubd}.
	
	Finally, from \eqref{Energy_esti1-ubd}, we have
	\begin{align*}
		&  \int_{s}^{0} \|\y(\tau)\|^{4}_{2} e^{\nu\lambda\left(1+ \frac{\varepsilon_0}{2}\right)\tau} \d \tau
		\nonumber\\ & \leq \int_{s}^{0} \bigg[\|\y(s)\|^2_{2}\  e^{-\nu\lambda\left(1+ \frac{\varepsilon_0}{2}\right)(\tau-s)}  
		\nonumber\\ & \qquad	+ C\int_{s}^{\tau} \bigg\{\|\z(\zeta)\|^{2}_{2}+ \|\z(\zeta)\|^{4}_{\mathbb{W}^{1,4}} + \|\f\|^2_{2}\bigg\} e^{-\nu\lambda\left(1+ \frac{\varepsilon_0}{2}\right)(\tau-\zeta)} \d \zeta\bigg]^2 e^{\nu\lambda\left(1+ \frac{\varepsilon_0}{2}\right)\tau} \d \tau
		\nonumber\\ & \leq 2\int_{s}^{0} \|\y(s)\|^4_{2}\  e^{-2\nu\lambda\left(1+ \frac{\varepsilon_0}{2}\right)(\tau-s)} e^{\nu\lambda\left(1+ \frac{\varepsilon_0}{2}\right)\tau} \d \tau 
		\nonumber\\ & \qquad	+ 2C\int_{s}^{0} \bigg[\int_{s}^{\tau} \bigg\{\|\z(\zeta)\|^{2}_{2}+ \|\z(\zeta)\|^{4}_{\mathbb{W}^{1,4}} + \|\f\|^2_{2}\bigg\} e^{-\nu\lambda\left(1+ \frac{\varepsilon_0}{2}\right)(\tau-\zeta)} \d \zeta\bigg]^2 e^{\nu\lambda\left(1+ \frac{\varepsilon_0}{2}\right)\tau} \d \tau
		\nonumber\\ & \leq -2 s\|\y(s)\|^4_{2}\  e^{\nu\lambda\left(1+ \frac{\varepsilon_0}{2}\right)s}   
		\nonumber\\ & \qquad	+ 2C\int_{s}^{0} \bigg[\int_{s}^{\tau} \bigg\{\|\z(\zeta)\|^{2}_{2}+ \|\z(\zeta)\|^{4}_{\mathbb{W}^{1,4}} + \|\f\|^2_{2}\bigg\} e^{-\nu\lambda\left(1+ \frac{\varepsilon_0}{2}\right)(\tau-\zeta)} \d \zeta\bigg]^2 e^{\nu\lambda\left(1+ \frac{\varepsilon_0}{2}\right)\tau} \d \tau,
	\end{align*}
	which gives
	\begin{align*}
		&  \int_{s}^{0} \|\y(\tau,\omega;s,\x-\z(s))\|^{4}_{2} e^{\nu\lambda\left(1+ \frac{\varepsilon_0}{2}\right)\tau} \d \tau
		\nonumber\\ & \leq -2 s\|\x-\z(s)\|^4_{2}\  e^{\nu\lambda\left(1+ \frac{\varepsilon_0}{2}\right)s}   
		\nonumber\\ & \qquad	+ 2C\int_{s}^{0} \bigg[\int_{s}^{\tau} \bigg\{\|\z(\zeta)\|^{2}_{2}+ \|\z(\zeta)\|^{4}_{\mathbb{W}^{1,4}} + \|\f\|^2_{2}\bigg\} e^{-\nu\lambda\left(1+ \frac{\varepsilon_0}{2}\right)(\tau-\zeta)} \d \zeta\bigg]^2 e^{\nu\lambda\left(1+ \frac{\varepsilon_0}{2}\right)\tau} \d \tau
		\nonumber\\ & \leq -16 s\left\{\|\x\|^4_{2}  +  \|\z(s)\|^4_{2}\right\}\  e^{\nu\lambda\left(1+ \frac{\varepsilon_0}{2}\right)s}   
		\nonumber\\ & \qquad	+ 2C\int_{s}^{0} \bigg[\int_{s}^{\tau} \bigg\{\|\z(\zeta)\|^{2}_{2}+ \|\z(\zeta)\|^{4}_{\mathbb{W}^{1,4}} + \|\f\|^2_{2}\bigg\} e^{-\frac{\nu\lambda}{4}\left(1+ \frac{\varepsilon_0}{2}\right)(\tau-\zeta)} \d \zeta\bigg]^2 e^{\nu\lambda\left(1+ \frac{\varepsilon_0}{2}\right)\tau} \d \tau
		\nonumber\\ & \leq 16\left\{\|\x\|^4_{2}e^{\frac{\nu\lambda}{2}\left(1+ \frac{\varepsilon_0}{2}\right)s}  +  \|\z(s)\|^4_{2}e^{\frac{\nu\lambda}{2}\left(1+ \frac{\varepsilon_0}{2}\right)s}\right\}\     \left\{-se^{\frac{\nu\lambda}{2}\left(1+ \frac{\varepsilon_0}{2}\right)s}\right\}
		\nonumber\\ & \qquad	+ 2C\int_{s}^{0} \bigg[\int_{s}^{\tau} \bigg\{\|\z(\zeta)\|^{2}_{2}+ \|\z(\zeta)\|^{4}_{\mathbb{W}^{1,4}} + \|\f\|^2_{2}\bigg\} e^{\frac{\nu\lambda}{4}\left(1+ \frac{\varepsilon_0}{2}\right)\zeta} \d \zeta\bigg]^2 e^{\frac{\nu\lambda}{2}\left(1+ \frac{\varepsilon_0}{2}\right)\tau} \d \tau.
	\end{align*}
	Using \eqref{UTE-TRV}, Lemmas \ref{Bddns4} and \ref{Bddns5}, and the fact that $\lim\limits_{s\to - \infty}\left\{-s e^{\frac{\nu\lambda}{2}\left(1+ \frac{\varepsilon_0}{2}\right)s}\right\}=0,$ we have that there exists a $t^{\ast\ast}_{\D}(\omega)\geq t_{\D}(\omega)$ such that for all $s\leq -t^{\ast\ast}_{\D}(\omega) $
	\begin{align*}
		&  \int_{s}^{0} \|\y(\tau,\omega;s,\x-\z(s))\|^{4}_{2} e^{\nu\lambda\left(1+ \frac{\varepsilon_0}{2}\right)\tau} \d \tau
		\nonumber\\ & \leq 16 + 16 \sup_{s\leq 0} \left\{\|\z(s)\|^4_{2}e^{\frac{\nu\lambda}{2}\left(1+ \frac{\varepsilon_0}{2}\right)s}\right\}
		+ C \bigg[\int_{-\infty}^{0} \bigg\{\|\z(\zeta)\|^{2}_{2}+ \|\z(\zeta)\|^{4}_{\mathbb{W}^{1,4}} + \|\f\|^2_{2}\bigg\} e^{\frac{\nu\lambda}{4}\left(1+ \frac{\varepsilon_0}{2}\right)\zeta} \d \zeta\bigg]^2,
	\end{align*}
	which is finite due to Lemmas \ref{Bddns4} and \ref{Bddns5}. This completes the proof of \eqref{S3-ubd}.
\end{proof}

The following result is used to obtain the uniform-tail estimates for the solution of system \eqref{CTGF-with-Pressure} (see Lemma \ref{LR} below).
\begin{lemma}\label{D-convege}
	Let \eqref{condition1} and Hypotheses \ref{assump1}-\ref{assumpO} be satisfied and let us denote by $\y(t, \z,\f,\boldsymbol{x}),$ the solution of   system \eqref{CTGF}. Assume that $\f\in \H^{-1}(\O) $ and $\{\x_{m}\}_{m\in\N}$ be a bounded sequence in $\H$. Then, there exists $\tilde{\y}\in \mathrm{L}^{\infty}(0, T; \H)\cap\mathrm{L}^{2}(0, T; \V)\cap\mathrm{L}^{4}(0, T; \mathbb{W}^{1,4}(\O))$  such that along a subsequence
	\begin{align}\label{D-convege-4}
		\y(\cdot, \z,\f,\boldsymbol{x}_m) \ \to \tilde{\y} \ \text{	in	} \ \mathrm{L}^{2}(0,T;\L^2_{\mathrm{loc}}(\O)),
	\end{align}
	for every $T>0$.
\end{lemma}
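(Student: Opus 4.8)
The plan is to combine uniform energy bounds (which are available because $\z$ and $\f$ are kept fixed while only the initial data $\x_m$ vary over a bounded set of $\H$) with a \emph{localized} Aubin--Lions--Simon compactness argument on bounded subdomains, and finally a diagonal extraction over an exhaustion of $\mathcal{O}$ and over $T\uparrow\infty$. Here, since the convergence is only required in $\mathrm{L}^2(0,T;\L^2_{\mathrm{loc}}(\mathcal{O}))$, the limit $\tilde{\y}$ will simply be the weak limit of the sequence and need not solve anything.

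\textbf{Step 1. Uniform estimates and weak limits.} Write $\y^m:=\y(\cdot,\z,\f,\x_m)$. Arguing exactly as for the bounds \eqref{S4}--\eqref{S5} in the proof of Theorem \ref{solution}, now directly for the solution $\y^m$ via the energy equality \eqref{eeq} with $\y_0$ replaced by $\x_m$, and using only that $\{\x_m\}$ is bounded in $\H$, that $\f$ is fixed and that $\z\in\mathrm{L}^2(0,T;\H)\cap\mathrm{L}^4(0,T;\mathbb{W}^{1,4}(\mathcal{O}))$, one gets that $\{\y^m\}$ is bounded in $\mathrm{L}^{\infty}(0,T;\H)\cap\mathrm{L}^{2}(0,T;\V)\cap\mathrm{L}^{4}(0,T;\mathbb{W}^{1,4}(\mathcal{O}))$. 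Feeding these into \eqref{CTGF} together with the mapping properties of $\A,\B,\J,\K$ from Subsection \ref{sec-operators} gives, as in \eqref{eqn-seq-deri+G}, that $\{\tfrac{\d\y^m}{\d t}\}$ is bounded in $\mathrm{L}^{2}(0,T;\V')+\mathrm{L}^{\frac43}(0,T;\mathbb{W}^{-1,\frac43}(\mathcal{O}))$, hence in $\mathrm{L}^{\frac43}(0,T;\X')$. By the Banach--Alaoglu theorem, along a subsequence $\y^m\xrightharpoonup{w^*}\tilde{\y}$ in $\mathrm{L}^{\infty}(0,T;\H)$ and $\y^m\xrightharpoonup{w}\tilde{\y}$ in $\mathrm{L}^{2}(0,T;\V)\cap\mathrm{L}^{4}(0,T;\mathbb{W}^{1,4}(\mathcal{O}))$, so $\tilde{\y}$ lies in the asserted space.

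\textbf{Step 2. Local strong convergence.} Fix a bounded open set $\mathcal{O}'$ with $\overline{\mathcal{O}'}\subset\mathcal{O}$ and pick $\zeta\in\mathrm{C}^{\infty}_c(\mathcal{O})$ with $\zeta\equiv1$ on $\mathcal{O}'$ and $\supp\zeta=:\overline{\mathcal{O}''}\subset\mathcal{O}$. Then $\{\zeta\y^m\}$ is bounded in $\mathrm{L}^{2}(0,T;\H^{1}_{0}(\mathcal{O}''))$, and since $\partial_t(\zeta\y^m)=\zeta\,\partial_t\y^m$ one would bound $\{\partial_t(\zeta\y^m)\}$ in $\mathrm{L}^{\frac43}(0,T;\H^{-s}(\mathcal{O}''))$ for some $s\geq2$: writing $\partial_t\y^m$ through the pressure formulation \eqref{CTGF-with-Pressure}, commuting $\zeta$ past the differential operators, each nonlinear term produces a divergence of an $\mathrm{L}^{2}_{t,x}$ (or $\mathrm{L}^{\frac43}_{t,x}$) quantity on $\mathcal{O}''$, while the pressure contribution $\zeta\nabla\widehat{\mathbf{P}}$ is controlled through the explicit representation $\widehat{\mathbf{P}}=(-\Delta)^{-1}\nabla\cdot[\nabla\cdot(\cdots)]$ recalled in the introduction, which yields a bound on $\widehat{\mathbf{P}}$ in $\mathrm{L}^{\frac43}(0,T;\L^{\frac43}_{\mathrm{loc}}(\mathcal{O}))$. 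Since $\H^{1}_{0}(\mathcal{O}'')$ is compactly embedded in $\L^{2}(\mathcal{O}'')$ and $\L^{2}(\mathcal{O}'')\hookrightarrow\H^{-s}(\mathcal{O}'')$ (Rellich--Kondrachov on the bounded set $\mathcal{O}''$), the Aubin--Lions--Simon lemma produces a further subsequence along which $\zeta\y^m$ converges strongly in $\mathrm{L}^{2}(0,T;\L^{2}(\mathcal{O}''))$; restricting to $\mathcal{O}'$, where $\zeta\equiv1$, gives $\y^m\to\tilde{\y}$ in $\mathrm{L}^{2}(0,T;\L^{2}(\mathcal{O}'))$, the limit being $\tilde{\y}$ because the continuous restriction $\mathrm{L}^{2}(0,T;\V)\to\mathrm{L}^{2}(0,T;\L^{2}(\mathcal{O}'))$ forces weak and strong limits to coincide.

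\textbf{Step 3. Exhaustion, diagonalization, and the main difficulty.} Taking $\mathcal{O}_1\Subset\mathcal{O}_2\Subset\cdots$ with $\bigcup_k\mathcal{O}_k=\mathcal{O}$ and applying Step 2 successively, a diagonal subsequence satisfies $\y^m\to\tilde{\y}$ in $\mathrm{L}^{2}(0,T;\L^{2}(\mathcal{O}_k))$ for every $k$, which is exactly \eqref{D-convege-4}; running the construction over $T=1,2,\ldots$ and diagonalizing once more makes the same subsequence work for every $T>0$. The main obstacle is the bound on the localized time derivative $\partial_t(\zeta\y^m)$: because $\zeta\y^m$ is no longer divergence free, one cannot test the projected equation \eqref{CTGF} against the non-solenoidal cut-off $\zeta\boldsymbol{\phi}$, so one must either work with the pressure formulation \eqref{CTGF-with-Pressure} and establish a local pressure estimate, or re-project $\zeta\boldsymbol{\phi}$ onto $\X$ and control $\|\mathcal{P}(\zeta\boldsymbol{\phi})\|_{\X}$. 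Handling the several strongly nonlinear terms $\B,\J,\K$ and the pressure in this local estimate is where the real work lies, and it is the same mechanism that underlies the uniform-tail estimates of Lemma \ref{LR}.
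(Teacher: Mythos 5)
Your Step 1 is exactly the paper's starting point, but from Step 2 onward you diverge, and the divergence is where the gap lies: the entire compactness argument hinges on a uniform bound for $\partial_t(\zeta\y^m)$ in some $\mathrm{L}^{p}(0,T;\H^{-s})$, and you do not prove it — you explicitly defer it as ``where the real work lies.'' This is not a routine verification. Since $\frac{\d\y^m}{\d t}$ is only known as an element of $\mathrm{L}^{2}(0,T;\V')+\mathrm{L}^{\frac43}(0,T;\mathbb{W}^{-1,\frac43}(\mathcal{O}))$, i.e.\ a functional on divergence-free test functions, giving a meaning and a bound to $\zeta\,\partial_t\y^m$ forces you to reintroduce the pressure, and a local bound on $\widehat{\mathbf{P}}$ obtained from $(-\Delta)^{-1}\nabla\cdot[\nabla\cdot(\cdots)]$ is delicate on an unbounded Poincar\'e domain with Dirichlet boundary: the sets on which the lemma is actually used later (and on which the paper proves it) are $\mathcal{O}_k=\mathcal{O}\cap\{|x|<k\}$, which touch $\partial\mathcal{O}$, so interior elliptic estimates for $(-\Delta)^{-1}$ do not suffice and one would need pressure regularity up to the boundary. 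Your choice of cut-off with $\overline{\mathcal{O}'}\subset\mathcal{O}$ sidesteps the boundary but then only yields convergence on compact subsets strictly inside $\mathcal{O}$, which is weaker than what the proof of Theorem \ref{Main_theorem_2} consumes (strong convergence in $\L^2(\mathcal{O}_k)$, cf.\ \eqref{Uac3}--\eqref{Uac8}). So as written the argument is incomplete, and completing it along your route would require a genuinely new local pressure estimate that the paper never needs.

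The paper avoids this entirely by replacing Aubin--Lions with a time-translate compactness criterion. From $\big\|\frac{\d\y^m}{\d t}\big\|_{\mathrm{L}^{4/3}(0,T;\V'+\mathbb{W}^{-1,\frac43})}\le C$ one pairs the integrated equation over $[t,t+a]$ with the increment $\y^m(t+a)-\y^m(t)$ itself, which \emph{is} an admissible (divergence-free) test function in $\V\cap\mathbb{W}^{1,4}(\mathcal{O})$; this yields
\begin{align*}
\sup_m\int_0^{T-a}\|\y^m(t+a)-\y^m(t)\|_2^2\,\d t\le \widetilde{C}(T)\,a^{1/4}\longrightarrow 0\quad\text{as } a\to 0 .
\end{align*}
Then, multiplying by a radial cut-off $\chi(|x|^2/k^2)$ (the truncated functions lie in $\H^1_0(\mathcal{O}_{2k})$ because $\y^m$ already vanishes on $\partial\mathcal{O}$), boundedness in $\mathrm{L}^2(0,T;\H^1_0(\mathcal{O}_{2k}))$ together with the above equicontinuity of time translates gives relative compactness in $\mathrm{L}^2(0,T;\L^2(\mathcal{O}_{2k}))$ by the criterion in \cite[Theorem 13.3]{Te1} (see also \cite[Theorem 16.3]{MMR}), with no pressure analysis and with convergence on the sets $\mathcal{O}_k$ that include the boundary portion. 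If you adopt this increment-testing step in place of your localized time-derivative bound, the rest of your scheme (weak limits, exhaustion, diagonalization over $k$ and $T$) goes through as you wrote it.
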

\begin{proof}
	Let $\y_m(\cdot)= \y(\cdot, \z,\f,\boldsymbol{x}_m)$. Since $\{\x_{m}\}_{m\in\N}$ is a bounded sequence in $\H$, the sequence
	\begin{align}\label{Bounded}
		\{\y_m\}_{n\in \N} \ \text{ is bounded in }\  \mathrm{L}^{\infty}(0, T; \H)\cap\mathrm{L}^{2}(0, T; \V)\cap\mathrm{L}^{4}(0, T; \mathbb{W}^{1,4}(\O)).
	\end{align}
	Hence, there exists a subsequence $\{\y_{m'}\}_{m'\in \N}$ of $\{\y_m\}_{m\in \N}$ and  $$\widetilde{\y}\in \mathrm{L}^{\infty}(0, T; \H)\cap\mathrm{L}^{2}(0, T; \V)\cap\mathrm{L}^{4}(0 , T; \mathbb{W}^{1,4}(\O)),$$ such that, as $m' \to \infty,$ (by the Banach-Alaoglu theorem)
	\begin{align}\label{lim1}
		\begin{cases}
			\y_{m'} \xrightharpoonup{w^*} \widetilde{\y} \ \text{  in } \ \mathrm{L}^{\infty}(0, T; \H)\\ \y_{m'} \xrightharpoonup{w} \widetilde{\y} \ \text{  in }\ \mathrm{L}^{2}(0, T; \V)\cap\mathrm{L}^{4}(0, T; \mathbb{W}^{1,4}(\O)).
		\end{cases}
	\end{align}		
	From Theorem \ref{solution}, we have $\big\|\frac{\d\y_{m}}{\d t}\big\|_{\mathrm{L}^{\frac{4}{3}}(0, T; \V'+\mathbb{W}^{-1,\frac{4}{3}})}\leq C,$ for some $C>0$ and all $m\in \N.$ Therefore, by the Cauchy-Schwarz inequality, for all $0\leq t \leq t+a \leq T$, $m\in \N$ and $\boldsymbol{\varphi}\in \V\cap\mathbb{W}^{1,4}(\O)$, we obtain
	\begin{align}\label{335} 
		|(\y_m(t+a)-\y_m(t), \boldsymbol{\varphi})|\leq \int_{t}^{t+a}\bigg|\bigg\langle\frac{\d\y_{m}(s)}{\d t} , \boldsymbol{\varphi} \bigg\rangle\bigg|\d s\leq C(T) a^{\frac{1}{4}} \|\boldsymbol{\varphi}\|_{\V\cap\mathbb{W}^{1,4}}.
	\end{align}
	Since $\y_m(t+a)-\y_m(t)\in\V\cap\mathbb{W}^{1,4}(\O)$, for a.e. $t\in(0, T)$, choosing $\boldsymbol{\varphi}=\y_m(t+a)-\y_m(t)$ in \eqref{335}, we obtain 
	\begin{align}
		\|\y_m(t+a)-\y_m(t)\|_{2}^2\leq C(T)a^{\frac{1}{4}}\|\y_m(t+a)-\y_m(t)\|_{\V\cap\mathbb{W}^{1,4}}.
	\end{align}
	Integrating from $0$ to $T-a$, we further find 
	\begin{align}
		\int_{0}^{T-a}	\|\y_m(t+a)-\y_m(t)\|_{2}^2\d t
		 &\leq C(T)a^{\frac{1}{4}}\int_{0}^{T-a}\|\y_m(t+a)-\y_m(t)\|_{\V\cap\mathbb{W}^{1,4}}\d t\nonumber\\&\leq C(T)a^{\frac{1}{4}}\bigg[(T-a)^{1/2}\left(\int_{0}^{T-a}\|\y_m(t+a)-\y_m(t)\|_{\V}^2\d t\right)^{1/2}
		\nonumber\\ & \qquad+(T-a)^{\frac{3}{4}}\left(\int_{0}^{T-a}\|\y_m(t+a)-\y_m(t)\|_{\mathbb{W}^{1,4}}^{4}\d t\right)^{\frac{1}{4}}\bigg]\nonumber\\&\leq \widetilde{C}(T)a^{\frac{1}{4}},
	\end{align}
	where we have used the H\"older's inequality and \eqref{Bounded}. Also, $\widetilde{C}(T)$ is an another positive constant independent of $m$. Furthermore, we have 
	\begin{align}\label{3p43}
		\lim_{a\to 0}\sup_m	\int_{0}^{T-a}	\|\y_m(t+a)-\y_m(t)\|_{2}^2\d t=0.
	\end{align}
	
	Let us now consider a truncation function $\chi\in\C^1(\R^+)$ with $\chi(s)=1$ for $s\in[0,1]$ and $\chi(s)=0$ for $s\in[4,\infty)$. For each $k>0$, let us define $$\y_{m,k}(x):=\chi\left(\frac{|x|^2}{k^2}\right)\y_m(x), \ \text{ for }x\in\O_{2k},$$
	where $\O_k= \{x\in\O: |x|< k\}$. It can easily be seen from \eqref{3p43} that 
	\begin{align}
		\lim_{a\to 0}\sup_m	\int_{0}^{T-a}	\|\y_{m,k}(t+a)-\y_{m,k}(t)\|_{\L^2(\O_{2k})}^2\d t=0,
	\end{align}
	for all $T,k>0$. Moreover, from \eqref{Bounded}, for all $T,k>0$, we infer 
	\begin{align}
		&  \{\y_{m,k}\}_{m\in\N}\ \text{ is bounded in }\ \mathrm{L}^{\infty}(0 , T;\L^2(\O_{2k}))\cap\mathrm{L}^2(0 , T;\H^1_0(\O_{2k})).
	\end{align}
	Since the injection $\H_0^1(\O_{2k})\subset\L^2{(\O_{2k})}$ is compact, we can apply  \cite[Theorem 16.3]{MMR} (see \cite[Theorem 13.3]{Te1} also) to obtain 
	\begin{align}\label{3.47}
		\{\y_{m,k}\}_{m\in\N}\ \text{ is relatively compact in } \ \mathrm{L}^{2}(0, T;\L^2(\O_{2k})),
	\end{align}
	for all $T,k>0$. From \eqref{3.47}, we further infer 
	\begin{align}\label{3.48}
		\{\y_{m}\}_{m\in\N}\ \text{ is relatively compact in } \ \mathrm{L}^{2}(0 , T;\L^2(\O_{k})),
	\end{align}
	for all $T,k>0$. 
	Using \eqref{lim1} and \eqref{3.48}, we can extract a subsequence of $\{\y_m\}_{m\in\N}$ (not relabeling) such that 
	\begin{align*}
		\y_m \to \tilde{\y} \ \text{ in } \mathrm{L}^{2} (0,  T;\L^{2}(\O_k)), 
	\end{align*}
	for all $T,k>0$. This completes the proof.
\end{proof}

Next, we show that the solution to system \eqref{CTGF-with-Pressure} satisfies the uniform-tail estimates which will help us to establish the $\mathfrak{DK}$-asymptotic compactness of $\Psi$ on unbounded domains. 

Let $\Lambda$ be a smooth function such that $0\leq\Lambda(\xi)\leq 1$ for $\xi\in\R$ and
\begin{align}\label{Cut-off}
	\Lambda(\xi)=\begin{cases*}
		0, \text{ for } |\xi|\leq 1,\\
		1, \text{ for } |\xi|\geq2.
	\end{cases*}
\end{align}
Then, there exists a positive constant $C^{\ast}$ such that $|\Lambda'(\xi)|\leq C^{\ast}$ and $|\Lambda''(\xi)|\leq C^{\ast}$ for all $\xi\in\R$.

\begin{lemma}\label{LR}
	Let \eqref{condition1} and Hypotheses \ref{assump1}-\ref{assumpO} be satisfied,  $\f\in\L^2(\O)$, $\omega\in\Omega$ and $\D \in\mathfrak{DK}$. Then, for each $\varepsilon>0$ and for each $\xi\in[-1,0]$, there exists $k_0:=k_0(\varepsilon,\xi,\omega,\D)\in\N$  such that  the solution $\y(\cdot)$ of system \eqref{CTGF-with-Pressure} satisfies
	\begin{align}
		\left\|\y(\xi,\omega;s,\x-\z(s))\right\|^2_{\L^2(\O^{c}_{k})}\leq\varepsilon, 
	\end{align}
	for all $\x\in \D(\theta_{s}\omega)$, for all $s\leq -\mathcal{T}=-\max\{t_{\D}^{\ast}(\omega), t_{\D}^{\ast\ast}(\omega)\}$ and for all $k\geq k_0$,	where  $\O_{k}=\{x\in\O:\,|x|< k\}$, $\O^{c}_{k}=\O\backslash\O_{k}$,
	and $t_{\D}^{\ast}(\omega)$ and $t_{\D}^{\ast\ast}(\omega)$ are the same time given in Lemma \ref{D-abH-ubd}.
\end{lemma}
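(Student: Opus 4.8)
The plan is to derive a differential inequality for the localized energy $\int_{\O}\Lambda\!\left(\frac{|x|^2}{k^2}\right)|\y(t)|^2\,\d x$, mimicking the energy estimate of Lemma~\ref{RA1-ubd} but now keeping track of all the commutator terms produced by the cut-off function $\Lambda$. First I would test the pathwise equation \eqref{CTGF-with-Pressure} with $\Lambda_k\y$, where $\Lambda_k(x):=\Lambda(|x|^2/k^2)$, and integrate over $\O$. This produces the leading term $\frac12\frac{\d}{\d t}\int_\O\Lambda_k|\y|^2\,\d x$ together with a good dissipative contribution $\nu\int_\O\Lambda_k|\nabla\y|^2\,\d x$ (up to Korn and Poincar\'e) plus a collection of error terms: (i) from $\nu\Delta\y$ a term $\nu\int_\O\y\cdot\nabla\Lambda_k\cdot\nabla\y\,\d x$ that is $O(1/k)$ in the gradient norms; (ii) from the convective term $((\y+\z)\cdot\nabla)(\y+\z)$ integration by parts moves a derivative onto $\Lambda_k$, yielding $\frac1k$-small terms involving $\|\y+\z\|$-type norms on $\O_k^c$; (iii) from the two non-Newtonian divergence terms $\alpha\,\mathrm{div}((\Arm(\y+\z))^2)$ and $\beta\,\mathrm{div}(|\Arm(\y+\z)|^2\Arm(\y+\z))$, integration by parts again produces $\nabla\Lambda_k$-weighted terms bounded by $C k^{-1}(\|\Arm(\y+\z)\|_4^2+\|\Arm(\y+\z)\|_4^3)\|\y\|_{\mathbb W^{1,4}(\O_k^c)}$ or similar; (iv) the forcing and $\chi\z$ terms localized to $\O_k^c$, which are small because $\f\in\L^2(\O)$ and $\z\in\mathrm{C}_{1/2}(\R;\mathfrak X)$ with $\z\in\mathbb W^{1,4}(\O)$; and crucially (v) the pressure term $\int_\O\nabla\widehat{\textbf P}\cdot\Lambda_k\y\,\d x=-\int_\O\widehat{\textbf P}\,\nabla\Lambda_k\cdot\y\,\d x$, which does not vanish and must be controlled using the explicit representation of $\textbf P$ via $(-\Delta)^{-1}$ recalled in the introduction.

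The pressure estimate is the main obstacle. I would use $\widehat{\textbf P}=\textbf P-p$ and the formula $\textbf P=(-\Delta)^{-1}\nabla\cdot[\nabla\cdot(\v\otimes\v)]+\alpha(-\Delta)^{-1}\nabla\cdot[\nabla\cdot\{(\Arm(\v))^2\}]+\beta(-\Delta)^{-1}\nabla\cdot[\nabla\cdot\{|\Arm(\v)|^2\Arm(\v)\}]-(-\Delta)^{-1}[\nabla\cdot\f]$ with $\v=\y+\z$. Since $\nabla\Lambda_k$ is supported in the annulus $\O_{2k}\setminus\O_k$ and $|\nabla\Lambda_k|\le C^\ast/k$, one wants $\int_\O|\widehat{\textbf P}||\nabla\Lambda_k||\y|\,\d x\le \frac{C}{k}\|\widehat{\textbf P}\|_{\mathrm{L}^2(\text{annulus})}\|\y\|_2$, or better, a bound that after integration in $t$ over $[s,\xi]$ against the weight $e^{-\nu\lambda(1+\varepsilon_0/2)(\xi-\tau)}$ becomes small as $k\to\infty$. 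The Calder\'on--Zygmund estimates give $\|\textbf P\|_{\mathrm{L}^2}\le C\|\v\otimes\v\|_{\mathrm L^2}+C\|(\Arm(\v))^2\|_{\mathrm L^2}+C\||\Arm(\v)|^2\Arm(\v)\|_{\mathrm L^{4/3}}+C\|\f\|_{\mathbb H^{-1}}$, i.e.\ $\|\textbf P\|$ controlled by $\|\v\|_4^2+\|\Arm(\v)\|_4^3+\|\f\|_2$; crucially these are exactly the quantities that are integrable in time with the exponential weight by Lemmas~\ref{RA1-ubd} and \ref{D-abH-ubd}, and $\|\y\|_2$ is uniformly bounded by \eqref{S1-ubd}. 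The only subtlety is that $\nabla\Lambda_k$ is localized to the annulus while $\textbf P$ is global; I would split $\textbf P=\textbf P_{\mathrm{in}}+\textbf P_{\mathrm{out}}$ according to whether the source $\v$, $\Arm(\v)$ is taken on $\O_{k/2}$ or on $\O_{k/2}^c$ (convolving against the Riesz-type kernel), using that $(-\Delta)^{-1}$ of a far-away source is small on the annulus by kernel decay, and that the near part is controlled by $\|\y\|_{\L^2(\O^c_{k/2})}$-type quantities; this is where Lemma~\ref{D-convege} enters, providing strong $\mathrm{L}^2_{\mathrm{loc}}$-compactness needed to absorb a stubborn cross term into a quantity that vanishes along the chosen subsequence.

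With all error terms collected, I would arrive at an inequality of the schematic form
\begin{align*}
\frac{\d}{\d t}\Phi_k(t)+\nu\lambda\Big(1+\tfrac{\varepsilon_0}{2}\Big)\Phi_k(t)\le \frac{C}{k}\,\mathcal{R}(t)+\rho_k(t),
\end{align*}
where $\Phi_k(t)=\int_\O\Lambda_k|\y(t)|^2\,\d x$, $\mathcal{R}(t)$ is a combination of $\|\Arm(\y(t))\|_2^2$, $\|\Arm(\y(t)+\z(t))\|_4^4$, $\|\y(t)\|_2^4$, $\|\z(t)\|_{\mathbb W^{1,4}}^4$, $\|\f\|_2^2$, $1$ — all integrable against the weight on $(-\infty,0]$ by Lemmas~\ref{Bddns5}, \ref{RA1-ubd}, \ref{D-abH-ubd} — and $\rho_k(t)\to0$ as $k\to\infty$ (for fixed $\omega$, along the subsequence). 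Then the variation-of-constants formula on $[s,\xi]$ with $\xi\in[-1,0]$ gives
\begin{align*}
\Phi_k(\xi)\le \Phi_k(s)e^{-\nu\lambda(1+\frac{\varepsilon_0}{2})(\xi-s)}+\frac{C}{k}\int_s^\xi e^{-\nu\lambda(1+\frac{\varepsilon_0}{2})(\xi-\tau)}\mathcal{R}(\tau)\,\d\tau+\int_s^\xi e^{-\nu\lambda(1+\frac{\varepsilon_0}{2})(\xi-\tau)}\rho_k(\tau)\,\d\tau.
\end{align*}
For $\x\in\D(\theta_s\omega)$ and $s\le-\mathcal{T}$, $\Phi_k(s)\le\|\y(s)\|_2^2=\|\x-\z(s)\|_2^2$ times $e^{\nu\lambda(1+\frac{\varepsilon_0}{2})s}$, which tends to $0$ as $s\to-\infty$ (this is precisely the role of the class $\mathfrak K$ and Lemma~\ref{Bddns4}), so choosing $\mathcal{T}$ large makes the first term $<\varepsilon/3$; the second term is $<\varepsilon/3$ once $k\ge k_0$ because the weighted integral of $\mathcal{R}$ over $(-\infty,0]$ is finite; and the third is $<\varepsilon/3$ for $k$ large by dominated convergence. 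Since $\Lambda_k\equiv1$ on $\O_k^c$, $\|\y(\xi)\|_{\L^2(\O_k^c)}^2\le\Phi_k(\xi)\le\varepsilon$, which is the claim. I expect the bookkeeping of the pressure term — in particular making the annulus-versus-global splitting rigorous and quantifying the kernel decay — to be the delicate point; everything else is a localized repetition of the a priori estimates already established.
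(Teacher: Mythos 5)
Your overall strategy coincides with the paper's proof: test \eqref{CTGF-with-Pressure} with a cut-off multiple of $\y$ (the paper uses $\Lambda^2\left(|x|^2k^{-2}\right)\y$ together with the weighted Korn--Poincar\'e inequality \eqref{poin-weight}), treat every term where a derivative falls on the cut-off as an $O(1/k)$ error controlled by global norms, bound the pressure through its explicit $(-\Delta)^{-1}$ representation and Calder\'on--Zygmund-type estimates, and close with the variation-of-constants formula, using the weighted-in-time integrability of $\|\Arm(\y)\|_2^2$, $\|\Arm(\y+\z)\|_4^4$ and $\|\y\|_2^4$ supplied uniformly in $\x$ and $s$ by \eqref{S1-ubd}--\eqref{S3-ubd}, and Lemmas \ref{Bddns4}--\ref{Bddns5} for the noise terms. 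Your schematic inequality is exactly \eqref{ep11}, and the conclusion via $\Lambda_k\equiv 1$ outside a ball is the paper's \eqref{ep12}.

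The one place you deviate --- the near/far splitting of $\textbf{P}$ with kernel decay and the appeal to Lemma \ref{D-convege} to remove a ``stubborn cross term along the chosen subsequence'' --- is both unnecessary and, as formulated, would not prove the lemma. It is unnecessary because your own first bound already closes the estimate: after integration by parts (using $\mathrm{div}\,\y=0$) the pressure contribution is $-\frac{4}{k^2}\int_{\O}\widehat{\textbf{P}}\,\Lambda\Lambda^{\prime}(x\cdot\y)\,\d x$, supported on the annulus where $|x|\le\sqrt{2}k$, so the prefactor is $O(1/k)$ and the global Calder\'on--Zygmund bound gives $|I_7|\le \frac{C}{k}\left[\|\Arm(\y)\|_2^2+\|\y\|_2^4+\|\Arm(\y+\z)\|_4^4+\|\z\|_{\mathbb{W}^{1,4}}^4+\|\f\|_2^2\right]$ (this is \eqref{ep8}); no localization of $\textbf{P}$ is needed because the $1/k$ alone provides the smallness, and the weighted time integrals of precisely these quantities are finite uniformly over $\x\in\D(\theta_s\omega)$ and $s\le-\mathcal{T}$ by Lemma \ref{D-abH-ubd}. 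It would be harmful because the statement requires a single $k_0(\varepsilon,\xi,\omega,\D)$ valid for \emph{all} $\x\in\D(\theta_s\omega)$ and \emph{all} $s\le-\mathcal{T}$: an error $\rho_k$ that vanishes only along a subsequence of initial data destroys exactly this uniformity. In the paper, Lemma \ref{D-convege} plays no role in Lemma \ref{LR}; it is used afterwards, in the proof of Theorem \ref{Main_theorem_2}, to produce the limit point to which the tail estimate is then applied. The only $k$-dependent quantities that must tend to zero are tails of fixed functions, namely $\int_{\O\cap\{|x|\ge k\}}|\f|^2\d x$ and the weighted time integrals of $\int_{\O\cap\{|x|\ge k\}}\left(|\z|^2+|\z|^4+|\nabla\z|^4\right)\d x$, which vanish by dominated convergence without any compactness. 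Finally, you cannot ``choose $\mathcal{T}$ large'' to make the initial term small: $\mathcal{T}=\max\{t^{\ast}_{\D}(\omega),t^{\ast\ast}_{\D}(\omega)\}$ is fixed by Lemma \ref{D-abH-ubd}. The paper instead uses that $\Lambda_k$ vanishes on $\{|x|\le k\}$, so the initial contribution is $e^{\nu\lambda\left(1+\frac{\varepsilon_0}{2}\right)s}\int_{\O\cap\{|x|\ge k\}}|\x-\z(s)|^2\d x$, which is handled together with \eqref{UTE-TRV} inside the $k\to\infty$ limit rather than by enlarging $\mathcal{T}$.
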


\begin{proof} 
Note that an application of Poincar\'e inequality \eqref{poin} allow us to write
	\begin{align*}
			 \lambda \int_{\O}\Lambda^2\left(|x|^2k^{-2}\right)\left| \y\right|^2\d x  
		 & \leq  \int_{\O}\left|\nabla\left(\Lambda\left(|x|^2k^{-2}\right) \y\right)\right|^2\d x
		\nonumber\\ &\leq  \frac{1}{2} \int_{\O}\left|\Arm\left(\Lambda\left(|x|^2k^{-2}\right) \y\right)\right|^2\d x
		\nonumber\\ & =\frac{1}{2} \int_{\O}\left|\Lambda\left(|x|^2k^{-2}\right) \Arm(\y)+\frac{2}{k^2}\Lambda^{\prime}\left(|x|^2k^{-2}\right) x^{T}\y + \frac{2}{k^2}\Lambda^{\prime}\left(|x|^2k^{-2}\right) \y^{T} x \right|^2\d x
		\nonumber\\ & \leq \frac{1}{2} \int_{\O}\Lambda^2\left(|x|^2k^{-2}\right) |\Arm(\y)|^2\d x + \frac{C}{k}\int_{\O}\left[|\Arm(\y)||\y|+|\y|^2\right]\d x
		\nonumber\\ & \leq \frac{1}{2} \int_{\O}\Lambda^2\left(|x|^2k^{-2}\right) |\Arm(\y)|^2\d x + \frac{C}{k}\|\Arm(\y)\|_{2}^2,
	\end{align*}
	which implies
	\begin{align}\label{poin-weight}
		- \frac{1}{2} \int_{\O}\Lambda^2\left(|x|^2k^{-2}\right) |\Arm(\y)|^2\d x \leq	- \lambda \int_{\O}\Lambda^2\left(|x|^2k^{-2}\right)\left| \y\right|^2\d x   + \frac{C}{k}\|\Arm(\y)\|_{2}^2.
	\end{align}

	Taking the inner product of the first equation of \eqref{CTGF-with-Pressure} with $\Lambda^2\left(|x|^2k^{-2}\right)\y$, we have
	\begin{align}\label{ep1}
		& \frac{1}{2} \frac{\d}{\d t}\int_{\O}\Lambda^2\left(|x|^2k^{-2}\right)|\y|^2\d x 
		\nonumber\\& = \underbrace{\nu\left\langle \Delta\y,  \Lambda^2\left(|x|^2k^{-2}\right) \right\rangle}_{I_{1}(k,t)} -\underbrace{ b\left(\y+\z,\y+\z,\Lambda^2\left(|x|^2k^{-2}\right)(\y+\z)\right)}_{I_{2}(k,t)} + \underbrace{ b\left(\y+\z,\y+\z,\Lambda^2\left(|x|^2k^{-2}\right)\z\right)}_{I_{3}(k,t)} 
		\nonumber\\&\quad  + \underbrace{\alpha \left\langle \text{div}((\Arm(\y+\z))^2) , \Lambda^2\left(|x|^2k^{-2}\right) \y \right\rangle}_{I_{4}(k,t)} 
		   + \underbrace{ \beta \left\langle \text{div}(|\Arm(\y+\z)|^2\Arm(\y+\z)) , \Lambda^2\left(|x|^2k^{-2}\right) (\y+\z) \right\rangle }_{I_{5}(k,t)} 
		\nonumber\\&\quad - \underbrace{ \beta \left\langle \text{div}(|\Arm(\y+\z)|^2\Arm(\y+\z)) ,  \Lambda^2\left(|x|^2k^{-2}\right) \z \right\rangle}_{I_{6}(k,t)}  - \underbrace{ \left\langle \nabla \widehat{\textbf{P}}, \Lambda^2\left(|x|^2k^{-2}\right)\y \right\rangle}_{I_{7}(k,t)} 
		\nonumber\\ & \quad + \underbrace{\chi\int_{\O}\z\Lambda^2\left(|x|^2k^{-2}\right)\y\d x}_{I_{8}(k,t)}
		  + \underbrace{\int_{\O}\f\Lambda^2\left(|x|^2k^{-2}\right)\y\d x}_{I_{9}(k,t)}.
	\end{align}	
	Let us now estimate each term on the right hand side of \eqref{ep1}. Integration by parts, \eqref{condition1} (see Remark \ref{rem-condition1} also), and H\"older's, Gagliardo-Nirenberg's \eqref{Gen_lady-4}, Korn's \eqref{Korn-ineq} and Young's inequalities help us to obtain
	\begin{align}
		I_{1}(k,t)
		&= -\frac{\nu}{2} \int_{\O} \Lambda^2\left(|x|^2k^{-2}\right) |\Arm(\y)|^2  \d x - \frac{4\nu}{k^2} \int\limits_{\O}\Lambda\left(|x|^2k^{-2}\right) \Lambda^{\prime}\left(|x|^2k^{-2}\right)
		[(x\cdot\nabla) \y\cdot\y+(\y\cdot\nabla) \y\cdot x]\d x\nonumber\\
		&\leq -\frac{\nu}{2} \int_{\O}\Lambda^2\left(|x|^2k^{-2}\right)  |\Arm(\y)|^2   \d x+  \frac{4\sqrt{2}\nu}{k} \int\limits_{\O\cap\{k\leq|x|\leq \sqrt{2}k\}} \left|\Lambda^{\prime}\left(|x|^2k^{-2}\right)\right| \left|\y\right| \left|\nabla \y\right| \d x\nonumber\\&\leq -\frac{\nu}{2} \int_{\O}\Lambda^2\left(|x|^2k^{-2}\right)  |\Arm(\y)|^2    \d x+  \frac{C}{k} \int_{\O}\left|\y\right| \left|\nabla \y\right| \d x
		\nonumber\\ & \leq  - \frac{\nu}{2} \int_{\O} \Lambda^2\left(|x|^2k^{-2}\right)  |\Arm(\y)|^2    \d x +   \frac{C}{k} \left(\|\y\|^2_{2}+\|\nabla\y\|^2_{2}\right)
		\nonumber\\ & \leq -\frac{\nu}{2} \int_{\O}\Lambda^2\left(|x|^2k^{-2}\right)  |\Arm(\y)|^2    \d x +  \frac{C}{k}\|\Arm(\y)\|^2_{2},\label{ep2}
		\\  
		|I_{2}(k,t)|&=\left|\frac{2}{k^2}\int_{\O} \Lambda\left(|x|^2k^{-2}\right)\Lambda^{\prime}\left(|x|^2k^{-2}\right)x\cdot(\y+\z) |\y+\z|^2 \d x\right|\nonumber\\&= \left|\frac{2}{k^2} \int\limits_{\O\cap\{k\leq|x|\leq \sqrt{2}k\}}\Lambda\left(|x|^2k^{-2}\right) \Lambda^{\prime}\left(|x|^2k^{-2}\right)x\cdot(\y+\z) |\y+\z|^2 \d x\right|\nonumber\\&\leq \frac{2\sqrt{2}}{k} \int\limits_{\O\cap\{k\leq|x|\leq \sqrt{2}k\}} \left|\Lambda^{\prime}\left(|x|^2k^{-2}\right)\right| |\y+\z|^3 \d x \nonumber\\& \leq \frac{C}{k}\|\y+\z\|^3_{ 3} \leq\frac{C}{k} \left[\|\y+\z\|^4_{4}+\|\y+\z\|^2_{2}\right] 
		\nonumber\\ & \leq\frac{C}{k} \left[\|\y\|^4_{4}+\|\z\|^4_{4}+\|\Arm(\y)\|^2_{2}+\|\z\|^2_{2}\right] \nonumber\\ & \leq\frac{C}{k} \left[\|\Arm(\y)\|^4_{4}+ \|\y\|^4_{2}+\|\Arm(\y)\|^2_{2} + \|\z\|^4_{4}+\|\z\|^2_{2} \right]
		\nonumber\\ & \leq\frac{C}{k} \left[\|\Arm(\y+\z)\|^4_{4}+ \|\y\|^4_{2}+\|\Arm(\y)\|^2_{2} + \|\z\|^4_{\mathbb{W}^{1,4}}+\|\z\|^2_{2} \right],\label{ep3}
		\\
		|I_{3}(k,t)| & \leq  \int_{\O} |\y+\z||\nabla(\y+\z)|\Lambda^2\left(|x|^2k^{-2}\right)|\z| \d x 
		\nonumber\\ & \leq \|\y+\z\|_{2}\|\nabla(\y+\z)\|_{4}\left(\int_{\O}\Lambda^8\left(|x|^2k^{-2}\right)|\z|^4\d x\right)^{\frac14}
		\nonumber\\ & \leq C \{\|\Arm(\y)\|_{2}+\|\z\|_{2}\}\|\Arm(\y+\z)\|_{4}\left(\int_{\O\cap\{|x|\geq k\}}|\z|^4\d x\right)^{\frac14},\label{ep4}
		\\
		|I_{4}(k,t)| & \leq  \left|	\frac{\alpha}{2}  \int_{\O} \Lambda^2\left(|x|^2k^{-2}\right)[(\Arm(\y+\z))^2: \Arm(\y)]  \d x \right|
		\nonumber\\ & \qquad + \left|\frac{4\alpha}{k^2} \int_{\O} \Lambda\left(|x|^2k^{-2}\right)\Lambda^{\prime}\left(|x|^2k^{-2}\right) (\Arm(\y+\z))^2: (x^{T} \y) \d x\right|
		\nonumber\\ & \leq  \frac{\nu(1-\varepsilon_0)}{4} \int_{\O} \Lambda^2\left(|x|^2k^{-2}\right)|\Arm(\y)|^2   \d x  + \frac{\beta(1-\varepsilon_0)}{2}   \int_{\O} \Lambda^2\left(|x|^2k^{-2}\right)|\Arm(\y+\z)|^4 \d x
		\nonumber\\ & \quad + \frac{C}{k} \left[\|\Arm(\y+\z)\|^4_{4}+\|\y\|^2_{2}\right]
		\nonumber\\ & \leq  \frac{\nu(1-\varepsilon_0)}{4} \int_{\O} \Lambda^2\left(|x|^2k^{-2}\right)|\Arm(\y)|^2 \d x  + \frac{\beta(1-\varepsilon_0)}{2}   \int_{\O} \Lambda^2\left(|x|^2k^{-2}\right)|\Arm(\y+\z)|^4 \d x
		\nonumber\\ & \quad + \frac{C}{k} \left[\|\Arm(\y+\z)\|^4_{4}+\|\Arm(\y)\|^2_{2}\right],\label{ep5}
		\\
		I_{5}(k,t) & = -	\frac{\beta}{2}  \int_{\O} \Lambda^2\left(|x|^2k^{-2}\right)|\Arm(\y+\z)|^4 \d x 
		\nonumber\\ & \qquad - \frac{4\beta}{k^2} \int_{\O} \Lambda\left(|x|^2k^{-2}\right)\Lambda^{\prime}\left(|x|^2k^{-2}\right) [|\Arm(\y+\z)|^2\Arm(\y+\z): (x^{T} (\y+\z))] \d x
		\nonumber\\ & = -	\frac{\beta}{2}  \int_{\O} \Lambda^2\left(|x|^2k^{-2}\right)|\Arm(\y+\z)|^4 \d x 
		\nonumber\\ & \qquad - \frac{4\beta}{k^2} \int\limits_{\O\cap\{k\leq|x|\leq \sqrt{2}k\}} \Lambda^{\prime}\left(|x|^2k^{-2}\right) [|\Arm(\y+\z)|^2\Arm(\y+\z): (x^{T} (\y+\z))] \d x
		\nonumber\\ & \leq -	\frac{\beta}{2}  \int_{\O} \Lambda^2\left(|x|^2k^{-2}\right)|\Arm(\y+\z)|^4 \d x + \frac{C}{k} \int\limits_{\O\cap\{k\leq|x|\leq \sqrt{2}k\}}  |\Arm(\y+\z)|^3 |\y+\z| \d x
		\nonumber\\ & \leq -	\frac{\beta}{2}  \int_{\O} \Lambda^2\left(|x|^2k^{-2}\right)|\Arm(\y+\z)|^4 \d x + \frac{C}{k}\|\Arm(\y+\z)\|^3_{4}\|\y+\z\|_{4}
		\nonumber\\ & \leq -	\frac{\beta}{2}  \int_{\O} \Lambda^2\left(|x|^2k^{-2}\right)|\Arm(\y+\z)|^4 \d x + \frac{C}{k}\left[\|\Arm(\y+\z)\|^4_{4}+\|\y\|^4_{4} +\|\z\|^4_{4}\right]
		\nonumber\\ & \leq -	\frac{\beta}{2}  \int_{\O} \Lambda^2\left(|x|^2k^{-2}\right)|\Arm(\y+\z)|^4 \d x + \frac{C}{k}\left[\|\Arm(\y+\z)\|^4_{4}+\|\y\|^4_{2}+ \|\z\|^4_{\mathbb{W}^{1,4}}\right],\label{ep6}
		\\
		 |I_{6}(k,t)| & \leq 	\beta  \int_{\O} \Lambda^2\left(|x|^2k^{-2}\right)|\Arm(\y+\z)|^3|\nabla\z| \d x 
		  + \frac{4\beta}{k^2} \int_{\O} \Lambda\left(|x|^2k^{-2}\right)\Lambda^{\prime}\left(|x|^2k^{-2}\right) |\Arm(\y+\z)|^3 |x| |\z| \d x
		\nonumber\\ & \leq	C  \|\Arm(\y+\z)\|^3_{4} \left(\int_{\O\cap\{|x|\geq k\}}|\nabla\z|^4\d x\right)^{\frac14}  + \frac{C}{k} \int\limits_{\O\cap\{k\leq|x|\leq \sqrt{2}k\}}  |\Arm(\y+\z)|^3 |\z| \d x
		\nonumber\\ & \leq C  \|\Arm(\y+\z)\|^3_{4} \left(\int_{\O\cap\{|x|\geq k\}}|\nabla\z|^4\d x\right)^{\frac14} + \frac{C}{k}\|\Arm(\y+\z)\|^3_{4}\|\z\|_{4}
		\nonumber\\ & \leq C  \|\Arm(\y+\z)\|^3_{4} \left(\int_{\O\cap\{|x|\geq k\}}|\nabla\z|^4\d x\right)^{\frac14} + \frac{C}{k}\left[\|\Arm(\y+\z)\|^4_{4}+\|\z\|^4_{4}\right],\label{ep7}
		\\
		|I_7(k,t)| & \leq  \left|\left\langle \nabla \widehat{\textbf{P}}_1, \Lambda^2\left(|x|^2k^{-2}\right)\y \right\rangle\right| + \left|\left\langle \nabla \widehat{\textbf{P}}_2, \Lambda^2\left(|x|^2k^{-2}\right)\y \right\rangle\right|
        \nonumber\\ &  \leq  \|\nabla \widehat{\textbf{P}}_1\|_{\H^{-1}} \| \Lambda^2\left(|x|^2k^{-2}\right)\y \|_{\H^1} + \| \nabla \widehat{\textbf{P}}_2\|_{\mathbb{W}^{-1,\frac43}}\| \Lambda^2\left(|x|^2k^{-2}\right)\y \|_{\mathbb{W}^{1,4}}
        \nonumber\\ &   \leq  \|\nabla \widehat{\textbf{P}}_1\|_{\H^{-1}} \bigg(\| \Lambda^2\left(|x|^2k^{-2}\right)\y \|^2_{2} + \| \nabla[\Lambda^2\left(|x|^2k^{-2}\right)\y ]\|^2_{2}\bigg)^{\frac12} 
        \nonumber\\ &  \quad + \| \nabla \widehat{\textbf{P}}_2\|_{\mathbb{W}^{-1,\frac43}}\bigg(\| \Lambda^2\left(|x|^2k^{-2}\right)\y \|^4_{4} + \| \nabla[\Lambda^2\left(|x|^2k^{-2}\right)\y] \|^4_{4}\bigg)^{\frac14}
        \nonumber\\ &  \leq C \|\nabla \widehat{\textbf{P}}_1\|_{\H^{-1}} \bigg( \int_{\O\cap\{|x|\geq k\}} |\y|^2\d x + \int_{\O\cap\{|x|\geq k\}} |\Arm(\y)|^2\d x\bigg)^{\frac12} 
        \nonumber\\ &   + C \| \nabla \widehat{\textbf{P}}_2\|_{\mathbb{W}^{-1,\frac43}}\bigg(\bigg\{\int_{\O\cap\{|x|\geq k\}} |\y|^2\d x\bigg\}^2 + \int_{\O\cap\{|x|\geq k\}} |\Arm(\y+\z)|^4\d x + \int_{\O\cap\{|x|\geq k\}} |\nabla\z|^4\d x\bigg)^{\frac14},\label{ep8}
		\\
		|I_{8}(k,t)|&\leq \frac{\nu\lambda\varepsilon_0}{8}\int_{\O}\Lambda^2\left(|x|^2k^{-2}\right)|\v|^2\d x + C\int_{\O}\Lambda^2\left(|x|^2k^{-2}\right)|\f|^2\d x, \label{ep9}
		\\
		|I_{9}(k,t)|&\leq \frac{\nu\lambda\varepsilon_0}{8}\int_{\O}\Lambda^2\left(|x|^2k^{-2}\right)|\v|^2\d x + C\int_{\O}\Lambda^2\left(|x|^2k^{-2}\right)|\z|^2\d x.\label{ep10}
	\end{align}
	Combining \eqref{ep1}-\eqref{ep10} and using \eqref{poin-weight}, we reach at
	\begin{align*}
		&	\frac{1}{2} \frac{\d}{\d t}\int_{\O}\Lambda^2\left(|x|^2k^{-2}\right)|\y|^2\d x + \frac{\beta\varepsilon_0}{2}   \int_{\O} \Lambda^2\left(|x|^2k^{-2}\right)|\Arm(\y+\z)|^4 \d x
	\nonumber\\ & \leq - \frac{\nu\lambda}{2}(1+\varepsilon_0) \int_{\O} \Lambda^2\left(|x|^2k^{-2}\right)|\v|^2   \d x + \frac{C}{k} \|\Arm(\v)\|^2_2  + \frac{\nu\lambda\varepsilon_0}{4}\int_{\O}\Lambda^2\left(|x|^2k^{-2}\right)|\v|^2\d x 
\nonumber\\ & \quad + C\int_{\O}\Lambda^2\left(|x|^2k^{-2}\right)|\f|^2\d x+ C\int_{\O}\Lambda^2\left(|x|^2k^{-2}\right)|\z|^2\d x \nonumber\\ & \quad + C \{\|\Arm(\y)\|_{2}+\|\z\|_{2}\}\|\Arm(\y+\z)\|_{4}\left(\int_{\O\cap\{|x|\geq k\}}|\z|^4\d x\right)^{\frac14} 
		  + C  \|\Arm(\y+\z)\|^3_{4} \left(\int_{\O\cap\{|x|\geq k\}}|\nabla\z|^4\d x\right)^{\frac14}
          \nonumber\\ & \quad +  C \|\nabla \widehat{\textbf{P}}_1\|_{\H^{-1}} \bigg( \int_{\O\cap\{|x|\geq k\}} |\y|^2\d x + \int_{\O\cap\{|x|\geq k\}} |\Arm(\y)|^2\d x\bigg)^{\frac12} 
        \nonumber\\ &  \quad + C \| \nabla \widehat{\textbf{P}}_2\|_{\mathbb{W}^{-1,\frac43}}\bigg(\bigg\{\int_{\O\cap\{|x|\geq k\}} |\y|^2\d x\bigg\}^2 + \int_{\O\cap\{|x|\geq k\}} |\Arm(\y+\z)|^4\d x + \int_{\O\cap\{|x|\geq k\}} |\nabla\z|^4\d x\bigg)^{\frac14}
		\nonumber\\ & \quad + \frac{C}{k} \left[\|\y\|^4_{2} + \|\Arm(\y)\|^2_2  + \|\Arm(\y+\z)\|^4_{4}+ \|\z\|^4_{\mathbb{W}^{1,4}} + \|\z\|^2_{2}\right],
	\end{align*}
	which implies 
	\begin{align}\label{ep11}
		&	 \frac{\d}{\d t}\int_{\O}\Lambda^2\left(|x|^2k^{-2}\right)|\v|^2\d x + \nu\lambda\left(1+\frac{\varepsilon_0}{2}\right)  \int_{\O} \Lambda^2\left(|x|^2k^{-2}\right)|\v|^2   \d x +  \beta\varepsilon_0   \int_{\O} \Lambda^2\left(|x|^2k^{-2}\right)|\Arm(\v)|^4 \d x
		\nonumber\\ & \leq   C\int_{\O\cap\{|x|\geq k\}}|\f|^2\d x + C\int_{\O\cap\{|x|\geq k\}}|\z|^2\d x + C  \|\Arm(\y+\z)\|^3_{4} \left(\int_{\O\cap\{|x|\geq k\}}|\nabla\z|^4\d x\right)^{\frac14}
		\nonumber\\ & \quad + C \bigg\{\|\Arm(\y)\|_{2}+\|\z\|_{2}\bigg\}\|\Arm(\y+\z)\|_{4}\left(\int_{\O\cap\{|x|\geq k\}}|\z|^4\d x\right)^{\frac14} 
        \nonumber\\ & \quad +  C \|\nabla \widehat{\textbf{P}}_1\|_{\H^{-1}} \bigg( \int_{\O\cap\{|x|\geq k\}} |\y|^2\d x + \int_{\O\cap\{|x|\geq k\}} |\Arm(\y)|^2\d x\bigg)^{\frac12} 
        \nonumber\\ &  \quad + C \| \nabla \widehat{\textbf{P}}_2\|_{\mathbb{W}^{-1,\frac43}}\bigg(\bigg\{\int_{\O\cap\{|x|\geq k\}} |\y|^2\d x\bigg\}^2 + \int_{\O\cap\{|x|\geq k\}} |\Arm(\y+\z)|^4\d x + \int_{\O\cap\{|x|\geq k\}} |\nabla\z|^4\d x\bigg)^{\frac14}
		\nonumber\\ & \quad + \frac{C}{k} \bigg[\|\y\|^4_{2} + \|\Arm(\y)\|^2_2  + \|\Arm(\y+\z)\|^4_{4}
		  + \|\z\|^4_{\mathbb{W}^{1,4}} + \|\z\|^2_{2} + \|\f\|^2_{2} \bigg].
	\end{align}
	An application of variation of constants formula to \eqref{ep8} gives for all $\omega\in\Omega$, for all $\xi\in[-1,0]$ and for all $s\leq - \mathcal{T}= -\max\{t_{\D}^{\ast}(\omega),t_{\D}^{\ast\ast}(\omega)\}$ 
	\begin{align}\label{ep12}
		&e^{-\nu\lambda\left(1+\frac{\varepsilon_0}{2}\right)}	\int_{\O}\Lambda^2\left(|x|^2k^{-2}\right)|\y(\xi,\omega;s,\x-\z(s))|^2\d x 
		\nonumber\\&\leq e^{\nu\lambda\left(1+\frac{\varepsilon_0}{2}\right) s}\int_{\O}\Lambda^2\left(|x|^2k^{-2}\right)|\x(x)-\z(x,s)|^2\d x + C\int_{s}^{0} e^{\nu\lambda\left(1+\frac{\varepsilon_0}{2}\right) \tau}\int_{\O\cap\{|x|\geq k\}}|\f(x)|^2\d x \d \tau 
		\nonumber\\ & \quad + C\int_{s}^{0} e^{\nu\lambda\left(1+\frac{\varepsilon_0}{2}\right) \tau}\int_{\O\cap\{|x|\geq k\}}|\z(x,\tau)|^2\d x \d \tau + \frac{C}{k}\int_{s}^{0} e^{\nu\lambda\left(1+\frac{\varepsilon_0}{2}\right) \tau}\bigg[\|\y(\tau,\omega;s,\x-\z(s))\|^4_{2} \nonumber\\& \qquad +  \|\Arm(\y(\tau,\omega;s,\x-\z(s)))\|^2_{2}+ \|\Arm(\y(\tau,\omega;s,\x-\z(s))+\z(\tau))\|^4_{4} + \|\z(\tau)\|^4_{\mathbb{W}^{1,4}}+ \|\z(\tau)\|^2_{2}  + \|\f\|^2_{2} 
		\bigg] \d \tau
		\nonumber\\ & \quad + C\int_{s}^{0} e^{\nu\lambda\left(1+\frac{\varepsilon_0}{2}\right) \tau} \|\Arm(\y(\tau,\omega;s,\x-\z(s))+\z(\tau))\|^3_{4} \left(\int_{\O\cap\{|x|\geq k\}}|\nabla\z(x,\tau)|^4\d x\right)^{\frac14} \d \tau
		\nonumber\\ & \quad + C\int_{s}^{0} e^{\nu\lambda\left(1+\frac{\varepsilon_0}{2}\right) \tau} \{\|\Arm(\y(\tau,\omega;s,\x-\z(s)))\|_{2}+\|\z(\tau)\|_{2}\}\|\Arm(\y(\tau,\omega;s,\x-\z(s))+\z(\tau))\|_{4}
		\nonumber\\ & \qquad \times \left(\int_{\O\cap\{|x|\geq k\}}|\z(x,\tau)|^4\d x\right)^{\frac14} \d\tau
        \nonumber\\ & \quad +   C \int_{s}^{0} e^{\nu\lambda\left(1+\frac{\varepsilon_0}{2}\right) \tau} \|\nabla \widehat{\textbf{P}}_1(\tau,\omega;s,\x-\z(s))\|_{\H^{-1}} \bigg( \int_{\O\cap\{|x|\geq k\}} |\y(\tau,\omega;s,\x-\z(s))|^2\d x 
        \nonumber\\ & \qquad  + \int_{\O\cap\{|x|\geq k\}} |\Arm(\y(\tau,\omega;s,\x-\z(s)))|^2\d x\bigg)^{\frac12}  \d \tau
        \nonumber\\ &   \quad + C \int_{s}^{0} e^{\nu\lambda\left(1+\frac{\varepsilon_0}{2}\right) \tau}  \| \nabla \widehat{\textbf{P}}_2(\tau,\omega;s,\x-\z(s))\|_{\mathbb{W}^{-1,\frac43}}\bigg(\bigg\{\int_{\O\cap\{|x|\geq k\}} |\y(\tau,\omega;s,\x-\z(s))|^2\d x\bigg\}^2
        \nonumber\\ & \qquad   + \int_{\O\cap\{|x|\geq k\}} |\Arm(\y(\tau,\omega;s,\x-\z(s))+\z(\tau)|^4\d x + \int_{\O\cap\{|x|\geq k\}} |\nabla\z(\tau)|^4\d x\bigg)^{\frac14} \d \tau
		\nonumber\\&\leq e^{\nu\lambda\left(1+\frac{\varepsilon_0}{2}\right) s}\int_{\O\cap\{|x|\geq k\}}|\x(x)-\z(x,s)|^2\d x + C\int_{s}^{0} e^{\nu\lambda\left(1+\frac{\varepsilon_0}{2}\right) \tau}\int_{\O\cap\{|x|\geq k\}}|\f(x)|^2\d x \d \tau 
		\nonumber\\ & \quad + C\int_{s}^{0} e^{\nu\lambda\left(1+\frac{\varepsilon_0}{2}\right) \tau}\int_{\O\cap\{|x|\geq k\}}|\z(x,\tau)|^2\d x \d \tau + \frac{C}{k}
		    + C \left(\int_{s}^{0} e^{\nu\lambda\left(1+\frac{\varepsilon_0}{2}\right) \tau} \int_{\O\cap\{|x|\geq k\}}|\nabla\z(x,\tau)|^4\d x \d \tau \right)^{\frac14} 
		\nonumber\\ & \quad + C  \left(\int_{s}^{0} e^{\nu\lambda\left(1+\frac{\varepsilon_0}{2}\right) \tau} \int_{\O\cap\{|x|\geq k\}}|\z(x,\tau)|^4\d x \d\tau \right)^{\frac14} 
        \nonumber\\ & \quad +   C \bigg( \int_{s}^{0} e^{\nu\lambda\left(1+\frac{\varepsilon_0}{2}\right) \tau}   \int_{\O\cap\{|x|\geq k\}} |\y(\tau,\omega;s,\x-\z(s))|^2\d x \d \tau
        \nonumber\\ & \qquad   +  \int_{s}^{0} e^{\nu\lambda\left(1+\frac{\varepsilon_0}{2}\right) \tau} \int_{\O\cap\{|x|\geq k\}} |\Arm(\y(\tau,\omega;s,\x-\z(s)))|^2\d x  \d \tau \bigg)^{\frac12}
        \nonumber\\ &   \quad + C  \bigg( \int_{s}^{0} e^{\nu\lambda\left(1+\frac{\varepsilon_0}{2}\right) \tau}  \bigg\{\int_{\O\cap\{|x|\geq k\}} |\y(\tau,\omega;s,\x-\z(s))|^2\d x\bigg\}^2\d \tau
        \nonumber\\ & \qquad   + \int_{s}^{0} e^{\nu\lambda\left(1+\frac{\varepsilon_0}{2}\right) \tau} \int_{\O\cap\{|x|\geq k\}} |\Arm(\y(\tau,\omega;s,\x-\z(s))+\z(\tau)|^4\d x \d \tau 
      \nonumber\\ & \qquad    + \int_{s}^{0} e^{\nu\lambda\left(1+\frac{\varepsilon_0}{2}\right) \tau} \int_{\O\cap\{|x|\geq k\}} |\nabla\z(\tau)|^4\d x \d \tau \bigg)^{\frac14}
		\nonumber\\&\to 0\text{ as } k\to\infty,
	\end{align}
	where we have used the finiteness of integral obtained in \eqref{S1-ubd}-\eqref{S3-ubd}, Lemmas \ref{Bddns4}-\ref{Bddns5} and \eqref{estimate-pressure} (see Remark \ref{rem-pressure}). Hence, from \eqref{ep12}, we conclude that for any $\varepsilon>0$, for any  $\omega\in\Omega$ and for any $\xi\in[-1,0]$, there exists a $k_0=k_0(\varepsilon,\xi, \omega)\in\N$ such that 
	\begin{align*} 
		&	\int_{\O\cap\{|x|\geq k\}}|\y(\xi,\omega;s,\x-\z(s))|^2 \d x \leq \varepsilon,
	\end{align*}
	for all $k\geq k_0$ and $s\leq -\mathcal{T}$. This completes the proof.
\end{proof}

Let us now provide the main result of this section.

	\begin{theorem}\label{Main_theorem_2}
	Let \eqref{condition1} and Hypotheses \ref{assump1}-\ref{assumpO} be satisfied. Consider the MDS, $\Im = (\Omega, \mathcal{F}, \mathbb{P}, \theta)$ from Proposition \ref{m-DS1}, and the RDS $\Psi$ on $\H$ over $\Im$ generated by system \eqref{STGF} with additive noise satisfying Hypothesis \ref{assump1}. Then, there exists a unique random $\mathfrak{DK}$-attractor for continuous RDS $\Psi$ in $\H$.
\end{theorem}
\begin{proof} 
	Because of \cite[Theorem 2.8]{BCLLLR}, it is only needed to prove that there exists a $\mathfrak{DK}$-absorbing set $\widetilde{\textbf{B}}\in \mathfrak{DK}$ and the RDS $\Psi$ is $\mathfrak{DK}$-asymptotically compact. 	
	\vskip 0.2 cm 
	\noindent
	\textbf{Existence of $\mathfrak{DK}$-absorbing set $\widetilde{\textbf{B}}\in \mathfrak{DK}$:}	Let $\mathrm{D}$ be a random set from the class $\mathfrak{DK}$. Let $\kappa_{\mathrm{D}}(\omega)$ be the radius of $\mathrm{D}(\omega)$, that is, $\kappa_{\mathrm{D}}(\omega):= \sup\{\|x\|_{2} : x \in \mathrm{D}(\omega)\}$ for $\omega\in \Omega$.
	
	Let $\omega\in \Omega$ be fixed.  Also, let us set
	\begin{align}
		[\tilde{\kappa}_{11}(\omega)]^2 &= 2+2\sup_{t\leq 0}\bigg\{ \|\z(t)\|^2_{2}\  e^{\nu\lambda\left(1+ \frac{\varepsilon_0}{2}\right) t}\bigg\} 
		+C_{ubd} \int_{- \infty}^{0} \bigg\{ \|\z(\tau)\|^{2}_{2}+ \|\z(\tau)\|^{4}_{\mathbb{W}^{1,4}} + \|\f\|^2_{2}\bigg\}e^{\nu\lambda\left(1+ \frac{\varepsilon_0}{2}\right) \tau}  \d \tau.
	\end{align}
	
	In view of Lemma \ref{Bddns5} and Proposition \ref{radius}, we deduce that both $\tilde{\kappa}_{11}\in  \mathfrak{K}$ and also that $\tilde{\kappa}_{11}+\kappa_{12}=:\tilde{\kappa}_{13} \in \mathfrak{K}$ as well, where $\kappa_{12}$ is same as in the proof of Theorem \ref{Main_theorem_1}. Therefore the random set $\widetilde{\textbf{B}}$ defined by $$\widetilde{\textbf{B}}(\omega) := \{\v\in\H: \|\v\|_{2}\leq \tilde{\kappa}_{13}(\omega)\}$$ is such that $\widetilde{\textbf{B}}\in\mathfrak{DK}.$ 
	
	Let us now show that $\widetilde{\textbf{B}}$ absorbs $\mathrm{D}$.  For $\omega\in\Omega$, $\boldsymbol{x}\in \mathrm{D}(\theta_{s}\omega)$ and $s\leq- t_{\mathrm{D}}(\omega),$ from \eqref{S1-ubd}, we have
	\begin{align}\label{AB1-ubd}
		\|\y(0,\omega; s, \boldsymbol{x}-\z(s))\|_{2}\leq \tilde{\kappa}_{11}(\omega), \  \text{ for } \ \omega\in \Omega.
	\end{align}
	Thus, we conclude that, for $\omega\in\Omega$  
	\begin{align*} 
		\|\v(0,\omega; s, \boldsymbol{x})\|_{2} \leq \|\y(0,\omega; s, \boldsymbol{x}-\z(s))\|_{2} + \|\z(\omega)(0)\|_{2}\leq \tilde{\kappa}_{13}(\omega).
	\end{align*}
	The above inequality implies that for $\omega\in \Omega$, $\v(0,\omega; s, \boldsymbol{x}) \in \widetilde{\textbf{B}}(\omega)$, for all $s\leq -t_{\mathrm{D}}(\omega).$ This proves  $\widetilde{\textbf{B}}$ absorbs $\mathrm{D}$.	
	\vskip 0.2 cm 
	\noindent
	\textbf{The RDS $\Psi$ is $\mathfrak{DK}$-asymptotically compact.} 		
	In order to establish the $\mathfrak{DK}$-asymptotically compactness of $\Psi$, we use uniform-tail estimates obtained in Lemma \ref{LR}. Let us assume that $\mathrm{D} \in \mathfrak{DK}$ and $\widetilde{\textbf{B}}\in \mathfrak{DK}$ be such that $\widetilde{\textbf{B}}$ absorbs $\mathrm{D}$. Let us fix $\omega\in \Omega$ and take a sequence of positive numbers $\{t_m\}^{\infty}_{m=1}$ such that $t_1\leq t_2 \leq t_3 \leq \cdots$ and $t_m \to \infty$. We take an $\H$-valued sequence $\{\boldsymbol{x}_m\}^{\infty}_{m=1}$ such that $\boldsymbol{x}_m \in \mathrm{D}(\theta_{-t_m}\omega),$ for all $m\in \mathbb{N}.$ Our aim is to show that the sequence $\Psi(t_m,\theta_{-t_m}\omega,\x_{m})$ or $\y(0,\omega;-t_m,\x_{m}-\z(-t_m))$ of solutions to system \eqref{CTGF} has a convergent subsequence in $\H$.

	Lemma \ref{D-abH-ubd} implies that there exists $t_{\D}(\omega) \geq 1$ such that for all $s\leq - t_{\D}(\omega)$ and $\x\in\D(\theta_{s}\omega)$,
	\begin{align}\label{Uac1}
		\|\y(-1,\omega;s,\x-\z(s))\|_2\leq C,
	\end{align}
	where $C$ is a positive constant independent of $s$ and  $\x.$
	Since $t_m\to \infty$, there exists $M_2=M_2(\omega,\D)\in\N$ such that $t_m\geq t_{\D}(\omega),$ for all $m\geq M_2$. Since $\x_{m} \in \D(\theta_{-t_m}\omega)$, \eqref{Uac1} implies that for all $m\geq M_2$, {the sequence 
		\begin{align}\label{Uac2}
			\{\y(-1,\omega;-t_m,\x_{m}-\z(-t_m))\}_{m\geq M_2} \subset\H
		\end{align}
		is bounded in $\H$.}	We infer from \eqref{Uac2} and Lemma \ref{D-convege} that there exists $\xi\in(-1,0)$, $\hat{\y}\in\H$ and a subsequence (not relabeling) such that for every $k\in\N$ 
	\begin{align}\label{Uac3}
		\y(\xi,\omega;-t_m,\x_{m}-\z(-t_m))=\y(\xi,\omega;-1,\y(-1,\omega;-t_m,\x_{m}-\z(-t_m))-\z(-1))\to \hat{\y} \  \text{ in }\   \L^2(\O_k).
	\end{align}
	as $m\to\infty$. Therefore, we infer from {the proof of } Lemma \ref{RDS_Conti1} that there exists a positive constant $C_{Lip}$ such that
	\begin{align}\label{Uac7}
		&\|\y(0,\omega;\xi,\y(\xi,\omega;-t_m,\x_{m}-\z(-t_m))-\z(\xi))-\y(0,\omega;\xi,\hat{\y}-\z(\xi))\|^2_{2}
		\nonumber\\ & \leq C_{Lip}\|\y(\xi,\omega;-t_m,\x_{m}-\z(-t_m))-\hat{\y}\|^2_{2}.
	\end{align}
	Let us now choose $\eta>0$ and fix it. Since $\hat{\y}\in\H$, there exists $K_1=K_1(\eta,\omega,\D)>0$ such that for all $k\geq K_1$, 
	\begin{align}\label{Uac4}
		\int_{\O\cap\{|x|\geq k\}}|\hat{\y}|^2\d x<\frac{\eta^2}{6C_{Lip}},
	\end{align}
	where $C_{Lip}>0$ is a constant defined in \eqref{Uac7}. Also, we know from Lemma \ref{LR} that there exists $M_3=M_3(\xi,\eta,\omega,\D)\in\N$ and $K_2=K_2(\xi,\eta,\omega,\D)\geq K_1$ such that for all $m\geq M_3$ and $k\geq K_2$,
	\begin{align}\label{Uac5}
		\int_{\O\cap\{|x|\geq k\}}|\y(\xi,\omega;-t_m,\x_{m}-\z(-t_m))|^2\d x<\frac{\eta^2}{6C_{Lip}}.
	\end{align}
	From \eqref{Uac3}, we have that there exists $M_4=M_4(\xi,\eta,\omega,\D)>M_3$ such that for all $m\geq M_4$,
	\begin{align}\label{Uac6}
		\int_{\O\cap\{|x|< K_2\}}|\y(\xi,\omega;-t_m,\x_{m}-\z(-t_m))-\hat{\y}|^2\d x<\frac{\eta^2}{3C_{Lip}}.
	\end{align}
	Finally, we infer from \eqref{Uac7} that
	\begin{align}\label{Uac8}
		&\|\y(0,\omega;\xi,\y(\xi,\omega;-t_m,\x_{m}-\z(-t_m))-\z(\xi))-\y(0,\omega;\xi,\hat{\y}-\z(\xi))\|^2_{2}\nonumber\\&\leq  C_{Lip}\bigg[\int_{\O\cap\{|x|<K_2\}}|\y(\xi,\omega;-t_m,\x_{m}-\z(-t_m))-\hat{\y}|^2\d x 
		\nonumber\\ & \qquad + \int_{\O\cap\{|x|\geq K_2\}} |\y(\xi,\omega;-t_m,\x_{m}-\z(-t_m))-\hat{\y}|^2\d x\bigg]\nonumber\\&\leq C_{Lip}\bigg[\int_{\O\cap\{|x|<K_2\}}|\y(\xi,\omega;-t_m,\x_{m}-\z(-t_m))-\hat{\y}|^2\d x \nonumber\\ & \qquad +2\int_{\O\cap\{|x|\geq K_2\}}(|\y(\xi,\omega;-t_m,\x_{m}-\z(-t_m))|^2+|\hat{\y}|^2)\d x\bigg]
		\nonumber\\ & < \eta^2,
	\end{align}
	for every $m\geq M_3$, where we have used \eqref{Uac4}-\eqref{Uac6}. Since $\eta>0$ is arbitrary, we conclude the proof.
\end{proof}

	\section{Invariant measures}\label{sec6}\setcounter{equation}{0}
In this section, we first show the existence of invariant measures for a subclass of third-grade fluids equations in $\H$. It is established in \cite{CF} that the existence of a compact invariant random set is a sufficient condition for the existence of invariant measures, that is, if a RDS $\Psi$ has compact invariant random set, then there exists an invariant measure for $\Psi$ (\cite[Corollary 4.4]{CF}). Since, the random attractor itself is a compact invariant random set, the existence of invariant measures for system \eqref{STGF} is a direct consequence of \cite[Corollary 4.4]{CF} and Theorem \ref{Main_theorem_1}. The uniqueness of invariant measures is still an open problem.

Let us define the transition operator $\{\mathrm{T}_t\}_{t\geq 0}$ by 
\begin{align}\label{71}
	\mathrm{T}_t f(\x)=\int_{\Omega}f(\Psi(\omega,t,\x))\d\mathbb{P}(\omega)=\E\left[f(\Psi(t,\x))\right], 
\end{align}
for all $f\in\mathcal{B}_b(\H)$, where $\mathcal{B}_b(\H)$ is the space of all bounded and Borel measurable functions on $\H$ and $\Psi$ is the RDS corresponding to   system \eqref{STGF}, which is defined by \eqref{combine_sol}. The continuity of $\Psi$ (cf. Lemma \ref{RDS_Conti1}), \cite[Proposition 3.8]{Brzezniak+Li_2006} provides the following result: 
\begin{lemma}\label{Feller}
	Let \eqref{condition1} and Hypotheses \ref{assump1}-\ref{assumpO} be satisfied. Then, the family $\{\mathrm{T}_t\}_{t\geq 0}$ is Feller, that is, $\mathrm{T}_tf\in\C_{b}(\H)$ if $f\in\C_b(\H)$, where $\C_b(\H)$ is the space of all bounded and continuous functions on $\H$. Furthermore, for any $f\in\C_b(\H)$, $\mathrm{T}_tf(\x)\to f(\x)$ as $t\downarrow 0$. 
\end{lemma}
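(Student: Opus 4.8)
The statement is Lemma \ref{Feller}, asserting that the transition semigroup $\{\mathrm{T}_t\}_{t\geq 0}$ defined in \eqref{71} is Feller, and moreover that $\mathrm{T}_t f(\x) \to f(\x)$ as $t\downarrow 0$ for every $f\in\C_b(\H)$. The plan is to establish the Feller property by transferring the pathwise continuity of the RDS $\Psi$ (Lemma \ref{RDS_Conti1}) to the level of expectations via dominated convergence, and to establish the pointwise short-time convergence by combining continuity of trajectories in $\H$ with another application of dominated convergence, exactly along the lines of \cite[Proposition 3.8]{Brzezniak+Li_2006}.

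First I would fix $f\in\C_b(\H)$ and show $\mathrm{T}_t f\in\C_b(\H)$. Boundedness is immediate: $|\mathrm{T}_t f(\x)| = |\E[f(\Psi(t,\cdot,\x))]| \leq \|f\|_{\infty}$ for all $\x\in\H$, so $\|\mathrm{T}_t f\|_{\infty}\leq\|f\|_{\infty}$. For continuity, take $\x_m\to\x$ in $\H$. Recalling from \eqref{combine_sol} that $\v(t,\omega;0,\x) = \Psi(t,\omega,\x) = \y(t,\omega;0,\x-\z(0)) + \z(t)$, and that $\x_m\to\x$ in $\H$ implies $\x_m-\z(0)\to\x-\z(0)$ in $\H$, Lemma \ref{RDS_Conti1} (applied with $s=0$, $\z_m=\z$, $\f_m=\f$ fixed, and initial data $\x_m-\z(0)$) gives $\y(t,\omega;0,\x_m-\z(0))\to\y(t,\omega;0,\x-\z(0))$ in $\H$ for each fixed $\omega\in\Omega$, hence $\Psi(t,\omega,\x_m)\to\Psi(t,\omega,\x)$ in $\H$ for $\mathbb{P}$-a.e.\ $\omega$. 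Since $f$ is continuous on $\H$, $f(\Psi(t,\omega,\x_m))\to f(\Psi(t,\omega,\x))$ pointwise in $\omega$, and since $|f(\Psi(t,\omega,\x_m))|\leq\|f\|_{\infty}$ is a fixed integrable bound, the dominated convergence theorem yields $\mathrm{T}_t f(\x_m) = \E[f(\Psi(t,\cdot,\x_m))]\to\E[f(\Psi(t,\cdot,\x))] = \mathrm{T}_t f(\x)$. This proves $\mathrm{T}_t f\in\C_b(\H)$.

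For the second assertion, fix $f\in\C_b(\H)$ and $\x\in\H$. By Theorem \ref{STGF-Sol} (or Theorem \ref{solution} together with the decomposition \eqref{combine_sol}), for $\mathbb{P}$-a.e.\ $\omega$ the trajectory $t\mapsto\Psi(t,\omega,\x) = \y(t,\omega;0,\x-\z(0))+\z(t)$ lies in $\mathrm{C}([0,\infty);\H)$ and starts at $\Psi(0,\omega,\x) = \x$; hence $\Psi(t,\omega,\x)\to\x$ in $\H$ as $t\downarrow 0$. Continuity of $f$ then gives $f(\Psi(t,\omega,\x))\to f(\x)$ as $t\downarrow 0$, and since $|f(\Psi(t,\omega,\x))|\leq\|f\|_{\infty}$ uniformly, dominated convergence again yields $\mathrm{T}_t f(\x) = \E[f(\Psi(t,\cdot,\x))]\to f(\x)$ as $t\downarrow 0$. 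I do not expect a serious obstacle here: the only mild care needed is to verify that the convergence $\x_m-\z(0)\to\x-\z(0)$ plus the fixed data $(\z,\f)$ really fit the hypotheses of Lemma \ref{RDS_Conti1} (they do, since that lemma allows the forcing and noise sequences to be constant), and that the $\mathbb{P}$-null set on which the solution may fail to be continuous does not interfere — which it does not, because it has measure zero and the integrand is bounded.
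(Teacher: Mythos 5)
Your proposal is correct and follows essentially the same route as the paper: the paper simply invokes the continuity of $\Psi$ with respect to the initial data (Lemma \ref{RDS_Conti1}) together with \cite[Proposition 3.8]{Brzezniak+Li_2006}, and your argument — pathwise convergence $\Psi(t,\omega,\x_m)\to\Psi(t,\omega,\x)$ plus dominated convergence for the Feller property, and path continuity of $t\mapsto\y(t)+\z(t)$ in $\H$ at $t=0$ plus dominated convergence for $\mathrm{T}_tf(\x)\to f(\x)$ — is exactly the content of that citation, spelled out. No gaps.
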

Analogously as in the proof of \cite[Theorem 5.6]{CF}, one can prove that $\Psi$ is a Markov RDS, that is, $\mathrm{T}_{t_1+t_2}=\mathrm{T}_{t_1}\mathrm{T}_{t_2}$, for all $t_1,t_2\geq 0$. Since, we know by \cite[Corollary 4.4]{CF} that if a Markov RDS on a Polish space has an invariant compact random set, then there exists a Feller invariant probability measure $\upmu$ for $\Psi$. 
\begin{definition}
	A Borel probability measure $\upmu$ on $\H$  is called an \textit{invariant measure} for a Markov semigroup $\{\mathrm{T}_t\}_{t\geq 0}$ of Feller operators on $\C_b(\H)$ if and only if $$\mathrm{T}_{t}^*\upmu=\upmu, \ t\geq 0,$$ where $(\mathrm{T}_{t}^*\upmu)(\Gamma)=\int_{\H}\mathrm{T}_{t}(\u,\Gamma)\upmu(\d\u),$ for $\Gamma\in\mathcal{B}(\H)$ and  $\mathrm{T}_t(\u,\cdot)$ is the transition probability, $\mathrm{T}_{t}(\u,\Gamma)=\mathrm{T}_{t}(\chi_{\Gamma})(\u),\ \u\in\H$.
\end{definition}

By the definition of random attractors, it is clear  that there exists an invariant compact random set in $\H$. A Feller invariant probability measure for a Markov RDS $\Psi$ on $\H$ is, by definition, an invariant probability measure for the semigroup $\{\mathrm{T}_t\}_{t\geq 0}$ defined by \eqref{71}. Hence, we have the following result on the existence of invariant measures for   system \eqref{STGF} in $\H$.
\begin{theorem}\label{thm6.3}
	Let \eqref{condition1} and Hypotheses \ref{assump1}-\ref{assumpO} be satisfied. Then, there exists an invariant measure for   system \eqref{STGF} in $\H$ with external forcing $\f\in \H^{-1}(\O)$ and $\f\in \L^{2}(\O)$ on bounded and unbounded domains, respectively.
\end{theorem}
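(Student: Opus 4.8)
The plan is to invoke the abstract criterion of Crauel and Flandoli (\cite[Corollary 4.4]{CF}), which states that a Markov random dynamical system on a Polish space possessing a compact invariant random set admits a Feller-invariant probability measure, and which moreover coincides with an invariant measure for the associated transition semigroup. All the ingredients needed to apply this criterion have in fact already been assembled in the excerpt, so the proof is essentially a matter of stitching them together in the correct order.

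First I would recall that $\H$ is a separable Hilbert space, hence a Polish space, so the abstract setting applies. Then I would point out that by Theorem \ref{Main_theorem_1} (on bounded domains, with $\f\in\H^{-1}(\O)+\mathbb{W}^{-1,\frac43}(\O)$) and by Theorem \ref{Main_theorem_2} (on unbounded Poincar\'e domains, with $\f\in\L^2(\O)$), the RDS $\Psi$ generated by system \eqref{STGF} possesses a unique random $\mathfrak{DK}$-attractor $\mathcal{A}$ in $\H$. By the very definition of a random attractor, $\mathcal{A}(\omega)$ is compact for $\mathbb{P}$-a.e.\ $\omega$ and $\mathcal{A}$ is strictly $\Psi$-invariant, i.e.\ $\Psi(t,\omega,\mathcal{A}(\omega))=\mathcal{A}(\theta_t\omega)$ for all $t\geq0$; in particular $\mathcal{A}$ is a compact invariant random set in the sense required by \cite{CF}.

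Next I would verify the Markov/Feller structure needed so that \cite[Corollary 4.4]{CF} yields an invariant measure for the transition semigroup $\{\mathrm{T}_t\}_{t\geq0}$ defined in \eqref{71}. The Feller property of $\{\mathrm{T}_t\}_{t\geq0}$, together with the weak continuity $\mathrm{T}_tf(\x)\to f(\x)$ as $t\downarrow0$ for $f\in\C_b(\H)$, is exactly the content of Lemma \ref{Feller}, which in turn rests on the continuous dependence of solutions on initial data proved in Lemma \ref{RDS_Conti1} and on \cite[Proposition 3.8]{Brzezniak+Li_2006}. The semigroup (Chapman--Kolmogorov) identity $\mathrm{T}_{t_1+t_2}=\mathrm{T}_{t_1}\mathrm{T}_{t_2}$ follows, as in \cite[Theorem 5.6]{CF}, from the cocycle property of $\Psi$ and the independence of increments of the Ornstein--Uhlenbeck driving process; this shows that $\Psi$ is a Markov RDS. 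Applying \cite[Corollary 4.4]{CF} to this Markov RDS, whose compact invariant random set is the random attractor from Theorem \ref{Main_theorem_1} or \ref{Main_theorem_2} respectively, produces a Feller-invariant probability measure $\upmu$ on $\H$, which by the identification between Feller-invariant measures for the RDS and invariant measures for $\{\mathrm{T}_t\}_{t\geq0}$ is precisely an invariant measure for system \eqref{STGF} in the sense of the preceding definition. This completes the argument in both the bounded and unbounded cases.

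There is no serious obstacle left at this stage: the genuinely hard work—constructing the RDS (Section \ref{sec3}), establishing the random absorbing set and the $\mathfrak{DK}$-asymptotic compactness via the compact Sobolev embedding on bounded domains and via the uniform-tail estimates of Lemma \ref{LR} on unbounded domains, and hence the existence of the random attractor—has already been carried out in Theorems \ref{Main_theorem_1} and \ref{Main_theorem_2}. The only point requiring a modicum of care is to confirm that the class $\mathfrak{DK}$ of tempered random sets is rich enough that the random attractor indeed qualifies as a compact invariant set in the framework of \cite{CF}, and that the Polishness of $\H$ together with the Feller property legitimately allows the passage from a Feller-invariant measure for $\Psi$ to an honest invariant measure for the Markov semigroup; both are routine and already implicit in Lemma \ref{Feller}. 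Consequently the statement follows, and I would close by remarking (as the authors do) that uniqueness of the invariant measure remains open because our estimates do not provide any contraction or ergodicity mechanism in the presence of the several non-monotone nonlinear terms.
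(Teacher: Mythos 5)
Your proposal is correct and follows essentially the same route as the paper: the random attractor from Theorems \ref{Main_theorem_1} and \ref{Main_theorem_2} serves as the compact invariant random set, and the Markov--Feller structure from Lemma \ref{Feller} (via Lemma \ref{RDS_Conti1}) allows the application of \cite[Corollary 4.4]{CF} to produce the invariant measure for the semigroup \eqref{71}. No gaps.
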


\begin{remark}
	 Note that we discuss the existence of an invariant measure based on the existence of random attractor which is a compact set. But the converse is not true always (see \cite{Scheutzow}). In particular, the authors in \cite{Scheutzow} discuss an example of a random dynamical system in $\R^d$, $d \geq 2$ namely an
isotropic Brownian flow with drift, whose single-point motion is an ergodic
diffusion process and which does not have a weak attractor. This also implies that there is no pathwise attractor for such process.
\end{remark}

\begin{appendix}
	\renewcommand{\thesection}{\Alph{section}}
	\numberwithin{equation}{section}

\section{Retrieval of Pressure }\label{PR} \numberwithin{equation}{section}\setcounter{equation}{0}
We have shown that the system \eqref{CTGF} has unique weak solution in the sense of Definition \ref{defn-CTGF} (see Theorem \ref{solution}). In order to retrieve the pressure, we follow the approach which has been carried out in the work \cite[Section 8]{SS11}. 

Let $\C_c^{\infty}(\mathcal{O};\R^d)$ be the space of all infinitely differentiable functions  ($\R^d$-valued) with compact support in $\mathcal{O}\subset\R^d$.
Let us introduce the space of vector-valued test functions  $\mathrm{C}_c^{\infty}(0,T;\C_c^{\infty}(\mathcal{O};\R^d))$, which consists of all infinitely differentiable $\C_c^{\infty}(\mathcal{O};\R^d)$-valued functions having compact support in $(0,T)$. Moreover, we denote by $\mathscr{D}'((0,T) \times \mathcal{O})$, the dual space of $\mathrm{C}_c^{\infty}(0,T;\C_c^{\infty}(\mathcal{O};\R^d))$. Let us define a functional $\upchi\in\mathscr{D}'((0,T) \times \mathcal{O})$ by
\begin{align}\label{press8}
	&\int_0^T \langle\upeta,\varphi\rangle\d t\nonumber\\&:=\int_0^T 
	\biggl\{\bigg\langle\frac{\partial\y}{\partial t} - \nu \Delta \y + [(\y+\z)\cdot \nabla](\y+\z) - \alpha\text{div}((\Arm(\y + \z))^2)  -  \beta \text{div}(|\Arm(\y + \z)|^2\Arm(\y + \z)) 
    \nonumber\\ & \qquad \qquad - \chi \z - \f,\varphi\bigg\rangle \biggr\}\d t,
\end{align}
for all $\varphi\in\mathrm{C}_c^{\infty}(0,T;\C_c^{\infty}(\mathcal{O};\R^d))$.  Since $\y \in  \mathrm{C}([0,T]; \H) \cap \mathrm{L}^{2}(0,T; \V)\cap \mathrm{L}^{4} (0,T; \mathbb{W}^{1,4}(\mathcal{O})),$ $\z \in \mathrm{L}^{2}(0,T; \H)\cap \mathrm{L}^{4} (0,T; \mathbb{W}^{1,4}(\mathcal{O}))$ and $\frac{\partial\y}{\partial t} \in \mathrm{L}^{2} (0,T;\V') + \mathrm{L}^{\frac43} (0,T;\mathbb{W}^{-1,\frac43}(\mathcal{O}))$, we have (see e.g. \eqref{eqn-deri-bound})
\begin{align}
    \left| \int_0^T \langle\upeta,\varphi\rangle\d t \right| \leq C(\alpha, \beta, \mu, \|\y_0\|_{2}, \|\f\|_{\H^{-1}}) \|\varphi\|_{\mathrm{L}^{2}(0,T; \H^1_0)\cap \mathrm{L}^{4} (0,T; \mathbb{W}^{1,4})},
\end{align}
for all $\varphi\in\mathrm{C}_c^{\infty}(0,T;\C_c^{\infty}(\mathcal{O};\R^d))$. Therefore, it follows that
\begin{align*}
	\upeta\in \mathrm{L}^{2} (0,T;\H^{-1}(\mathcal{O})) + \mathrm{L}^{\frac43} (0,T;\mathbb{W}^{-1,\frac43}(\mathcal{O})).
\end{align*}
Since $\y(\cdot)$ is unique solution of system \eqref{CTGF} in the sense of Definition \ref{defn-CTGF},  \eqref{press8} gives 
\begin{align}\label{press11}
	\int_0^T \langle\upeta,\varphi\rangle\d t=0, \  \text{ for all } \  \mathrm{C}_c^{\infty}(0,T;\mathcal{V}).
\end{align}
Next, we choose a test function of the form $\varphi(x,t)=\upxi(t)\psi(x)$ with $\upxi\in\mathrm{C}_c^{\infty}(0,T)$ and $\psi\in\mathcal{V}$. Then, from \eqref{press11}, we have 
\begin{align*}
	\int_0^T\upxi(t)\langle\upeta,\psi\rangle\d t=0, \ \text{ for all } \  \upxi\in\mathrm{C}_c^{\infty}(0,T) \  \text{ and for all } \  \psi\in\mathcal{V}.
\end{align*}
Then, we deduce that $\upeta$ satisfies the following equation   for a.e. $t\in[0,T]$:
\begin{align}\label{press12}
& \langle\upeta,\psi\rangle 
\nonumber\\ &=  
	\bigg\langle\frac{\partial\y}{\partial t} - \nu \Delta \y  + [(\y+\z)\cdot \nabla](\y+\z) - \alpha\text{div}((\Arm(\y + \z))^2) - \beta \text{div}(|\Arm(\y + \z)|^2\Arm(\y + \z)) 
     - \chi \z - \f,\psi\bigg\rangle, 
\end{align}
 for all $ \psi\in  \C_c^{\infty}(\mathcal{O};\R^d)$,  and 
\begin{align*}
	\langle\upeta ,\psi\rangle=0,  \  \text{ for all } \ \psi\in\mathcal{V}.
\end{align*}
Finally, by an application of \cite[Proposition 1.1]{Temam_1984}, there exists a $\hat{\mathbf{P}} \in \mathscr{D}'((0,T)\times \mathcal{O})$ such that
$$\upeta = - \nabla \hat{\mathbf{P}} \in \mathrm{L}^{2} (0,T;\H^{-1}(\mathcal{O})) + \mathrm{L}^{\frac43} (0,T;\mathbb{W}^{-1,\frac43}(\mathcal{O})).$$

\begin{remark}\label{rem-pressure}
    Indeed, we have 
\begin{align*}
    \nabla \hat{\mathbf{P}} & = (\mathrm{I}- \mathcal{P}) \left\{  \nu \Delta \y  - [(\y+\z)\cdot \nabla](\y+\z) + \alpha\text{div}((\Arm(\y + \z))^2) + \beta  \text{div}(|\Arm(\y + \z)|^2\Arm(\y + \z))  + \f \right\}
    \nonumber\\ & =:\nabla \hat{\mathbf{P}}_1 + \nabla \hat{\mathbf{P}}_2,
\end{align*}
where 
\begin{align*}
    \nabla \hat{\mathbf{P}}_1 & = (\mathrm{I}- \mathcal{P}) \left\{  \nu \Delta \y  - [(\y+\z)\cdot \nabla](\y+\z) + \alpha\text{div}((\Arm(\y + \z))^2)   + \f \right\} \\
    \nabla \hat{\mathbf{P}}_2 & = \beta (\mathrm{I}- \mathcal{P})   \text{div}(|\Arm(\y + \z)|^2\Arm(\y + \z)).
\end{align*}
Note that, by Lemma \ref{D-abH-ubd}, we have for $s\leq -\mathcal{T}=\max\{t_{\D}^{\ast}, t_{\D}^{\ast\ast}\}$ that
\begin{align}\label{estimate-pressure}
		e^{\frac{\nu\lambda}{2}\left(1+\frac{\varepsilon_0}{2}\right) \cdot}\; \nabla \widehat{\textbf{P}}_1\in	\mathrm{L}^2(s,0;\H^{-1}(\mathcal{O})) \; \text{ and }\; e^{\frac{3\nu\lambda}{4}\left(1+\frac{\varepsilon_0}{2}\right) \cdot}\; \nabla \widehat{\textbf{P}}_2\in	\mathrm{L}^{\frac{4}{3}}(s,0;\mathbb{W}^{-1,\frac43}(\mathcal{O})).
	\end{align}
\end{remark}

\end{appendix}

\vskip 6mm
\noindent
\textbf{Acknowledgments:}  
“This work is funded by national funds through the FCT - Fundação para a Ciência e a Tecnologia, I.P., under the scope of the projects UIDB/00297/2020 (https://doi.org/10.54499/UIDB/00297/2020) and UIDP/00297/2020 (https://doi.org/10.54499/UIDP/00297/2020) (Center for Mathematics and Applications)”. K. Kinra would like to thank Prof. Manil T. Mohan, Department of Mathematics, Indian Institute of Technology Roorkee, Roorkee, India for introducing him into the research area of attractor theory.

\medskip
\medskip\noindent
\textbf{Data availability:} No data was used for the research described in the article.

\medskip
\medskip\noindent
\textbf{Declarations}: During the preparation of this work, the authors have not used AI tools.

\medskip
\medskip\noindent
\textbf{Conflict of interest:} The authors declare no conflict of interest.

\end{document}